\documentclass[10pt]{article}

%%%%%%%%%%%%%%%%%%%%%%%%%%%%
%
%         PACKAGES
%
%%%%%%%%%%%%%%%%%%%%%%%%%%%%

%\usepackage{showkeys}
\usepackage[ngerman,french,english]{babel}

\usepackage{pdfsync}

\usepackage{amsmath}                                                        % need for subequations
\numberwithin{equation}{section}
\usepackage{graphicx}                                                       % need for figures
\usepackage{subfigure}
\usepackage{enumitem}                                                    % need for subfigures
\usepackage{hyperref}
\usepackage{amssymb}                                                        % gives you \mathbb{} font
\usepackage[mathscr]{eucal}                                             % gives you \mathscr font
                                                                                % use option [mathcal] to make \mathcal command use eucal mathscr fonts
\usepackage{cancel}                                                             % gives you the ability to visibly cross out terms in equations}
\usepackage[normalem]{ulem}                                                                 % gives you \sout
\usepackage{pstricks}
\usepackage{rotating}
\usepackage{lscape}
\usepackage[paperwidth=8.5in,paperheight=11in,top=1.00in, bottom=1.00in, left=1.00in, right=1.00in]{geometry}
\usepackage{mathtools}                                                      % need for `show only references'
\mathtoolsset{showonlyrefs=true}                                    % only equations which are labeled AND referenced will be numbered.
                                                                                                    % IMPORTANT NOTE...must use \eqref{} instead of (\ref{})
\usepackage{fixltx2e,amsmath}                                           % Supposedly, this allows one to use \eqref{} in \caption{}.
\MakeRobust{\eqref}

\usepackage{dsfont}
\usepackage{comment}

\linespread{1.3}                                                                    % double-space everything with 1.6
\usepackage{mathdots}
\usepackage{amsthm}                                                             % need for theorem-proof environment
\allowdisplaybreaks

                                                        % allows page breaks for long equations
                                                                                                    % you can prevent a page-break with \\*
%%

%\newcommand{\ndel}[1]{{\color{red}\xout{#1}} }

\theoremstyle{plain}
\newtheorem{theorem}{Theorem}
\numberwithin{theorem}{section}

\newtheorem{lemma}[theorem]{Lemma}                              % [theorem] ==> theorems and lemmas will share a counter
\newtheorem{proposition}[theorem]{Proposition}

\newtheorem{corollary}[theorem]{Corollary}
\theoremstyle{definition}
\newtheorem{definition}[theorem]{Definition}
\newtheorem{example}[theorem]{Example}
\newtheorem{notation}[theorem]{Notation}
\newtheorem{remark}[theorem]{Remark}
\newtheorem{assumption}[theorem]{Assumption}

%%%%%%%%%%%%%%%%%%%%%%%%%%%%
%
%          MACROS
%
%%%%%%%%%%%%%%%%%%%%%%%%%%%%

\hyphenation{pa-ra-me-trix}

% shortcuts: Andrea

\def \s {{\sigma}}

\def \R {\mathbb{R}}
\def \p {\partial}

\newcommand{\CT}[1]{\mathit{C}_T \mathcal{C}_b\cap \Leb^\infty_T\mathcal{C}_B^{#1}}
\newcommand{\Linf}[1]{ \Leb^\infty_T\mathcal{C}_B^{#1}}

%                   Brackets and Parenthesis

%\newcommand{\<}{\left\langle}
%\renewcommand{\>}{\right\rangle}
\newcommand{\<}{\langle}
\renewcommand{\>}{\rangle}
\renewcommand{\(}{\left(}
\renewcommand{\)}{\right)}

%                   Math Blackboard Bold Symbols
\newcommand{\Leb}{L}

\newcommand\Eb{\mathbb{E}}
\newcommand\Pb{\mathbb{P}}

\newcommand\Rb{\mathbb{R}}

\newcommand\Nzero{\mathbb{N}_0}

%                   mathscr symbols

\newcommand\Ac{\mathscr{A}}

\newcommand\Bc{\mathscr{B}}
\newcommand\Cc{\mathscr{C}}

\newcommand\Lc{\mathscr{L}}

\newcommand\Kc{\mathscr{K}}

%                   mathfrak symbols

%                   shortcuts for greek letters

\newcommand\eps{\varepsilon}

%\newcommand\internal{\overset{\text{int}}{f(\partial B)}}
%\newcommand\external{\overset{\text{ext}}{f(\partial B)}}
%                   Letters with bars

%                   Letters with underlines

%                   Letters with dots

%                   Vectors (bolded)

%                   Letters with Hats

\newcommand\fh{\widehat{f}}

%                   Letters with Tildes

%                   other macros

\newcommand\dd{d}

%                   Colors

%\newcommand{\ble}[1]{\textbf{\textcolor[cmyk]{0,.61,.97,0}{#1}}}

\def \rr {r}

\def \R  {{\mathbb {R}}}

\def \eps {{\varepsilon}}

\def \n {{\nu}}

\def \p {{\partial}}

% \Omega\cup\partial_P\Omega

\def \It\^o {It\^o }
\def \s {{\sigma}}

\def \R {{\mathbb {R}}}
\def \N {{\mathbb {N}}}
\def \C {{\mathcal {C}}}

\def \eps {{\varepsilon}}

\def \n {{\nu}}

\def \div {{\text{\rm div}}}

\def \tilde {\widetilde}

\def \A {\mathcal{A}}

\def \F {\mathcal{F}}

\def \BB {{\bf B}}

\def \Ã  {{\`a }}
\def \Ã¨ {{\`e }}
\def \Ã² {{\`o }}
\def \Ã¹ {{\`u }}

%%%%%%%%%%%%%%%%%%%%%%%%%%%%
%
%          BEGIN DOCUMENT
%
%%%%%%%%%%%%%%%%%%%%%%%%%%%%

\begin{document}

\title{Degenerate McKean-Vlasov equations with  drift in anisotropic negative Besov spaces}

\author{Elena Issoglio \thanks{Dipartimento di Matematica G. Peano, Universit\`a di Torino, Torino, Italy.  \textbf{E-mail}: elena.issoglio@unito.it} \and
Stefano Pagliarani
\thanks{Dipartimento di Matematica, Universit\`a di Bologna, Bologna, Italy.
  \textbf{E-mail}: stefano.pagliarani9@unibo.it.} \and Francesco Russo
\thanks{Unit\'e de Math\'ematiques Appliqu\'ees, ENSTA Paris,
  Institut Polytechnique de Paris, Palaiseau, France.
  \textbf{E-mail}: francesco.russo@ensta.fr
} \and Davide Trevisani
\thanks{Facultad de Informática, Universitade da Coruña, A Coruña, Spain.
\textbf{E-mail}: davide.trevisani@udc.es}}

\date{This version: 16th March 2026}

\maketitle

\begin{abstract}
  The paper is concerned with a McKean-Vlasov type SDE with  drift   in anisotropic Besov spaces with negative regularity and with
   degenerate diffusion matrix
  under the weak H\"ormander condition.
The main result is of existence and uniqueness of a solution  in law for the McKean-Vlasov equation, which is formulated as a suitable martingale problem.
  All analytical tools needed are derived in the paper, such as the well-posedness of the Fokker-Planck and Kolmogorov PDEs with   distributional drift, as well as continuity dependence on the coefficients. The solutions to these PDEs naturally live in anisotropic Besov spaces, for which we developed suitable analytical inequalities, such as Schauder estimates.
\end{abstract}

\noindent \textbf{Keywords}: McKean-Vlasov; kinetic PDEs; martingale problem;
singular drift; anisotropic Besov-H\"older spaces; Schauder estimates.\\ 
%\textbf{Acknowledgements}: 
\noindent \textbf{MSC 2020:} 60H10; 60H30; 60H50; 35C99; 35D99; 35K10.

%%%%%%%%%%%%%%%%%%%%%%%%%%%%%%%%%%%%%%%
%
%       SECTION: Introduction - NEW
%
%%%%%%%%%%%%%%%%%%%%%%%%%%%%%%%%%%%%%%%

\tiny
\tableofcontents
\normalsize
\section{Introduction and preliminaries}
\subsection{Statement of the problem}\label{subsec:statement}
In this work we treat the {weak} well-posedness for a class of of %singular 
degenerate  McKean-Vlasov-type SDEs, 
whose drift contains an {{anisotropic Besov distribution} 
of negative order $-\beta$, and $\beta \in (0,1/2)$.} Let $0< d < N$ be two given natural numbers, and let us denote by  $Z_t:=(V_t,X_t)$ {a stochastic process valued on $\R^{N}=\R^{d}\times \R^{N-d}$.}

The class of equations that we consider in this paper is given as
\begin{equation}\label{eq:mkv_singular}
\begin{cases}
\dd V_t = \Big( F\big( u_t(Z_t)  \big) b_t(Z_t) + \Bc_0 Z_t \Big) \dd t   +     \dd W_t       \\
\dd X_t = \Bc_1 Z_t \dd t \\
%\mu_t = v_t(z) \dd z
\mu_{Z_t} (\dd z) = u_t(z) \dd z,
\end{cases}
\end{equation}
for a given random initial condition
\begin{equation}\label{eq:initial_cond_Z}
Z_0=(V_0,X_0) \sim\mu_0 = u^0(z) (dz). 
\end{equation}
Here  
$W$ is a $d$-dimensional Brownian motion, $\Bc_0\in\mathcal{M}^{d \times N}$ and 
$\Bc_1\in\mathcal{M}^{(N-d)\times N}$ are constant matrices, and $F:[0,+\infty) \to \mathcal{M}^{d\times m}$ is a  matrix-valued Borel function. 
 In order for the process $(V,X)$ to take values on $\R^{N}$ one can formally consider $b:[0,T] \times \R^N \to \R^{m}$ in a way that the product in \eqref{eq:mkv_singular} can be made sense of as a usual multiplication \emph{row-by-column}. One of the novel features here, however, is that %for any $t>0$, 
 $b_t(\cdot)$ is a distribution, hence the {\em product} $ F(u) b$ needs to be defined with care.
Another trait of \eqref{eq:mkv_singular} is that the unknown $Z$ has a density $u$ which appears in the drift of the $V$-component, and this makes the problem a McKean-Vlasov type SDE. Notice moreover that the SDE is degenerate, in the sense that the noise $W$ appears only in the  dynamics of the $V$-component. 
In order for the SDE to be well-posed,  we will then require the hypoellipticity of the Kolmogorov operator $\Kc$ associated to \eqref{eq:mkv_singular} when $F\equiv 0$ (see Assumption \ref{ass:kolmogorov_op}).

We point out that, when $N =2d$, the system \eqref{eq:mkv_singular} reduces to the  {\em singular  kinetic} McKean-Vlasov SDE% stochastic differential equation
\begin{equation}\label{eq:mkv_kinetic_singular}
\begin{cases}
\dd V_t = F\big( u_t(V_t,X_t)  \big) b_t(V_t , X_t) \dd t   +     \dd W_t   ,    \\
\dd X_t = V_t \dd t , \\
\mu_{(V_t,X_t)} ( \dd v \dd x )= u_t(v,x) \dd v \dd x ,
\end{cases}
\end{equation}
where the $\R^{2d}$-valued pair $(V_t,X_t)$ describes {\em velocity} and {\em position} of a particle in the phase-space. When $F\equiv 1$ and $b_t$ is a linear function of $V_t$ (in particular there is no McKean-Vlasov interaction and $b_t$ is not singular), \eqref{eq:mkv_kinetic_singular} reduces to the classical Langevin model \cite{langevin1908theorie}, which serves as a pilot example of more complex kinetic models (see \cite{imbert2021schauderSilvestre}, \cite{imbert2021schauder} among others). Recently, kinetic McKean-Vlasov models
attracted the interest of several authors (e.g. \cite{bossy,hao2024singular,veretennikov2023weak,zhang2021second}). As it is pointed out in \cite{hao2024singular},\eqref{eq:mkv_kinetic_singular} reflects the macroscopic behavior of a system of particles obeying Newton's second law with Gaussian noise, interacting through a mean-field force coefficient, with the singular nature of the latter accounting for environmental noise that acts on all particles. In \cite{bossy} the authors study a conditional version of \eqref{eq:mkv_kinetic_singular}, as a Langrangian stochastic model alternative to Navier-Stokes equations for the simulation of turbulent flows. In \cite{zhang2021second} (see also \cite{veretennikov2023weak}), the author studies a non-singular version of \eqref{eq:mkv_kinetic_singular} where the McKean-Vlasov interaction also affects the diffusion term, motivated by the fact that the non-linear Fokker-Planck PDE associated to this class includes to the kinetic Landau equation as a prototype example, the latter being the most popular mathematical model in the theory of collisional plasma. In this regards, the $2$nd order part of the Fokker-Planck PDE associated to \eqref{eq:mkv_kinetic_singular}, which reads as
\begin{equation}\label{eq:fokker_plank_kinetic_ex}
(\partial_t + \langle v, \nabla_x \rangle ) u_t +  \div_v\big(u_t F(u_t)b_t\big) = \Delta_v u_t ,
\end{equation}
differs from the one in the Landau equation, in particular it is linear. 
However, it is argued in \cite{alexandre2004landau} that a satisfactory description of a plasma is obtained when one also takes into account not only binary collisions (which are missing in \eqref{eq:fokker_plank_kinetic_ex}) but also collective effects modelled by a mean-field force term, which is present in \eqref{eq:mkv_kinetic_singular} and reflected by the non-linear $1$st order term in \eqref{eq:fokker_plank_kinetic_ex}. 

The main aim of this paper is to give a mathematical meaning to equation \eqref{eq:mkv_singular} and to prove existence and uniqueness of a solution, together with some regularity results on its time-marginal probability densities (see Theorem \ref{thm:ex_uniq_McKeanSDE_intro} and its proof in Section \ref{sec:MKMP}). To define the notion of solution to the McKean-Vlasov equation, which is a non-linear equation in the sense that the drift of the SDE depends on the solution  via the density of the solution itself, a natural step is to define and investigate its linear  counterpart, which is a kinetic-type SDE where the drift  is singular but does not depend on the law of the solution itself. Such SDE is formulated as a martingale problem in Section \ref{sec:linear MP}, and results on  its well-posedness 
are the first by-product of this work.
 We denote by $\Kc$ the parabolic generator of the underlying
  Gaussian process solving the SDE \eqref{eq:mkv_singular},
  see \eqref{eq:SDEgeneralLangevin}.             
 Another by-product is the study of the nonlinear Fokker-Planck (FP) PDE given by
\begin{equation}\label{eq:Kc'}
\begin{cases}
\Kc'  u_t = \div_v\big(u_t F(u_t)b_t\big), \qquad t\in (0,T],\\
u_0=u^0,
\end{cases}
\end{equation}
where $\Kc'$ is the  formal adjoint of $\Kc$,
 given in \eqref{eq:FP operator kinetic}-\eqref{eq:FP operator kinetic_bis},
$u^0 $ is the law density of $\mu_{(V_0, X_0)}$ and $\div_v$ denotes the divergence in the variable $v$ of the vector $(v,x)$.  Indeed, we prove that the density of the 
solution to \eqref{eq:mkv_singular} does satisfy \eqref{eq:Kc'} and for this, analytical results for the latter are  crucial (see Section \ref{sc:FP}), which might have separate interest. These include the existence and uniqueness of a weak solution in an anisotropic H\"older space, as well as stability results for the regularized version of \eqref{eq:Kc'}, obtained by replacing the distribution $b_t$ with a smooth function. Finally, we mention a third by-product, namely analytical results  on the singular kinetic-type backward Kolmogorov PDE  
\begin{equation}
\label{eq:back_cauchy probl_intro}
\begin{cases}
\Kc  u + \langle \Bc , \nabla_v \rangle u  = \lambda u + g  , \qquad t\in (0,T),\\
u_T=\ell,
\end{cases}
\end{equation}
with $\Kc$ being defined in \eqref{eq:kolm_const},  $\Bc$  and $g$ being  suitable distributions, and $\ell$ a suitable function. This PDE is the fundamental tool used to set up the linear martingale problem,  and  in Section \ref{sec:Kolmogorov PDE} we prove results that are analogous to those for the Fokker-Planck PDE \eqref{eq:Kc'}. We also note that all the PDE results of this work rely on suitable Schauder estimates in anisotropic Besov spaces for the semigroups of $\Kc$ and $\Kc'$, which 
are stated in Theorem \ref{th:schauder_first} and proved in Appendix \ref{app:proof_Schauder}. 

\subsection{Literature review}
As we have mentioned above, the system \eqref{eq:mkv_singular} that we will investigate combines three features: distributional coefficients (hence the term {\em singular}), the fact that the drift depends on  the density of the law of the unknown itself (hence the term {\em McKean-Vlasov}), the fact that the Brownian motion drives only the $V$-component (hence  the term {\em degenerate} or {\em kinetic}). All these  features have previously been studied in the literature, as we see below, but this is,  to the best of our knowledge, the first time they are combined together when the drift depends in a pointwise fashion on the density law.

The first examples of SDEs with distributional drifts date back some 20 years ago with the works  \cite{bass_chen, frw2,frw1,russo_trutnau07} in one dimension. Some years later, 
\cite{flandoli_et.al14} treat the case of any dimension $d\geq 1$ and with a drift in a fractional Sobolev space of negative regularity index $-\beta>-1/2$, in the so-called Young regime. 
Immediately after and independently   
\cite{diel} treat the case of  singular coefficients in Besov spaces with negative regularity index $-\beta>-2/3$ in dimension 1 (i.e  beyond the Young regime). In the following years 
\cite{catellier_chouk}   investigate extensions  in dimension $d$ and $-\beta>-2/3$. 
Another generalization is given simultaneously in  
\cite{athreya2020} and
\cite{chaudru_menozzi}, where the authors  consider the extension to $\alpha$-stable noises in place of Brownian motion, in the Young regime and dimension $d$, producing also an explicit description of the dynamics of the solution using a suitable semigroup. In \cite{issoglio_russoMPb} %Issoglio \& Russo 
they frame the problem as a martingale problem and  derive a description of the dynamics in terms of weak Dirichlet processes.  
One of the common tools that can be found in many of these works is a singular Kolmogorov equation like \eqref{eq:back_cauchy probl_intro} in the 
uniformly parabolic case, i.e. when $N=d$, which is needed to cast the singular SDE as a  martingale problem. A solution for the latter is a process $Z$ and a probability $\mathbb P$ such that  for all $f\in \mathcal D$ we have $f(t, Z_t) - f(0, X_0) - \int_0^t \mathcal L f (s, X_s) ds $ is a $\mathbb P$-martingale, where $\mathcal L$ is the
`generator' of $Z$ (formally given by $\mathcal L = \Kc + \langle \Bc , \nabla_v \rangle $). The class of functions $\mathcal D$ is constructed as the set of solutions to \eqref{eq:back_cauchy probl_intro}  for smooth forcing terms $g$ and terminal conditions $\ell$. 
Once the martingale problem is defined, one uses results on the Kolmogorov PDE \eqref{eq:back_cauchy probl_intro} (such as well-posedness and continuity with respect to the coefficients) combined with classical tools of stochastic analysis such as tightness and uniqueness results of marginal laws to show existence and uniqueness of a martingale solution.
After these early foundational works on SDEs with singular coefficients, people started to explore this topic further: by studying numerical schemes for singular SDEs \cite{deangelis.et.al, Goudenege}; allowing for different type of noises \cite{kremp_perkowski}; studying heat kernel estimates for singular SDEs \cite{perkowski_vanzuijlen}; studying {\em singular kinetic} SDEs, {\em singular McKean-Vlasov} SDEs  and even {\em singular kinetic McKean-Vlasov}  SDE, as we see below.

{\em McKean-Vlasov} equations were originally introduced by McKean in the mid 1960s and have become very popular, in recent years, after the well-known Saint-Flour lecture notes by A.S. Sznitman \cite{Sznitman}, and they 
typically arise as limit of interacting particle systems when the number of particles goes to infinity and when the dynamics of each particle depends on the empirical measure of all other particles. 
Depending on the type of interaction, one obtains different limiting equations.
In the  so-called {moderate interaction}, first introduced by \cite{Oelschlaeger} for smooth drifts, the particles interact through a mollified version of the empirical measure, and the limiting equation turns out to depend on the density of the law in a pointwise way (rather than the law itself). 
A similar McKean SDE whose marginal laws solve the   porous media PDE was provided in \cite{Ben_Vallois}.
  Other McKean SDEs with more general diffusion and drift coefficients
  were investigated by \cite{BRR, BRR2, BCR2, BarbuRockSIAM}.
In the framework of moderate interaction, but with singular drift, and in the special non-degenerate case, the  well-posedness of the limiting {\em singular McKean-Vlasov} SDE \eqref{eq:mkv_singular} has been recently studied in %Issoglio \& Russo 
\cite{issoglio_russoMK}. More precisely, in \cite{issoglio_russoMK} the authors prove weak existence and uniqueness to the equation \eqref{eq:mkv_singular} in the special case when $ \Bc_0 =0$ and $d=N$, that is  when the vector $(V,X)$ in \eqref{eq:mkv_singular} reduces to the first component $V$, and for $-\beta>-1/2$. 
Similar equations have been studied  
in \cite{chaudru_jabir_menozzi22, chaudru_jabir_menozzi23} (see \cite{hao2023second} for analogous kinetic models) where they analyse a McKean-Vlasov type SDE with singular coefficients with a drift of convolution form $b \ast \mu_{V_t}$ and a general $\alpha$-stable process as noise.
They prove well-posedness of said SDE and heat kernel estimates. Notice that their threshold for the singularity is $-\beta > -1 $ but they are still in the Young regime due to the smoothing effect of the convolution, as opposed to the pointwise product  in \eqref{eq:mkv_singular}, which is the same as in \cite{issoglio_russoMK}.

In the realm of {\em kinetic}-type SDEs, the well-posedness of the \emph{linear} martingale problem, together with Gaussian estimates for the density, have been established in \cite{menozzi2018martingale} and \cite{menozzi}, assuming the coefficients in suitable H\"older spaces induced by an anisotropic norm that takes into account the different time-scaling properties of the degenerate components, namely  the component $X$ of $(V,X)$ according to our notation. In the latter references, the  drift of the degenerate component $X$ may be non-linear and the diffusion coefficient is allowed to be non-constant, provided that it satisfies a uniform ellipticity condition on $\mathbb{R}^d$.
In \cite{chaudru_et_al2023}, the authors 
allow the drift of $V$  to be an  unbounded discontinuous function and derive heat kernel and gradient kernel estimates.
 In \cite{hao2024singular}, the drift of $V$ is a convolution with a kernel that may be a distribution and the authors study the singular martingale problem in order to tackle a convolutional kinetic McKean-Vlasov SDE (see also the next paragraph).
In the {\em kinetic McKean-Vlasov} setting, we mention a few contributions. In \cite{bossy} a conditional Langevin-type dynamics is considered as an alternative approach to Navier-Stokes equations for turbulent flows. The weak well-posedness of the McKean-Vlasov SDE was proved via propagation of chaos, by considering a smoothed SDE and proving that the corresponding interacting particle system propagates chaos. The weak existence for equations with McKean-Vlasov coefficients in the form of $\Eb[b(t,z,Z_t)]$, both in the drift and the diffusion, was recently proved in \cite{pascucci2024existence} for the strictly kinetic case ($N=2d$), under minimal regularity conditions (uniform continuity w.r.t.\ to the position variable in both the state and the measure component) on the function $b$. Similar results were recently proved in \cite{zhang2021second}.

Finally, we  mention \cite{hao2024singular} in which the authors combine {all three aspects  considered in this paper:} {\em singular, kinetic} and {\em McKean-Vlasov}. In particular, they study  a kinetic McKean-Vlasov equation which differs from \eqref{eq:mkv_kinetic_singular} in that the dynamics of the velocity component $V_t$ reads as
\begin{equation}\label{eq:SDEzhang}
\dd V_t = b_t(Z_t) \dd t +( K \ast \mu_{X_t}) (X_t) \dd t   +     \dd W_t ,
\end{equation}
where the drift component is the sum of a 
term 
$b_t$ in a weighted anisotropic Besov space with regularity index $-\beta$ with $\beta \in (1/2, 2/3)$ (hence this is  outside the Young regime and requires the use of tools like paracontrolled calculus) and one convolution part involving the law of the position $X$  with a singular kernel $K$  with regularity index $\alpha > (\beta -1)/3$ (notice that $\alpha$ can be negative). 
They prove existence  of a martingale solution, and under the stronger  assumptions that $K$ is bounded and measurable (thus non-singular) they also prove uniqueness. 
One of the main differences between the setting in \cite{hao2024singular} and that of the present work regards the way the drift depends on the law: (i) in \eqref{eq:SDEzhang} the dependence %on the law %of the solution itself 
appears through the convolution with a singular term, hence it is linear, while here, as in \eqref{eq:mkv_singular}, we have  a nonlinear function evaluated pointwisely in the law and multiplied by a Schwartz  distribution; (ii) in \eqref{eq:SDEzhang} the dependence on the law, and thus the mean-field interaction of the associated particle system, only involves the position variable $X$. 
 
 Finally we stress that in the present work we derive our results in the general degenerate setting of equation \eqref{eq:mkv_singular} (with $0<d<N$) and not only in the purely kinetic case ($N=2d$).

 As mentioned in Section \ref{subsec:statement}, a key step to prove our results on the SDE \eqref{eq:SDEzhang}
 is the study of the singular non-linear Fokker-Planck equation \eqref{eq:Kc'} and the backward Kolmogorov equation  \eqref{eq:back_cauchy probl_intro}. From the PDE point of view, our results in Sections \ref{sc:FP} and \ref{sec:Kolmogorov PDE} on the well-posedness, regularity and stability of these equations are natural extensions of those in \cite{issoglio_russoMK} and \cite{issoglio2022pde} from the elliptic to the hypoelliptic setting. As the proofs in the latter references rely on Besov-Schauder estimates for the heat semigroup (see \cite{gubinelli2015paracontrolled}), our proofs rely on their anisotropic counterpart for the semigroup of the operators $\Kc$ and $\Kc'$ (defined in \eqref{eq:kolm_const} and \eqref{eq:FP operator kinetic}), for which we provide a complete proof. To the best of our knowledge, in the hypoelliptic framework, these estimates only appeared in \cite{zhang2021cauchy} for the strictly kinetic ($N=2d$) and homogeneous case, namely when the operator $\Kc$ is invariant with respect to a family of dilations. We also mention that our PDE results are somehow complementary to those in \cite{hao2024singular}, where the paracontrolled case ($\beta\in(1/2,2/3)$) was considered,  in the strictly kinetic framework. In general, there is a large body of literature, also very recent, concerned with the regularity properties of degenerate-type PDEs involving $\Kc$ or its adjoint $\Kc'$ as a principal operator. In the H\"older setting, we recognize two
main approaches. In the semigroup approach, initiated by %Lunardi 
\cite{lunardi1997schauder}, solutions are defined in the distributional sense: in this framework, solutions do not benefit from the time-smoothing effect that is typical of parabolic equations, and the regularity structures are typically defined in terms of spatial \emph{anisotropic} H\"older spaces (see Section \ref{sec:holder_anisot_intrinsic}), which do not require H\"older-regularity with respect to the time-variable. With respect to this approach, the anisotropic Besov spaces defined in Section \ref{sec:anis_besov}, and the related Schauder estimates for the semigroups of $\Kc$ and $\Kc'$ in Section \ref{sec:schauder}, are the natural distributional extensions. 
On the other hand, in the stream of research started by %Polidoro 
\cite{pol}, solutions in the Lie sense are defined in terms of directional derivatives along the H\"ormander vector fields. 
In this approach, the regularity of the coefficients and of the solutions is typically stated in so-called \emph{instrinsic} H\"older spaces (see Section \ref{sec:holder_anisot_intrinsic}), where the regularity properties in space and time are strictly intertwined.  Some authors (see \cite{biagi2022schauder}, \cite{dong2022global}, \cite{lucertini2023optimal}) have recently combined these two approaches to obtain intrinsic regularity of the solution (with respect to all the variables) by only assuming anisotropic regularity for the data (and only measurability). We refer to \cite{lucertini2022optimal} and \cite{lucertini2023optimal} for a more exhaustive overview of the literature regarding these two approaches and their combination. 
In particular, we refer to \cite{lucertini2022optimal} and to the previous work \cite{difpas} for the study of fundamental solutions in the degenerate H\"older setting. We note that, although the theory of functional intrinsic spaces has already been developed in the Sobolev setting (e.g. \cite{garofalo2022hardy}, \cite{pascucci2022sobolev}), the distributional Besov counterpart of the intrinsic H\"older spaces has not yet been studied and it is an open problem for future research. This will be necessary to establish sharp regularity estimates for the singular PDEs considered in this paper.

\subsection{Setting and main result}
 Throughout the paper, we set the matrix
\begin{equation}\label{B_tot}
  B:=\begin{pmatrix}
  \Bc_0  \\ \Bc_1
  \end{pmatrix} \in \mathcal{M}^{N\times N},
\end{equation}
where $\Bc_0 \in  \mathcal{M}^{d\times N}$ and $\Bc_1 \in  \mathcal{M}^{(N-d)\times N}$ are the real matrices in \eqref{eq:SDEzhang}. We also denote by $\Delta_v$ and $\nabla_{z}$, respectively, the Laplacian operator on $\R^d$ and the gradient on $\R^{N}$, namely
\begin{equation}
 \Delta_v = \sum_{i=1}^d \partial^2_{v_i}, \qquad \nabla_z = (\partial_{z_1}, \cdots, \partial_{z_{N}}).
\end{equation}
\begin{assumption}\label{ass:kolmogorov_op}
The Kolmogorov operator
\begin{equation}\label{eq:kolm_const}
 \Kc := \Ac +\partial_t := \frac{1}{2} \Delta_{v}  + \underbrace{{\langle B z}, \nabla_{z} \rangle + \partial_t}_{=:Y} ,\qquad  {z= (v, x)}\in \mathbb{R}^N,
\end{equation}
is hypoelliptic on $\R^{N+1}$.
\end{assumption}

\begin{remark}
By H\"ormander theorem, $\Kc$ in \eqref{eq:kolm_const} is hypoelliptic if and only if the vector fields $\partial_{v_1},\dots,\partial_{v_{d}}$ and $Y$ satisfy %the so-called H\"ormander condition, which is
\begin{equation}\label{horcon}
\text{rank } \text{Lie}(\partial_{v_1},\dots,\partial_{v_{d}},Y)  = N+1.
\end{equation}
\end{remark}
\begin{example}\label{ex:kinetic}
The kinetic equation \eqref{eq:mkv_kinetic_singular} can be obtained from \eqref{eq:SDEzhang} by setting $N=2 d$, $m=d$ and 
\begin{equation}%\label{B_tot}
  B:=\begin{pmatrix}
  0 & 0 \\ 
  I_d & 0 
  \end{pmatrix}.
\end{equation}
In this case we have $Y = \partial_t + \langle v , \nabla_x \rangle$ and 
\begin{equation}%\label{eq:}
[\partial_{v_i}, Y ] = \partial_{x_i}, \qquad i=1,\cdots, d,
\end{equation}
thus \eqref{horcon} is satisfied.
\end{example}
In \cite{lanpol} it was shown that \eqref{horcon}, known as weak (or parabolic) \emph{H\"ormander} condition, is equivalent to $\Bc_1$ having the block-form
\begin{equation}\label{B}
  \Bc_1=\begin{pmatrix}
 %\ast & \ast & \cdots & \ast & \ast \\
 B_1 & \ast &\cdots& \ast & \ast \\
 0 & B_2 &\cdots& \ast& \ast \\
 \vdots & \vdots &\ddots& \vdots&\vdots \\
 0 & 0 &\cdots& B_{\rr}& \ast
  \end{pmatrix}
\end{equation}
where the $*$-blocks are arbitrary and $B_j$ is a $(d_{j-1}\times d_j)$-matrix of rank $d_j$ with
\begin{equation}\label{eq:elements_d}
d\equiv d_{0}\geq d_1\geq\dots\geq d_{\rr}\geq1,\qquad \sum_{i=0}^{\rr} d_i=N.
\end{equation}
It is convenient to set 
\begin{equation}%\label{eq:}
\bar d_i := \sum_{k =0}^i d_k, \qquad i=0,\cdots, \rr, \qquad \text{and} \quad \bar d_{-1}:=0.
\end{equation}

 Next we introduce the notation for function spaces used in this paper. 
For a given $T>0$, $\alpha \in \R$, and for any $(U, \|\cdot\|_U)$ generic normed space endowed with the Borel $\s$-algebra generated by the topology induced by $\|\cdot\|_U$, we introduce the following notations.
\begin{itemize}
\item  $\mathcal{S}$ the space of Schwartz functions and $\mathcal{S}'$ the space of tempered distribution on $\R^N$.
\item $\C_b$  the space of bounded continuous functions on $\R^N$, equipped with the norm $\| \cdot \|_{L^{\infty}}=\| \cdot \|_{L^{\infty}(\R^N)} $. 
\item $\C_c^\infty$  the space of $\C^\infty$ functions on $\R^N$ with  compact support.
\item $\C_B^\alpha$ the anisotropic Besov space of order $\alpha$ and by $ \|\cdot\|_\alpha$ its norm, see Definition \ref{def:anisotrBesov}.
\item $ \Cc^{\alpha}_{B,T}$ the space of intrinsically $\alpha$-H\"older continuous functions for $\alpha>0$, and by $\|\cdot\|_{\Cc^{\alpha}_{B,T}}$ its norm, see  Definition \ref{def:intrinsicHolder}.
 \item ${ L}^{\infty}_{T} U
$,  the space of {Borel-}measurable functions $f:[0,T]\to U$
such that
\[ \|f\|_{T,U}:=\operatorname*{ess\,sup}\limits_{t\in [0,T]} \|f_t\|_U<+ \infty .\]
{We have two special cases: 
\begin{itemize}
\item when $U=\C_B^\alpha$ the norm $\|f\|_{T,U}$ is denoted by $\|f\|_{T,\alpha}$,
\item when $U=\C_b$, the norm $\|f\|_{T,U}$ is denoted by $\|f\|_{T}$.
\end{itemize}}
\item  $\mathit{C}_T U $ %(resp. $\mathit{C} U$), 
the space of continuous functions $f:[0,T]\to U$. %(resp. $\R_+ \to U$).
\item  $\mathit{C}_T \mathcal S $ %(resp. $\mathit{C} \mathcal S $), 
the space of continuous functions $f:[0,T]\to \mathcal S$, %(resp. $f:\R_+\to \mathcal S$),
  where $\mathcal S$ is equipped with the usual strong topology.
\end{itemize}

We use the notation $\langle \cdot, \cdot \rangle $ to indicate the scalar product in $\R^N$, and $\langle \cdot \vert\cdot \rangle $ for the distributional dual pairing in $\mathcal S$ and $\mathcal S'$.

We now specify the regularity assumptions on the coefficients $b$, $F$ in \eqref{eq:mkv_singular} and on the density of the initial condition $u^0$ in \eqref{eq:initial_cond_Z}, under which we will prove the main result.
\begin{assumption}
\label{ass:beta and b}
Let $T>0$, $\beta\in (0,\frac{1}{2})$.  We have
\begin{itemize}
\item[(i)] {$b\in \mathit{C}_T \C_B^{-\beta}$}. 
  %$b\in \mathit{C}_T \C_B^{-\beta}$; OLD

\item[(ii)]  $u^0 \in \C_B^{\beta+\eps}$  for some $\eps \in (0,1-2 \beta)$, and is a probability density on $\mathbb{R}^N$. %In particular, $b_t \in \C_B^{-\beta}$ for any $t\in[0,T]$. 
\end{itemize}
\end{assumption}

Let $F:\mathbb{R}\to \mathcal{M}^{d\times m}$ and $\tilde F:\mathbb{R}\to \mathcal{M}^{d\times m}$, defined as 
\begin{equation}\label{eq:FFF}
\tilde F{(s)}:= s F{(s)}.
\end{equation}
We will make use of the following assumptions for  $\Phi= F$ or  $\Phi= \tilde F$.
\begin{assumption}\label{ass:phi}
The function $\Phi:\mathbb{R}\to \mathcal{M}^{d\times m}$ is differentiable, with  bounded and globally Lipschitz-continuous derivative.
\end{assumption}

Postponing until Section  \ref{sec:MKMP}
(Definition \ref{def:MKMP}) the precise definition  of  solution (in law)
to \eqref{eq:mkv_singular} (denoted as solution to   MKMP$( F,b, u^0)$), we formulate the following main result, which will be restated and proved in Theorem \ref{thm:ex_uniq_McKeanSDE}.  
Concerning the regularity of the density of the solution to the MKMP$( F,b,u^0)$, we refer to Remark \ref{rem:regularity_ut}.

\begin{theorem}\label{thm:ex_uniq_McKeanSDE_intro} 
Let Assumptions \ref{ass:kolmogorov_op}  and 
 \ref{ass:beta and b} hold. Let also Assumption \ref{ass:phi} hold with $\Phi=F,\tilde F$. 
Then there exists a unique  solution to  MKMP$( F, b,u^0)$.
\end{theorem}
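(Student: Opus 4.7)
The plan is to decouple the McKean-Vlasov nonlinearity by treating the nonlinear Fokker-Planck PDE \eqref{eq:Kc'} as an autonomous equation for the time-marginal density $u$ of the sought process, and then solving the linear martingale problem of Section \ref{sec:linear MP} with the now-explicit drift $F(u)b$. For any $v\in\Leb^\infty_T\C_B^{\beta+\eps}$ the product $F(v)b$ is well defined in $\mathit{C}_T\C_B^{-\beta}$ by Besov paraproduct estimates: the condition $\beta+\eps>\beta$ places us in the Young regime, and Assumption \ref{ass:phi} applied to $F$ gives $F(v)\in\Leb^\infty_T\C_B^{\beta+\eps}$. A solution to MKMP$(F,b,u^0)$ then amounts to a fixed point of the map $v\mapsto\tilde u^v$, where $\tilde u^v$ denotes the time-marginal density of the unique-in-law solution to the linear MP driven by $F(v)b$. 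The crucial identification, coming from \Ito-type computations on test functions drawn from the solution class of the backward Kolmogorov PDE \eqref{eq:back_cauchy probl_intro} of Section \ref{sec:Kolmogorov PDE}, is that $\tilde u^v$ is the unique weak solution in $\mathit{C}^{\beta+\eps}_{B,T}$ of the linear FP equation $\Kc'\tilde u^v=\div_v(\tilde u^v F(v)b)$ with initial datum $u^0$; hence the self-consistency condition $v=\tilde u^v$ is exactly the nonlinear PDE \eqref{eq:Kc'}.

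For existence, I would first invoke the well-posedness result of Section \ref{sc:FP} to obtain a weak solution $u\in\mathit{C}^{\beta+\eps}_{B,T}$ to \eqref{eq:Kc'} with $u_0=u^0$, produced by a mild-formulation fixed-point argument for the Duhamel map $v\mapsto P_\cdot u^0+\int_0^\cdot P_{\cdot-s}\div_v(\tilde F(v_s)b_s)\,ds$, where $P$ denotes the semigroup of $\Kc'$ and $\tilde F(s)=sF(s)$. The contractive character of this map on a small time interval, then extended to all of $[0,T]$ by concatenation, follows from the anisotropic Schauder estimates of Theorem \ref{th:schauder_first} together with Assumption \ref{ass:phi} applied to $\tilde F$. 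Next, I would apply the existence part of the linear martingale problem (Section \ref{sec:linear MP}) with drift $F(u)b\in\mathit{C}_T\C_B^{-\beta}$ to obtain a process $Z$ whose marginal density $\tilde u$ solves $\Kc'\tilde u=\div_v(\tilde u F(u)b)$. Since $u$ itself solves this very same linear equation with the same initial datum, the uniqueness for the linear FP from Section \ref{sc:FP} forces $\tilde u=u$, and therefore $Z$ is a solution to MKMP$(F,b,u^0)$.

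For uniqueness, given two solutions with marginal densities $u^1$ and $u^2$, both are weak solutions of the nonlinear FP \eqref{eq:Kc'} with the same initial datum, so by uniqueness for \eqref{eq:Kc'} from Section \ref{sc:FP} we conclude $u^1=u^2=:u$. Both processes are then solutions to the linear martingale problem with the common drift $F(u)b$, and uniqueness in law for that problem (Section \ref{sec:linear MP}) yields equality of laws.

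The main obstacle is the nonlinear Fokker-Planck step. Starting from $v\in\mathit{C}^{\beta+\eps}_{B,T}$, the source $\div_v(\tilde F(v)b)$ lies in an anisotropic Besov class of regularity $-\beta-1$ along the nondegenerate directions, so the Schauder estimates for $\Kc'$ must recover exactly $\C_B^{\beta+\eps}$ in order for the fixed-point iteration to close; the constraint $\eps<1-2\beta$ from Assumption \ref{ass:beta and b}(ii) is precisely the parameter window in which this regularity budget balances, keeping us inside the Young regime where the paraproduct $F(v)b$ remains unambiguous. The composition/Lipschitz estimates $\|F(v_1)-F(v_2)\|_{\beta+\eps}\lesssim\|v_1-v_2\|_{\beta+\eps}$, and the analogous one for $\tilde F$, that are needed to promote the mild formulation to a contraction, are supplied by Assumption \ref{ass:phi}, which is in turn the reason the hypothesis must be assumed for both $F$ and $\tilde F$.
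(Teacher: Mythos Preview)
Your uniqueness argument is exactly the paper's: both solutions have densities solving the nonlinear Fokker--Planck equation \eqref{eq:FP cauchy probl}, uniqueness there forces the densities to coincide, and then uniqueness for the linear MP$(F(u)b,\mu_0)$ (Theorem \ref{thm:ex_un_MP}) finishes the job.

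Your existence argument is correct but takes a genuinely different and somewhat more direct route than the paper. The paper regularizes at the McKean--Vlasov level: it replaces $b$ by $b^{(n)}$, solves the regularized nonlinear FP for $u^{(n)}$, invokes Figalli's superposition principle to produce solutions $(Z,\mathbb P^{(n)})$ to the Stroock--Varadhan problem with drift $F(u^{(n)})b^{(n)}$, and then passes to the limit using tightness (Proposition \ref{pr:convergence}) together with the stability of the FP solution (Theorem \ref{thm:conv_un}). Your approach instead solves the nonlinear FP once to get $u$, then applies the already-established linear results (Theorem \ref{thm:ex_un_MP} for existence of the linear MP, Proposition \ref{prop:MP sol FP} for the fact that its marginal density $\tilde u$ solves the linear FP with drift $F(u)b$), and closes the loop by observing that $u$ itself solves that same linear FP, so $\tilde u=u$ by linear FP uniqueness. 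This is a cleaner packaging: the regularization and Figalli arguments are still present, but they are confined to the proof of Proposition \ref{prop:MP sol FP} rather than rerun at the McKean--Vlasov level. The paper's route, on the other hand, makes the approximation structure of the McKean--Vlasov solution explicit, which could be useful if one later wants quantitative stability or propagation-of-chaos type statements.

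One small imprecision: you write that for $v\in L^\infty_T\C_B^{\beta+\eps}$ the product $F(v)b$ lies in $\mathit{C}_T\C_B^{-\beta}$. Membership in $L^\infty_T$ alone does not give time-continuity of the product; you need the extra $\mathit{C}_T\C_b$ regularity of $u$ (which you do have from Theorem \ref{th:well_posedness_FP}), together with an interpolation to get $u\in \mathit{C}_T\C_B^\alpha$ for $\alpha\in(\beta,\beta+\eps)$, before the Bony estimate yields $F(u)b\in \mathit{C}_T\C_B^{-\beta}$. This is a technicality, not a gap.
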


In order to help the reader better  understand the paper, we now explain 
the key steps and overall strategy for the proof of the main result.  
The idea in a nutshell is to construct 
a reasonably good ``guess" of solution to the McKean problem and then verify that such a guess is the unique solution of our problem. However, the construction of the guess solution is not trivial. 

The {\em first step}, in order to do this, is to view the MKMP$( F,b, u^0)$ as a particular case in a  
class of {\em linear} martingale problems, denoted by  MP$( \Bc, \mu_0)$, where the drift $\Bc$ does not contain any unknowns. Here $\Bc$ must be any given Besov distribution and $\mu_0 $ any initial law.
We will construct our guess solution by solving the linear martingale problem, which is an easier environment to work in. 
 It can be easily verified that a solution of  MKMP$( F, b,u^0)$ is a solution of MP$(F(u)b, u^0(z)dz)$, so in order to move from the McKean-Vlasov framework to the linear framework we need to know the density of the solution itself, because  the unknown density $u$ of \eqref{eq:mkv_singular} has to be given as an input of the linear problem.  
  In the linear case,  by applying the It\^o formula, it is easily verified that the marginals of an SDE are (distributional) solutions of the associated  FP equation.   
 This fact also holds for a solution of MKPM$(F,b,u^0)$, as proved in Proposition \ref{prop:MP sol FP}, even though the associated FP equation is non-linear in this case. The results on well-posedness for the nonlinear FP equation are proved in Section \ref{sc:FP}, and are a first hint about the well-posedness of \eqref{eq:mkv_singular}.
Once we are in the linear framework, the {\em second step} consists of finding a (unique) solution to the singular linear martingale problem, which is the content of Theorem \ref{thm:ex_un_MP}. 
To do so, the singular drift is smoothed 
 and the result \cite[Theorem 2.6]{figalli} allows us to find the solution of the Stroock-Varadhan martingale problem. 
 At this point, it is worth noticing that in the smoothed case  our notion of solution coincides with  the classical one \`a l\`a Stroock-Varadhan, as proven in 
Proposition \ref{prop:MP-Strook}.  To show {\em existence} of a solution to the non-smoothed linear problem we have to 
 verify that the sequence of measures that solves the regularized problem
 is tight, and that the limit of a subsequence is a solution.  To prove tightness (see Lemma \ref{lm:tighness})  we make use of a uniform-boundness result of solutions to a Kolmogorov-type PDE, whose analysis is done in  Section \ref{sec:Kolmogorov PDE}. 
To prove {\em uniqueness}, we extend a well-known uniqueness property of martingale problems to time-inhomogeneous coefficients, which is referred to as `Property M'  in this work (see Appendix \ref{sec:MP Ethier-Kurtz}).   
 
 The {\em final step} is now easy and consists of checking that the solution constructed as described above is indeed the unique solution to the singular McKean-Vlasov  %degenerate equation 
 {martingale problem}. This can be seen using tightness as well as continuity results of the  Zvonkin-type PDE and the Fokker-Planck PDE, together with the fact that for the smoothed coefficient the result is valid. This is done in Section \ref{sec:MKMP}. 
 
 \vspace{10pt}
 
  The rest of the paper is organized as follows. In Section \ref{sec:analytical_tools} we introduce the anisotropic Besov and H\"older spaces we will work in and the main analytical tools that are needed to study the non-linear Fokker-Planck and the backward Kolmogorov PDEs. In particular we introduce the semigroups associated to the Kolmogorov backward operator and to the Fokker-Planck forward operator and provide key Schauder estimates for both, the proof of which is postponed to the Appendix. 
  In Section \ref{ssc:FP-well} we study the well posedness of the nonlinear Fokker-Planck equation via local fixed point arguments and using Schauder estimates. We also show continuity results for the FP equation in Section \ref{ssc:FP-cont}. Section \ref{sec:Kolmogorov PDE} is devoted to showing well-posedness and continuity results for the linear Kolmogorov equation. Section \ref{sec:linear MP} contains the study of the linear martingale problem and Section  \ref{sec:MKMP} the well posedness of the McKean-Vlasov problem. Appendix \ref{sec:MP Ethier-Kurtz} contains the extension of the uniqueness results for time-inhomogeneous martingale problems knowing the uniqueness of the marginals  and Appendix  \ref{app:proof_Schauder} the proof of Schauder estimates.

\section{Analytical tools}\label{sec:analytical_tools}

 In this section we provide the main analytical results that are needed for the study of the singular non-linear Fokker-Planck PDE \eqref{eq:Kc'} and backward Kolmogorov PDE \eqref{eq:back_cauchy probl_intro}. We first introduce the relevant anisotropic Besov spaces and recall some of their basic properties (Section \ref{sec:anis_besov}), then we state the inherent Schauder estimates for the semigroup kernels of $\Kc$ and $\Kc'$ (Section \ref{sec:schauder}). We then recall the definition of anisotropic H\"older spaces, their characterization as Besov spaces with non-integer positive regularity index, and briefly compare them with their intrinsic counterparts (Section \ref{sec:holder_anisot_intrinsic}). Finally we state some useful regularization results (Section \ref{sec:regul}).

Hereafter we will always assume that Assumption \ref{ass:kolmogorov_op} is satisfied and thus $ \Bc_1$ admits a representation as in \eqref{B}-\eqref{eq:elements_d}, which factorizes $\R^N$ in $r+1$ components of dimensions $d_0\geq \dots \geq d_r\geq 1$. Accordingly, we will employ the notation $z=(z_0,\cdots, z_r)\in\R^N$ with each $z_i\in\R^{d_i}$.

\subsection{Anisotropic Besov Spaces}\label{sec:anis_besov}
 For the reader's convenience, we recall the classical Fourier-based construction of anisotropic distributional Besov spaces together with some useful results. We focus on the particular case of integrability and microscopic parameters equal to infinity  (typically denoted by $p$ and $q$ in the literature), as this work only covers this case. The interested reader can refer to \cite[Chapter 5]{triebel06} for the general definition, and to the references therein for an account on the historical development of these spaces.

Let us define the so-called \emph{anisotropic norm} as
\begin{equation}
|z|_B :%= |(z_0, \cdots, z_q)|_B 
= \sum_{i=0}^{\rr} |z_i|^{\frac{1}{2 i + 1}}, \qquad z = (z_0, \cdots, z_{\rr}) \in \R^N,
\end{equation}
where $|z_i|$ is the Euclidean norm in $\R^{d_i}$.
It is immediate that 
$|\cdot|_B$ is not strictly a norm as it is not homogeneous with respect to usual scalar multiplication. However, homogeneity holds for the family of dilations $D_\lambda: \R^N \longrightarrow \R^N$, $\lambda>0$, defined as
\begin{equation}\label{eq:dilation_lambda}
\qquad D_{\lambda}z = \lambda . z  :=  ( \lambda z_0,  \lambda^{3} z_1  ,  \ldots, \lambda^{2\rr +1} z_{\rr}), \qquad z = (z_0, \cdots, z_{\rr})\in\R^N.
 \end{equation}
In this work $\BB_{\tau}(z)$ denotes the closed anisotropic disk, or ball, centred at $z\in \R^N$ with radius $\tau>0$, i.e. $ \BB_{\tau}(z):=\{ y\in \R^N : \; |y-z |_B\leq \tau\}$. The ball centred at the origin is denoted by $\BB_{\tau}:= \BB_{\tau}(0)$.
Moreover, we define the anisotropic annuli
\begin{equation}
 \A_{\tau,\tau'}:=\{x\in \R^N :\; \tau \leq |x|_B \leq \tau' \} , \qquad 0<\tau<\tau'.
\end{equation}
 We finally denote by $Q$ the so-called \emph{homogeneous dimension}, set as \begin{equation} 
  \label{eq:homog dim}
 Q := \sum_{i=0}^{\rr} d_i (2i + 1). 
 \end{equation}
As in the isotropic case, the anisotropic spaces are defined through a Paley-Littlewood decomposition. 
We start by considering a smooth, non-negative function $\rho_{-1}$ on $\R^N$ such that
\begin{equation}\label{eq:def_rho_minus_one}
\rho_{-1} \equiv 1 \text{ on } \BB_{\frac{1}{2}}, \qquad  \text{supp}\rho_{-1} \subseteq \BB_{\frac{2}{3}} \quad \text{  and  } \int_{\R^N}\rho_{-1}(\xi) d\xi=1.
\end{equation}
The sequence of functions $\{ \rho_j\}_{j\geq 0}$, defined by %for $j\geq 0$ as
\begin{align}
 \rho_j(\xi):= \rho_{-1}(2^{-(j+1)}.\xi)-\rho_{-1}(2^{-j}.\xi),  \qquad \xi\in\R^N, %j\geq 0,
 \end{align}
is called an anisotropic \emph{Paley-Littlewood partition}. 
\begin{remark}\label{rem:paley-littlewood}
By definition we have the following properties: %$\rho_j(\xi)=\rho_0(2^{-j}.\xi)$ for $j\geq 0$ and it can be verified that 
\begin{align}
(a)&\quad\rho_j(\xi)  =\rho_0(2^{-j}.\xi), \qquad j\geq 0,\\
(b)&\quad\text{supp} \rho_j \subseteq %2^{j}.\A_{\frac{1}{2}, 2}:=
\A_{2^{j-1},2^{j+1}}, \qquad j\geq 0,  \label{rmk:point (b) support partition} \\
(c)&\quad\text{supp} \rho_i  \cap \, \text{supp} \rho_j \neq \emptyset \Longleftrightarrow \, |i-j|\leq 1, \qquad i,j\geq-1,\\
(d)&\quad \sum_{j=-1}^{n} \rho_j (\xi) = \rho_{-1}\big(2^{-(n+1)}.\xi\big) ,\qquad  n\geq -1,\\
(e)&\quad \sum_{j=-1}^{+\infty} \rho_j \equiv 1 \text{  on } \R^N.
\end{align}
\end{remark}
For any $f\in\Leb^1(\R^N)$  we employ the following definitions for the Fourier transform and its inverse:
\begin{align}%\label{eq:}
\F(f)(\xi)&=\fh(\xi):=\left(2\pi\right)^{-\frac{N}{2}}\int_{\R^N} e^{-i\< z, \xi \>} f(z) dz,\\
\F^{-1}\left(f\right)(z)&=\check{f}(z):=\left(2\pi\right)^{-\frac{N}{2}}\int_{\R^N} e^{i\< z, \xi \>} f(\xi) d\xi. 
\end{align}
\begin{remark}
It can be directly verified that the scaling property of the Fourier transform for the $\{ \rho_j\}_{j\geq 0}$ reads as 
\begin{equation}
\label{eq: scaling propr part}
2^{jQ}%\F^{-1}( 
\check{\rho_0} \circ D_{2^{j}}  = \F^{-1}( \rho_0 \circ D_{2^{-j}} )  =  \check{\rho_j}, \qquad j\geq 0.
\end{equation}
Another important property is
\begin{equation}
\label{eq: mean propr part}
\left(2\pi\right)^{-\frac{N}{2}} \int_{\R^N} \check{\rho}_{-1}(z)dz=1, \qquad \left(2\pi\right)^{-\frac{N}{2}} \int_{\R^N} \check{\rho}_j(z)dz=0, \quad j\geq 0,
\end{equation}
which stems from
\begin{equation}%\label{eq:}
\left(2\pi\right)^{-\frac{N}{2}} \int_{\R^N} \check{\rho_j}(z) dz =\F\left( \F^{-1} \rho_j\right)(0) = \rho_j(0).
\end{equation}
\end{remark}
The Fourier transform of $f\in \mathcal{S}'$ is defined, as usual, by duality, i.e.
\begin{equation}\label{eq:fourier_distr}
\<\F(f)|\,\varphi\>=\<\fh|\,\varphi\>:= \<f|\, \widehat\varphi \>, \qquad \varphi \in \mathcal{S},
\end{equation}
and the same for $\F^{-1} = \check{f}$.
For $f \in \mathcal{S}'$, %and $ $, 
the $j$-th Paley-Littlewood block of $f$ is defined as
\begin{equation}\label{eq:fourier_Delta_j}
\Delta_j f:= \F^{-1}\big( \rho_j \fh\ \big),\qquad j\geq -1.
\end{equation}
\begin{remark}\label{rem:paley_little}
By the properties of the Fourier transform, we have %it holds that 
\begin{equation}
 \Delta_j f =  \check{\rho_j}* f ,
\end{equation}
and in particular, $\Delta_j f$ is smooth. On the other hand, it is easy to check that $\sum_{j= - 1}^n \Delta_j f$ tends to $f$ in $\mathcal{S}'$ as $n\to \infty$.
\end{remark}

The following result, whose proof is completely analogous to that of \cite[Lemma 2.1]{bahouri2011fourier} in the isotropic case, is fundamental to comprehend the construction of Besov spaces used in the paper.
\begin{theorem}[Bernstein's type inequality]
For every $k\in \Nzero$, there exists a positive constant $C=C(k%,N%,\tau
)$ such that
\begin{equation}
\label{eq: general Bern inequality}
\sum_{l=\bar{d}_{i-1}+1}^{\bar{d}_i}\|\partial^k_{z_l} f\|_{\Leb^\infty}\leq C \lambda^{ (2i+1) k }\|f\|_{\Leb^\infty},\qquad \lambda>0, \quad i=0,\cdots, r,
\end{equation}
for any $f\in \Leb^\infty(\R^N)$ such that $\text{\emph{supp}}%\F(f)
 \fh \subseteq \BB_{ \lambda %\tau
 }$. Note that, in particular, $C$ does not depend on $\lambda$.
\end{theorem}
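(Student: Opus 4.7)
The plan is to adapt the classical Bernstein inequality argument (cf.\ \cite[Lemma 2.1]{bahouri2011fourier}) to the anisotropic setting by exploiting the family of dilations $D_\lambda$ introduced in \eqref{eq:dilation_lambda}.

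First, I would fix once and for all a function $\psi_0\in \C_c^\infty(\R^N)$ with $\psi_0\equiv 1$ on $\BB_1$ and $\text{supp}\,\psi_0\subseteq \BB_2$, and consider the rescaled cutoff $\psi(\xi):=\psi_0(D_{1/\lambda}\xi)$. A direct computation from the definition of $|\cdot|_B$ shows that $|D_{1/\lambda}\xi|_B=\lambda^{-1}|\xi|_B$, so $\xi\in\BB_\lambda$ if and only if $D_{1/\lambda}\xi\in\BB_1$; hence $\psi\equiv 1$ on $\BB_\lambda\supseteq \text{supp}\,\fh$, which gives $\fh=\psi\fh$. The convolution theorem then yields the representation $f=c_N\,\check\psi\ast f$ with $c_N=(2\pi)^{-N/2}$.

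Next, I would compute $\check\psi$ by the change of variables $\eta=D_{1/\lambda}\xi$ in the Fourier inversion formula, obtaining
\begin{equation*}
\check\psi(z)=\lambda^Q\,\check\psi_0(D_\lambda z),
\end{equation*}
where the Jacobian $\lambda^Q$ arises because each block $\R^{d_i}$ contributes a factor $\lambda^{(2i+1)d_i}$ and $\sum_i (2i+1)d_i=Q$. For $l\in\{\bar d_{i-1}+1,\ldots,\bar d_i\}$, differentiating $k$ times in $z_l$ brings out, via the chain rule, an extra factor $\lambda^{(2i+1)k}$, and a further change of variables $y=D_\lambda z$ in the $\Leb^1$ integral cancels the $\lambda^Q$ Jacobian, leaving
\begin{equation*}
\|\partial^k_{z_l}\check\psi\|_{\Leb^1}=\lambda^{(2i+1)k}\,\|\partial^k_{z_l}\check\psi_0\|_{\Leb^1},
\end{equation*}
in which the rightmost norm is a finite constant depending only on $k$, since $\check\psi_0$ is a fixed Schwartz function.

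Finally, Young's convolution inequality applied to $\partial^k_{z_l} f=c_N\,(\partial^k_{z_l}\check\psi)\ast f$ gives $\|\partial^k_{z_l} f\|_{\Leb^\infty}\leq C(k)\,\lambda^{(2i+1)k}\|f\|_{\Leb^\infty}$, and summing over $l\in\{\bar d_{i-1}+1,\ldots,\bar d_i\}$ produces \eqref{eq: general Bern inequality}; the constant is manifestly independent of $\lambda$. I do not expect any serious obstacle: the argument is essentially a careful book-keeping exercise, and the only conceptual point worth highlighting is that the homogeneous dimension $Q$ and the anisotropic exponents $\lambda^{2i+1}$ combine coherently through the dilation $D_\lambda$, which is precisely what produces the exponent $(2i+1)k$ on the right-hand side of the statement.
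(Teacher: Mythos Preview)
Your proposal is correct and is precisely the adaptation of \cite[Lemma~2.1]{bahouri2011fourier} that the paper invokes; the paper itself does not write out a proof but simply states that the argument is ``completely analogous'' to the isotropic case, and your sketch fills in exactly those details via the anisotropic dilation $D_\lambda$.
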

By applying this result to the Paley-Littlewood partition, we have the
following.

\begin{corollary}
For any $f\in \mathcal{S}'$ and $k\in\N_0$, we have
\begin{equation}
\label{eq: Bernstein ineq part}
\sum_{l=\bar{d}_{i-1}+1}^{\bar{d}_i} \| \Delta_j (\partial^k_{z_l} f)\|_{\Leb^\infty}\leq C\, 2^{j(2i+1) k} \|\Delta_j f\|_{\Leb^\infty},  \qquad i=0,\cdots, r, \qquad j\geq -1,
\end{equation}
where $C=C(k)>0$.% only depends on $k,N$.
\end{corollary}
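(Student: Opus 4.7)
My plan is to deduce this corollary as a direct application of the general Bernstein-type inequality \eqref{eq: general Bern inequality} to the function $g:=\Delta_j f$. The only real content beyond invoking that inequality is (i) verifying its hypotheses for $g$, and (ii) checking that the partial derivative $\partial_{z_l}^k$ commutes with the Paley-Littlewood projector $\Delta_j$.

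First, I would note that if $\|\Delta_j f\|_{L^\infty}=+\infty$ the estimate is trivial, so we may assume $\Delta_j f\in L^\infty$. By Remark~\ref{rem:paley_little}, $\Delta_j f = \check{\rho}_j * f$ is smooth, hence derivatives of any order are classically defined. Next, from the definition \eqref{eq:fourier_Delta_j} together with the distributional action of Fourier transform \eqref{eq:fourier_distr} and the fact that $\partial_{z_l}$ corresponds under $\F$ to multiplication by $\mathtt{i}\xi_l$, one sees immediately that
\begin{equation}
\Delta_j(\partial_{z_l}^k f) = \F^{-1}\bigl(\rho_j\,(\mathtt{i}\xi_l)^k\,\widehat f\bigr) = \partial_{z_l}^k\bigl(\F^{-1}(\rho_j\widehat f)\bigr) = \partial_{z_l}^k(\Delta_j f),
\end{equation}
so we may replace $\Delta_j(\partial_{z_l}^k f)$ by $\partial_{z_l}^k(\Delta_j f)$ on the left-hand side.

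The second hypothesis to verify is the spectral support of $g=\Delta_j f$. By construction $\widehat g = \rho_j\widehat f$, and by point (b) of Remark~\ref{rem:paley-littlewood} we have $\operatorname{supp}\rho_j\subseteq \A_{2^{j-1},2^{j+1}}\subseteq\BB_{2^{j+1}}$ for $j\geq 0$, while $\operatorname{supp}\rho_{-1}\subseteq\BB_{2/3}\subseteq\BB_1$ for $j=-1$. Thus $\operatorname{supp}\widehat g\subseteq \BB_{\lambda}$ with $\lambda := \max(1,2^{j+1})$. Applying inequality \eqref{eq: general Bern inequality} to $g$ with this choice of $\lambda$ yields, for each $i=0,\dots,r$,
\begin{equation}
\sum_{l=\bar d_{i-1}+1}^{\bar d_i}\bigl\|\partial_{z_l}^k(\Delta_j f)\bigr\|_{L^\infty} \leq C(k)\,\lambda^{(2i+1)k}\,\|\Delta_j f\|_{L^\infty}.
\end{equation}

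Finally, since $\lambda^{(2i+1)k}\leq 2^{(2i+1)k}\,2^{j(2i+1)k}$ uniformly in $j\geq -1$ (the factor $2^{(2i+1)k}$ handles both the $j\geq 0$ case and the $j=-1$ boundary case), and since $i$ ranges over the fixed finite set $\{0,\dots,r\}$, the prefactor $C(k)\,2^{(2i+1)k}$ can be absorbed into a new constant depending only on $k$ (and the fixed $r$). This delivers the claimed inequality. I do not foresee any real obstacle here: the commutation of $\partial_{z_l}^k$ with $\Delta_j$ is immediate at the Fourier side, and the spectral localisation of $\rho_j$ is exactly what the previous theorem was built to exploit; the only mildly nontrivial point is treating $j=-1$ uniformly, which is handled by the harmless constant above.
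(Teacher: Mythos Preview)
Your proof is correct and follows essentially the same approach as the paper's: verify the spectral support of $\Delta_j f$ lies in $\BB_{2^{j+1}}$, note that $\partial_{z_l}^k$ commutes with $\Delta_j$, and apply the Bernstein inequality \eqref{eq: general Bern inequality}. You spell out a few more details (the Fourier-side commutation identity and the separate treatment of $j=-1$), but the argument is the same.
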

\begin{proof}
 By \eqref{eq:fourier_Delta_j} we have 
\[ \F \left(\Delta_j f\right) = \rho_j  \fh, \qquad j\geq -1,\]
whose support is contained in $\BB_{2^{j+1}}$ by \eqref{rmk:point (b) support partition}.
If $\|\Delta_j f\|_{\Leb^\infty}=\infty$, there is nothing to prove. Otherwise, since the decomposition operators $\Delta_j$ commute with the derivatives, the result follows from \eqref{eq: general Bern inequality}.
\end{proof}
We can now give the definition of anisotropic Besov space.
\begin{definition}\label{def:anisotrBesov}
Given $\gamma\in \R$, the space $\C_B^\gamma$ of the distributions $f\in \mathcal{S}'$ such that
\begin{equation}%\label{eq:}
\|f\|_{\gamma}:= \sup_{j\geq -1} \(2^{j\gamma}\| \Delta_j f\|_{\Leb^\infty}\) <\infty
\end{equation}
is called \emph{anisotropic Besov space of order $\gamma$}.  In this context, $\gamma$ is called the \emph{regularity parameter}.
\end{definition}
It is straightforward to see that
\[\alpha<\gamma \Longrightarrow\C_B^\gamma \subset \C_B^\alpha.\]
Also, by applying inequality \eqref{eq: Bernstein ineq part}, %can be directly applied, so 
for any $\gamma\in\R$ and $k\in\N_0$ we have% if $\alpha=\beta+(2i+1) k$ then
\begin{equation}
\label{eq: Bern ineq Besov norm}
\sum_{l=\bar{d}_{i-1}+1}^{\bar{d}_i}\|\partial^k_{z_l}  f\|_{\gamma}\leq C\| f\|_{\beta+(2i+1) k}, \qquad i=0,\cdots, r.
\end{equation}
The quantity $(2i+1)$ represents the reduction in terms of regularity whenever a derivative along the $i$-th block is required.
\begin{example}\label{ex:Linfinity}
We have that $\Leb^\infty(\R^N) \subset \C^0_B$. Indeed, %As $\check{\rho_j}\in \Leb^1(\R^N)$, $j\geq -1$, 
for any $f\in \Leb^\infty(\R^N)$, Young's inequality yields
\begin{equation}
\|\Delta_j f\|_{L^{\infty}}\leq \|\check{\rho_j}\|_{L^1} \|f\|_{L^{\infty}}.
\end{equation}
Also, by \eqref{eq: scaling propr part} we have $\|\check{\rho_j}\|_{L^{1}}=\|\check{\rho_0}\|_{L^{1}}$, and thus %for $\beta\leq 0$
\begin{equation}
\|f\|_0=\sup_{j\geq -1} \|\Delta_j f\|_{L^{\infty}} <\infty.
\end{equation}
\end{example}
\begin{example} The Dirac delta $\delta_0$ of $\R^N$ is an element of $\C^\gamma_B$ %for every
if and only if $\gamma\leq-Q$, where $ Q $ denotes the homogeneous dimension defined in \eqref{eq:homog dim}. 
To see this, note that
\begin{equation}
\Delta_j \delta_0= \check{\rho_j}*\delta_0=\check{\rho_j}, \qquad j\geq -1.
\end{equation} 
Therefore, by \eqref{eq: scaling propr part} we obtain
\begin{equation}
\sup_{j\geq 0} \left( 2^{j\gamma}\|\Delta_j \delta_0\|_{L^{\infty}}\right)=\sup_{j\geq 0}\left(2^{(j(\gamma+Q)} \| \check{\rho_0}\|_{L^{\infty}}\right),
\end{equation} 
which is finite if and only if $\gamma\leq -Q$.
\end{example}

 To make sense of the product appearing in the MKV SDE \eqref{eq:mkv_singular} and in the PDEs \eqref{eq:Kc'} and \eqref{eq:back_cauchy probl_intro}, we will consider the so-called \emph{Bony's product} (see \cite{bony1981interaction}) between Besov distributions, which extends the product between functions (which are also tempered distributions). Intuitively, this is defined if we can make sense of the formal product
\begin{equation}\label{eq:formal_product}
  f g = \sum_{j,i= - 1}^{\infty} \Delta_j f  \Delta_i g,
\end{equation}
which is possible if the sum between the regularity parameters of $f$ and $g$ is strictly positive. As in the isotropic framework, this is achieved by bounding separately the \emph{paraproduct} and the \emph{resonant} components of \eqref{eq:formal_product} (see \cite[Section 2.1]{gubinelli2015paracontrolled}). In particular, the following result can be proved by proceeding exactly as in \cite[Lemma 2.1]{gubinelli2015paracontrolled} (see also \cite{hao2024singular}).

\begin{proposition}\label{prop:bony_prod}
Let $\alpha,\gamma \in \R$  
with  
$\alpha+\gamma >0$.  
 Then Bony's product restricted to   $\C_B^\alpha \times \C_B^\gamma$ is a bilinear form from $\C_B^\alpha \times \C_B^\gamma$ to $\C_B^{\alpha \wedge \gamma}$ and 
there exists a positive constant $C%=C(B)
$ such that
\begin{equation}
\label{eq: prod estim}
\| fg\|_{\alpha \wedge \gamma}\leq C\, \|f\|_\alpha \|g\|_\gamma, \qquad f\in \C_B^\alpha,\ g\in \C_B^\gamma.
\end{equation}
\end{proposition}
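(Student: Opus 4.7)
The idea is to adapt the Bony decomposition to the anisotropic Paley--Littlewood blocks $\Delta_j$ introduced in \eqref{eq:fourier_Delta_j}. Split the formal sum \eqref{eq:formal_product} into the two \emph{paraproducts} and the \emph{resonant} term,
\begin{align}
  T_f g := \sum_{j\leq i-2} \Delta_j f\,\Delta_i g, \qquad T_g f := \sum_{i\leq j-2} \Delta_j f\,\Delta_i g, \qquad R(f,g) := \sum_{|i-j|\leq 1} \Delta_j f\,\Delta_i g,
\end{align}
so that Bony's product is defined as $fg := T_f g + T_g f + R(f,g)$ provided each series converges in some $\C_B^{\sigma}$. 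The whole argument is then a frequency-by-frequency estimate for each of the three pieces, relying on two ingredients from Section~\ref{sec:anis_besov}: the Fourier-localization properties of the $\Delta_j$'s (Remark~\ref{rem:paley-littlewood}) and the anisotropic Bernstein inequality \eqref{eq: Bernstein ineq part}.

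For the paraproducts, I first observe that the partial sum $S_{i-2}f := \sum_{j\leq i-2}\Delta_j f$ has Fourier transform supported in $\BB_{2^{i-1}}$ by point $(d)$ of Remark~\ref{rem:paley-littlewood}, whence $S_{i-2}f\cdot\Delta_i g$ has Fourier support in an anisotropic annulus of scale $2^i$; consequently only blocks $\Delta_k(T_f g)$ with $|k-i|\leq c$ receive a nonzero contribution. To control $\|S_{i-2}f\|_{L^\infty}$, one distinguishes two cases. If $\alpha > 0$, then $\|S_{i-2}f\|_{L^\infty}\leq \sum_{j\leq i-2}2^{-j\alpha}\|f\|_\alpha\leq C\|f\|_\alpha$ uniformly in $i$, and the estimate $\|\Delta_k(T_f g)\|_{L^\infty}\leq C\|f\|_\alpha\|g\|_\gamma\,2^{-k\gamma}$ follows; if $\alpha\leq 0$, the same geometric sum gives the $i$-dependent bound $\|S_{i-2}f\|_{L^\infty}\leq C\|f\|_\alpha\,2^{-(i-2)\alpha}$, leading to $\|\Delta_k(T_f g)\|_{L^\infty}\leq C\|f\|_\alpha\|g\|_\gamma\,2^{-k(\alpha+\gamma)}$. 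In both cases $T_f g\in\C_B^{\alpha\wedge\gamma}$ with the desired bound, and the same for $T_g f$ after swapping roles.

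For the resonant piece each summand $\Delta_j f\,\Delta_i g$ with $|i-j|\leq 1$ has Fourier support inside $\BB_{c\,2^i}$; thus $\Delta_k R(f,g)$ receives contributions only from indices $i\gtrsim k$. Writing
\begin{equation}
  2^{k(\alpha+\gamma)}\|\Delta_k R(f,g)\|_{L^\infty} \leq C \sum_{i\geq k-c} 2^{(k-i)(\alpha+\gamma)}\bigl(2^{i\alpha}\|\Delta_j f\|_{L^\infty}\bigr)\bigl(2^{i\gamma}\|\Delta_i g\|_{L^\infty}\bigr),
\end{equation}
the geometric series converges precisely because $\alpha+\gamma>0$, yielding $R(f,g)\in\C_B^{\alpha+\gamma}\subset\C_B^{\alpha\wedge\gamma}$ (the embedding is valid since $\alpha+\gamma\geq \alpha\wedge\gamma$ whenever $\alpha+\gamma>0$). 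Summing the three contributions gives bilinearity and the claimed estimate \eqref{eq: prod estim}.

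\paragraph{Main difficulty.} In the isotropic Littlewood--Paley calculus the fact that a product of functions with Fourier supports in dyadic balls/annuli again has Fourier support in a ball/annulus of comparable scale is immediate from the triangle inequality for the Euclidean norm. In the anisotropic setting one must verify the analogous statement for the \emph{quasi-norm} $|\cdot|_B$; this is the one genuinely non-trivial point and the reason Bernstein's inequality has to be formulated in the anisotropic form \eqref{eq: general Bern inequality}--\eqref{eq: Bernstein ineq part}. Once the spectral-localization calculus is in place, the three estimates are a mechanical transcription of the argument in \cite[Lemma 2.1]{gubinelli2015paracontrolled}.
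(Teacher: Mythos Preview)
Your proposal is correct and follows exactly the route the paper indicates: the paper gives no self-contained proof but refers to \cite[Lemma~2.1]{gubinelli2015paracontrolled}, which is precisely the paraproduct/resonant decomposition you carry out. Two minor remarks: the boundary case $\alpha=0$ needs a separate line (your geometric-sum bound for $\|S_{i-2}f\|_{L^\infty}$ picks up a harmless factor $i$ there, absorbed by the target regularity $\alpha\wedge\gamma$), and the anisotropic norm $|\cdot|_B$ actually satisfies the ordinary triangle inequality (each $|z_i|^{1/(2i+1)}$ is subadditive), so the spectral-localization step you flag as the ``main difficulty'' is no harder than in the isotropic setting.
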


\subsection{Kinetic-type semigroups and Schauder's estimates}\label{sec:schauder}

In this section we present Besov-Schauder's estimates for the fundamental solutions of the hypoelliptic operators $\Kc$ in \eqref{eq:kolm_const} and of its formal adjoint $\Kc'$ defined below, which appear in \eqref{eq:back_cauchy probl_intro} and \eqref{eq:Kc'}, respectively. Note that $\Kc$ is the Kolmogorov operator associated to the $\R^N$-valued linear SDE
\begin{equation}\label{eq:SDEgeneralLangevin}
\dd Z_t = B Z_t \dd t+ \sigma \dd W_t,
\end{equation}
with 
\begin{equation}\label{eq:sigma}
\sigma = \begin{pmatrix}
  I_d  \\ 
  0_{N-d,d}
  \end{pmatrix} \in \mathcal{M}^{N\times d},
\end{equation}
where $I_d$ and $0_{N-d,d} $
denote, respectively, the $d\times d$ identity
matrix and the $(N-d)\times d$ matrix with null entries.  We recall that $B$ is as in \eqref{B_tot}-\eqref{B}-\eqref{eq:elements_d}, so that Assumption \ref{ass:kolmogorov_op} is satisfied.
Note that \eqref{eq:SDEgeneralLangevin} coincides with \eqref{eq:mkv_singular} upon posing the McKean-Vlasov coefficient $ F\big( u_t(Z_t)  \big) b_t (Z_t)$   equal to zero. 
%{\color{green}The notion of fundamental solution $\mathcal G:= \mathcal G (t-s; y,z)$ associated to a linear PDE operator $\mathfrak L$ with respect to the backward variables $s,y$, and associated to the dual operator $\mathfrak L'$ with respect to the forward variables $t,z$ can be found in AGGIUNGERE Hormander? questa nozione la usiamo anche dopo a pagina 24-25}
It is  known since \cite{hormander1967hypoelliptic} (see also \cite[Section 9.5]{pascucci} for a recent presentation) that $\Kc$ admits a unique smooth fundamental solution $\Gamma(T-t;y,z)$, $t<T$, $y,z\in\R^N$.  Namely, the function $(t,y)\mapsto \Gamma(T-t;y,z) $ solves $\Kc u = 0 $ on $(-\infty,T)\times \R^N$ and 
\begin{equation}%\label{eq:}
\Gamma(T-t;\eta,z) dz {\longrightarrow} \delta_y(dz) \quad \text{weakly as } (t,\eta)\to (T^-,y).
\end{equation}
%$\Gamma(T-t;\eta,z) dz \to \delta_y$ weakly as $(t,\eta)\to (T^-,y)$. 
% (with respect to the backward variables $t,y$)
In particular, %which 
$\Gamma$ is explicitly given by
\begin{equation}
\label{eq:fund sol general langevin}
\Gamma(s;y,z)=\Gamma_{s}(z-e^{s B}y), \qquad s>0, \quad z,y \in \R^N,
\end{equation}
with
\begin{equation}
\label{eq:Gamma general Langevin}
\Gamma_s(z):=\frac{(2\pi)^{-\frac{N}{2}}}{\sqrt{\det \Cc (s)}} \exp\( -\frac{1}{2}\< \Cc^{-1} (s)z,z\>%-t\text{Tr}B
 \),
\end{equation}
and
\begin{equation}
\Cc (s):= \int_0^s e^{\tau B}A \left(e^{\tau B}\right)^\top d\tau, \qquad A:= \begin{pmatrix}
I_d & 0 \\
0 & 0 
\end{pmatrix}.
\end{equation}
Note that $\Cc (s)$ is strictly positive definite if (and only if) Assumption \ref{ass:kolmogorov_op} holds. 
 It is also known that $\Gamma(t-s;y,z)$, $t>s$, is the fundamental solution (with respect to the forward variables $t,z$) of the %formal adjoint
Fokker-Planck operator $\Kc'$ given by
\begin{align}
\label{eq:FP operator kinetic}
\Kc' &:=  - \partial_t + \Ac' \\ \label{eq:FP operator kinetic_bis}
 \Ac' &:=  \frac{1}{2}\Delta_v  -\<\nabla_z, Bz \> = \frac{1}{2}\Delta_v  -\<Bz,\,\nabla_z\>-Tr(B)  ,\qquad   z= (v, x)\in \mathbb{R}^N. 
\end{align}  
This means that the function $(t,z)\mapsto \Gamma(t-s;y,z) $ solves $\Kc' u = 0 $ on $(s,+\infty)\times \R^N$ and 
\begin{equation}\label{eq:delta_solfund}
\Gamma(t-s;y,\eta) dy {\longrightarrow} \delta_z (dy) \quad \text{weakly as } (t,\eta)\to (s^+,z).
\end{equation}
%$\Gamma(t-s;y,\eta) dy \to \delta_z$ weakly as $(T,\eta)\to (t^+,z)$.
To fix the ideas, this implies that the function
\begin{equation}
\label{eq:classical PF cauchy solution}
v(s;t,z):= \int_{\R^N} \Gamma(t-s;y,z)\varphi(y)dy - \int_{s}^t  \int_{\R^N} \Gamma(t-\tau;y,z)g(s,y) \dd y \dd \tau ,\qquad t>s,\ z \in\mathbb{R}^N,
\end{equation}  
is a smooth solution to the Cauchy problem
\begin{equation}\label{eq:example_degenerate_pdes}
\begin{cases}
\Kc' v(s;\cdot,\cdot) = g, \qquad \text{on $(s,+\infty)\times \mathbb{R}^N$}, \\
v(s;s,\cdot) = \varphi ,
\end{cases}
\end{equation}
for any $\varphi \in C_b(\mathbb{R}^N)$ and $g%\in C_b^{\infty}
$ sufficiently smooth. An analogous representation holds for the solution to the backward Cauchy problem for $\Kc$.

In light of this, we want to extend the action of the fundamental solution to tempered distributions. For $t>0$, we consider  $P_t,P'_t: \mathcal{S} \to \mathcal{S}$ %and $P'_t:\mathcal{S} \to \mathcal{S}$ 
acting as
\begin{equation}
\label{eq:def semigr schwartz}
P_t  \varphi(y)=\int_{\R^N} \Gamma(t;y,z)\varphi(z)dz,\qquad P'_t  \varphi(z)=\int_{\R^N} \Gamma(t;y,z)\varphi(y)dy,
\end{equation}
which define semigroups. 
By duality, we then extend these semigroups $(P_t)_{t>0}$ and $(P'_t)_{t>0}$ of $\Kc$ and $\Kc'$, respectively, as the family of linear operators $P_t,P'_t:\mathcal{S}'\rightarrow \mathcal{S}'$ 
acting as
\begin{equation}\label{eq:def semigroup tempered}
 \<P_t  f |\, \varphi\> := \<f|\, P'_t \varphi \>, \qquad \<P'_t  f |\, \varphi\> := \<f|\, P_t \varphi \>, \qquad \varphi\in \mathcal{S},\quad f\in \mathcal{S}'.
\end{equation}

\begin{theorem}[Schauder's estimates]\label{th:schauder_first}
For any $\gamma \in \R$, $\alpha\geq 0$ and $T>0$ there exists a positive constant $C= C(\alpha, \gamma ,T)$ such that, for any $f\in \C_B^\gamma$, we have
\begin{equation}
\label{eq:schauder 1}
\|P_tf\|_{\gamma+\alpha} + \|P'_tf\|_{\gamma+\alpha} \leq C\, t^{-\frac{\alpha}{2}} \|f\|_{\gamma}, \qquad    t\in (0,T].
\end{equation}
\end{theorem}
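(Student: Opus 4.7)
The plan is to prove the estimate first for $P_t$ and then deduce it for $P'_t$ by an essentially symmetric argument; the heart of the proof is a direct kernel estimate on the Gaussian fundamental solution $\Gamma$, combined with the anisotropic Paley--Littlewood decomposition. By the definition of the Besov norm, the goal reduces to bounding $2^{j(\gamma+\alpha)} \|\Delta_j P_t f\|_{L^\infty}$ uniformly in $j \geq -1$ and $t \in (0,T]$. From \eqref{eq:fund sol general langevin}--\eqref{eq:Gamma general Langevin}, and using that $\Gamma_t$ is Gaussian (hence even), one has $P_t f(y) = (\Gamma_t * f)(e^{tB} y)$, which splits the analysis into (i) a convolution against $\Gamma_t$, and (ii) a pullback by the linear flow $e^{tB}$.

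For (i), the quantitative form of hypoellipticity ensures that the covariance $\Cc(t)$ satisfies the anisotropic scaling $\Cc(t) \asymp D_{\sqrt{t}}\, \Cc(1)\, D_{\sqrt{t}}$ up to a bounded multiplicative distortion for $t \in (0,T]$, as a quantitative consequence of the normal-form analysis in \cite{lanpol}. On the Fourier side, where $\widehat{\Gamma}_t(\xi) = \exp(-\tfrac{1}{2}\langle \Cc(t)\xi,\xi\rangle)$, this scaling yields the decay estimate
\begin{equation}
\|\check{\rho}_j * \Gamma_t\|_{L^1(\R^N)} \leq C\, e^{-c\,t\,2^{2j}}, \qquad j\geq 0,
\end{equation}
with a uniform bound for $j = -1$. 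For (ii), the block-triangular structure \eqref{B}--\eqref{eq:elements_d} of $B$ implies that for $t\in[0,T]$ the map $e^{tB^\top}$ distorts each Paley--Littlewood annulus $\A_{2^{j-1},2^{j+1}}$ into a controlled union $\bigcup_{|k-j|\leq M} \A_{2^{k-1},2^{k+1}}$ for some $M = M(B,T) \in \N$; consequently, pullback by $e^{tB}$ mixes anisotropic scales only within a bounded window, and one obtains
\begin{equation}
\|\Delta_j P_t f\|_{L^\infty} \leq C \sum_{|k-j|\leq M} e^{-c\,t\,2^{2j}} \|\Delta_k f\|_{L^\infty}.
\end{equation}

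Multiplying the last bound by $2^{j(\gamma+\alpha)}$ and using the elementary inequality $\sup_{x\geq 0} x^{\alpha/2} e^{-c x} < \infty$ applied at $x = t\,2^{2j}$, together with $2^{j\gamma} \lesssim 2^{k\gamma}$ for $|k-j|\leq M$, gives the Schauder estimate for $P_t$ upon taking the supremum over $j$. For $P'_t$, the analogous representation $P'_t f(z) = e^{-t\,\mathrm{Tr}(B)}(\Gamma_t * (f\circ e^{-tB}))(z)$ involves the same ingredients---Gaussian convolution and pullback by a linear flow---merely composed in the opposite order, so the same kernel and distortion estimates apply. The main obstacle is making the scaling $\Cc(t) \asymp D_{\sqrt{t}}\, \Cc(1)\, D_{\sqrt{t}}$ fully quantitative and compatible with the anisotropic Paley--Littlewood decomposition: unlike the homogeneous case treated in \cite{zhang2021cauchy}, where exact dilation invariance under $D_\lambda$ renders the argument essentially algebraic, here we only have approximate invariance plus bounded cross-block perturbations, and these must be tracked carefully through the interaction of $e^{tB^\top}$ with the family of dilations $\{D_\lambda\}$. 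This bookkeeping is the technical core of the appendix proof.
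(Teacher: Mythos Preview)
Your proof contains a genuine gap: the claim that $e^{tB^\top}$ maps each anisotropic annulus $\A_{2^{j-1},2^{j+1}}$ into a union $\bigcup_{|k-j|\leq M}\A_{2^{k-1},2^{k+1}}$ with $M=M(B,T)$ independent of $j$ is \emph{false}. Already in the kinetic case (Example~\ref{ex:kinetic}) one has $e^{tB^\top}(\xi_0,\xi_1)=(\xi_0+t\xi_1,\xi_1)$; taking $\xi=(0,\xi_1)$ with $|\xi_1|=2^{3j}$ gives $|\xi|_B=2^j$ but $|e^{tB^\top}\xi|_B=t\,2^{3j}+2^j=2^j(1+t\,4^j)$, so the image lands in a block of index roughly $j+\log_2(t\,4^j)$, which is \emph{not} within a bounded distance of $j$ as $j\to\infty$ for any fixed $t>0$. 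The linear flow $e^{tB^\top}$ does not commute with the anisotropic dilations $D_\lambda$, and the distortion it induces on the anisotropic norm is genuinely $j$-dependent, growing polynomially in $t\,4^j$. Consequently your displayed bound $\|\Delta_j P_t f\|_{L^\infty}\leq C\sum_{|k-j|\leq M} e^{-c t 2^{2j}}\|\Delta_k f\|_{L^\infty}$ cannot be established, and the rest of the argument collapses.

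The paper's proof addresses precisely this difficulty. It shows (Lemma~\ref{lem:theta}) that the set of frequencies $i$ interacting with $j$ through the flow is contained in a set $\Theta^t_j$ whose contribution $\sum_{i\in\Theta^t_j}2^{-\gamma i}$ is bounded by $C\,2^{-j\gamma}$ times a \emph{polynomial} in $t\,4^j$ (with degree depending on $|\gamma|$ and on the block structure of $B$). This polynomial loss is then absorbed by the arbitrarily fast polynomial decay $\|\Delta_j\Gamma_t\|_{L^1}\leq C(t\,4^j)^{-l}$ of Lemma~\ref{lem:l1 norm density}, choosing $l$ large enough term by term. Your exponential kernel bound is actually stronger than what is needed, but it cannot compensate for an error in the frequency-mixing step; the correct argument requires quantifying the unbounded interaction range and then beating it with the kernel decay, rather than assuming the interaction range is bounded.
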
 
 The proof of Theorem \ref{th:schauder_first} is postponed to Appendix \ref{app:proof_Schauder}.
These estimates generalize those in \cite{gubinelli2015paracontrolled} for the semigroup of the heat operator, and those in \cite{zhang2021cauchy} for the strictly kinetic ($N=2d$) and homogeneous case, namely when the operator $\Kc$ is invariant with respect to the family of dilations in \eqref{eq:dilation_lambda}.

\subsection{Anisotropic and intrinsic H\"older spaces}\label{sec:holder_anisot_intrinsic}

As mentioned in the introduction, the anisotropic Besov spaces defined above represent a natural extension of the anisotropic H\"older spaces typically used in the study of $\Kc$ and $\Kc'$-type operators according to the semi-group approach. For $\gamma > 0$, a full characterization of $\C^\gamma_B$ in terms of an anisotropic H\"older space can be found in \cite[Lemma 2.8]{zhang2021cauchy}. To illustrate the idea, we provide a representation of such a characterization for some ranges of $\beta$.
\begin{definition}
  We denote by $\Cc^\gamma_B$ the linear space
  of %continuous
  functions $f:\R^N\longrightarrow \R$ such that the following quantity
  is
  finite:
\begin{align}%\label{eq:}
\|f\|_{\Cc^\gamma_B}:= \begin{cases}
 %\sup_{z\in \R^N} |f(z)|
 \| f \|_{{L^{\infty}}}+ \sup\limits_{z\in \R^N}\sup\limits_{h\in\R^N%, h\neq 0
}\frac{|f(z+h)-f(z)|}{|h|_B^{\gamma}}, &\quad \gamma\in (0,1] ,\\
 \| f \|_{{L^{\infty}}}+ \sup\limits_{z\in \R^N}\sup\limits_{ h\in\R^{N-d} }\frac{|f(z+(0,h))-f(z)|}{|(0,h)|_B^{\gamma}}+ \sum\limits_{l= 1}^{d}\|\partial_{z_l}  f\|_{\Cc^{\gamma-1}_B}, &\quad \gamma\in (1,3], \\
 \| f \|_{{L^{\infty}}}+ \sup\limits_{z\in \R^N}\sup\limits_{h\in\R^{N-d-d_1}}\frac{|f(z+(0,0,h))-f(z)|}{|(0,0,h)|_B^{\gamma}} + \sum\limits_{l= 1}^{d}\|\partial_{z_l}  f\|_{\Cc^{\gamma-1}_B} + \sum\limits_{l= 1}^{d_1}\|\partial_{z_{d+l}}  f\|_{\Cc^{\gamma-3}_B}, &\quad \gamma\in (3,4].
\end{cases}
\end{align}
Previous quantity is a norm
  and $\Cc^\gamma_B$ is a Banach space.
\end{definition}
Note that, up to order $\gamma=3$, the only derivatives appearing are with respect to the first $d$ space-variables. The first-order derivatives with respect to the space variables in the $d_1$-block only appear when $\gamma>3$. We refer the reader to \cite[Subsection 2.1]{zhang2021cauchy} for the definition of $\Cc^\gamma_B$
for an arbitrary $\gamma>0$. 
The basic idea is that a derivative with respect to a space-variable within the $j$-th block is counted as a $(2j+1)$-th order derivative. 
\begin{remark}\label{rm:holder2}
If $\alpha>\gamma>0$ then the space ${\mathit C}^\alpha_B$ is topologically embedded in $\Cc^\gamma_B$. 
\end{remark}
The relation between the Besov spaces $\C^\gamma_B$ and $\Cc^\gamma_B$ is as follows (see \cite[Lemma 2.8]{zhang2021cauchy}, which is an adaptation of  \cite[Proposition 1.3.2]{danchin2005fourier}, see also \cite{chemin}). 
\begin{proposition}
\label{prop:Besov-hold}
{For any non-integer $\gamma>0$, 
$\Cc_B^\gamma=\C_B^\gamma$ and their norms are equivalent.}
\end{proposition}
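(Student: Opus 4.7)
The plan is to establish the two-sided continuous embedding between $\mathit{C}_B^\gamma$ and $\C_B^\gamma$ with equivalent norms, mirroring the classical Fourier-analytic identification of Besov and H\"older spaces in the isotropic case (\cite[Proposition~1.3.2]{danchin2005fourier}, \cite{chemin}), but replacing the usual dilations with the anisotropic family $D_\lambda$ from~\eqref{eq:dilation_lambda} and systematically applying the block-wise scaling rule that a derivative along the $i$-th block consumes $(2i+1)$ orders of regularity, as in~\eqref{eq: Bern ineq Besov norm}. Throughout, I would decompose $\gamma=n+\gamma'$ with $n\in\Nzero$ and $\gamma'\in(0,1)$ genuinely fractional (the non-integer hypothesis on $\gamma$ is essential here to exclude the Zygmund-type ambiguity), and reduce the general case to the base range $\gamma\in(0,1)$ by differentiating block by block.

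For the embedding $\mathit{C}_B^\gamma\hookrightarrow\C_B^\gamma$ in the base case $\gamma\in(0,1)$, fix $j\geq 0$, invoke the scaling identity $\check{\rho}_j = 2^{jQ}\check{\rho}_0\circ D_{2^j}$ from~\eqref{eq: scaling propr part}, and change variables to write
\begin{equation}
\Delta_j f(z) = \int_{\R^N} \check{\rho}_0(w)\bigl[f(z-D_{2^{-j}}w) - f(z)\bigr] dw,
\end{equation}
where the subtraction of $f(z)$ is legitimate by the vanishing-mean property $\int\check{\rho}_0 = 0$ for $j\geq 0$ from~\eqref{eq: mean propr part}. The anisotropic H\"older assumption, combined with the homogeneity $|D_{2^{-j}}w|_B = 2^{-j}|w|_B$ of the weighted norm, bounds the bracket by $\|f\|_{\mathit{C}_B^\gamma}\,2^{-j\gamma}|w|_B^{\gamma}$; since $\check{\rho}_0\in\mathcal{S}$, the integral of $|\check{\rho}_0(w)||w|_B^{\gamma}$ converges, yielding $\|\Delta_j f\|_{L^\infty}\leq C\,2^{-j\gamma}\|f\|_{\mathit{C}_B^\gamma}$. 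The block $j=-1$ is controlled directly by $\|f\|_{L^\infty}$. For $\gamma>1$, the argument ascends by applying the base case to suitable block-derivatives $\partial_{z_l}f$, precisely as these appear in the nested definition of $\mathit{C}_B^\gamma$.

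For the reverse embedding I would start from $f=\sum_{j\geq -1}\Delta_j f$, which converges uniformly since $\|\Delta_j f\|_{L^\infty}\leq 2^{-j\gamma}\|f\|_\gamma$ is summable. In the base case, I split the telescoping sum at the threshold $J$ with $2^J\approx |h|_B^{-1}$: for $j\leq J$ I use the mean-value inequality together with the block-wise Bernstein bound~\eqref{eq: Bernstein ineq part}, giving $|\Delta_j f(z+h)-\Delta_j f(z)|\leq C|h|_B\, 2^{j(1-\gamma)}\|f\|_\gamma$ (the lowest block dominates because $|h_i|\leq |h|_B^{2i+1}$ and $2^j|h|_B\leq 1$ for $j\leq J$), and for $j>J$ I use the raw bound $2\|\Delta_j f\|_{L^\infty}\leq 2\cdot 2^{-j\gamma}\|f\|_\gamma$; both resulting geometric series sum to $C|h|_B^{\gamma}\|f\|_\gamma$. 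For $\gamma>1$, I then invoke~\eqref{eq: Bern ineq Besov norm} to pass to derivatives in the relevant block and conclude by induction on the number of anisotropic orders in $\gamma$.

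The main obstacle I expect is the bookkeeping in the inductive step: the definition of $\mathit{C}_B^\gamma$ intertwines finite differences along specific upper blocks with derivatives along lower blocks, and one must choose the Taylor-type expansion subtracted inside $\Delta_j f$ so that exactly the right block-derivatives are produced at each level. The homogeneous strictly-kinetic framework of~\cite{zhang2021cauchy} sidesteps much of this because a single dilation group controls all scales simultaneously; here, the $\ast$-blocks in~\eqref{B} and the possibly unequal dimensions $d_0\geq\cdots\geq d_r$ force one to handle each block independently through the weighted norm $|\cdot|_B$, without any homogeneity of $\Kc$ to lean on.
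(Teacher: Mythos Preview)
The paper does not supply its own proof of this proposition: it simply refers to \cite[Lemma~2.8]{zhang2021cauchy}, noting that the latter is an adaptation of the isotropic argument in \cite[Proposition~1.3.2]{danchin2005fourier} (see also \cite{chemin}). Your proposal is precisely that adaptation --- the vanishing-mean trick for the forward embedding, the threshold split combined with Bernstein for the reverse, and induction on the block structure for $\gamma>1$ --- so your approach coincides with the one the paper invokes by citation.
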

\begin{remark}\label{rm:holder}
We observe that, if $f\in \Cc^\gamma_B$ with $\gamma\in(0,1]$, then  there exist two positive constants $C$ and $\nu \in (0, \gamma)$ (small enough depending on $B$)  such that 
$$\|f\|_{\C^\nu}:=%\sup_{z\in\R^N}| f(z)| 
\| f \|_{L^{\infty}}+ \sup_{z \neq z' \in \R^N, \ |z-z'|\leq 1}\frac{ |f(z)- f(z')|}{|z-z'|^\nu} \leq C \|f\|_{\gamma}.$$
In other words, $f$ is $\nu$-H\"older continuous in the classical sense (see \cite[Definition 1.49]{bahouri2011fourier}).
\end{remark}

The anisotropic H\"older spaces above can be somehow ``extended" to functions defined on $\R\times\R^N$. These are the so-called \emph{intrinsic} H\"older spaces mentioned in the introduction, which are the natural choices to include the regularity along the time-variable in the study of degenerate $\Kc$ and $\Kc'$-type operators. 
In this framework, the correct translation to consider is not the Euclidean one used in the parabolic case, but the one with respect to the integral curves of $Y$. Intuitively, this follows from the fact that $Y$ plays the role played by the partial derivative $\partial_t$ in the parabolic case. For the same reason, the Lie derivative along $Y$, i.e.
\begin{equation}
\label{eq:def Lie der}
Yu (t,z):=\lim_{h\to 0} \frac{u(t+h, e^{hB}z)-u(t,z)}{h}
\end{equation}
 counts as a second-order derivative. 
 Once more, we provide the definition of these spaces only for some ranges of $\gamma$.  
\begin{definition}\label{def:intrinsicHolder}
For $T>0$, %and $\beta\in (0,1]$ 
we denote by $\Cc_{B,T}^\gamma$ the space of %continuous 
functions $u:[0,T]\times\R^{N}\longrightarrow \R$  such that the following {exists and it} is finite {(in particular it is a norm)}:
\begin{align}%\label{eq:}
\|u\|_{\Cc^\gamma_{B,T}}:= \begin{cases}
 %\sup_{z\in \R^N} |f(z)|
\sup\limits_{(t,z)\in[0,T]\times\R^{N}} |u(t,z)|+ \sup\limits_{(t,z),(s,y)\in [0,T]\times\R^{N}} \frac{|u(t,z)-u(s,y)|}{|t-s|^{\gamma/2}+|z - e^{(t-s) B} y|_B^{\gamma}}  , &\quad \gamma\in (0,1] ,\\
 \sup\limits_{(t,z)\in [0,T]\times \R^N} |u(t,z)|+ \sup\limits_{\substack{(t,z)\in [0,T]\times\R^{N}\\ s\in[0,T]}} \frac{|u (s, e^{(s-t)B} z )-u(t,z)|}{|s-t|^{\gamma/2}} + \sum\limits_{l= 1}^{d}\|\partial_{z_l}  u\|_{\Cc^{\gamma-1}_{B,T}}   , &\quad \gamma\in (1,2], \\
\sup\limits_{(t,z)\in [0,T]\times \R^N} |u(t,z)| + \sum\limits_{l= 1}^{d}\|\partial_{z_l}  u\|_{\Cc^{\gamma-1}_{B,T}} + \| Y  u\|_{\Linf{\gamma-2}}   , &\quad \gamma\in (2,3].
\end{cases}
\end{align}
In the case $\gamma\in(2,3]$, we also require that $Yu$, which is the Lie derivative of $u$  in the sense of \eqref{eq:def Lie der}, is continuous on $[0,T]\times \R^N$. Note that, by the boundedness of the norm, $u, \partial_{z_i}u, \partial_{z_i z_j} u$ are uniformly continuous on $[0,T]\times \R^N$.  
We say that these functions are instrinsically $\gamma$-H\"older continuous.
\end{definition}

Obviously, by Proposition \ref{prop:Besov-hold}, for any $\gamma\in (0,1)$ we have 
\begin{equation}\label{eq:intrinsic_inclusion}
\Cc_{B,T}^\gamma \subset \CT{\gamma}.%, \qquad \b\in (0,1).
\end{equation}
A full characterization of the intrinsic H\"older spaces at any order was given in \cite{pagliarani2016intrinsic} (see also the references therein for previous characterizations), where an intrinsic Taylor formula was proved. The latter allows to see that \eqref{eq:intrinsic_inclusion} actually holds for any non-integer $\gamma>0$.

\subsection{Regularizations}\label{sec:regul}
Let $\Phi$ be a standard mollifier on $\R^N$ with compact support. For $n\in \N$ we set $\Phi_n(z)=n^N \Phi(nz)$. For $\gamma\in \R$, $g\in \C^{\gamma}_B$, set the family
\begin{equation}
\label{eq:b_n bis}
g^{(n)}:=\Phi_n * g, \qquad n\in \N.
\end{equation}  

\begin{lemma}\label{lem:regul_g}
  Let $T>0$, $g\in \Linf{\gamma}$ with $\gamma \in\R$ (respectively $g\in    C_T \C^\gamma_B $),
   %\CT{\gamma}$)
  and denote by $g^{(n)}$ the function $([0,T]\ni t\mapsto g^{(n)}_t)$ given by \eqref{eq:b_n bis}. Then we have the following.
\begin{itemize}
\item[(i)]
For any $n\in\N$ and $\alpha>0$, we have  $g^{(n)}\in \Linf{\alpha}$ (respectively $g^{(n)}\in  \CT{\alpha}$). 
%C_T \C^\alpha_B $);
\item[(ii)] For any $\eta\in (0,1)$, there exists $C=C(%r,s,
\gamma,\eta)>0$ such that %we have %and $b(t)\in \mathit{C}_T \C^{-\beta}$
\begin{equation}\label{eq:bound_gn_g}
\|g^{(n)}-g\|_{T,\gamma-\eta}%=\sup_{j\geq -1} 2^{j(\gamma-\eta)}\|\Delta_j (g^{(n)}_t-g_t)\|_{\infty} 
\leq C \|g\|_{T , \gamma} \int_{\R^N} \Big{|} \frac{y}{n}\Big{|}_B^\eta  \Phi(y)  dy, \qquad n\in\N,
\end{equation}
and in particular,
\begin{equation}
 \label{eq:conv sequence}
 \|g^{(n)} - g \|_{T,\gamma-\eta} \rightarrow 0, \qquad \text{as } n\to \infty.
\end{equation}
\item[(iii)] In the case
  $g\in  C_T \C_b $ %{\color{green} {\sout{$\CT{\gamma}$  and if, furthermore, $\gamma>0$ and}}} 
  with  compact support, then $g^{(n)} \in C_T \mathcal S$ for any $n\in\N$.
\end{itemize}
\end{lemma}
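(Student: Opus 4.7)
The plan is to prove the three items separately, anchored by the identity $\Delta_j g^{(n)}_t=\Phi_n*\Delta_j g_t$ for all $j\geq -1$, which holds since convolution commutes with the Paley--Littlewood projectors.

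For (i), the smoothing effect of $\Phi_n\in\mathcal{S}$ upgrades the regularity to any order: because $\widehat{\Phi_n}(\xi)=\widehat{\Phi}(\xi/n)$ decays faster than any polynomial at infinity and the Fourier transform of $\Delta_j g^{(n)}_t$ is supported in $\A_{2^{j-1},2^{j+1}}$, one gets $\|\Delta_j g^{(n)}_t\|_{L^\infty}$ decaying faster than any negative power of $2^j$, uniformly in $t$; combined with the Young-type bound $\|\Delta_j g^{(n)}_t\|_{L^\infty}\leq \|\Phi\|_{L^1}\|\Delta_j g_t\|_{L^\infty}\leq \|\Phi\|_{L^1}2^{-j\gamma}\|g\|_{T,\gamma}$ for small $j$, this yields $\sup_t\|g^{(n)}_t\|_\alpha<\infty$ for every $\alpha>0$. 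Continuity $t\mapsto g^{(n)}_t$ in $\C_B^\alpha$ is then inherited from that in $\C_B^\gamma$ by applying the same estimate to the increments $g^{(n)}_t-g^{(n)}_s=\Phi_n*(g_t-g_s)$.

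Item (ii) is the core of the lemma. Since $\int\Phi_n=1$,
\begin{equation}
\Delta_j g^{(n)}_t(z)-\Delta_j g_t(z)=\int_{\R^N}\Phi_n(y)\big[\Delta_j g_t(z-y)-\Delta_j g_t(z)\big]dy,
\end{equation}
and the crucial step is the anisotropic H\"older estimate
\begin{equation}
|\Delta_j g_t(z-y)-\Delta_j g_t(z)|\leq C (|y|_B\, 2^j)^\eta \|\Delta_j g_t\|_{L^\infty},\qquad \eta\in(0,1),\ j\geq -1.
\end{equation}
To establish it, I decompose $y=(y_0,\dots,y_r)$ into its $r+1$ block components; along each block $i$, I use the anisotropic Bernstein inequality \eqref{eq: Bernstein ineq part}, which gives $\|\partial_{z_l}\Delta_j g_t\|_{L^\infty}\leq C\, 2^{j(2i+1)}\|\Delta_j g_t\|_{L^\infty}$ for $l$ in block $i$ (with $O(1)$ constant when $j=-1$), and interpolate it against the trivial bound $2\|\Delta_j g_t\|_{L^\infty}$. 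Since $|y|_B=\sum_i|y_i|^{1/(2i+1)}$, the $(2i+1)$-scaling of Bernstein matches exactly the anisotropic weight of each block increment, and interpolating with $\eta\in(0,1)\subset(0,2i+1)$ uniformly across blocks yields the claim. Substituting and making the change of variable $u=ny$ leads to
\begin{equation}
\|\Delta_j g^{(n)}_t-\Delta_j g_t\|_{L^\infty}\leq C\,2^{j\eta}\|\Delta_j g_t\|_{L^\infty}\int_{\R^N}|y/n|_B^\eta\,\Phi(y)\,dy.
\end{equation}
Multiplying by $2^{j(\gamma-\eta)}$ and taking successive suprema over $j\geq -1$ and $t\in[0,T]$ delivers \eqref{eq:bound_gn_g}; \eqref{eq:conv sequence} then follows by dominated convergence, using $|y/n|_B\leq|y|_B$ for $n\geq 1$ and the fact that $\Phi$ has compact support.

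For (iii), the common compact support of the $g_t$'s and of $\Phi_n$ implies that $g^{(n)}_t=\Phi_n*g_t$ is smooth with support in a fixed compact set (independent of $t$), hence $g^{(n)}_t\in\mathcal{S}$. Each Schwartz seminorm satisfies $\sup_x|x|^a|\partial^\beta g^{(n)}_t(x)|\leq C_a\|\partial^\beta\Phi_n\|_{L^1}\|g\|_{T}$ uniformly in $t$, and continuity $t\mapsto g^{(n)}_t$ in $\mathcal{S}$ follows from the same estimate applied to the increments $\Phi_n*(g_t-g_s)$ together with continuity of $t\mapsto g_t$ in $\C_b$. The principal technical obstacle throughout is the anisotropic H\"older estimate in (ii), where one must carefully match the block-dependent $(2i+1)$-scaling of Bernstein's inequality with the $1/(2i+1)$-power of $|y_i|$ inside $|y|_B$, uniformly across all $r+1$ blocks, so that a single universal exponent $\eta\in(0,1)$ emerges after interpolation.
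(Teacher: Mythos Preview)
Your proposal is essentially correct and, for the core Part (ii), takes a slightly more elementary route than the paper. Where the paper invokes the ready-made bound $\|\delta_h g\|_{\gamma-\eta}\leq C|h|_B^\eta\|g\|_\gamma$ from \cite[Corollary~2.9]{hao2021singular}, you effectively re-derive that inequality from scratch via block-wise Bernstein estimates and interpolation; this makes the argument self-contained. For Part (iii), the paper passes through the Fourier side (noting $\mathcal{F}g^{(n)}_t=\mathcal{F}g_t\cdot\mathcal{F}\Phi_n$ with $t\mapsto\mathcal{F}g_t\in C_T\mathcal{S}$), while your direct bound on the Schwartz seminorms via Young's inequality is equally valid and arguably more transparent.

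The only point that deserves a caution is your Part (i). The assertion that pointwise decay of $\widehat{\Phi_n}$ on the annulus $\A_{2^{j-1},2^{j+1}}$ automatically yields fast decay of $\|\Delta_j g^{(n)}_t\|_{L^\infty}$ is correct, but it is a Fourier-multiplier statement: since $g_t$ is merely a tempered distribution, you cannot pass through $\|\widehat{\cdot}\|_{L^1}$, and one must instead control $\|\mathcal{F}^{-1}(\widehat{\Phi_n}\tilde\rho_j)\|_{L^1}$ (which requires bounds on derivatives of $\widehat{\Phi_n}\tilde\rho_j$, not just its size). This step is routine but should be mentioned. The paper sidesteps this by a duality argument: writing $\partial^k g^{(n)}(z)=\langle g\,|\,\partial^k\Phi_n(z-\cdot)\rangle$ and bounding the pairing via the dual Besov-type norm of $\partial^k\Phi_n$. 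That route avoids multiplier estimates entirely and gives boundedness of all derivatives (hence membership in every $\C_B^\alpha$) in one stroke.
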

To prove Lemma \ref{lem:regul_g}, we need the following result, which is a particular case of \cite[Corollary 2.9]{hao2024singular}.
\begin{lemma}
Let $g\in \C^{\gamma}_B$ with $\gamma\in\R$, and %$s\in \R$ and 
$\eta\in ( 0, 1)$. There exists a constant $C=C(%r,s,
\gamma,\eta)>0$ such that %for all $b\in \C_B^s$ and $h\in \R^N$, we have
\begin{equation}
\label{eq:hold between besov spaces}
\|\delta_h g\|_{\gamma-\eta}\leq C |h|_B^\eta \|g\|_{\gamma}, {\qquad h\in\R^N,}
\end{equation}
with $\delta_h g \in \C_B^{\gamma}$ being defined through
 \begin{equation}%\label{eq:}
 \langle \delta_h g | \varphi \rangle := \langle  g| \varphi(\cdot-h)- \varphi \rangle , \qquad \varphi\in \mathcal{S}(\R^N).
\end{equation}
\end{lemma}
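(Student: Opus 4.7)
The plan is to reduce the claim to a block-by-block estimate on the Paley--Littlewood blocks of $\delta_h g$, and then to take the supremum. Specifically, for each $j\geq-1$ I aim to prove
\begin{equation*}
\|\Delta_j\delta_h g\|_{L^\infty}\leq C\,|h|_B^\eta\,2^{-j(\gamma-\eta)}\|g\|_\gamma,
\end{equation*}
after which the conclusion follows from the definition of $\|\cdot\|_{\gamma-\eta}$. A first observation is that $\Delta_j=\check\rho_j\ast\cdot$ is a convolution operator and therefore commutes with translations, so that $\Delta_j\delta_h g=\delta_h\Delta_j g$ holds as bounded smooth functions (recall that $\Delta_j g$ is smooth, cf.\ Remark \ref{rem:paley_little}). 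Next I would decompose the increment as $h=\sum_{i=0}^r h^{(i)}$, where $h^{(i)}\in\R^N$ has only the $i$-th block non-zero and equal to $h_i\in\R^{d_i}$; a telescoping argument then yields
\begin{equation*}
\|\Delta_j\delta_h g\|_{L^\infty}\leq\sum_{i=0}^r\|\delta_{h^{(i)}}\Delta_j g\|_{L^\infty}.
\end{equation*}

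For each block $i$, I would use two complementary bounds on $\|\delta_{h^{(i)}}\Delta_j g\|_{L^\infty}$. The trivial one is $\|\delta_{h^{(i)}}\Delta_j g\|_{L^\infty}\leq 2\|\Delta_j g\|_{L^\infty}$. The derivative-based one, sharper for small $|h_i|$, comes from the mean value theorem applied only along the coordinates of the $i$-th block (where $h^{(i)}$ actually moves), combined with the anisotropic Bernstein inequality \eqref{eq: Bernstein ineq part}:
\begin{equation*}
\|\delta_{h^{(i)}}\Delta_j g\|_{L^\infty}\leq |h_i|\sum_{l=\bar d_{i-1}+1}^{\bar d_i}\|\partial_{z_l}\Delta_j g\|_{L^\infty}\leq C\,|h_i|\,2^{j(2i+1)}\|\Delta_j g\|_{L^\infty}.
\end{equation*}
I would interpolate these two via the elementary inequality $\min(a,b)\leq a^{1-\theta}b^\theta$ with the choice $\theta_i:=\eta/(2i+1)\in(0,1]$, which is admissible precisely because $\eta<1\leq 2i+1$ for every $i=0,\ldots,r$. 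This produces
\begin{equation*}
\|\delta_{h^{(i)}}\Delta_j g\|_{L^\infty}\leq C\,|h_i|^{\eta/(2i+1)}\,2^{j\eta}\,\|\Delta_j g\|_{L^\infty}.
\end{equation*}
By the very definition of the anisotropic norm, $|h_i|^{1/(2i+1)}\leq|h|_B$, hence $|h_i|^{\eta/(2i+1)}\leq|h|_B^\eta$; summing over $i=0,\ldots,r$ and combining with the defining bound $\|\Delta_j g\|_{L^\infty}\leq 2^{-j\gamma}\|g\|_\gamma$ yields $\|\Delta_j\delta_h g\|_{L^\infty}\leq C'|h|_B^\eta\,2^{-j(\gamma-\eta)}\|g\|_\gamma$, and taking the supremum over $j\geq-1$ closes the argument.

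The only delicate point is the matching between the Bernstein factor $2^{j(2i+1)}$ that appears when differentiating along the $i$-th block, and the anisotropic scaling $|h_i|\leq|h|_B^{2i+1}$ that controls the $i$-th component of $h$: the interpolation exponent $\theta_i=\eta/(2i+1)$ is chosen precisely so that these two factors combine, block by block, into the single factor $|h|_B^\eta\,2^{j\eta}$, uniformly in $i$. Note that no separate treatment of the low-frequency block $j=-1$ is required, since \eqref{eq: Bernstein ineq part} already holds from $j=-1$.
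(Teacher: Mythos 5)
Your proof is correct and self-contained. The paper itself gives no proof of this lemma — it simply cites \cite[Corollary 2.9]{hao2021singular} — so there is no in-paper argument to compare against, but yours is the natural one: Paley--Littlewood reduction, telescoping $h=\sum_{i=0}^r h^{(i)}$, and a block-wise interpolation between the trivial $L^\infty$ bound and the first-order mean-value/Bernstein bound, with the exponent $\theta_i=\eta/(2i+1)$ tuned so that the Bernstein factor $2^{j(2i+1)}$ and the anisotropic control $|h_i|^{1/(2i+1)}\le |h|_B$ collapse into the single factor $|h|_B^\eta\,2^{j\eta}$. You also correctly pinpoint where the hypothesis $\eta\in(0,1)$ is used: it is exactly what keeps $\theta_0=\eta$ strictly below $1$ for the non-degenerate block $i=0$ (all the other $\theta_i$ are automatically smaller). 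One small housekeeping remark worth making explicit: in the telescoping sum the $i$-th term is actually $(\delta_{h^{(i)}}\Delta_j g)\bigl(z-\sum_{k<i}h^{(k)}\bigr)$, with a shifted argument; this causes no issue because you immediately pass to $L^\infty$ norms, which are translation invariant, but it is worth writing down. Likewise, for $j=-1$ the estimate $\|\Delta_{-1}g\|_{L^\infty}\le 2^{\gamma}\|g\|_\gamma$ and the factor $2^{-\eta}$ from the interpolation combine to give exactly $2^{-(-1)(\gamma-\eta)}=2^{\gamma-\eta}$, confirming your remark that no separate treatment of the low-frequency block is needed.
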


\begin{proof}[Proof of Lemma \ref{lem:regul_g}]
 Let $n\in\N$ and $t\in[0,T]$ be fixed throughout the proof and dropped for ease of notation. 
 To prove Part (i) it is sufficient to show that all $g^{(n)}$ are smooth in space with bounded derivatives uniformly in $t$.
Smoothness is proved as in \cite[Theorem 4.1.1]{hormander2015analysis}.   
We need to prove that for every multi-index $k\in \N^N$
\begin{equation}
|\partial^k g^{(n)}(z)|=|g*\partial^k \Phi_n(z)|=|\langle g|\, \partial^k \Phi_n(z-\cdot)\rangle|<\infty , \qquad z\in\R^N.
\end{equation}
If $\varphi_z(y):=\partial^k \Phi_n(z-y)$, it follows from Remark \ref{rem:paley-littlewood}-(c) that
\begin{align}
\partial^k g^{(n)}(z)&=\sum_{i,j\geq -1} \langle \Delta_ig |\, \Delta_j \varphi_z \rangle= \sum_{i\geq -1} \langle \Delta_ig |\, \tilde{\Delta}_i \varphi_z \rangle
\end{align}
with
\begin{equation}
\tilde{\Delta}_i \varphi_z:=
\begin{cases}
(\Delta_{-1}+\Delta_0)\varphi_z,&  i=-1,\\
(\Delta_{i-1}+\Delta_i+\Delta_{i+1}) \varphi_z,&  i\geq 0 .
\end{cases}
\end{equation}
Therefore, using H\"older inequality we get
\begin{equation}\label{eq:est_gn_z}
|\partial^k g^{(n)}(z)|\leq \sum_{i\geq -1} \| \Delta_i g \|_{L^{\infty}} \|\tilde{\Delta}_i \varphi_z \|_{L^1} =  \sum_{i\geq -1} 2^{i\gamma}\| \Delta_i g \|_{L^{\infty}} 2^{-i\gamma}\|\tilde{\Delta}_i \varphi_z\|_{L^1} \leq \|g\|_\gamma \sum_{i\geq-1}2^{-i\gamma}\underbrace{\|\tilde{\Delta}_i \varphi_z\|_{L^1}}_{= \|\tilde{\Delta}_i \varphi_0\|_{L^1}} .
\end{equation}
Now, the latter sum is finite as it is bounded by the dual Besov norm $\sum_{i\geq -1}2^{-i\gamma}\| \Delta_i \varphi_0 \|_{\Leb^1}$, which is finite due to
$\varphi\in C^{\infty}_c$.
Respectively, under the additional assumption $g\in    C_T \C^\gamma_B $, with $k=0$ in \eqref{eq:est_gn_z} and replacing $g^{(n)}$ with $g^{(n)}_t - g^{(n)}_s$ yields $g^{(n)}\in    \mathit{C}_T \C_b$.

We now prove Part (ii). For any $j\geq -1$ %and $t\in[0,T]$ 
we have
\[ 
\Delta_j g^{(n)}_t=\check{\rho}_j* \Phi_n * g_t= \Phi_n *\check{\rho}_j*g_t= \Phi_n *\Delta_j g_t, 
\] 
and thus 
\begin{align}
\big( \Delta_j\, (g^{(n)}_t-g_t) \big) (z)&=\int_{\R^N} (\Delta_j g_t)(z-y) \Phi_n(y) dy- \int_{\R^N} (\Delta_j g_t)(z) \Phi(y) dy\\
&=\int_{\R^N} \underbrace{\big( (\Delta_j g_t) (z-y/n)-(\Delta_j g_t)(z)\big)}_{=\, \delta_{y/n}( \Delta_j g_t)(z)} \Phi(y) dy , \qquad z\in\R^N,
\end{align}
which yields 
\begin{align}
\| \Delta_j\, (g^{(n)}_t-g_t )\|_{{L^{\infty}}}  & \leq \int_{\R^N}\| \delta_{y/n}( \Delta_j g_t)\|_{{L^{\infty}}}  \Phi(y)  dy
\intertext{(by \eqref{eq:hold between besov spaces} and definition of Besov norm)}
& \leq C \|g\|_{T,\gamma}\, 2^{-j(\gamma-\eta)}\int_{\R^N} \Big{|} \frac{y}{n}\Big{|}_B^\eta   \Phi(y) dy.%, \qquad z\in\R^N.
\end{align}
Again by definition of Besov norm, the latter inequality implies \eqref{eq:bound_gn_g} and concludes the proof.
%By combining this with 
%\begin{equation}%\label{eq:}
%\|g^{(n)}_t-g_t\|_{\gamma-\eta}=\sup_{j\geq -1} 2^{j(\gamma-\eta)}\|\Delta_j (g^{(n)}_t-g_t)\|_{\infty} , \qquad t\in [0,T],
%\end{equation}
%we obtain \eqref{eq:bound_gn_g} and conclude the proof.
%As a result, for any $t\in [0,T]$,
%\begin{equation}
%\|g^{(n)}_t-g_t\|_{-\beta-\eta}=\sup_{j\geq -1} 2^{-j(\beta+\eta)}\|\Delta_j (g^{(n)}_t-g_t)\|_{\infty} 
%\leq C \|g\|_{T , -\beta} \int_{\R^N} \Big{|} \frac{y}{\sqrt{n}}\Big{|}_B^\eta  \tilde\Gamma_{1}(y)  dy\xrightarrow[n\to +\infty]{}0,
%\end{equation}
%%i.e
%%\[b_n\xrightarrow{\mathit{C}_T\C^{-\beta-\eta}_B}b.\]
%which proves \eqref{eq:conv sequence} and completes the proof.

{We now prove Part (iii). 
It is sufficient  to prove that $t\mapsto \mathcal F g^{(n)} (t, \cdot) $ is continuous with values in $ \mathcal S$. We have 
\[ \mathcal F g^{(n)} (t, \cdot)  = \mathcal F g(t, \cdot)\, \mathcal F  \Phi_n. \]
Since $g$ is a continuous function with compact support then $(t\mapsto \mathcal F g(t, \cdot) )\in C_T \mathcal S$. %is continuous with values in $C_b^\infty$. 
Since $ \mathcal F  \Phi_n \in \mathcal S$ the result easily  follows. }
\end{proof}
The next result will be used in Section \ref{sec:linear MP} to prove the equivalence between our formulation of the martingale problem with the classical one by Stroock-Varadhan.

\begin{lemma}\label{lem:density_intrinsic_spaces}
Let $T>0$, $\alpha\in (0,1]$. For any $u\in \Cc^{2+\alpha}_{B,T}$, there exists a sequence $(u^{(n)})_{n\in\N}$ such that the following hold. 
\begin{itemize}
 \item[(a)] For any $n\in\N$, $u^{(n)} \in C^{1,2}([0,T]\times \R^N)$, with $u^{(n)}, \partial_{z_i} u^{(n)}, \partial_{z_i z_j} u^{(n)}$ bounded on $[0,T]\times \R^N$.
\item[(b)] For any $i,j=1,\cdots, d$ and for any compact $K\subset \R^N$, we have  
\begin{equation}\label{eq:conv_regul_u_intrinsic}
\| u - u^{(n)} \|_{T}, \ \| \partial_{z_i} u - \partial_{z_i} u^{(n)} \|_{T}, \ \| \partial_{z_{i}z_j} u - \partial_{z_{i}z_j} u^{(n)} \|_{T}, \ \| (Y u - Y u^{(n)} ) {\bf 1}_{K} \|_{T} \longrightarrow 0, \qquad \text{as } n\to +\infty,
\end{equation}
where $Yu$ denotes the Lie derivative of $u$,  continuous on $[0,T]\times \R^N$, defined as in \eqref{eq:def Lie der}. 
\end{itemize}
\end{lemma}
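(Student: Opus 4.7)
The plan is to construct $u^{(n)}$ as a standard space-time convolution mollification of an extension of $u$. First, extend $u$ to a function $\tilde u$ on $\R\times\R^N$ that is bounded, with $\tilde u,\partial_{z_i}\tilde u,\partial_{z_iz_j}\tilde u$ ($i,j\leq d$) uniformly continuous and $Y\tilde u$ continuous on $\R\times\R^N$; a standard construction combines a smooth time-cutoff with a reflection-type extension adapted to the intrinsic regularity of $\mathit{C}^{2+\alpha}_{B,T}$. Fix a smooth compactly supported mollifier $\Psi_n=\phi_n\otimes\Phi_n$ on $\R\times\R^N$ and define
\begin{equation}
u^{(n)}(t,z):=(\tilde u*\Psi_n)(t,z)=\iint_{\R\times\R^N} \tilde u(t-s,z-y)\,\phi_n(s)\Phi_n(y)\,ds\,dy.
\end{equation}
Part (a) follows at once: $u^{(n)}\in C^\infty(\R\times\R^N)$, and its space derivatives up to second order are bounded on $[0,T]\times\R^N$ by moving the derivatives onto $\Psi_n$ in the integral.

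For Part (b), the three uniform convergences of $u^{(n)}$, $\partial_{z_i}u^{(n)}$ and $\partial_{z_iz_j}u^{(n)}$ to $u$, $\partial_{z_i}u$ and $\partial_{z_iz_j}u$ in the $\|\cdot\|_T$ norm are routine consequences of standard mollifier estimates (compare Lemma \ref{lem:regul_g}), relying on the uniform continuity of $u,\partial_{z_i}u,\partial_{z_iz_j}u$ on $[0,T]\times\R^N$ inherited from $u\in\mathit{C}^{2+\alpha}_{B,T}$.

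The main obstacle is the convergence $\|(Yu-Yu^{(n)}){\bf 1}_K\|_T\to 0$ for compact $K\subset\R^N$. The subtle point is that for $\alpha\in(0,1)$ the regularity class $\mathit{C}^{2+\alpha}_{B,T}$ does not provide individual partial derivatives $\partial_{z_j}u$ in the degenerate directions $j>d$; only the combination $Yu=\partial_tu+\langle Bz,\nabla u\rangle$ is classically defined. A direct computation yields the decomposition
\begin{equation}
Yu^{(n)}(t,z)=(Y\tilde u)*\Psi_n(t,z)+R_n(t,z),
\end{equation}
where $R_n$ is the commutator $\iint\langle By,\nabla\tilde u(t-s,z-y)\rangle\Psi_n(s,y)\,ds\,dy$ arising from the non-commutation of $\langle Bz,\nabla\rangle$ with spatial translation, an expression that is ill-defined classically because of the degenerate-direction derivatives of $\tilde u$. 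The key step is to apply distributional integration by parts in $y$ to move all derivatives onto the smooth $\Psi_n$, obtaining
\begin{equation}
R_n(t,z)=\iint \tilde u(t-s,z-y)\bigl[\mathrm{tr}(B)\Phi_n(y)+\langle By,\nabla\Phi_n(y)\rangle\bigr]\phi_n(s)\,ds\,dy,
\end{equation}
which only involves $\tilde u$. Using the scaling $\Phi_n(y)=n^N\Phi(ny)$ with the change of variable $w=ny$, the cancellation $\int_{\R^N}[\mathrm{tr}(B)\Phi(w)+\langle Bw,\nabla\Phi(w)\rangle]\,dw=0$, and the uniform continuity of $\tilde u$ on compacts, one concludes $R_n\to 0$ uniformly on $[0,T]\times K$. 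Combined with $(Y\tilde u)*\Psi_n\to Y\tilde u=Yu$ uniformly on $[0,T]\times K$ by continuity of $Y\tilde u$, this yields the last convergence in Part (b).
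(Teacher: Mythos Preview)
Your approach via standard space--time convolution plus a commutator estimate is genuinely different from the paper's, and the commutator computation you sketch (integrating by parts in $y$, using $\int[\mathrm{tr}(B)\Phi+\langle Bw,\nabla\Phi\rangle]\,dw=0$) is correct once everything is set up. However, the paper's construction is both simpler and avoids the one point in your argument that is a real gap.

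The paper does \emph{not} mollify in time at all. It sets
\[
u^{(n)}(t,z)=\int_{\R^N}\Phi_n(y)\,u\bigl(t,\,z-e^{tB}y\bigr)\,dy,
\]
i.e.\ a purely spatial mollification, but along the \emph{intrinsic} translations $z\mapsto z-e^{tB}y$. The point of this choice is that the Lie derivative $Y$ is invariant under these translations, so one gets directly
\[
Yu^{(n)}(t,z)=\int_{\R^N}\Phi_n(y)\,(Yu)\bigl(t,\,z-e^{tB}y\bigr)\,dy,
\]
with \emph{no commutator term}. Since $\nabla_z u^{(n)}$ exists classically (smoothness in $z$) and $Yu^{(n)}$ is continuous, $\partial_t u^{(n)}=Yu^{(n)}-\langle Bz,\nabla_z u^{(n)}\rangle$ follows. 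All of Part~(b) then reduces to standard mollifier convergence for the continuous functions $u,\partial_{z_i}u,\partial_{z_iz_j}u,Yu$. No extension of $u$ outside $[0,T]$ is needed.

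The gap in your argument is precisely the extension step, which you dismiss as ``a standard construction \ldots\ adapted to the intrinsic regularity''. It is not standard: a naive time cutoff or constant extension destroys the continuity of $Y\tilde u$ at $t=0,T$ (recall $\partial_t u$ and $\partial_{z_j}u$ for $j>d$ do not exist separately, so you cannot patch them independently), and ``reflection adapted to the intrinsic regularity'' is not a textbook object. One can in fact build such an extension by first passing to the straightened coordinates $v(s,\zeta)=u(s,e^{sB}\zeta)$, where $Y$ becomes $\partial_s$, and then extending $v$ in $s$ by, say, a first-order Taylor continuation; but this has to be carried out explicitly and checked against all the required regularities. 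Your proof sketch would need this to be complete. The paper's choice of mollifier sidesteps the whole issue.
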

\begin{proof}
Let $\Phi$ be a standard mollifier on $\R^N$ and set, for any $n\in\N$,
\begin{equation}\label{eq:eq:un_moll}
 u^{(n)}(t,z) : = \frac{1}{|\det e^{tB}|}  \int_{\R^N} \Phi_n \big( e^{- t B} (z -  y ) \big) u(t, y) \dd y, \quad \Phi_n(y ) := n^{N} \Phi( n y ), \qquad (t,z)\in [0,T]\times\R^N.
\end{equation}
%where $\tilde u$ denotes a bounded extension of $u$ to $\R\times \R^N$. 

We first prove Part (a). By the properties of the mollifier and by the fact that $u$ is continuous and bounded on $[0,T]\times \R^N$, it is standard to prove that %$(z\mapsto u^{(n)}(t,z))\in C^{\infty}(\R^N)$ 
$\nabla_z u^{(n)}(t,z)$ is continuous and bounded on $[0,T]\times \R^N$ and that the same holds true for the second order derivatives in space.
  %is with all the derivatives bounded, uniformly in $t\in [0,T]$. 
We now study $\partial_t u^{(n)}(t,z)$. First note that a simple change of variable yields
\begin{equation}\label{eq:proof_mollif_un}
u^{(n)}(t,z)  = \int_{\R^N} \Phi_n ( y ) u (t,z -  e^{t B} y ) \dd y.
\end{equation}
Note that the Lie derivative along $Y$ is invariant with respect to the translation along its integral curves, namely
\begin{align}%\label{eq:dxI2}
Y \big((t,z)\mapsto u\big(t ,  z -  e^{t B} y\big) \big) &= \lim_{\delta\to 0} \frac{u\big(t + \delta,  e^{\delta B} (z -  e^{t B} y)\big) - u\big(t ,  z -  e^{t B} y\big)}{\delta} \\
& =  \lim_{\delta\to 0} \frac{u\big(t + \delta ,  e^{\delta B} \xi \big) - u(t,  \xi )}{\delta}\bigg|_{\xi=  z -  e^{t B} y}%\\ 
%&
  = (Yu)\big( t , z -  e^{t B} y \big),\label{eq:dx2I2}
\end{align}
for any $0\leq t \leq T$ and $z,y\in\R^N$. Owing to this, and to the boundedness of $Y  u$ on $[0,T]\times\R^N$, one can move the Lie derivative under the integral in \eqref{eq:proof_mollif_un} and obtain 
\begin{equation}\label{eq:YI2}
Y u^{(n)}(t,z) %&  = \int_{]t-T , t[\times \R^N} \Phi_n ( s ,  y ) Y_{t,z}  u\big(t - s,  z -  e^{(t-s)B} y\big) \dd y \dd s \\
%&
 = \int_{\R^N} \Phi_n ( y ) (Y  u)\big(t,  z -  e^{t B} y\big) \dd y .
\end{equation}
%In particular, $Y u^{(n)}$ is continuous and bounded on $[0,T]\times \R^N$. 
 Therefore, $u^{(n)}(t,z) $ is differentiable with respect to $t$ and we have 
\begin{equation}%\label{eq:}
\partial_t u^{(n)}(t,z) = Yu^{(n)}(t,z) - \langle Bz , \nabla_z u^{(n)}(t,z) \rangle, \qquad (t,z) \in [0,T] \times \R^N,
\end{equation}
where $\nabla_z u^{(n)}(t,z)$ makes sense a standard Euclidean gradient. As $Y u^{(n)}, \nabla_z u^{(n)}$ are continuous on $[0,T]\times \R^N$, so is $\partial_t u^{(n)}$. This concludes the proof of Part (a).

We now prove Part (b). By the boundedness of $\partial_{z_i}  u, \partial_{z_{i}z_j}  u$ on $[0,T]\times\R^N$, one can move the derivatives under the sign of the integral in \eqref{eq:proof_mollif_un} and obtain
\begin{align}\label{eq:estI11}
\partial_{z_i} u^{(n)}(t,z) &  = \int_{\R^N} \Phi_n ( y ) \partial_{z_i}  u\big(t ,  z -  e^{t B} y\big) \dd y , %\qquad (t,z)\in (0,T)\times\R^N.
\\ \label{eq:estI12}
\partial_{z_{i}z_j} u^{(n)}(t,z) &  = \int_{\R^N} \Phi_n ( y ) \partial_{z_{i}z_j}  u\big(t ,  z -  e^{t B} y\big) \dd y . %\qquad (t,z)\in ]0,T[\times\R^N.
\end{align}
 Noticing now that, by assumption, $u, \partial_{z_i}  u, \partial_{z_{i}z_j}  u$ are uniformly continuous on $[0,T]\times\R^N$ and $Yu$ is uniformly continuous on $[0,T]\times K$ for any compact $K\subset \R^N$, it is standard to show that \eqref{eq:proof_mollif_un}, \eqref{eq:YI2}, \eqref{eq:estI11} and \eqref{eq:estI12} imply \eqref{eq:conv_regul_u_intrinsic}. 
\end{proof}

\section{Non-linear Fokker-Planck singular PDE}\label{sc:FP}

In this section we study the non-linear Fokker-Planck PDE associated to the McKean-Vlasov SDE  \eqref{eq:mkv_kinetic_singular}, i.e.
\begin{equation}
\label{eq:FP cauchy probl}
\begin{cases}
%\partial_t u= 
\Kc'  u_t =  \div_v\big(u_t F(u_t)b_t\big), \qquad t\in (0,T],\\
u_0= u^0,
\end{cases}
\end{equation}
with $\Kc'$ being the forward Kolmogorov operator associated to the Markovian SDE \eqref{eq:SDEgeneralLangevin}, defined in \eqref{eq:FP operator kinetic}-\eqref{eq:FP operator kinetic_bis}, and where $\div_v$ is the divergence operator in $\R^d$. Namely %components of $z=(v,x)\in\R^N$, 
$\div_v$ formally acts on a function $G:\mathbb{R}^N \to \mathbb{R}^d$ as
\begin{equation}%\label{eq:}
\div_v G(z) = \sum_{j=1}^d \partial_{v_j} G_j(v,x),\qquad z=(v,x)\in\R^{d}\times \R^{N-d} .
\end{equation}
We assume, throughout this section, that Assumptions \ref{ass:kolmogorov_op}, \ref{ass:beta and b} and \ref{ass:phi} for $\Phi = F$ and $\Phi = \tilde F$
(see \eqref{eq:FFF}) are satisfied.

\subsection{Well-posedeness of the PDE}\label{ssc:FP-well}

We prove that \eqref{eq:FP cauchy probl} has a unique mild (or, equivalently, weak) solution 
in $\CT{\beta+\eps}$ for  a suitable $\eps>0$. 
Note that, a priori, it is not obvious how the term
\[ \div_v(u_tF(u_t)b_t),\qquad t\in [0,T], \]
in \eqref{eq:FP cauchy probl} might be well-defined. In this respect and to prove well-posedness of \eqref{eq:FP cauchy probl} we need the following% Lemma.
\begin{lemma}
\label{lem:F lipsch est}
Let $\Phi$ satisfy Assumption \ref{ass:phi}.
For any $\alpha\in (0,1)$, the map
\begin{equation}%\label{eq:}
\Phi:\C_B^\alpha\longrightarrow \C_B^\alpha, \qquad f \longmapsto \Phi f:= \Phi \circ f, %f\,  (F \circ f)
\end{equation}
 is well-defined 
and there exists a constant $C>0$, only depending on $\Phi$, such that
\begin{align}\label{eq:estim_Ftil_lip}
\|\Phi f-\Phi g\|_\alpha &\leq C(1+\|f\|_\alpha+\|g\|_\alpha) \|f-g \|_\alpha, && f,g\in \C_B^\alpha, \\ \label{eq:estim_Ftild_f}
\|\Phi f\|_\alpha &\leq C(1+\|f\|_\alpha), && f\in \C_B^\alpha.
\end{align}
\end{lemma}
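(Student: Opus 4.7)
The plan is to reduce the problem to the anisotropic Hölder characterization $\C_B^\alpha = \mathit{C}_B^\alpha$ provided by Proposition \ref{prop:Besov-hold} (valid since $\alpha\in(0,1)$ is non-integer), and work directly with the seminorm
\[
[f]_\alpha := \sup_{z\in\R^N,\, h\in\R^N\setminus\{0\}} \frac{|f(z+h)-f(z)|}{|h|_B^\alpha}.
\]
Throughout, $M$ denotes the bound on $\Phi'$ and $L$ its Lipschitz constant, both finite by Assumption \ref{ass:phi}; in particular $\Phi$ itself is globally $M$-Lipschitz on $\R$.

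For \eqref{eq:estim_Ftild_f} I would proceed directly. The Lipschitz property yields the sup-norm bound $\|\Phi\circ f\|_{L^\infty}\le |\Phi(0)|+M\|f\|_{L^\infty}$ and, by composition,
\[
|\Phi(f(z+h))-\Phi(f(z))| \le M|f(z+h)-f(z)|\le M\,[f]_\alpha\, |h|_B^\alpha.
\]
Summing gives $\|\Phi\circ f\|_{\mathit{C}_B^\alpha}\le C(1+\|f\|_{\mathit{C}_B^\alpha})$, which via norm equivalence is \eqref{eq:estim_Ftild_f}. In particular $\Phi\circ f$ actually lies in $\C_B^\alpha$, so the map is well-defined.

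The estimate \eqref{eq:estim_Ftil_lip} is the main point. I would use the fundamental theorem of calculus to factorize pointwise
\[
\Phi(f(w))-\Phi(g(w)) = G_{f,g}(w)\bigl( f(w)-g(w)\bigr), \qquad G_{f,g}(w) := \int_0^1 \Phi'\!\bigl( g(w)+t(f(w)-g(w))\bigr)\,dt,
\]
reducing the problem to estimating the $\mathit{C}_B^\alpha$-norm of the product $G_{f,g}\cdot (f-g)$. The space $\mathit{C}_B^\alpha$ is a Banach algebra: the identity $(pq)(z+h)-(pq)(z)=p(z+h)(q(z+h)-q(z))+q(z)(p(z+h)-p(z))$ yields at once
\[
\|pq\|_{\mathit{C}_B^\alpha}\le C\bigl(\|p\|_{L^\infty}\|q\|_{\mathit{C}_B^\alpha}+\|q\|_{L^\infty}\|p\|_{\mathit{C}_B^\alpha}\bigr).
\]
(Alternatively, since $2\alpha>0$, one could invoke Proposition \ref{prop:bony_prod}, but the elementary Hölder argument is sharper here.) Boundedness of $\Phi'$ gives $\|G_{f,g}\|_{L^\infty}\le M$, while Lipschitz continuity of $\Phi'$ gives
\[
|G_{f,g}(w+h)-G_{f,g}(w)|\le L\bigl(|f(w+h)-f(w)|+|g(w+h)-g(w)|\bigr)\le L\bigl([f]_\alpha+[g]_\alpha\bigr)|h|_B^\alpha,
\]
so $\|G_{f,g}\|_{\mathit{C}_B^\alpha}\le C(1+\|f\|_\alpha+\|g\|_\alpha)$. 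Applying the product estimate and bounding $\|f-g\|_{L^\infty}\le \|f-g\|_\alpha$ concludes \eqref{eq:estim_Ftil_lip}.

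The only non-essential bookkeeping concern is that $\Phi$ is matrix-valued; this is handled by applying the scalar argument entry-wise and absorbing dimensional constants into $C$. The main structural input, and the reason the argument is so direct, is that $\alpha<1$: the Hölder characterization of $\C_B^\alpha$ does not involve derivatives of $f$, so composition with the Lipschitz function $\Phi$ preserves the class without requiring any regularity of $f$ in the direction of the degenerate variables.
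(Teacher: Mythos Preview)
Your proof is correct and takes a genuinely different route from the paper. Both arguments first pass to the H\"older characterization $\C_B^\alpha=\mathit{C}_B^\alpha$, but then diverge: the paper estimates the second difference
\[
\Phi f(z)-\Phi g(z)-\Phi f(z')+\Phi g(z')
\]
directly by a case distinction on whether $|f(z)-g(z)|+|f(z')-g(z')|$ dominates $|f(z)-f(z')|+|g(z)-g(z')|$ or vice versa, applying the mean-value theorem in the appropriate pairing in each case. You instead factorize $\Phi f-\Phi g=G_{f,g}\,(f-g)$ via the fundamental theorem of calculus, bound $G_{f,g}$ in $\mathit{C}_B^\alpha$ using Lipschitzness of $\Phi'$, and invoke the (elementary) algebra property of the H\"older space. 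Your approach is a bit more structural and avoids the case split; the paper's is self-contained and does not need to isolate the algebra estimate. Neither buys extra generality here, since both hinge on the same hypotheses (bounded and Lipschitz $\Phi'$) and the restriction $\alpha\in(0,1)$.
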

\begin{proof}
We only show \eqref{eq:estim_Ftil_lip}, with \eqref{eq:estim_Ftild_f} being simpler.  Furthermore, without loosing generality, we can assume $\Phi$ being scalar valued.  As the function $\Phi$ is Lipschitz continuous, we have
\begin{equation}%\label{eq:}
\| \Phi f -\Phi g \|_{\Leb^{\infty}} \leq C \|f-g\|_{\Leb^{\infty}}.
\end{equation}
Now fix $z,z'\in\R^d$. To complete the proof of \eqref{eq:estim_Ftil_lip}, we need to show 
\begin{equation}\label{eq:eq:estim_Ftil_lip_proof}
| \Phi f (z)-\Phi g (z) - \Phi f (z') + \Phi g (z') | \leq C (1+\|f\|_\alpha+\|g\|_\alpha) \|f-g \|_\alpha |z-z'|^{\alpha}_B.
\end{equation}
We set
\begin{equation}%\label{eq:}
a := f(z) - g(z), \quad b := f(z') - g (z'), \quad c:= f(z) - f(z'), \quad d:=g(z) - g(z'),
\end{equation}
and proceed by considering two separate cases:

\vspace{2pt}
\fbox{$|a|+|b| \leq |c| + |d| $}\, By mean-value theorem,
there exists $\xi$ included between $f(z)$ and $g(z)$, and $\eta$ included between $f(z')$ and $g(z')$, such that
\begin{equation}%\label{eq:}
\Phi  f (z)  - \Phi  g (z) = \Phi ' (\xi) a, \qquad \Phi  f (z')  - \Phi  g (z') = \Phi ' (\eta) b.
\end{equation}
Therefore, by triangular inequality we obtain
\begin{align}%\label{eq:}
| \Phi f (z)-\Phi g (z) - \Phi f (z') + \Phi g (z') | & \leq |\Phi  ' (\xi) (a-b)| + \big| b \big( \Phi  ' (\xi) - \Phi  ' (\eta) \big) \big| 
\intertext{(by Assumption \ref{ass:phi})}
& \leq C  \| f-g \|_{\alpha} \big( |z-z'|^{\alpha}_B  + | \xi - \eta| \big). \label{eq:estimate_Ftilde_proof}
\end{align}
Now, as $|a|+|b| \leq |c| + |d| $, we obtain
\begin{equation}%\label{eq:}
| \xi - \eta| \leq |a|+|b|+|c|+|d| \leq 2(|c| + |d| ) \leq 2 (\| f \|_{\alpha} + \| g \|_{\alpha}) |z - z'|^{\alpha}_B,
\end{equation}
which, together with \eqref{eq:estimate_Ftilde_proof}, yields \eqref{eq:eq:estim_Ftil_lip_proof}.

\vspace{2pt}
\fbox{$|c|+|d| \leq |a| + |b| $}\, By mean-value theorem, there exist $\xi$ included between $f(z)$ and $f(z')$, and $\eta$ included between $g(z)$ and $g(z')$, such that 
\begin{equation}%\label{eq:}
\Phi  f (z)  - \Phi  f (z') = \Phi ' (\xi) c, \qquad \Phi  g (z)  - \Phi  g (z') = \Phi ' (\eta) d.
\end{equation}
Therefore, by triangular inequality we obtain
\begin{align}%\label{eq:}
| \Phi f (z)-\Phi g (z) - \Phi f (z') + \Phi g (z') | & \leq |\Phi  ' (\xi) (c-d)| + \big| d \big( \Phi  ' (\xi) - \Phi  ' (\eta) \big) \big| 
\intertext{(by Assumption \ref{ass:phi})}
& \leq C \big( \| f-g \|_{\alpha}  + \|  g \|_{\alpha} | \xi - \eta| \big)  |z-z'|^{\alpha}_B . \label{eq:estimate_Ftilde_proof_bis}
\end{align}
Now, as $|c|+|d| \leq |a| + |b| $, we obtain
\begin{equation}%\label{eq:}
| \xi - \eta| \leq |a|+|b|+|c|+|d| \leq 2(|a| + |b| ) \leq 4 \| f - g \|_{\alpha} ,
\end{equation}
which, together with \eqref{eq:estimate_Ftilde_proof_bis}, yields \eqref{eq:eq:estim_Ftil_lip_proof} and completes the proof.
\end{proof}

\begin{remark}\label{rem:div_reg}
By Lemma \ref{lem:F lipsch est}  applied to $\Phi = \tilde F$, for any $\alpha\in (\beta,1)$ we have
\begin{equation}%\label{eq:}
\div_v (f F( f )b_t) \in  \C^{-\beta-1}_B , \qquad f \in C_B^{\alpha}, \quad t\in[0,T].
\end{equation}
Indeed, by the Bernstein inequality \eqref{eq: Bern ineq Besov norm}, the estimate for the Bony's product \eqref{eq: prod estim} and by \eqref{eq:estim_Ftild_f}, we obtain
\begin{equation}
\label{eq:est_div_sing}
\|\div_v(f  F(f)b_t)\|_{-\beta-1} \leq C \sum_{i=1}^d \big\| \big( (\tilde{F} f)  b_t \big)_i\big\|_{-\beta}
\leq C (1+\|f\|_{\alpha})\|b_t\|_{-\beta}< +\infty .
\end{equation}
\end{remark}
\begin{definition}\label{def:weak_sol_FP}
We say that a function $u\in \CT{\beta+\eps}$ is a \emph{weak solution} for the singular Fokker-Planck \eqref{eq:FP cauchy probl} if it is a distributional solution. Namely, for all $\varphi\in \mathcal{S}$ we have%and $t\in [0,T]$ it holds that
\begin{equation}%\label{eq:weak sol def}
\label{eq:weak sol def}
\< u_t|\, \varphi\>= \< u^0|\, \varphi\>+ \int_0^t \<u_s|\, \Ac \, \varphi\> \, ds +\sum_{i=1}^d \int_0^t \big\< u_s (F(u_s)b_s)_i|\, \partial_{v_i} \varphi\big\> ds , \qquad t\in [0,T],
\end{equation} 
where $\Ac$ is the generator of $Z$ in \eqref{eq:SDEgeneralLangevin}, defined through \eqref{eq:kolm_const}.
\end{definition}
We now introduce the notion of \emph{mild solution}, which turns out to be equivalent to the one of \emph{weak solution}.
\begin{definition}\label{def:mild_sol}
We say that a function $u\in \CT{\beta+\eps}$ is a \emph{mild solution} for the singular Fokker-Planck equation \eqref{eq:FP cauchy probl} if the integral equation 
\begin{equation}
\label{eq:def mild sol}
u_t=P'_t  u^0 {-} \int_0^t P'_{t-s}\big( \div_v \( u_s F(u_s)\, b_s\) \big) ds,\qquad t\in [0,T],
\end{equation}
is satisfied.
\end{definition}
\begin{remark}
Inequality \eqref{eq:est_div_sing} in Remark \ref{rem:div_reg} together with the Schauder's estimate \eqref{eq:schauder 1} yield
\begin{equation}\label{eq:integrability_Est}
\big\|P'_{t-s}\big( \div_v ( u_s F(u_s)\, b_s)\big) \big\|_{\beta+\eps}\leq C (t-s)^{-(\beta+\frac{\eps+1}{2})} \|\div_v ( u_s F(u_s)\, b_s)\|_{-\beta-1}\leq%_{\eqref{eq:est div sing}} 
C (t-s)^{-(\beta+\frac{\eps+1}{2})},
\end{equation}
where
\begin{equation}%\label{eq:}
\beta+\frac{\eps+1}{2}< 1,
\end{equation}
in light of $\beta\in(0,1/2)$ and $\eps \in (0,1-2 \beta)$. This shows that the LHS in \eqref{eq:integrability_Est} is integrable as a function of $s$, in $[0,t]$, and thus Definition \ref{def:mild_sol} is well-posed.  Indeed, the integral in \eqref{eq:def mild sol} can be understood as a pointwise Lebesgue integral, namely as 
\begin{equation}%\label{eq:}
\int_0^t P'_{t-s}\big( \div_v \( u_s F(u_s)\, b_s\) \big) (x) ds, \qquad x\in\R^N,
\end{equation}
and the standard triangular inequality yields
\begin{equation}%\label{eq:}
\| u_t \|_{\beta+\eps} \leq \| P'_t  u^0 \|_{\beta+\eps} + \int_0^t \big\|P'_{t-s}\big( \div_v ( u_s F(u_s)\, b_s)\big) \big\|_{\beta+\eps} ds.
\end{equation}

\end{remark}
%We remind the equivalence between $u\in \mathit{C}_T \C^{\beta+}_B$ and the existence of $\alpha>\beta$ such that $u_t\in \C^\alpha_B$ for all $t\in [0,T]$. Without loss of generality, we assume that $\alpha\in (\beta,1-\beta)$; so for the First Schauder's estimate \eqref{eq:schauder 1}
%\begin{align}
%\|P_{t-s}[ \div_v ( u_s F(u_s)\, b_s)] \|_{\alpha}\lesssim_C (t-s)^{-\frac{\alpha+\beta+1}{2}} \|\div_v ( u_s F(u_s)\, b_s)\|_{-\beta-1}\leq_{\eqref{eq:est div sing}} C (t-s)^{-\frac{\alpha+\beta+1}{2}},\end{align}
%with
%\[ \frac{\alpha+\beta+1}{2}< 1.\]
%We can now define a different, but equivalent concept of solution for \eqref{eq:FP cauchy probl}.
%\begin{remark}
%For any continuous $g:[0,T]\to \mathcal{S}'$, by definition,  $P_{t-s} g_s:[0,t]\longrightarrow \mathcal{S}'$ is also continuous. As a result $\int_0^t P_{t-s}g_s ds$ in the previous definition is well defined as an element of $\mathcal{S}'$.
%\end{remark}
\begin{remark} \label{Sprime}
  \begin{enumerate}
    \item
The right-hand side of $\eqref{eq:def mild sol}$ can  also be considered as a Bochner's integral with values in
$\mathcal{S}'$ equipped with its strong topology, being the strong dual of the vector space $\mathcal{S}$, see e.g. \cite[Chapter 4, Example 1 a]{walsh86}. 
For $k\in \N$, let $(E_k,\|\cdot \|_k)$ be the normed space
\begin{equation}\label{eq:k}
E_k:=\{\varphi\in C^\infty(\R^N)|\, \|\varphi\|_k:=\sup_{z\in \R^N,\, |\alpha|\leq k} |D^\alpha \varphi(z)| \left(|z|^2+1\right)^{k}<\infty \}.
\end{equation}
Then $\mathcal{S}$ is a vector (metrizable) space given by the intersection of the $E_k$'s, while $\mathcal{S'}$ coincides with the union of the spaces $E^*_k$, which are %Banach space given by 
the strong duals of  $(E_k,\|\cdot\|_k)$.
The (strong) topology of ${\mathcal S}'$ is the one corresponding with the inductive limit of the $E^*_k$'s. 
\item
%defined as the dual of  $(E_k,\|\cdot\|_k)$.
Since $E^*_k$ endowed with the strong topology is a Banach space, it is then possible to define the Bochner's integral $\int_0^T g_s ds$ for $g\in\Leb^\infty_T \mathcal{S}'$. The latter 
%In general, we can define the Bochner's integral as in the right-hand side of $\eqref{eq:def mild sol}$ for elements of the following space.
%\begin{definition}
%We denote by $\Leb^\infty_T \mathcal{S}'$
is the space of strongly measurable $g:[0,T]\rightarrow \mathcal{S}'$
such that there is $k$ with $g(t) \in E_k^*$ for a.e. $ t\in [0,T]$ and that
$ t \mapsto \Vert g(t) \Vert_{E_k^*}$  belongs to $L^\infty([0,T])$. 
%that are uniformly bounded a.e. in $[0,T]$ with respect to the strong topology of $\mathcal{S}'$.
\item   Specifically, if $g\in \Leb^\infty_T \mathcal{S}'$
then we have the following.
  % if and only if  
\begin{itemize}
\item[i)] The map $t\mapsto \langle u_t|\, \varphi\rangle$ is Borel for all $\varphi\in \mathcal{S}$.
\item[ii)] There exists $C=C(g)>0$, $k\in \N$ such that  for all $\varphi\in \mathcal{S}$
\begin{equation}%\label{eq:}
|\langle g_t |\, \varphi\rangle| \leq C \|\varphi\|_k  \qquad \text{a.e. in }[0,T]. 
\end{equation} 
\end{itemize}
\item We say that $g:[0,T] \to {\mathcal S}'$ belongs to
  $C^{AC}([0,T]; {\mathcal S}')$ if there is $g':[0,T] \to {\mathcal S}'$
  continuous with $g(t) = g(0) + \int_0^t g'(s) ds$ for any $t \in [0,T]$.
  \end{enumerate}
\end{remark}
%\end{definition}
%For $k\in \N$, let $(E_k,\|\cdot \|_k)$ be the normed space
%\[E_k:=\{\varphi\in C^\infty(\R^N)|\, \|\varphi\|_k:=\sup_{z\in \R^N,\, |\alpha|\leq k} |D^\alpha \varphi(z)| \left(|z|^2+1\right)^{k}<\infty \}.\]
%By definition, $g\in \Leb^\infty_T \mathcal{S}'$ if its range is contained in some $E^*_k$, defined as the dual of  $(E_k,\|\cdot\|_k)$. Since $E^*_k$ endowed with the strong topology is a Banach space, it is then possible to define the Bochner's integral $\int_0^T g_s ds$.
The next result shows the equivalence between mild and weak solutions.  Although the statement assumes $u\in \CT{\beta+\eps}$ in accordance with Definitions \ref{def:weak_sol_FP} and \ref{def:mild_sol}, its proof relies on the weaker assumption that $u\in\Leb^\infty_T \mathcal{S}'$, the latter being the natural framework in order to make sense of the the concepts of mild and weak solutions to the linear version of the Fokker-Planck equation.
\begin{proposition}
  \label{prop:mild_weak_equiv}
Let $u\in \CT{\beta+\eps}$. Then $u$ is a weak solution of \eqref{eq:FP cauchy probl} if and only if it is a mild solution.
\end{proposition}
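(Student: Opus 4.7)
The strategy is to regard both sides of \eqref{eq:def mild sol} as identities in $\mathcal{S}'$ and to use the duality \eqref{eq:def semigroup tempered} together with the fact that $s\mapsto P_s\varphi$ is a differentiable curve in $\mathcal{S}$ for $\varphi\in\mathcal{S}$, with derivative $\Ac P_s\varphi$. For brevity, set
\begin{equation}
 g_s:=\div_v\bigl(u_s F(u_s)\,b_s\bigr),\qquad s\in[0,T].
\end{equation}
By Remark \ref{rem:div_reg} and the assumption $u\in\CT{\beta+\eps}$, we have $g\in\Linf{-\beta-1}$; by the Schauder estimate \eqref{eq:integrability_Est}, $s\mapsto P'_{t-s}g_s$ is integrable in $\C_B^{\beta+\eps}$ on $[0,t]$, so both formulations are well posed in the sense of Remark~\ref{Sprime}. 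The core identity to be exploited is that, for every $\varphi\in\mathcal{S}$ and $0\le r\le t$,
\begin{equation}
 P_{t-r}\varphi \;=\; \varphi+\int_{r}^{t}\Ac P_{s-r}\varphi\,ds,
\end{equation}
where the integral is taken in the strong topology of $\mathcal{S}$; in particular, integrating against a distribution commutes with this integral.

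\textbf{Mild $\Rightarrow$ weak.} Fix $\varphi\in\mathcal S$ and apply it to \eqref{eq:def mild sol}. By the duality \eqref{eq:def semigroup tempered},
\begin{equation}
 \langle u_t|\varphi\rangle=\langle u^0|P_t\varphi\rangle-\int_0^t\langle g_s|P_{t-s}\varphi\rangle\,ds.
\end{equation}
I would then substitute the identity $P_{t-s}\varphi=\varphi+\int_s^t \Ac P_{r-s}\varphi\,dr$ (and analogously for $P_t\varphi$), apply Fubini to swap the $s$- and $r$-integrations (justified by the bounds above and the fact that $\Ac P_{r-s}\varphi$ stays in $E_k$ uniformly in the relevant variables), and recognise $\int_0^r P'_{r-s}g_s\,ds=P'_r u^0-u_r$ inside the double integral via the mild equation at time $r$. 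The terms involving $P_s$ cancel, leaving exactly
\begin{equation}
 \langle u_t|\varphi\rangle=\langle u^0|\varphi\rangle+\int_0^t\langle u_r|\Ac\varphi\rangle\,dr-\int_0^t\langle g_s|\varphi\rangle\,ds,
\end{equation}
which, after one distributional integration by parts in the last term, is \eqref{eq:weak sol def}.

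\textbf{Weak $\Rightarrow$ mild.} Fix $t\in[0,T]$ and $\psi\in\mathcal S$, set $\varphi(s):=P_{t-s}\psi\in\mathcal S$ for $s\in[0,t]$, and test \eqref{eq:weak sol def} \emph{pointwise in $s$} against $\varphi(s)$. Reading \eqref{eq:weak sol def} with a fixed $\psi$ as an identity in $s$ and differentiating formally gives $\frac{d}{ds}\langle u_s|\psi\rangle=\langle u_s|\Ac\psi\rangle-\langle g_s|\psi\rangle$ in the sense of distributions on $[0,T]$. A rigorous version of the same idea is to use the identity $\varphi(s)=\psi+\int_s^t\Ac P_{r-s}\psi\,dr$ in \eqref{eq:weak sol def}, apply Fubini, and recognise the resulting expression as the difference $\langle u_t|\psi\rangle-\langle P'_t u^0|\psi\rangle+\int_0^t\langle P'_{t-s}g_s|\psi\rangle\,ds$. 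Since $\psi\in\mathcal S$ is arbitrary, \eqref{eq:def mild sol} follows.

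\textbf{Main obstacle.} The only delicate point is the justification of the Fubini interchanges and of the differentiation of $s\mapsto P_{t-s}\psi$ in $\mathcal{S}$. This requires using that $P_s\mathcal{S}\subset\mathcal{S}$ with seminorm bounds uniform on compacts in $s$ (so that $\Ac P_{t-s}\psi$ stays in some $E_k$ with $L^\infty_s$ norm), and pairing these with the distributional bounds $u\in\CT{\beta+\eps}$ and $g\in\Linf{-\beta-1}$ through Remark~\ref{Sprime}. No further structure of $F$ or $b$ is needed beyond what already guarantees $g\in\Linf{-\beta-1}$.
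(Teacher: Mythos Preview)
Your plan is correct. The Mild $\Rightarrow$ Weak direction is exactly what the paper does ``by inspection'': one writes the mild identity, tests against $\varphi\in\mathcal S$, expands $P_t\varphi$ and $P_{t-s}\varphi$ via the semigroup identity, swaps the two time integrals, and uses the mild equation at the inner time $r$ to cancel the $\langle u^0|\Ac P_r\varphi\rangle$ contributions. One small technical point you should make explicit is that $\Ac P_s = P_s\Ac$ on $\mathcal S$ (this is where the manipulation $\langle g_s|\Ac P_{r-s}\varphi\rangle=\langle P'_{r-s}g_s|\Ac\varphi\rangle$ lives), which here follows from the fact that $\Gamma(s;y,z)$ solves both $\partial_s\Gamma=\Ac_y\Gamma$ and $\partial_s\Gamma=\Ac'_z\Gamma$.

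For Weak $\Rightarrow$ Mild your route differs from the paper's. The paper does \emph{not} run the duality computation backwards; instead it freezes $g_s:=\div_v(u_sF(u_s)b_s)$, observes that the mild expression \eqref{eq:mild g eq} always produces a weak solution of the \emph{linear} problem $\Kc'u=g$, $u_0=u^0$, and then reduces the converse to \emph{uniqueness of weak solutions of this linear problem}, for which it cites \cite[Proposition~3.18]{LucasOR}. Your argument is the standard ``time-dependent test function'' computation: set $\varphi(s)=P_{t-s}\psi$, use that the weak formulation gives absolute continuity of $s\mapsto\langle u_s|\varphi\rangle$ for each fixed $\varphi$, and combine it with $\partial_s P_{t-s}\psi=-\Ac P_{t-s}\psi$ to get $\frac{d}{ds}\langle u_s|P_{t-s}\psi\rangle=-\langle g_s|P_{t-s}\psi\rangle$, hence the mild identity after integrating on $[0,t]$. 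This is self-contained and avoids the external uniqueness reference; the trade-off is that you must justify carefully the product-rule step (the weak formulation is stated only for \emph{fixed} $\varphi$), e.g.\ via a partition argument or by first proving the weak formulation extends to $C^1([0,t];\mathcal S)$ test functions. Your ``Main obstacle'' paragraph correctly identifies this as the only genuine analytic work.
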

\begin{proof}
%We prove this result in $ \Leb^{\infty}_T \mathcal{S}'$. 
For $g\in \Leb^{\infty}_T \mathcal{S}'$, $t\in [0,T]$, and $\varphi\in \mathcal{S}$ we have
\begin{equation}
|\langle P'_{t-s}g_s|\, \varphi\rangle |=|\langle g_s|\, P_{t-s} \varphi \rangle|\leq C(g) \|\varphi \|_k,\qquad s\in [0,t],
\end{equation}
for some $k\in \N$, where $\|\cdot\|_k$ was defined in \eqref{eq:k}. Then $s\mapsto P'_{t-s}g_s \in \Leb^{\infty}_t \mathcal{S}'$ and, for $f_0\in \mathcal{S}'$, 
 the mild function $u:[0,T]\to \mathcal{S}'$ defined by
\begin{equation}
\label{eq:mild g eq}
u_t:=P'_t u^0 - \int_0^t P'_{t-s}g_s ds,\qquad t\in [0,T],
\end{equation}
is well-defined in $\Leb^\infty_T \mathcal{S}'$. By inspection, it can be verified that such a $u$ is a weak solution of
\begin{equation}
\label{eq:FP cauchy g}
\begin{cases}
%\partial_t u= 
\Kc' \, u_t = g_t,\qquad t\in(0,T),\\
u_0=u^0,
\end{cases}
\end{equation}
%Specifically, 
namely, for every $\varphi\in \mathcal{S}$
\begin{equation}
\label{eq:weak sol g}
\< u_t|\, \varphi\>= \<u^0|\, \varphi\>+ \int_0^t \big( \<u_s|\, \Ac \, \varphi\> 
%\, ds + \int_0^t  
+ \< g_s |\,  \varphi \> \big) ds , \qquad t\in [0,T].
\end{equation}
By setting $g_s:=\div_v(u_s F(u_s) b_s)$, it holds that a mild solution of \eqref{eq:FP cauchy probl} is also a weak solution.
To prove the opposite implication it is then sufficient to prove uniqueness of weak solutions to \eqref{eq:FP cauchy g} in the sense above.
This fact can be established in the same way as in
\cite[Proposition 3.18]{LucasOR}, even though that
result was stated for  finite measure-valued functions.
\end{proof}

We now state one of the main results of this section.
\begin{theorem}\label{th:well_posedness_FP}
  Let Assumptions \ref{ass:kolmogorov_op}, \ref{ass:beta and b},   and Assumption \ref{ass:phi}  for $\Phi = \tilde F$,
  hold.
 For all $\varepsilon \in (0, 1-2\beta)$ such that $u^0 \in \C_B^{\beta+\varepsilon}$ 
  there exists a unique mild solution (or equivalently, by Proposition \ref{prop:mild_weak_equiv}, a weak solution) $u\in \CT{\beta+\varepsilon}$ to \eqref{eq:FP cauchy probl}.
\end{theorem}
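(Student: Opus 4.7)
The plan is to apply Banach's fixed-point theorem to the operator
\begin{equation}
\Psi(u)_t := P'_t u^0 - \int_0^t P'_{t-s}\bigl(\div_v(u_s F(u_s) b_s)\bigr)\, ds, \qquad t\in[0,T],
\end{equation}
whose fixed points in $\CT{\beta+\varepsilon}$ are precisely the mild—and, by Proposition \ref{prop:mild_weak_equiv}, weak—solutions of \eqref{eq:FP cauchy probl}. The strategy is to establish local existence by contraction on a small time interval $[0,T_0]$ and then iterate to cover $[0,T]$. That $\Psi$ is well defined into $\Linf{\beta+\varepsilon}$ was essentially observed in Remark \ref{rem:div_reg} and in the integrability discussion around \eqref{eq:integrability_Est}; continuity in time with values in $\mathcal{C}_b$ follows by dominated convergence, using that $\mathcal{C}_B^{\beta+\varepsilon}\hookrightarrow \mathcal{C}_b$ and that the Schauder kernel $(t-s)^{-\theta}$ is integrable, with $\theta:=\beta+\tfrac{\varepsilon+1}{2}<1$ by the hypotheses $\beta\in(0,1/2)$ and $\varepsilon\in(0,1-2\beta)$.

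The core of the argument consists of two quantitative estimates. Chaining Lemma \ref{lem:F lipsch est} applied to $\tilde F$ (valid by Assumption \ref{ass:phi}), Bony's product estimate in Proposition \ref{prop:bony_prod} (applicable since $(\beta+\varepsilon)+(-\beta)=\varepsilon>0$), the Bernstein inequality \eqref{eq: Bern ineq Besov norm} to absorb $\div_v$, and the Schauder estimate \eqref{eq:schauder 1}, I obtain the stability bound
\begin{equation}
\|\Psi(u)_t\|_{\beta+\varepsilon} \leq C\|u^0\|_{\beta+\varepsilon} + C\|b\|_{T,-\beta}\int_0^t (t-s)^{-\theta}\bigl(1+\|u_s\|_{\beta+\varepsilon}\bigr)\, ds.
\end{equation}
The local Lipschitz estimate \eqref{eq:estim_Ftil_lip} yields analogously, for $u,v$ in the same space,
\begin{equation}
\|\Psi(u)_t-\Psi(v)_t\|_{\beta+\varepsilon} \leq C\|b\|_{T,-\beta}\int_0^t (t-s)^{-\theta}\bigl(1+\|u_s\|_{\beta+\varepsilon}+\|v_s\|_{\beta+\varepsilon}\bigr)\|u_s-v_s\|_{\beta+\varepsilon}\, ds.
\end{equation}
Restricted to a closed ball of fixed radius (depending on $\|u^0\|_{\beta+\varepsilon}$ and $\|b\|_{T,-\beta}$) in the analogue of $\CT{\beta+\varepsilon}$ on the smaller interval $[0,T_0]$, the first estimate gives self-invariance and the second a strict contraction, provided $T_0$ is chosen small enough. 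Banach's theorem then delivers a unique solution up to time $T_0$.

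Global extension on $[0,T]$ is obtained by iterating the local argument, taking $u_{T_0}$ as the new initial datum. To rule out shrinkage of the successive step sizes, I would first derive an a priori bound on $\sup_{t\in[0,T]}\|u_t\|_{\beta+\varepsilon}$ for any mild solution $u$; this is exactly what the first integral inequality above furnishes, through Henry's weakly singular Gronwall lemma, which applies precisely because $\theta<1$. The very same singular Gronwall argument, applied to the difference of two hypothetical global solutions, produces uniqueness on all of $[0,T]$. The main technical obstacle is the tight regularity accounting behind $\theta<1$: Bony's product consumes the cushion $\varepsilon$, the divergence $\div_v$ costs a full unit of regularity, and the Schauder estimate recovers $1+2\varepsilon$ units at the price of the singular factor $(t-s)^{-\theta}$; these losses must all fit inside the strict inequality $\theta<1$, which is exactly what the standing assumptions $\beta\in(0,1/2)$ and $\varepsilon\in(0,1-2\beta)$ encode.
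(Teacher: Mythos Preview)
Your argument is correct and follows a standard semilinear-PDE template, but the paper takes a different and somewhat more streamlined route. Rather than proving local-in-time existence and then iterating with an a~priori bound obtained via a weakly singular Gronwall inequality, the paper works on the full interval $[0,T]$ from the start. Two devices make this possible: first, the fixed-point map is set up for the \emph{fluctuation} $w_t:=u_t-P'_tu^0$ (so the initial datum only enters through a shift inside $\tilde F$), and second, the contraction is obtained with respect to an exponentially weighted norm $\|w\|_{\rho,T,\beta+\varepsilon}:=\sup_{t}e^{-\rho t}\|w_t\|_{\beta+\varepsilon}$ for $\rho$ large. This weight absorbs the singular kernel $(t-s)^{-\theta}$ globally and obviates both the a~priori bound and the gluing step; Lemma~\ref{prop:J space mapping} (referring back to \cite{issoglio_russoMK}) packages exactly this. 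The continuity in $\mathit{C}_T\C_b$ is then isolated in Lemma~\ref{lem:continuity_J}, whose proof is a bit more delicate than a single application of dominated convergence (it splits the time increment into a semigroup-continuity piece and a short-time piece). Your approach buys robustness---it is the one to reach for when the weighted-norm trick is unavailable---while the paper's buys concision. One minor slip in your regularity bookkeeping: the Schauder gain from $\C_B^{-\beta-1}$ to $\C_B^{\beta+\varepsilon}$ is $1+2\beta+\varepsilon$, not $1+2\varepsilon$; your value of $\theta$ is nevertheless correct.
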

\begin{remark}\label{rem:existence_uniq}
Notice that, given an initial condition  $u^0 \in \C_B^{\beta+\varepsilon}$ for some $\varepsilon \in (0, 1-2\beta)$,  the solution  $u$ to equation \eqref{eq:FP cauchy probl} exists in the space $\CT{\beta+\varepsilon}$ but it is unique in any larger space of the form   $\CT{\beta+\varepsilon'}$ for $0<\varepsilon' <\varepsilon$.
\end{remark}

\begin{remark}\label{rem:well_posedness_FP}
The result of Theorem \ref{th:well_posedness_FP} remains true, with identical proof, under a weaker version of Assumption \ref{ass:beta and b}, namely replacing $b\in \mathit{C}_T \C_B^{-\beta}$ by $b\in \Leb^{\infty}_T \C_B^{-\beta}$.
\end{remark}

To prove Theorem \ref{th:well_posedness_FP}, 
the idea is to find a fixed point $w^*\in\Linf{\beta+\varepsilon}$ for the map
\begin{equation}\label{eq:def_Jt}
J_t(w):= - \int_0^t P'_{t-s}[ \div_v G_s(w)]ds,\quad  \quad G_s(w):=\tilde{F}(w_s+P'_s u^0)b_s, \qquad t\in [0,T].
\end{equation}
A mild solution to \eqref{eq:FP cauchy probl} can be then obtained by setting $u_t:=w^*_t+P'_t u^0$, after verifying that $u \in \mathit{C}_T \C_b$. The uniqueness of the solution will stem from the fact that the fixed point $w^*$ is proved via contraction.
\begin{remark}
Proceeding as in Remark \ref{rem:div_reg} one can see that, if $w\in\Linf{\beta+\varepsilon}$, the integral in \eqref{eq:def_Jt} can be understood as a standard pointwise Lebesgue integral, namely
\begin{equation}%\label{eq:}
J_t(w)(x) = - \int_0^t P'_{t-s}[{\div_v} G_s(w)](x)\, ds, \qquad x\in\R^N.
\end{equation}
The standard triangular inequality also yields
\begin{equation}%\label{eq:}
\| J_t(w) \|_{\beta+\varepsilon} \leq \int_0^t  \| P'_{t-s}[{ \div_v} G_s(w)]  \|_{\beta+\varepsilon}  \, ds .
\end{equation} 
\end{remark}

For 
any $\varrho \geq 0$ we define   
the norm on $\Linf{\beta+\eps}$ given by
\begin{equation}\label{eq:equiv_norm_rho}
\|w\|_{\varrho,T,\beta+\eps}:=\sup_{t\in [0,T]}e^{-\varrho t}\|w_t\|_{\beta+\eps},
\end{equation}
and we consider the $\varrho$-closed balls
\begin{equation}
E_{\varrho,M}^{\beta+\varepsilon}:=\{w\in \Linf{\beta+\eps}\;: \|w\|_{\varrho,T,\beta+\eps}\leq M\}, \qquad M>0.
\end{equation}
\begin{remark}\label{rem:complete}
For any $\varrho \geq 0$, the norm $\|\cdot\|_{\varrho,T,\beta+\eps}$ is equivalent to $\|\cdot\|_{T,\beta+\eps}$.
Furthermore, the $\varrho$-closed balls are closed sets of $\Linf{\beta+\eps}$, and thus $(E^{\beta+\varepsilon}_{\varrho,M}, \|\cdot\|_{\varrho,T,\beta+\varepsilon})$ is a complete metric space for any $M>0$.
\end{remark}
The choice of these spaces 
is justified by the following lemma.
\begin{lemma}
\label{prop:J space mapping}
 There exist $\varrho_0, M_0>0$, depending on $T$, $\beta$, $\eps$, $\|u^0\|_{\beta+\eps}$, $\|b\|_{T,-\beta}$,
such that 
\begin{equation}
\label{eq:J-fixed ball}
J:E^{\beta+\varepsilon}_{\varrho_0,M}\longrightarrow E^{\beta+\varepsilon}_{\varrho_0,M}, \qquad M>M_0.
\end{equation}
Furthermore, for any $M>M_0$ there exists $\varrho_M>0$, also dependent on $T$, $\beta$, $\eps$, $B$, $\|u^0\|_{\beta+\eps}$, $\|b\|_{T,-\beta}$, such that 
\begin{equation}\label{eq:J is a contraction}
\|J(w)-J(w')\|_{\varrho_M,T,\beta+\varepsilon}\leq \frac{1}{2} \|w-w'\|_{\varrho_M,T,\beta+\varepsilon}, \qquad w,w' \in E^{\beta+\varepsilon}_{\varrho_0,M}.
\end{equation}
\end{lemma}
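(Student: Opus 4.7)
The plan is to estimate $\|J_t(w)\|_{\beta+\eps}$ by chaining Schauder estimates, Bernstein's inequality, Bony's product and the Nemytski-type bounds of Lemma \ref{lem:F lipsch est}, and then absorb the resulting singular-kernel factor into the weight $e^{-\rho t}$ by taking $\rho$ sufficiently large.

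\textbf{Step 1 (pointwise-in-time Besov estimate).} For $w\in E^{\beta+\eps}_{\rho,M}$ and $0\le s<t\le T$, the Schauder estimate \eqref{eq:schauder 1} applied with loss of regularity $2\beta+\eps+1$ gives
\begin{equation*}
\big\|P'_{t-s}[\div_v G_s(w)]\big\|_{\beta+\eps}\le C(t-s)^{-\gamma}\,\|\div_v G_s(w)\|_{-\beta-1},\qquad \gamma:=\frac{2\beta+\eps+1}{2}.
\end{equation*}
Since $\eps\in(0,1-2\beta)$ we have $\gamma\in(0,1)$, so the kernel is integrable. Combining the Bernstein inequality \eqref{eq: Bern ineq Besov norm}, the Bony product estimate \eqref{eq: prod estim} (applicable because $(\beta+\eps)+(-\beta)=\eps>0$), Lemma \ref{lem:F lipsch est} applied to $\Phi=\tilde F$ with $\alpha=\beta+\eps\in(0,1)$, and the trivial Schauder bound $\|P'_s u^0\|_{\beta+\eps}\le C\|u^0\|_{\beta+\eps}$, we obtain
\begin{equation*}
\|\div_v G_s(w)\|_{-\beta-1}\le C\,\|b_s\|_{-\beta}\bigl(1+\|u^0\|_{\beta+\eps}+\|w_s\|_{\beta+\eps}\bigr).
\end{equation*}

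\textbf{Step 2 (exponential weight absorbs the kernel).} Set $K(\rho):=\sup_{t\in[0,T]}\int_0^t (t-s)^{-\gamma} e^{-\rho(t-s)}\,ds$. Since $\gamma<1$, dominated convergence (or the explicit bound $K(\rho)\le \Gamma(1-\gamma)\rho^{\gamma-1}$) gives $K(\rho)\downarrow 0$ as $\rho\to\infty$. Multiplying the bound from Step~1 by $e^{-\rho t}=e^{-\rho(t-s)}e^{-\rho s}$ and taking the supremum in $t$ yields
\begin{equation*}
\|J(w)\|_{\rho,T,\beta+\eps}\le C_*\,\|b\|_{T,-\beta}\,K(\rho)\,\bigl(1+\|u^0\|_{\beta+\eps}+\|w\|_{\rho,T,\beta+\eps}\bigr)
\end{equation*}
for a universal constant $C_*$ depending only on $\beta,\eps,T,B$.

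\textbf{Step 3 (invariance of the ball).} Choose $\rho_0$ so large that $C_*\,\|b\|_{T,-\beta}\,K(\rho_0)\le \tfrac12$, and then $M_0:=1+\|u^0\|_{\beta+\eps}$. For any $M>M_0$ and $w\in E^{\beta+\eps}_{\rho_0,M}$ the bound above yields $\|J(w)\|_{\rho_0,T,\beta+\eps}\le \tfrac12(1+\|u^0\|_{\beta+\eps})+\tfrac{M}{2}\le M$, which proves \eqref{eq:J-fixed ball}.

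\textbf{Step 4 (contraction).} For $w,w'\in E^{\beta+\eps}_{\rho_0,M}$ repeat Steps~1--2 applied to
\begin{equation*}
G_s(w)-G_s(w')=\bigl(\tilde F(w_s+P'_s u^0)-\tilde F(w'_s+P'_s u^0)\bigr)\,b_s,
\end{equation*}
using this time the Lipschitz-type bound \eqref{eq:estim_Ftil_lip} instead of \eqref{eq:estim_Ftild_f}. Since $\|w_s\|_{\beta+\eps}+\|w'_s\|_{\beta+\eps}\le 2Me^{\rho_0 s}$, we obtain, after multiplication by $e^{-\rho t}$ and absorbing the $e^{\rho_0 s}e^{-\rho s}\le 1$ factor for $\rho\ge\rho_0$,
\begin{equation*}
\|J(w)-J(w')\|_{\rho,T,\beta+\eps}\le C_{**}\,\|b\|_{T,-\beta}\bigl(1+2\|u^0\|_{\beta+\eps}+2M\bigr)\,K(\rho)\,\|w-w'\|_{\rho,T,\beta+\eps}.
\end{equation*}
Choosing $\rho_M\ge\rho_0$ large enough so that the prefactor is at most $\tfrac12$ yields \eqref{eq:J is a contraction}.

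The only technical point requiring care is the verification that $\gamma<1$, which relies on the precise range $\eps\in(0,1-2\beta)$ built into Assumption \ref{ass:beta and b}; everything else is a routine consequence of the Schauder estimates of Theorem \ref{th:schauder_first}, Bony's estimate of Proposition \ref{prop:bony_prod}, and the composition estimate of Lemma \ref{lem:F lipsch est}.
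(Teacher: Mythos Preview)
Your proof is correct and follows essentially the same weighted fixed-point strategy as the paper, which simply invokes the anisotropic Schauder estimates of Theorem~\ref{th:schauder_first} and refers to \cite[Proposition~3.5 and Lemma~3.6]{issoglio_russoMK} for the details; your write-up is precisely the expected unpacking of that citation.

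One small slip: in Step~4 you claim a prefactor $(1+2\|u^0\|_{\beta+\eps}+2M)$ after ``absorbing the $e^{\rho_0 s}e^{-\rho s}\le 1$ factor''. This is not quite right, because the single $e^{-\rho s}$ coming from $e^{-\rho t}=e^{-\rho(t-s)}e^{-\rho s}$ is already used to turn $\|w_s-w'_s\|_{\beta+\eps}$ into $\|w-w'\|_{\rho,T,\beta+\eps}$; the remaining factor $Me^{\rho_0 s}$ from the Lipschitz bound has no matching $e^{-\rho s}$ left. The cleanest fix is simply to bound $Me^{\rho_0 s}\le Me^{\rho_0 T}$ uniformly on $[0,T]$, yielding a prefactor $(1+2\|u^0\|_{\beta+\eps}+2Me^{\rho_0 T})$. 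Since $\rho_0$ depends only on the stated parameters, this does not affect the existence of a suitable $\rho_M$ nor its dependence, so the argument goes through unchanged.
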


\begin{proof} Owing to the anisotropic Schauder's estimate of Theorem \ref{th:schauder_first},   the proof is identical to the one of \cite[Proposition 3.5 and Lemma 3.6]{issoglio_russoMK}.
\end{proof}
\begin{lemma}\label{lem:continuity_J}
  We have
\begin{align}%\label{eq:}
  (t\mapsto P'_{t} g ) \in  \mathit{C}_T \C_b,& \qquad g \in \C_b,
                    \label{eq:contin_semigroup} \\ 
J(w) \in  \mathit{C}_T \C_b,& \qquad w\in \Linf{\beta+\eps}.  \label{eq:contin_convol}
\end{align}
\end{lemma}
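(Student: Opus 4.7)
The plan is to combine direct kernel analysis of the Gaussian semigroup $(P'_t)$ with the Schauder estimates of Theorem \ref{th:schauder_first}, bridging the two parts via the semigroup identity $P'_{r_1+r_2}=P'_{r_1}\circ P'_{r_2}$.

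For \eqref{eq:contin_semigroup}, I would start from the change of variables $\eta=z-e^{tB}y$ in the defining integral of $P'_t g(z)$, which yields
\begin{equation*}
P'_t g(z) = \frac{1}{|\det e^{tB}|}\int_{\R^N}\Gamma_t(\eta)\,g\bigl(e^{-tB}(z-\eta)\bigr)\,d\eta.
\end{equation*}
The maps $t\mapsto\Gamma_t$ and $t\mapsto e^{tB}$ are continuous on $[0,T]$, and the Gaussian tails of $\Gamma_t$ provide a uniformly integrable dominant on any subinterval $[\tau,T]$ with $\tau>0$. Dominated convergence then yields joint continuity of $(t,z)\mapsto P'_t g(z)$ on $(0,T]\times\R^N$, together with the bound $\|P'_t g\|_\infty\leq\|g\|_\infty/|\det e^{tB}|$. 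For the behavior at $t\to 0^+$, one uses that $\Gamma_t(\eta)\,d\eta$ concentrates at the origin and that $e^{-tB}(z-\eta)\to z$; combined with the continuity of $g$, this gives $\|P'_t g-g\|_\infty\to 0$.

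For \eqref{eq:contin_convol}, fix $w\in\Linf{\beta+\eps}$ and set $f_s:=\div_v G_s(w)\in\C_B^{-\beta-1}$, which is uniformly bounded in $s$ by \eqref{eq:est_div_sing}. The Schauder estimate of Theorem \ref{th:schauder_first} gives
\begin{equation*}
\|P'_r f_s\|_{\beta+\eps}\leq C\,r^{-(\beta+(\eps+1)/2)},\qquad r\in(0,T],
\end{equation*}
with exponent strictly less than $1$ by Assumption \ref{ass:beta and b}. Since $\beta+\eps\in(0,1)$ is non-integer, $\C_B^{\beta+\eps}\hookrightarrow\C_b$ by Proposition \ref{prop:Besov-hold}, and continuity of $J(w)$ at $t=0$ follows from
\begin{equation*}
\|J_t(w)\|_\infty \leq C\int_0^t(t-s)^{-(\beta+(\eps+1)/2)}\,ds \xrightarrow{t\to 0^+} 0.
\end{equation*}
For continuity at $t\in(0,T]$, I would split, for small $h>0$ (the case $h<0$ being analogous),
\begin{equation*}
J_{t+h}(w)-J_t(w) = -\int_t^{t+h}P'_{t+h-s}f_s\,ds - \int_0^t\bigl(P'_{t+h-s}f_s - P'_{t-s}f_s\bigr)\,ds.
\end{equation*}
The first integral is $O(h^{1-\beta-(1+\eps)/2})$ in $\C_B^{\beta+\eps}$, hence in $\C_b$. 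For the second, fix $s\in[0,t)$, set $r:=t-s>0$ and $g_s:=P'_{r/2}f_s$, which by a single application of Schauder lies in $\C_B^{\gamma}$ for every $\gamma$, in particular in $\C_b$. The semigroup property yields
\begin{equation*}
P'_{t+h-s}f_s - P'_{t-s}f_s = P'_{r/2+h}g_s - P'_{r/2}g_s \longrightarrow 0 \text{ in }\C_b \text{ as }h\to 0,
\end{equation*}
by Part \eqref{eq:contin_semigroup} applied to $g_s$. Dominated convergence with the integrable dominant $C(t-s)^{-(\beta+(\eps+1)/2)}$ completes the argument.

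The main obstacle is that the integrand $f_s$ in $J_t(w)$ is a Besov distribution of negative order, so Part \eqref{eq:contin_semigroup} cannot be invoked on it directly. The workaround is the semigroup splitting $P'_r=P'_{r/2}\circ P'_{r/2}$, which regularizes $f_s$ into a $\C_b$-function before Part \eqref{eq:contin_semigroup} is applied. A secondary subtlety is justifying continuity at $t=0^+$ in Part \eqref{eq:contin_semigroup}, which requires upgrading pointwise convergence to uniform-in-$z$ convergence from continuity of $g$ and tightness of the Gaussians.
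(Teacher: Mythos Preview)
Your proof is correct and follows essentially the same route as the paper. The only difference is cosmetic: for the second integral in the decomposition of $J_{t+h}(w)-J_t(w)$, the paper sets $g_s:=P'_{t-s}f_s$ directly (which already lies in $\C_B^{\beta+\eps}\subset\C_b$ by Schauder) and invokes \eqref{eq:contin_semigroup} at $h\to 0^+$ via $P'_h g_s - g_s\to 0$, whereas you interpose the extra splitting $P'_{t-s}=P'_{r/2}\circ P'_{r/2}$ and apply \eqref{eq:contin_semigroup} at the interior point $r/2$. Your splitting is harmless but unnecessary; the ``main obstacle'' you flag is already resolved by a single application of Schauder to the full time increment $t-s$.
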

\begin{proof}
The proof of \eqref{eq:contin_semigroup} is standard: indeed $\Gamma(t-s, y , z)$ is smooth on $\{s>t\}$, and \eqref{eq:delta_solfund} holds true uniformly with respect to $z\in\R^N$. 
  Let now $w\in \Linf{\beta+\eps}$ be fixed. Lemma \ref{prop:J space mapping} implies that
  $J_t(w) \in \C_b$ for any $t\in [0,T]$. To show \eqref{eq:contin_convol},   note that applying  \eqref{eq:est_div_sing} in Remark \ref{rem:div_reg} with %$G_s$
$f = w_s + P'_s u^0 \in \C_B^{\beta+\eps}$ and Schauder's estimate \eqref{eq:schauder 1} with $\alpha=0$ yields
\begin{equation}
\label{eq:est divG_s}
\|\div_v\, G_s(w)\|_{-\beta-1}%\leq C (1+\|w_s+ P_s v_0\|_{\beta+\varepsilon})\|b_s\|_{-\beta}
\leq C(1+\|w_s\|_{\beta+\varepsilon}+\|u^0\|_{\beta+\eps})\|b_s\|_{-\beta},\qquad s\in[0,T],
\end{equation} 
and thus $\div_v G (w)\in L_T^\infty \C_B^{-\beta-1}$. Therefore, it is sufficient to show that
\begin{equation} \label{eq:suff}
t\longmapsto \int_0^t P'_{t-s}f_s ds\in \mathit{C}_T\C_b,\qquad f\in L_T^\infty \C_B^{-\beta-1}.
\end{equation}
We study the continuity in time, by splitting the integrals as follows. For $h\in (0,T-t)$, using the semigroup property, we have
\begin{align}
\int_0^{t+h} P'_{t+h-s} f_s ds-\int_0^t P'_{t-s}f_s ds %&=\int_0^t (P'_{t+h-s}f_s-P'_{t-s}f_s)ds +\int_t^{t+h} P'_{t+h-s}f_{s}ds\\
&=\int_{0}^t \Big(P'_h P'_{t-s}f_s- \underbrace{P'_{t-s}f_s}_{=:g_s}\Big)+\int_t^{t+h} P'_{t+h-s}f_{s}ds\\
&=\int_{0}^t \left(P'_hg_s- g_s\right)ds+\int_t^{t+h} P'_{t+h-s}f_{s}ds=:I^h_1+I^h_2.
\end{align}
To estimate $\|I^h_2\|_\infty$, we set $\delta\in (0,\frac{1}{2})$. By %employ 
the Schauder's estimate \eqref{eq:schauder 1} we obtain
\begin{equation}
\|I^h_2\|_\infty\leq \int_t^{t+h} \|P'_{t+h-s}f_s\|_\infty ds \leq C \int_t^{t+h} \|P'_{t+h-s}f_s\|_{\delta}\, ds\leq C\|f\|_{T,-\beta-1}\int_0^{h} (h-s)^{-\frac{\beta+1+\delta}{2}} ds \xrightarrow[h\to 0^+]{}0 .%(\sim |h|^{\frac{1-\beta-\delta}{2}}).
\end{equation}
To estimate $\|I^h_1\|_\infty$, note that \eqref{eq:contin_semigroup} yields
\begin{equation}
 \| P'_h g_s- g_s \|_\infty \to 0, \qquad \text{as} \quad h\to 0^+,
\end{equation}
for any $s\in [0,t)$. Also, $\| P'_h g_s \|_\infty \leq C \|g_s \|_\infty$, the latter being summable on $[0,t]$ (once more by Schauder's estimate \eqref{eq:schauder 1}). Therefore, Lebesgue's dominated convergence theorem yields $\|I^h_1\|_\infty \to 0$ as $h\to 0^+$. This proves continuity from the right. Continuity from the left can be shown with analogous computations.
\end{proof}

We are now in the position to prove Theorem \ref{th:well_posedness_FP}.
\begin{proof}[Proof of Theorem \ref{th:well_posedness_FP}]
Since $P_{\cdot} u^0 \in \Linf{\beta+\varepsilon}$, a function $u\in\Linf{\beta+\varepsilon}$ satisfies \eqref{eq:def mild sol} if and only if $u_t=w^*_t+P_t u^0$ with %$w^*\in\CT{\beta+\epsilon}$ such that
\begin{equation}%\label{eq:}
J(w^*) = w^* \in\Linf{\beta+\varepsilon}. 
\end{equation}
Lemma \ref{prop:J space mapping} and Remark \ref{rem:complete} allow us to apply Banach fixed point theorem and obtain that such $w^*$ exists and is unique. Finally, Lemma \ref{lem:continuity_J} ensures that both $(t\mapsto P_{t} u^0), w^* \in \mathit{C}_T \C_b$, and this completes the proof.
\end{proof}

\subsection{The regularized equation}\label{ssc:FP-cont}

For any $n\in\N$, consider the regularized Fokker-Planck Cauchy problem
\begin{equation}
\label{eq:FP cauchy probl_regul}
\begin{cases}
\Kc'  u_t =  \div_v\big(u_t F(u_t)b^{(n)}_t\big), \qquad t\in (0,T],\\
u_0= u^0,
\end{cases}
\end{equation}
where $b^{(n)}$ is the smooth approximation of $b$ defined  in  \eqref{eq:b_n bis}.

\begin{theorem}\label{thm:conv_un}
  Let Assumptions \ref{ass:kolmogorov_op}, \ref{ass:beta and b} with related $\varepsilon$, and \ref{ass:phi} for $\Phi =\tilde F$  be in force. 
 Then the following
  properties hold.
\begin{itemize}
\item[(a)]
For any $n\in\N$ and $\alpha> 0$, we have $b^{(n)}\in  \CT{\alpha} $, and the regularized Fokker-Planck Cauchy problem \eqref{eq:FP cauchy probl_regul} admits a unique mild solution $u^{(n)}\in \CT{\beta+\eps}$, in the sense of Definition \ref{def:mild_sol}, with 
\begin{equation}\label{eq:L1_bound_u}
u^{(n)}_t\geq 0,\qquad %\int_{\R^N} u^{(n)}_t (z) dz 
%\| u^{(n)}_t \|_{\Leb^1}\leq C_n, \qquad t\in[0,T],
{ \| u^{(n)}_t \|_{\Leb^1(\R^N)} \leq 1, \qquad t\in[0,T].}
\end{equation}
% where $C_n>0$. 
\item[(b)]
We have
\begin{equation}\label{eq:conv_un_u_FP}
\|u^{(n)} - u \|_{T,\beta+\eps} \rightarrow 0, \qquad \text{as } n\to \infty,
\end{equation}
where
$u \in \CT{\beta+\eps}$ is the unique solution to \eqref{eq:FP cauchy probl}. 
\end{itemize}
\end{theorem}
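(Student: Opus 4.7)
The proof naturally splits into the two parts of the statement.

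\textbf{Part (a).} That $b^{(n)} \in \CT{\alpha}$ for every $\alpha > 0$ is immediate from Lemma \ref{lem:regul_g}(i) applied to $g = b \in \mathit{C}_T \C_B^{-\beta}$. Since $\C_B^\alpha \hookrightarrow \C_B^{-\beta}$ whenever $\alpha > -\beta$, the smoothed drift $b^{(n)}$ satisfies the hypotheses of Theorem \ref{th:well_posedness_FP}, which immediately yields a unique mild solution $u^{(n)} \in \CT{\beta+\eps}$ of \eqref{eq:FP cauchy probl_regul}. To establish the non-negativity and the $L^1$-bound \eqref{eq:L1_bound_u}, the plan is to freeze the nonlinearity along $u^{(n)}$ and invoke a probabilistic representation of the resulting linear Fokker--Planck equation
\begin{align}
\Kc' v_t = \div_v\bigl(v_t F(u^{(n)}_t) b^{(n)}_t\bigr), \qquad v_0 = u^0.
\end{align}
A linearized version of the contraction used in Theorem \ref{th:well_posedness_FP} gives a unique mild solution $v \in \CT{\beta+\eps}$, which contains $u^{(n)}$ as a solution. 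On the other hand, the drift $F(u^{(n)}_t(z))b^{(n)}_t(z) + \Bc_0 z$ is continuous in $z$ and of at most linear growth, so by Stroock--Varadhan weak well-posedness and the hypoellipticity granted by Assumption \ref{ass:kolmogorov_op}, the associated degenerate SDE \eqref{eq:SDEgeneralLangevin}-type process admits a weak solution whose marginal densities lie in $\CT{\beta+\eps}$. Itô's formula shows these densities are a mild solution of the same linear PDE, so uniqueness forces them to coincide with $u^{(n)}$. This delivers \eqref{eq:L1_bound_u}.

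\textbf{Part (b).} Subtracting the mild formulations of $u$ and $u^{(n)}$ gives
\begin{align}
u_t - u^{(n)}_t = -\int_0^t P'_{t-s}\,\div_v\!\Bigl[\bigl(\tilde F(u_s) - \tilde F(u^{(n)}_s)\bigr) b_s + \tilde F(u^{(n)}_s)\bigl(b_s - b^{(n)}_s\bigr)\Bigr] ds.
\end{align}
Pick $\eta \in (0,\, 1 - 2\beta - \eps)$, so that both exponents $\beta + \tfrac{1+\eps}{2}$ and $\beta + \tfrac{1+\eps+\eta}{2}$ stay strictly below $1$. Combining the Bernstein inequality \eqref{eq: Bern ineq Besov norm}, the Bony product bound (Proposition \ref{prop:bony_prod}), the Schauder estimate \eqref{eq:schauder 1}, and Lemma \ref{lem:F lipsch est} applied to $\tilde F$, I would obtain
\begin{align}
\|u_t - u^{(n)}_t\|_{\beta+\eps} \leq C_1 \int_0^t (t-s)^{-(\beta + \frac{1+\eps}{2})} \|u_s - u^{(n)}_s\|_{\beta+\eps}\, ds + C_2\, \delta_n,
\end{align}
with $\delta_n := \|b - b^{(n)}\|_{T,-\beta-\eta}$, which tends to $0$ by Lemma \ref{lem:regul_g}(ii). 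The constants $C_1,C_2$ depend on $T$, $\|b\|_{T,-\beta}$ and on the uniform bound $M := \sup_n \|u^{(n)}\|_{T,\beta+\eps}$. Such a uniform bound exists because $\|b^{(n)}\|_{T,-\beta} \leq C\|b\|_{T,-\beta}$ uniformly in $n$ (the Paley--Littlewood blocks commute with the convolution by the mollifier, which is an $L^\infty$-contraction), so Lemma \ref{prop:J space mapping} applies with constants independent of $n$. A standard fractional Gronwall argument -- equivalently, absorbing the Volterra kernel into the weighted norm \eqref{eq:equiv_norm_rho} with $\rho$ large -- then gives $\|u - u^{(n)}\|_{T,\beta+\eps} \leq C\, \delta_n \to 0$, proving \eqref{eq:conv_un_u_FP}.

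\textbf{Main obstacle.} The delicate step is the second half of Part (a): transferring the non-negativity and mass preservation from the probabilistic interpretation of the frozen-drift linear equation to the nonlinear mild solution $u^{(n)}$, which hinges on a linearized well-posedness in $\CT{\beta+\eps}$ combined with the hypoelliptic density existence/regularity for a Markov SDE whose drift is only H\"older in space. In Part (b), the technical care lies in balancing the Schauder exponent against the fact that $b - b^{(n)}$ does not converge in $\C_B^{-\beta}$: this is bypassed by sacrificing a sliver $\eta$ of regularity, chosen so as to keep the singular kernel integrable.
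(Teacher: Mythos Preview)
Your Part (b) is correct and matches the paper's approach; the paper packages the same estimate into a stability lemma (Lemma \ref{lem:comparison_mild_sol_FP}) which refers to the same Gronwall/weighted-norm argument you outline, with the same choice of sacrificing $\eta$ units of regularity to exploit the convergence of $b^{(n)}$ in $\C_B^{-\beta-\eta}$.

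Your Part (a), however, has a genuine gap at exactly the point you flag as the main obstacle, and the paper's proof takes a different route to close it. After freezing the nonlinearity you obtain a \emph{linear} FP equation with drift $\Bc^{(n)}:=F(u^{(n)})b^{(n)}$, and you want to identify $u^{(n)}$ with the marginal densities of the SDE driven by $\Bc^{(n)}$. The problem is that $\Bc^{(n)}$ is only in $\CT{\beta+\eps}$ (the factor $F(u^{(n)})$ caps the regularity, regardless of how smooth $b^{(n)}$ is), and since $\beta+\eps<1$ the divergence form FP cannot be rewritten in non-divergence form with a H\"older zeroth-order term: $\div_v\Bc^{(n)}$ is a genuine distribution. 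Hypoellipticity alone tells you the SDE has smooth densities for $t>0$, but not that they sit in $\CT{\beta+\eps}$ down to $t=0$, nor that they satisfy the mild equation. Without that, the uniqueness statement you have (Theorem \ref{th:well_posedness_FP}, which is uniqueness in $\CT{\beta+\eps}$) does not bite, and the identification fails.

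The paper fixes this by a \emph{second} regularization: it mollifies $\Bc^{(n)}$ to $\Bc^{(n,m)}\in\CT{\alpha}$ for all $\alpha>0$, so that the degenerate operator now has a genuine fundamental solution $\Gamma^{(m)}$ (by \cite{lucertini2023optimal}). Positivity and $\int\Gamma^{(m)}\,dz=1$ give \eqref{eq:L1_bound_u} for the doubly regularized solution $u^{(n,m)}$ directly from the Duhamel representation. Then one shows $\|u^{(n,m)}-u^{(n)}\|_{T,\beta+\eps}\to 0$ as $m\to\infty$ (a simpler replay of your Part (b) argument), and concludes via Fatou's lemma. The key point you are missing is that with merely H\"older drift $\Bc^{(n)}$ there is no off-the-shelf fundamental-solution or density-regularity result in this degenerate setting that places the SDE densities in the uniqueness class; the extra mollification is what buys access to the classical theory.
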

The rest of the section is devoted to the proof of Theorem \ref{thm:conv_un}. We start by proving the first part.
\begin{proof}[Proof of Theorem \ref{thm:conv_un}, Part (a)] 
Here $n$ is fixed. 
By Lemma \ref{lem:regul_g}-(i), $b^{(n)}\in \Linf{\alpha} $ for any $\alpha> 0$. In particular, we have $b^{(n)}\in \Linf{-\beta} $, and, by  
Remark \ref{rem:well_posedness_FP}, 
the regularized FP Cauchy problem \eqref{eq:FP cauchy probl_regul} admits a unique mild solution $u^{(n)} $ such that  
\begin{equation}\label{eq:unCb}
u^{(n)}\in \CT{\beta+\eps}.
\end{equation}
We now show \eqref{eq:L1_bound_u}. Setting 
\begin{equation}%\label{eq:}
{ \Bc^{(n)}_t(z):= F\big(u^{(n)}_t(z)\big)b^{(n)}_t(z)}, \qquad t\in[0,T],\quad z\in \R^N,
\end{equation}
the function $u^{(n)}$ solves the linear FP Cauchy problem 
\begin{equation}
\label{eq:FP cauchy probl_regul_lin}
\begin{cases}
\Kc'  u_t = \div_v\big(u_t { \Bc^{(n)}_t}\big), \qquad t\in (0,T],\\
u_0= u^0.
\end{cases}
\end{equation}  
   We can  show that
   ${ \Bc^{(n)}}\in \CT{\beta+\eps}$:  by Lemma \ref{lem:regul_g}-(i), $b^{(n)}\in C_T \C_b\cap \Linf{\beta+\eps}$, and  $  F\big(u^{(n)}_t(z)\big)\in \Linf{\beta+\eps} $ thanks to Lemma \ref{lem:F lipsch est} with $\Phi=F$, and $  F\big(u^{(n)}_t(z)\big)\in  C_T \C_b\cap \Linf{\beta+\eps} $ thanks to \eqref{eq:unCb},
   thus $\Bc^{(n)} \in C_T \C_b\cap \Linf{\beta+\eps}$.
 Since the PDE in \eqref{eq:FP cauchy probl_regul_lin} is written in divergence form, the results about classical solutions for degenerate operators in non-divergence form are not directly applicable since $\Bc^{(n)} $ is not regular enough.
 We thus need a further regularization step. For any $m\in\N$, consider the linear Cauchy problem
\begin{equation}
\label{eq:FP cauchy probl_regul_lin_reg}
\begin{cases}
\Kc'  u_t {- \div_v\big(u_t { \Bc^{(n,m)}_{t}}\big) =0}, \qquad t\in (0,T],\\
u_0= u^0,
\end{cases}
\end{equation}
where ${ \Bc^{(n,m)}}$ denotes the regular approximation of ${ \Bc^{(n,m)}}$ defined as in Lemma \ref{lem:regul_g}.
Obviously $\Bc^{(n,m)} \in C_T \C_b$ and by Lemma \ref{lem:regul_g}-(i),  $\Bc^{(n,m)}\in\CT{\alpha}$ for any $\alpha> 0$. Therefore, by \cite[Theorem 2.7 and  Remark 2.9]{lucertini2023optimal}, \eqref{eq:FP cauchy probl_regul_lin_reg} has a strong Lie solution $u^{(n,m)}$, which can be represented  (see  \cite[equation (3.1)]{lucertini2023optimal}) as
\begin{equation}\label{eq:duhamel_u_mn}
u^{(n,m)}_t = \int_{\R^N} \Gamma^{(m)} (0,y; t, \cdot) u^0(y) d y, \qquad t\in [0,T],
\end{equation}
where $\Gamma^{(m)}(s,y;t,z)$ is the fundamental solution of the operator 
%{\color{green} controllare il segno sotto}
\begin{equation}
\Kc' -  \langle  { \Bc^{(n,m)} }, \nabla_v   \rangle -  \div_v{ \Bc^{(n,m)} } 
\end{equation}
in the variables $(t,z)$. Also, proceeding like in the proof of \cite[Theorem 1.5]{difpas}, it can be shown that $\Gamma^{(m)}(s,y;t,z)$ is also the fundamental solution of the operator 
\begin{equation}%\label{eq:}
{\Kc + \langle { \Bc^{(n,m)} }, \nabla_v   \rangle }
\end{equation}
in the variables $(s,y)$. In particular $\Gamma^{(m)}>0$ { (see \cite[Theorem 1.1]{lucertini2022optimal})} and $\int_{\R^N} \Gamma^{(m)}(0,y; t, z) dz =1$ { (by \cite[Theorem 2.7]{lucertini2023optimal})}. The latter and \eqref{eq:duhamel_u_mn} imply  \eqref{eq:L1_bound_u} for $u^{(n,m)}$.  Therefore, owing to the positivity of the functions and to Fatou's lemma, to show \eqref{eq:L1_bound_u} for $u^{(n)}$ it is enough to prove
\begin{equation}\label{eq:conv_unm_un}
\|u^{(n,m)} - u^{(n)} \|_{T,\beta+\eps} \rightarrow 0, \qquad \text{as } m\to \infty.
\end{equation}
 Owing once more to 
\cite[(3.1)]{lucertini2023optimal}, it is {straightforward} to see that $u^{(n,m)}$ is also a mild solution to \eqref{eq:FP cauchy probl_regul_lin_reg} in the sense of Definition \ref{def:mild_sol}, and the proof of \eqref{eq:conv_unm_un} is a simpler version of the proof of \eqref{eq:conv_un_u_FP}: we omit the details for brevity.
\end{proof}

To prove the second part of Theorem \ref{thm:conv_un}, we need the following stability result.

\begin{lemma}\label{lem:comparison_mild_sol_FP}
Let the assumptions of Theorem \ref{thm:conv_un} be in force. Let $\tilde b \in \Linf{-\beta}$, and denote by $\tilde u \in \CT{\beta+\eps}$ the mild solution of the nonlinear FP Cauchy problem obtained by replacing $b$ with $\tilde b$ in \eqref{eq:FP cauchy probl}, which exists in light of Remark \ref{rem:well_posedness_FP}. Then, for any $0\leq \eta< \min(\eps,1-2\beta-\eps)$ there exists a function  $l_{\eps,\eta}:\R^+\times\R^+\longrightarrow \R^+$, independent of $b$ and $\tilde b$, increasing in the second variable, such that
\begin{equation}
\| u-\tilde u\|_{T,\beta+\eps}\leq l_{\eps,\eta}\big(\|u^0\|_{\beta+\varepsilon}, \|b\|_{T,-\beta-\eta}\vee \|\tilde b\|_{T,-\beta-\eta}\big)\|b-\tilde b\|_{T,-\beta-\eta},
\end{equation}
where $u \in \CT{\beta+\eps}$ is the unique solution to \eqref{eq:FP cauchy probl}. 
\end{lemma}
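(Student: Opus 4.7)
The plan is to subtract the two mild formulations, decompose the nonlinear discrepancy by an add–subtract trick, and then close a singular integral inequality of Gronwall type using Bony, Bernstein and Schauder estimates. The condition $\eta<\min(\eps,1-2\beta-\eps)$ is designed precisely so that the Bony product used below is well-defined and the resulting Schauder kernel is integrable in time.

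The first task is to obtain uniform a priori bounds on $\|u\|_{T,\beta+\eps}$ and $\|\tilde u\|_{T,\beta+\eps}$ expressed in terms of the weaker norm $\|b\|_{T,-\beta-\eta}\vee\|\tilde b\|_{T,-\beta-\eta}$ (not of $\|b\|_{T,-\beta}$). The key observation is that, setting $\beta':=\beta+\eta$ and $\eps':=\eps-\eta$, the hypotheses on $\eta$ imply $\eps'\in(0,1-2\beta')$ while $\beta'+\eps'=\beta+\eps$. Hence Theorem~\ref{th:well_posedness_FP} applied with $(\beta',\eps')$ in place of $(\beta,\eps)$ produces a unique mild solution in $\CT{\beta+\eps}$ driven by $b$ (resp.\ $\tilde b$), which by uniqueness must coincide with $u$ (resp.\ $\tilde u$). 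Inspection of the fixed-point construction in Lemma~\ref{prop:J space mapping} then gives a bound
\begin{equation}
\|u\|_{T,\beta+\eps}\vee\|\tilde u\|_{T,\beta+\eps}\leq M,
\end{equation}
where $M$ is an increasing function of $\|u^0\|_{\beta+\eps}$ and $\|b\|_{T,-\beta-\eta}\vee\|\tilde b\|_{T,-\beta-\eta}$.

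Next I would subtract the mild identities from Definition~\ref{def:mild_sol} and split
\begin{equation}
\tilde F(u_s)b_s-\tilde F(\tilde u_s)\tilde b_s=\tilde F(u_s)(b_s-\tilde b_s)+\bigl(\tilde F(u_s)-\tilde F(\tilde u_s)\bigr)\tilde b_s.
\end{equation}
Each piece is estimated by combining \eqref{eq:estim_Ftil_lip}--\eqref{eq:estim_Ftild_f} (to handle the composition with $\tilde F$), Bony's product estimate \eqref{eq: prod estim} applied in $\C^{\beta+\eps}_B\times\C^{-\beta-\eta}_B\to\C^{-\beta-\eta}_B$ (legitimate since $\eps-\eta>0$), Bernstein \eqref{eq: Bern ineq Besov norm} for $\div_v$ (which costs one unit of regularity in the $0$-block), and finally Schauder \eqref{eq:schauder 1} from $\C^{-\beta-\eta-1}_B$ to $\C^{\beta+\eps}_B$. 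The Schauder step produces the singular factor $(t-s)^{-\alpha}$ with $\alpha:=\beta+\tfrac{1+\eps+\eta}{2}$, which is strictly less than $1$ precisely because $\eta<1-2\beta-\eps$. The outcome is the integral inequality
\begin{equation}
\|u_t-\tilde u_t\|_{\beta+\eps}\leq C(1+M)\int_0^t(t-s)^{-\alpha}\Bigl[\|b_s-\tilde b_s\|_{-\beta-\eta}+\|\tilde b_s\|_{-\beta-\eta}\|u_s-\tilde u_s\|_{\beta+\eps}\Bigr]ds.
\end{equation}

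Finally I would close the estimate via the equivalent weighted norm $\|\cdot\|_{\rho,T,\beta+\eps}$ of \eqref{eq:equiv_norm_rho}: multiplying the above by $e^{-\rho t}$ and using $\int_0^t e^{-\rho(t-s)}(t-s)^{-\alpha}ds=O(\rho^{\alpha-1})$ as $\rho\to\infty$, one chooses $\rho$ (depending on $M$ and on $\|\tilde b\|_{T,-\beta-\eta}$) large enough that the coefficient in front of $\|u-\tilde u\|_{\rho,T,\beta+\eps}$ drops below $1/2$; absorbing that term on the left and reverting to the original norm through $\|u-\tilde u\|_{T,\beta+\eps}\leq e^{\rho T}\|u-\tilde u\|_{\rho,T,\beta+\eps}$ yields the claimed bound with $l_{\eps,\eta}$ increasing in its second variable through its dependence on $M$ and on $\rho$. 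The main obstacle is the bookkeeping of the two strict constraints on $\eta$: $\eta<\eps$ is dictated by the positivity requirement of Bony's product, while $\eta<1-2\beta-\eps$ is exactly the threshold ensuring $\alpha<1$ in the Schauder kernel, and neither can be relaxed without breaking one of the two building blocks.
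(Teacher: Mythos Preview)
Your proposal is correct and follows essentially the same approach as the paper: the paper's proof defers to \cite[Proposition~4.7-(i)]{issoglio_russoMK}, and what you have written is precisely the anisotropic adaptation of that argument, built on the same two ingredients the paper singles out (the Schauder estimate of Theorem~\ref{th:schauder_first} and the Lipschitz estimate for $\tilde F$ in Lemma~\ref{lem:F lipsch est}). Your observation that the shift $(\beta,\eps)\mapsto(\beta+\eta,\eps-\eta)$ preserves all structural constraints is exactly what justifies the paper's substitution $\|b\|_{T,-\beta}\mapsto\|b\|_{T,-\beta-\eta}$, and your closing via the weighted norm mirrors the fixed-point machinery of Lemma~\ref{prop:J space mapping}.
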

\begin{proof}
Owing to the anisotropic Schauder's estimate of Lemma \ref{th:schauder_first}, and the continuity of the function $\tilde F$ by Lemma \ref{lem:F lipsch est}, the proof is a straightforward modification of the one of \cite[Proposition 4.7-(i)]{issoglio_russoMK}, upon substituting $\| u\|_{T,\alpha}$, $\|\tilde u\|_{T,\alpha}$ with $\| u\|_{T,\beta + \eps}$, $\| \tilde u\|_{T,\beta + \eps}$ and $\| b\|_{T,-\beta}$, $\|\tilde b\|_{T,-\beta}$ with $\| b\|_{T,-\beta - \eta}$, $\| \tilde b\|_{T,-\beta - \eta}$.
\end{proof}
We are now in the position to prove Part (b) of Theorem \ref{thm:conv_un}.

\begin{proof}[Proof of Theorem \ref{thm:conv_un}, Part (b)]\vspace{2pt}
The result stems from combining Lemma \ref{lem:regul_g}-(ii) with  Lemma \ref{lem:comparison_mild_sol_FP}  with $\tilde u = u^{(n)}$ and $\tilde b = b^{(n)}$ and using the fact that $$l_{\varepsilon, \eta} (\|u^0\|_{\beta+\varepsilon},  \|b\|_{T,-\beta-\eta}\vee \| b^{(n)}\|_{T,-\beta-\eta}) \leq l_{\varepsilon, \eta} (\|u^0\|_{\beta+\varepsilon} ,  \|b\|_{T,-\beta-\eta}\vee  \sup_n\|b^{(n)}\|_{T,-\beta-\eta}). $$ 
\end{proof}
\begin{remark}
The result of Theorem \ref{thm:conv_un}  remains true, with identical proof, under a weaker version of Assumption \ref{ass:beta and b}, namely replacing $b\in \mathit{C}_T \C_B^{-\beta}$ by $b\in \Leb^{\infty}_T \C_B^{-\beta}$.
\end{remark}

\section{Backward Kolmogorov PDE with singular drift}\label{sec:Kolmogorov PDE}

In this section we study the backward Kolmogorov Cauchy problem associated to the Markovian kinetic SDE with singular drift, obtained by %\eqref{eq:mkv_kinetic_singular}, i.e.
\eqref{eq:mkv_singular} with   $F(u)b$ replaced by $\Bc$. %and with $b\in\mathit{C}_T \C_B^{-\beta}$. 
Namely, for any ${\lambda\geq 0}$, we consider the backward Cauchy problem
\begin{equation}
\label{eq:back_cauchy probl}
\begin{cases}
%\partial_t u= 
\Kc  u + \langle \Bc , \nabla_v \rangle u  = \lambda u + g  , \qquad t\in (0,T),\\
u_T=\ell,
\end{cases}
\end{equation}
with $\Kc$ being the backward Kolmogorov operator associated to the Markovian SDE \eqref{eq:SDEgeneralLangevin}, defined in \eqref{eq:kolm_const}. 
Throughout this section we let Assumption \ref{ass:kolmogorov_op} be in force, and we fix  $g, \Bc \in \Linf{-\beta}$ and $\ell \in \C_B^{1+\beta+\eps}$, with $T>0$, $\beta \in (0,1/2)$ and $ \eps \in (0,1-2\beta)$.

\subsection{Well-posedness of the PDE}
\begin{definition}\label{def:mild_sol_back}

  We say that a function $u\in \CT{1+\beta+\eps}$ is a \emph{mild solution} for the backward Kolmogorov Cauchy problem \eqref{eq:back_cauchy probl} if the integral equation 
\begin{equation}
\label{eq:back_cauchy probl_def}
u_t = P_{T-t} \ell - \int_t^T P_{s-t}\big(  \lambda u_s + g_s - \langle \Bc_s , \nabla_v \rangle u_s \big) ds, \qquad t\in [0,T],
\end{equation}
is satisfied.
\end{definition}
\begin{remark}
If $u\in \CT{1+\beta+\eps}$, then Schauder's estimate \eqref{eq:schauder 1} yields
\begin{equation}\label{eq:integrability_Est_bis}
\big\| P_{s-t}\big(  \lambda u_s + g_s - \langle \Bc_s , \nabla_v \rangle u_s \big) \big\|_{1+\beta+\eps}\leq C (s-t)^{-(\beta+\frac{\eps+1}{2})} \| \lambda u_s + g_s - \langle \Bc_s , \nabla_v \rangle u_s \|_{-\beta}\leq%_{\eqref{eq:est div sing}} 
C (s-t)^{-(\beta+\frac{\eps+1}{2})},
\end{equation}
where
\begin{equation}%\label{eq:}
\beta+\frac{\eps+1}{2}< 1 
\end{equation}
in light of $\beta\in(0,1/2)$ and $\eps \in (0,1-2 \beta)$. This shows that the LHS in \eqref{eq:integrability_Est_bis} is integrable as a function of $s$, in $[t,T]$, and thus Definition \ref{def:mild_sol_back} makes sense.
Indeed, the integral in \eqref{eq:back_cauchy probl_def} can be understood as a pointwise Lebesgue integral, namely as 
\begin{equation}%\label{eq:}
\int_t^T P_{s-t}\big(  \lambda u_s + g_s - \langle \Bc_s , \nabla_v \rangle u_s \big)(x) ds, \qquad x\in\R^N,
\end{equation}
and the standard triangular inequality yields
\begin{equation}%\label{eq:}
\| u_t  \|_{1+\beta+\eps} \leq \| P_{T-t} \ell  \|_{1+\beta+\eps} + \int_t^T \big\| P_{s-t}\big(  \lambda u_s + g_s - \langle \Bc_s , \nabla_v \rangle u_s \big) \big\|_{1+\beta+\eps} ds.
\end{equation}
\end{remark}

\begin{definition}\label{def:weak_sol_defBis}
We say that a function $u\in \CT{1+\beta+\eps}$ is a \emph{weak solution} to the backward Kolmogorov Cauchy problem \eqref{eq:back_cauchy probl} if it is a distributional solution. Namely, for all $\varphi\in \mathcal{S}$ we have%and $t\in [0,T]$ it holds that
\begin{equation}%\label{eq:weak sol def}
\label{eq:weak_sol_defBis}
\< \ell\, |\, \varphi\>= \<u_t|\, \varphi\>- \int_t^T \<u_s|\, \Ac' \, \varphi\> \, ds -\sum_{i=1}^d \int_t^T \big\< (\Bc_s)_i (\partial_{v_i}  u_s) |\, \varphi \big\> ds + \int_t^T \< \lambda u_s +g_s |\, \varphi \> ds
 , \qquad t\in [0,T],
\end{equation} 
where $\Ac'$ is the formal adjoint of $\Ac$ as defined in \eqref{eq:FP operator kinetic_bis}.
\end{definition}
The proof of the next statement is fully analogous to that of Proposition \ref{prop:mild_weak_equiv}, 
and thus is omitted.
\begin{proposition}\label{prop:equivalence_sol_backward}
A function $u\in \CT{1+\beta+\eps}$ is a {weak solution} to the backward Kolmogorov Cauchy problem \eqref{eq:back_cauchy probl} if and only if it is a mild solution.
\end{proposition}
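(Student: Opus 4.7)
The strategy is to closely mirror the proof of Proposition \ref{prop:mild_weak_equiv}, adapted to the backward setting. First, I would rewrite the nonlinear equation in the form of a \emph{linear} backward Kolmogorov problem with a purely distributional forcing. Setting
\begin{equation*}
h_s := \lambda u_s + g_s - \langle \Bc_s, \nabla_v \rangle u_s,
\end{equation*}
I first need to verify that $h\in \Linf{-\beta}$. Since $u\in \Linf{1+\beta+\eps}$, the Bernstein inequality \eqref{eq: Bern ineq Besov norm} yields $\nabla_v u \in \Linf{\beta+\eps}$; as $\Bc \in \Linf{-\beta}$ and $(\beta+\eps)+(-\beta) = \eps > 0$, Proposition \ref{prop:bony_prod} ensures that the Bony product $\langle \Bc_s, \nabla_v\rangle u_s$ lives in $\C_B^{-\beta}$ uniformly in $s$; the remaining terms $\lambda u_s$ and $g_s$ trivially belong to $\C_B^{-\beta}$. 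The proposition thus reduces to the equivalence between mild and weak formulations of the linear backward problem $\Kc w = h$, $w_T = \ell$, with $h\in \Linf{-\beta}$ and $\ell\in \C_B^{1+\beta+\eps}$.

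For the implication ``mild $\Rightarrow$ weak'', I would proceed exactly as in \cite[Proposition 3.18]{LucasOR} (the reference invoked for Proposition \ref{prop:mild_weak_equiv}): pair the mild identity \eqref{eq:back_cauchy probl_def} against an arbitrary $\varphi\in\mathcal{S}$, transfer each $P_{s-t}$ onto the test function via the duality \eqref{eq:def semigroup tempered}, and use the fact that $(\tau \mapsto P'_\tau \varphi)$ is a smooth $\mathcal{S}$-valued trajectory solving $\partial_\tau P'_\tau \varphi = \Ac' P'_\tau \varphi$ with $P'_0 \varphi = \varphi$. A Fubini-type manipulation together with the identity $\frac{d}{dt} P_{T-t} \ell = - \Ac P_{T-t} \ell$ in the distributional sense then recovers \eqref{eq:weak_sol_defBis}. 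The integrability of the semigroup pairings on $[t,T]$ follows from Schauder's estimate \eqref{eq:schauder 1} exactly as in \eqref{eq:integrability_Est_bis}, so all interchanges of integration are justified.

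For the converse ``weak $\Rightarrow$ mild'', it suffices to establish uniqueness of weak solutions to the linear backward problem with prescribed $h$ and $\ell$. I would perform the time reversal $\tilde{u}_t := u_{T-t}$, $\tilde{h}_t := h_{T-t}$, which turns the backward operator $\Kc = \partial_t + \Ac$ into the forward operator $-\partial_t + \Ac$ with initial datum $\tilde{u}_0 = \ell$. Since $\Ac$ is hypoelliptic of the same structural form as $\Ac'$ (the drift matrix $B$ appears through $\langle Bz, \nabla_z\rangle$ instead of its formal adjoint, but this does not affect the Fourier/characteristic argument), the uniqueness proof of \cite[Proposition 3.18]{LucasOR} invoked in the forward case applies verbatim. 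Alternatively, one may argue directly by passing to Fourier variables and integrating along the characteristic curves of the first-order part, obtaining a linear transport ODE whose unique solution in $\Linf{} \mathcal{S}'$ is given by the mild formula.

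The main delicate point is the consistent interpretation of the singular drift term across the two formulations. In the mild identity \eqref{eq:back_cauchy probl_def} the product $\langle \Bc_s, \nabla_v\rangle u_s$ must be read as a Bony product in $\C_B^{-\beta}$, whereas \eqref{eq:weak_sol_defBis} uses the pairing $\<(\Bc_s)_i(\partial_{v_i}u_s)|\varphi\>$. These agree because $\partial_{v_i} u_s \in \C_B^{\beta+\eps}$ is a continuous (indeed H\"older) function by Proposition \ref{prop:Besov-hold} and Remark \ref{rm:holder}, so its product with the distribution $(\Bc_s)_i\in \C_B^{-\beta}$ admits an unambiguous definition that coincides with the Bony product; a standard mollification argument (using Lemma \ref{lem:regul_g} and the continuity estimate \eqref{eq: prod estim}) makes this identification rigorous. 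Once this is settled, the equivalence mild $\Leftrightarrow$ weak for \eqref{eq:back_cauchy probl} follows at once from the linear analysis above.
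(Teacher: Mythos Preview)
Your proposal is correct and takes essentially the same approach as the paper, which omits the proof entirely with the remark that it is fully analogous to Proposition \ref{prop:mild_weak_equiv}. The reduction to a linear backward problem with distributional forcing $h_s = \lambda u_s + g_s - \langle \Bc_s, \nabla_v\rangle u_s$, the mild $\Rightarrow$ weak direction by pairing and semigroup duality, and the weak $\Rightarrow$ mild direction via uniqueness (your time reversal to reduce to the forward setting handled by \cite[Proposition 3.18]{LucasOR}) are precisely the natural backward analogue of that argument.
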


We now state the first main result of this section. 
\begin{theorem}\label{thm:sol_back_CP}
Let Assumption \ref{ass:kolmogorov_op}  hold, and let $g, \Bc \in \Linf{-\beta}$ and $\ell \in \C_B^{1+\beta+\eps}$, for some  $\eps \in (0,1-2\beta) $. Then there exists a unique mild solution $u\in \CT{1+\beta+\eps}$ to \eqref{eq:back_cauchy probl}.
\end{theorem}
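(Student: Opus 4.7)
The plan is to recast \eqref{eq:back_cauchy probl} in its mild (Duhamel) form \eqref{eq:back_cauchy probl_def} and produce the solution as the unique fixed point of the affine map
\begin{equation}
J(u)_t := P_{T-t}\ell - \int_t^T P_{s-t}\bigl(\lambda u_s + g_s - \langle \Bc_s,\nabla_v \rangle u_s\bigr)\, ds, \qquad t\in[0,T],
\end{equation}
first on $\Linf{1+\beta+\eps}$ equipped with a suitable exponentially-weighted norm, and then upgrading the resulting fixed point to $\CT{1+\beta+\eps}$ by a direct continuity argument. Since the problem is linear, no ball restriction is required and the main issue is to verify that $J$ is well defined and contractive on all of $\Linf{1+\beta+\eps}$.

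The first key point is to make sense of the product $\langle \Bc_s,\nabla_v\rangle u_s$ via Bony's paraproduct (Proposition \ref{prop:bony_prod}). Indeed, by the anisotropic Bernstein inequality \eqref{eq: Bern ineq Besov norm}, a first-order derivative in the $v$-block costs exactly one unit of regularity, so $\nabla_v u_s\in \C_B^{\beta+\eps}$ with $\|\nabla_v u_s\|_{\beta+\eps}\leq C\|u_s\|_{1+\beta+\eps}$. Since $\Bc_s\in \C_B^{-\beta}$ and $-\beta+(\beta+\eps)=\eps>0$, Proposition \ref{prop:bony_prod} gives
\begin{equation}
\|\langle \Bc_s,\nabla_v\rangle u_s\|_{-\beta}\leq C\,\|\Bc_s\|_{-\beta}\,\|u_s\|_{1+\beta+\eps}.
\end{equation}
Combining this with $\|g_s\|_{-\beta}<\infty$, $\|\lambda u_s\|_{1+\beta+\eps}<\infty$, and the Schauder estimate of Theorem \ref{th:schauder_first}, one obtains
\begin{equation}
\|P_{s-t}(\lambda u_s+g_s-\langle \Bc_s,\nabla_v\rangle u_s)\|_{1+\beta+\eps}\leq C(s-t)^{-\frac{1+2\beta+\eps}{2}}\bigl(\|g\|_{T,-\beta}+(\lambda+\|\Bc\|_{T,-\beta})\|u\|_{T,1+\beta+\eps}\bigr),
\end{equation}
where the exponent satisfies $\tfrac{1+2\beta+\eps}{2}<1$ precisely because $\eps<1-2\beta$. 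Hence the Duhamel integral converges as a Bochner integral in $\C_B^{1+\beta+\eps}$ and $J$ is well defined on $\Linf{1+\beta+\eps}$.

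Next I introduce the equivalent norm $\|u\|_{\rho,T,1+\beta+\eps}:=\sup_{t\in[0,T]}e^{\rho(T-t)}\|u_t\|_{1+\beta+\eps}$, analogous to \eqref{eq:equiv_norm_rho} but weighted from the terminal time since the equation is backward. For any two $u,u'\in \Linf{1+\beta+\eps}$, the linear part of $J$ satisfies, after inserting $e^{\rho(T-s)}e^{-\rho(T-s)}$ under the integral,
\begin{equation}
e^{\rho(T-t)}\|J(u)_t-J(u')_t\|_{1+\beta+\eps}\leq C(\lambda+\|\Bc\|_{T,-\beta})\|u-u'\|_{\rho,T,1+\beta+\eps}\int_t^T(s-t)^{-\frac{1+2\beta+\eps}{2}}e^{-\rho(s-t)}ds,
\end{equation}
and the last integral can be made arbitrarily small (uniformly in $t$) by choosing $\rho$ large enough, since the singularity at $s=t$ is integrable. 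Thus $J$ is a strict contraction on $(\Linf{1+\beta+\eps},\|\cdot\|_{\rho,T,1+\beta+\eps})$ for $\rho$ sufficiently large depending on $T,\beta,\eps,\lambda,\|\Bc\|_{T,-\beta}$, and Banach's fixed-point theorem yields a unique $u\in\Linf{1+\beta+\eps}$ with $J(u)=u$. Uniqueness in $\CT{1+\beta+\eps}$ is inherited automatically.

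To conclude that $u\in\CT{1+\beta+\eps}$, i.e.\ that $t\mapsto u_t$ is continuous from $[0,T]$ to $\C_b$, one argues exactly as in Lemma \ref{lem:continuity_J}: the map $t\mapsto P_{T-t}\ell$ is continuous into $\C_b$ by \eqref{eq:contin_semigroup}, and the Duhamel integral of the $\Linf{-\beta}$-valued integrand $\lambda u_s+g_s-\langle \Bc_s,\nabla_v\rangle u_s$ is continuous in $t$ with values in $\C_b$ by the same splitting $I_1^h+I_2^h$ used there, the only change being that the integrable singularity is now $(s-t)^{-(1+2\beta+\eps)/2}$, which is $L^1$ near zero by the same restriction $\eps<1-2\beta$. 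The main obstacle throughout is bookkeeping the regularity exponents so that Bony's paraproduct and Schauder's estimate give an integrable-in-time bound; once this is in place the linear structure makes the fixed-point step essentially automatic.
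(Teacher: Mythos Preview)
Your approach is essentially identical to the paper's: fixed point for the Duhamel map on $\Linf{1+\beta+\eps}$ with an exponentially weighted norm, then upgrade to $\mathit{C}_T\C_b$ via the argument of Lemma \ref{lem:continuity_J}. The paper simply cites \cite[Theorem 4.7]{issoglio2022pde} for the contraction step and notes that the anisotropic Schauder estimate (Theorem \ref{th:schauder_first}) replaces the heat-semigroup estimate used there; your write-up fills in exactly those details.

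There is, however, a sign slip in your weighted norm. With your definition $\|u\|_{\rho,T,1+\beta+\eps}=\sup_t e^{\rho(T-t)}\|u_t\|_{1+\beta+\eps}$ one has $\|u_s\|\leq e^{-\rho(T-s)}\|u\|_\rho$, and inserting this gives
\[
e^{\rho(T-t)}\int_t^T(s-t)^{-\frac{1+2\beta+\eps}{2}}e^{-\rho(T-s)}\,ds=\int_t^T(s-t)^{-\frac{1+2\beta+\eps}{2}}e^{+\rho(s-t)}\,ds,
\]
which blows up as $\rho\to\infty$. The correct backward weight is $e^{-\rho(T-t)}$ (this is what the paper uses), and with that choice your displayed bound with $e^{-\rho(s-t)}$ is exactly what comes out. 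Since your subsequent reasoning already uses the correct damping factor, this is a typo-level inconsistency rather than a conceptual gap; just flip the sign in the norm definition.
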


\begin{remark}
If $0<\eps' <\eps$ then clearly $\ell \in \C_B^{1+\beta+\eps'}$ and the unique solution $u \in \CT{1+\beta+\eps'}$ coincides with the solution $u \in \CT{1+\beta+\eps}$, by uniqueness in the larger space $ \CT{1+\beta+\eps'} $.
\end{remark}

\begin{proof}[Proof of Theorem \ref{thm:sol_back_CP}] 
To prove Theorem \ref{thm:sol_back_CP},
we follow the same strategy employed in the parabolic case (\cite[Theorem 4.7]{issoglio2022pde}), namely $d=N$ and $B\equiv 0$. The idea is to find a unique fixed point $u\in\Linf{1+\beta+\eps}$ for the solution map
\begin{equation}
I_t(w):= P_{T-t} \ell - \int_t^T P_{s-t}\big(  \lambda w_s + g_s - \langle \Bc_s , \nabla_v \rangle w_s \big) ds, \qquad t\in [0,T],
\end{equation}
and to show that $u\in \mathit{C}_T \C_b$.

Relying on the anisotropic Schauder estimate of Theorem \ref{th:schauder_first}, one can proceed exactly as in the proof of \cite[Theorem 4.7]{issoglio2022pde} and obtain that $I$ is a contraction from the complete metric space 
$(\Linf{1+\beta+\eps}, \| \cdot \|_{\varrho,T,1+\beta+\eps})$ onto itself for $\varrho>0$ suitably large, where $\| f \|_{\varrho,T,1+\beta+\eps}  : = \sup_{t\in[0,T]} e^{-\varrho(T-t)} \|f_t\|_{ 1+\beta+\eps}$ is the analogous backward of the norm defined in \eqref{eq:equiv_norm_rho}. Banach fixed point theorem then yields that there exists a unique fixed point $u\in\Linf{\beta+\eps+1}$ for $I$. Eventually, a straightforward modification of the proof of Lemma \ref{lem:continuity_J} yields $u\in \mathit{C}_T \C_b$ and completes the proof.
\end{proof}

Then next result shows that, if $\Bc_t,g_t$ are H\"older-continuous functions, then the mild solution $u$ is a solution also in some classical sense and enjoys additional regularity.
\begin{proposition}\label{prop_lie_sol}
Let Assumption \ref{ass:kolmogorov_op} be in force. If $\ell \in \C^{2+\alpha}_B$, $g \in \CT{\alpha}$ for $\alpha\in(0,1)$ and $ \Bc\in \CT{\alpha+\nu}$ for some $\nu>0$,  then the mild solution $u$ to \eqref{eq:back_cauchy probl} belongs to $\Cc^{2+\alpha}_{B,T}$ (see Definition \ref{def:intrinsicHolder}) 
and is also a strong Lie solution in the sense of \cite[Definition 2.6]{lucertini2023optimal}.
\end{proposition}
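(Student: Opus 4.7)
The plan is to invoke the intrinsic Schauder theory of Lucertini and Pagliarani \cite{lucertini2023optimal} to produce a strong Lie solution $\tilde u \in \mathit{C}^{2+\alpha}_{B,T}$ to \eqref{eq:back_cauchy probl} under the present (more regular) data, and then to identify $\tilde u$ with the mild solution $u$ via the uniqueness statement of Theorem \ref{thm:sol_back_CP}. This strategy is natural since a direct bootstrap via the distributional Schauder estimate of Theorem \ref{th:schauder_first} only gains \emph{almost} two orders of anisotropic Besov regularity (due to the $t^{-\alpha/2}$ singularity), falling just short of the endpoint $\gamma=2+\alpha$; the intrinsic Hölder Schauder theory of \cite{lucertini2023optimal} is precisely what closes this gap.

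Under the hypotheses $\ell\in \C_B^{2+\alpha}$, $g\in \mathit{C}_T\C_B^{\alpha}$ and $\Bc\in \mathit{C}_T\C_B^{\alpha+\nu}$, the equivalence of anisotropic Besov and anisotropic Hölder spaces for non-integer indices (Proposition \ref{prop:Besov-hold}) places the data within the scope of \cite{lucertini2023optimal}: the full operator $\Kc+\langle \Bc,\nabla_v\rangle-\lambda$ is a degenerate Kolmogorov operator perturbed by a first-order term with anisotropically $(\alpha+\nu)$-Hölder coefficient, so that the strict excess regularity $\nu>0$ furnishes the classical Schauder margin. Appealing to \cite[Theorem~2.7 and Remark~2.9]{lucertini2023optimal} (possibly after the elementary time-reversal $v(t,z):=u(T-t,z)$ that recasts \eqref{eq:back_cauchy probl} as a forward Cauchy problem), one obtains a strong Lie solution $\tilde u\in \mathit{C}^{2+\alpha}_{B,T}$.

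I would next verify that $\tilde u$ is also a mild solution in the sense of Definition \ref{def:mild_sol_back}. Since $\tilde u\in \mathit{C}^{2+\alpha}_{B,T}$, the derivatives $\partial_{v_i}\tilde u$, $\partial_{v_i v_j}\tilde u$ for $i,j=1,\dots,d$ and the Lie derivative $Y\tilde u$ are bounded and continuous on $[0,T]\times\R^N$, and the PDE holds pointwise. Applying Itô's formula to $s\mapsto \tilde u(s,Z^z_s)$ along the Gaussian process $Z$ in \eqref{eq:SDEgeneralLangevin}, justified by mollifying $\tilde u$ along the flow of $Y$ via Lemma \ref{lem:density_intrinsic_spaces} and passing to the limit (this is clean because the diffusion of $Z$ only engages $v$-derivatives, already covered by $\mathit{C}^{2+\alpha}_{B,T}$), then yields after inserting the pointwise PDE and taking expectations the identity
\begin{equation*}
\tilde u_t=P_{T-t}\ell-\int_t^T P_{s-t}\bigl(\lambda\tilde u_s+g_s-\langle\Bc_s,\nabla_v\tilde u_s\rangle\bigr)\,ds.
\end{equation*}
Because $\mathit{C}^{2+\alpha}_{B,T}\subset \mathit{C}_T\mathcal{C}_b\cap \Leb^\infty_T\C_B^{2+\alpha}\subset \CT{1+\beta+\varepsilon}$ (automatic from $\alpha>0$ and $\beta+\varepsilon<1$), the function $\tilde u$ lies in the class in which mild solutions are unique; hence $u=\tilde u$, and the conclusion follows.

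The main obstacle I anticipate is matching the hypotheses of \cite{lucertini2023optimal} with ours: their existence and regularity theorems are typically stated in terms of anisotropic Hölder regularity of the data in space together with continuity in time, and it must be verified in detail that our assumption $\Bc\in \mathit{C}_T\C_B^{\alpha+\nu}$ (together with $g\in \mathit{C}_T\C_B^\alpha$) genuinely fits their framework, rather than the a priori stronger intrinsic joint regularity $\Bc\in \mathit{C}^{\alpha+\nu}_{B,T}$; the first-order nature of the perturbation and the strict excess $\nu>0$ should make this immediate, but the argument hinges on a careful reading of their hypotheses. A secondary, more routine technical point is the justification of the Itô formula for a function with only intrinsic $\mathit{C}^{2+\alpha}$ regularity, which proceeds by the mollification scheme of Lemma \ref{lem:density_intrinsic_spaces} and dominated convergence for the Lebesgue and stochastic integrals.
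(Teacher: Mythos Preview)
Your proposal is correct and follows essentially the same route as the paper: invoke \cite[Theorem~2.7 and Remark~2.9]{lucertini2023optimal} to produce a strong Lie solution $\tilde u\in \mathit{C}^{2+\alpha}_{B,T}$, then identify it with the mild solution via the uniqueness of Theorem~\ref{thm:sol_back_CP}. The only difference is in how you verify that $\tilde u$ satisfies the mild equation: the paper simply cites the Duhamel representation \cite[(3.1)]{lucertini2023optimal}, which already gives \eqref{eq:back_cauchy probl_def} directly, whereas you propose to recover it via It\^o's formula along the Gaussian process and a mollification argument---this works but is unnecessary once that reference is in hand.
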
 
\begin{remark} In particular, the function $u$ solves the PDE pointwise, everywhere on $[0,T]\times\mathbb{R}^N$, where the term $Yu$ that appears in $\Kc u$ makes sense as a Lie derivative (see \eqref{eq:def Lie der}). 
\end{remark}
\begin{proof}[Proof of Proposition \ref{prop_lie_sol}]
  By \cite[Theorem 2.7 and Remark 2.9]{lucertini2023optimal} (see also \cite{brampol}), \eqref{eq:back_cauchy probl} admits a (unique) Lie solution $u\in \Cc{2+\alpha}_{B,T}$. Furthermore, by \cite[(3.1)]{lucertini2023optimal}, the latter
 verifies equation \eqref{eq:back_cauchy probl_def}, and thus $u$ coincides with the unique mild solution to \eqref{eq:back_cauchy probl}.
\end{proof}

\subsection{Continuity properties}
We recall it is assumed that $g,\Bc \in \Linf{-\beta}$ and $\ell \in \C_B^{1+\beta+\eps}$, with  $\beta\in(0,1/2)$ and $\eps \in (0,1-2\beta)$. 
Let $\Bc^{(n)},g^{(n)}\in  \Linf{-\beta}$ and $\ell^{(n)}\in\C_B^{1+\beta+\eps}$ denote any  approximations of $b$, $g$ and $\ell$, respectively, such that 
\begin{equation}%\label{eq:}
  \|\Bc-\Bc^{(n)}\|_{T, -\beta-\eta},  \|g-g^{(n)}\|_{T, -\beta-\eta}, \|\ell-\ell^{(n)}\|_{1+\beta+\eps-\eta} \rightarrow 0 \qquad \text{as } n\to \infty,
\end{equation}
for all $\eta>0$. An example of how to construct said approximation  is given in by \eqref{eq:b_n bis} and  Lemma \ref{lem:regul_g} ensures the convergence. 
For any fixed $n\in\N$, we consider the unique {mild (or,
  equivalently, weak)} solution $u^{(n)}$ to the approximate backward Kolmogorov Cauchy problem
\begin{equation}
\label{eq:back_cauchy probl_reg}
\begin{cases}
%\partial_t u= 
\Kc  u + \langle \Bc^{(n)} , \nabla_v \rangle u  = \lambda u + g^{(n)}  , \qquad t\in (0,T),\\
u_T=\ell^{(n)}.
\end{cases}
\end{equation}

\begin{theorem}\label{thm:conv_un_back}
Let Assumption \ref{ass:kolmogorov_op} be in force and let $g, \Bc \in \Linf{-\beta}$ and $\ell \in \C_B^{1+\beta+\eps}$ for some $\eps \in (0,1-2\beta) $.
\begin{itemize}
\item[(a)] {Let $\lambda \geq 0$. %For any $\eps' \in (0,\eps)$, 
For any $\eta>0$,} we have
\begin{equation} \label{eq:cont_propr}
\|u^{(n)} - u \|_{T,1+\beta+{\eps -\eta}} \longrightarrow 0, \qquad \text{as } n\to \infty,
\end{equation}
where  
$u^{(n)},u\in\CT{1 + \beta+\eps}$ 
are the unique mild solutions 
to \eqref{eq:back_cauchy probl_reg} and \eqref{eq:back_cauchy probl}, respectively.
\item[(b)] {Let $\ell\equiv 0$.  There exists $\bar\lambda >0$, dependent on $T$, $\beta$, $\eps$, $\| \Bc \|_{T,-\beta}$ and $\| g \|_{T,-\beta}$ such that for any $\lambda>\bar \lambda$ we have  }
\begin{equation}\label{eq:bound_grad_u_one_half}
\|  u^{(n)} \|_{T,1 + \beta + \eps}  \leq \frac{1}{2}, \qquad n\in \N.
\end{equation}
\end{itemize}
\end{theorem}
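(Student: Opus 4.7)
The plan is to combine mild-solution formulations with Schauder and Bony-product estimates: a Gronwall argument in the Besov norm $\|\cdot\|_{1+\beta+\eps-\eta}$ for Part (a), and an exponentially weighted reformulation of the mild equation for Part (b). As a preliminary step I would observe that, inspecting the fixed-point argument behind Theorem \ref{thm:sol_back_CP}, the contraction parameters depend on the data only through $\|\ell^{(n)}\|_{1+\beta+\eps}$, $\|g^{(n)}\|_{T,-\beta}$ and $\|\Bc^{(n)}\|_{T,-\beta}$, which are uniformly bounded in $n$ (standard mollifications, as in Lemma \ref{lem:regul_g}, are norm-contractive on the relevant Besov spaces). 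Hence $\sup_{n}\|u^{(n)}\|_{T,1+\beta+\eps}<\infty$.

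\emph{Part (a).} Subtracting the mild equations for $u^{(n)}$ and $u$ yields, for every $t\in[0,T]$,
\begin{align}
u^{(n)}_t-u_t &= P_{T-t}(\ell^{(n)}-\ell) - \int_t^T P_{s-t}\Bigl[\lambda(u^{(n)}_s-u_s) + (g^{(n)}_s-g_s) \\
& \qquad - \langle \Bc^{(n)}_s-\Bc_s,\nabla_v\rangle u^{(n)}_s - \langle \Bc_s,\nabla_v\rangle(u^{(n)}_s-u_s)\Bigr]\dd s.
\end{align}
I would then estimate the $\|\cdot\|_{1+\beta+\eps-\eta}$-norm of each term by combining Theorem \ref{th:schauder_first} with Proposition \ref{prop:bony_prod}. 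Picking $\eta<\eps$, the two critical products satisfy $\nabla_v u^{(n)}_s\in\C_B^{\beta+\eps}$ and $\Bc^{(n)}_s-\Bc_s\in\C_B^{-\beta-\eta}$ (sum $\eps-\eta>0$, product in $\C_B^{-\beta-\eta}$), and $\nabla_v(u^{(n)}_s-u_s)\in\C_B^{\beta+\eps-\eta}$ with $\Bc_s\in\C_B^{-\beta}$ (sum $\eps-\eta>0$, product in $\C_B^{-\beta}$); in both cases the Schauder-induced exponent $\alpha/2$ stays strictly below $1$ thanks to $\eps<1-2\beta$. Assembling the bounds produces a weakly-singular backward inequality
\begin{equation}
\|u^{(n)}_t-u_t\|_{1+\beta+\eps-\eta}\le A_n + C\int_t^T\bigl[(s-t)^{-\alpha/2}+\lambda\bigr]\|u^{(n)}_s-u_s\|_{1+\beta+\eps-\eta}\dd s,
\end{equation}
with $A_n\to 0$ by the convergence of the data together with the uniform bound on $\|u^{(n)}\|_{T,1+\beta+\eps}$. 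A standard (weakly-singular) Gronwall lemma then delivers \eqref{eq:cont_propr}.

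\emph{Part (b).} With $\ell\equiv 0$, Definition \ref{def:mild_sol_back} is not directly useful, since the $\lambda u^{(n)}$-source yields a contribution that grows linearly in $\lambda$. I would instead set $v^{(n)}_t:=e^{\lambda(T-t)}u^{(n)}_t$; a direct computation shows that $v^{(n)}$ satisfies $\Kc v^{(n)}+\langle \Bc^{(n)},\nabla_v\rangle v^{(n)}=e^{\lambda(T-t)}g^{(n)}$ with $v^{(n)}_T=0$, i.e.\ the PDE with $\lambda=0$. Writing the mild equation for $v^{(n)}$ and substituting back produces the equivalent exponentially weighted formulation
\begin{equation}
u^{(n)}_t=-\int_t^T e^{-\lambda(s-t)}P_{s-t}\bigl(g^{(n)}_s-\langle \Bc^{(n)}_s,\nabla_v\rangle u^{(n)}_s\bigr)\dd s,
\end{equation}
which agrees with the mild solution of Definition \ref{def:mild_sol_back} by uniqueness in Theorem \ref{thm:sol_back_CP}. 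Applying Schauder and Bony estimates as in Part (a), now with $\alpha=1+2\beta+\eps$, gives
\begin{equation}
\|u^{(n)}\|_{T,1+\beta+\eps}\le C\bigl(\|g^{(n)}\|_{T,-\beta}+\|\Bc^{(n)}\|_{T,-\beta}\|u^{(n)}\|_{T,1+\beta+\eps}\bigr)\int_0^T e^{-\lambda r}r^{-\alpha/2}\dd r,
\end{equation}
and the integral is bounded by $\Gamma(1-\alpha/2)\lambda^{\alpha/2-1}\to 0$ as $\lambda\to\infty$. Choosing $\bar\lambda$ so large that $C\|\Bc^{(n)}\|_{T,-\beta}\Gamma(1-\alpha/2)\bar\lambda^{\alpha/2-1}\le 1/4$ uniformly in $n$ (possible by the uniform bound on $\|\Bc^{(n)}\|_{T,-\beta}$) and then further enlarging $\bar\lambda$ to absorb the $g^{(n)}$-contribution yields $\|u^{(n)}\|_{T,1+\beta+\eps}\le 1/2$.

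\emph{Main obstacle.} The most delicate step is the exponentially weighted substitution underpinning Part (b), which is indispensable to convert the $\lambda u$-source into an exponentially decaying semigroup kernel; one must either verify the equivalent formulation from first principles or identify it with the mild solution of Definition \ref{def:mild_sol_back} via uniqueness. In Part (a), the parameter bookkeeping — choosing $\eta<\eps$ so that Bony's product is admissible and the resulting Schauder exponents remain sub-critical — is routine but must be tracked carefully.
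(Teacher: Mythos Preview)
Your proposal is correct and follows essentially the same route as the paper. For Part (a) you use a weakly-singular Gronwall inequality where the paper (citing \cite{issoglio2022pde}) works with the equivalent $\rho$-weighted norms $\|\cdot\|_{\rho,T,1+\beta+\eps-\eta}$; for Part (b) you derive precisely the exponentially weighted mild formulation \eqref{eq:un_duhamel} that the paper also obtains, and your direct Schauder step with $\alpha=1+2\beta+\eps$ is in fact slightly cleaner than the paper's version, which introduces an auxiliary small parameter $\eta<\min(\eps,1-2\beta-\eps)$ that is not strictly needed there.
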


\begin{remark}\label{rem:contraction_u}
Combining Part (a) and Part (b) of Theorem \ref{thm:conv_un_back}, one trivially obtains that \eqref{eq:bound_grad_u_one_half} also holds for the unique  solution $u$ in $ \CT{1 + \beta+\eps}$ to the singular Cauchy problem \eqref{eq:back_cauchy probl} with $\lambda>\bar\lambda$ and $\ell\equiv 0$. 
\end{remark}

\begin{proof}[Proof of Theorem \ref{thm:conv_un_back}]
  We first prove Part (a). It is enough to show the claim for $\eta <  \eps $. Owing to the anisotropic Schauder's estimate of Lemma \ref{th:schauder_first}, the proof is a straightforward modification of the one of \cite[Lemma 4.17-(ii)]{issoglio2022pde} where we substitute
  $\| u - u^{(n)}\|_{\varrho,T,1+\alpha}$ with $\| u -  u^{(n)}\|_{\varrho,T,1+\beta + \eps-\eta}$, $\| \Bc - \Bc^{(n)}\|_{T,-\beta}$ with $\| \Bc -  \Bc^{(n)}\|_{T,-\beta - \eta}$, $\| g - g^{(n)}\|_{\varrho,T,-\beta}$ with $\| g -  g^{(n)}\|_{\varrho,T,-\beta - \eta}$ and  $\| \ell - \ell^{(n)}\|_{1+\alpha}$ with $\| \ell -  \ell^{(n)}\|_{1+\beta+\eps-\eta}$.

We now prove Part (b). 
Let $n\in\N$ be fixed. 
Proceeding similarly as in the proof of \cite[Proposition 4.13-(ii)]{issoglio2022pde} one obtains that $u^{(n)}$ is a solution of 
\begin{equation}\label{eq:un_duhamel}
u^{(n)}_t = - \int_t^T e^{-\lambda (s-t)} P_{s-t}\big(  g^{(n)}_s - \langle \Bc^{(n)}_s , \nabla_v  \rangle u^{(n)}_s  \big) ds, \qquad t\in [0,T] .  
\end{equation}
In fact it is enough to choose $\eta < \min(\eps,1-2\beta-\eps)$
and denote by $C$, indistinctly, any positive constant that depends, at most, on $T$, $\beta$ and $\eps$.
For any $0\leq t < s \leq T$, applying the Schauder estimate of Theorem \ref{th:schauder_first} yields
\begin{align}%\label{eq:}
\|  P_{s-t}\big(  g^{(n)}_s - \langle \Bc^{(n)}_s , \nabla_v  \rangle u^{(n)}_s  \big) \|_{1+\beta+\eps} & \leq C (s-t)^{-\frac{1 + \eps + \eta + 2\beta}{2}} \|    g^{(n)}_s - \langle \Bc^{(n)}_s , \nabla_v \rangle u^{(n)}_s  \|_{-\beta-\eta}
\intertext{(by Proposition \ref{prop:bony_prod}, since $\eps-\eta>0$)}
  & \leq C (s-t)^{-\frac{1 + \eps + \eta + 2\beta}{2}} \big( \|    g^{(n)}_s \|_{-\beta-\eta} + \|  \Bc^{(n)}_s \|_{-\beta-\eta} \| \nabla_v u^{(n)}_s  \|_{\beta+\eps} \big)\\
&  \leq C (s-t)^{-\frac{1 + \eps + \eta + 2\beta}{2}} \big( \|    g_s^{(n)} \|_{-\beta} + \|  \Bc_s^{(n)} \|_{-\beta} \| \nabla_v u^{(n)}_s  \|_{\beta+\eps} \big) \\
&  \leq C (s-t)^{-\frac{1 + \eps + \eta + 2\beta}{2}} \big( \sup_m\|    g_s^{(m)} \|_{-\beta} + \sup_m\|  \Bc_s^{(m)} \|_{-\beta} \|  u^{(n)}_s  \|_{1+\beta+\eps}\big),
\end{align}
which, combined with \eqref{eq:un_duhamel}, yields
\begin{align*}%\label{eq:}
\|  u^{(n)}  \|_{T,1+\beta+\eps} & \leq C  \big( \sup_m\|    g^{(m)} \|_{T,-\beta} +\sup_m\|  \Bc^{(m)} \|_{T,-\beta}  \|  u^{(n)}  \|_{T,1+\beta+\eps} \big)  \sup_{t\in[0,T]} \int_t^T e^{-\lambda (s-t)} (s-t)^{-\frac{1 + \eps + \eta + 2\beta}{2}}ds\\
& \leq C   \big( \sup_m\|    g^{(m)} \|_{T,-\beta} +\sup_m\|  \Bc^{(m)} \|_{T,-\beta}  \|  u^{(n)}  \|_{T,1+\beta+\eps} \big)   \int_0^T e^{-\lambda s} s^{-\frac{1 + \eps + \eta + 2\beta}{2}}ds
\end{align*}
Now, as $\eta < 1-2\beta-\eps$, the latter integral is finite and tends to $0$ as $\lambda \to + \infty$. This completes the proof.
\end{proof}

\begin{corollary}\label{cor:uniform_conv_un}
 Under the assumptions of Theorem \ref{thm:conv_un_back} we have $u^{(n)} \to u$ in $ \mathit{C}_T \C_b$ as $n\to\infty$, and in particular $u^{(n)}$ is equicontinuous in $n$ as a function on $[0,T]$ with values in $ \C_b$.
\end{corollary}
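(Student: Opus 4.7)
The plan is to derive the $\mathit{C}_T\mathcal{C}_b$ convergence directly from the anisotropic Besov convergence provided by Theorem \ref{thm:conv_un_back}(a), and then to obtain equicontinuity as a consequence of the fact that uniformly convergent sequences with continuous limits on a compact interval are equicontinuous.

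First I would fix any $\eta\in(0,\eps)$ so that $1+\beta+\eps-\eta>0$. By Theorem \ref{thm:conv_un_back}(a),
\begin{equation}
\|u^{(n)}-u\|_{T,1+\beta+\eps-\eta}\longrightarrow 0,\qquad n\to\infty.
\end{equation}
Since $1+\beta+\eps-\eta$ is positive (and, up to a further decrease of $\eta$, non-integer), Proposition \ref{prop:Besov-hold} identifies the Besov space $\C_B^{1+\beta+\eps-\eta}$ with the H\"older space $\mathit{C}_B^{1+\beta+\eps-\eta}$, which continuously embeds into $\mathcal{C}_b(\R^N)$. Therefore
\begin{equation}
\sup_{t\in[0,T]}\|u^{(n)}_t-u_t\|_{L^\infty}\leq C\,\|u^{(n)}-u\|_{T,1+\beta+\eps-\eta}\longrightarrow 0.
\end{equation}
Because $u^{(n)},u\in\CT{1+\beta+\eps}\subset\mathit{C}_T\mathcal{C}_b$ by Theorem \ref{thm:sol_back_CP}, all the maps $t\mapsto u^{(n)}_t,\;t\mapsto u_t$ are continuous with values in $\mathcal{C}_b$, so the above sup coincides with the $\mathit{C}_T\mathcal{C}_b$-norm and the convergence $u^{(n)}\to u$ takes place in $\mathit{C}_T\mathcal{C}_b$.

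For the equicontinuity statement, fix $\delta'>0$. By uniform convergence, choose $N$ such that $\sup_{t\in[0,T]}\|u^{(n)}_t-u_t\|_{L^\infty}<\delta'/4$ for all $n\geq N$. Since $u\in\mathit{C}_T\mathcal{C}_b$ and $[0,T]$ is compact, $u$ is uniformly continuous in $t$ with values in $\mathcal{C}_b$; thus there exists $\delta_0>0$ such that $\|u_s-u_t\|_{L^\infty}<\delta'/2$ whenever $|s-t|<\delta_0$. Triangular inequality then gives
\begin{equation}
\|u^{(n)}_s-u^{(n)}_t\|_{L^\infty}\leq 2\sup_{\tau\in[0,T]}\|u^{(n)}_\tau-u_\tau\|_{L^\infty}+\|u_s-u_t\|_{L^\infty}<\delta',\qquad n\geq N,\ |s-t|<\delta_0.
\end{equation}
For the finitely many indices $n<N$, each $u^{(n)}$ is uniformly continuous on $[0,T]$ with values in $\mathcal{C}_b$, hence there exists $\delta_n>0$ such that $|s-t|<\delta_n$ implies $\|u^{(n)}_s-u^{(n)}_t\|_{L^\infty}<\delta'$. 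Taking $\delta:=\min(\delta_0,\delta_1,\ldots,\delta_{N-1})$ yields a modulus of continuity which is uniform in $n$, proving the equicontinuity. There is no real obstacle here: the only substantive ingredient is the embedding $\C_B^\gamma\hookrightarrow\mathcal{C}_b$ for positive (non-integer) $\gamma$, already recorded in Proposition \ref{prop:Besov-hold}, together with Theorem \ref{thm:conv_un_back}(a).
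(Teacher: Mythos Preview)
Your proof is correct and follows exactly the route the paper intends: the paper's own proof is the single sentence ``It is a straightforward consequence of Theorem \ref{thm:conv_un_back}, Part (a),'' and you have spelled out precisely that consequence, using the embedding $\C_B^{1+\beta+\eps-\eta}\hookrightarrow\mathcal{C}_b$ (via Proposition \ref{prop:Besov-hold}) and the standard uniform-convergence-implies-equicontinuity argument.
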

\begin{proof}
 It is a straightforward consequence of Theorem \ref{thm:conv_un_back}, Part (a).
\end{proof}

\begin{lemma}
\label{lm:pre-tighness}
Let  $(\Bc^{(n)})_{n\in \N} \subset L^\infty_T \C_B^{-\beta}$  be such that $\Bc^{(n)} \to \Bc$ in $L^\infty_T \C_B^{-\beta{-\eta}}$ for some $\eta>0$.  
Let  $u^{(n)}$ be the   mild $\R^N$-valued  solution in $L_T^\infty  \C_B^{1+\beta{+\varepsilon}}$,  for some $\varepsilon \in(0, 1-2\beta)$ not depending on $n$, to the  Cauchy problem
\begin{equation}
\label{eq:Zvonkin PDE reg}
\begin{cases}
\Kc u^{(n)}+ \langle \Bc^{(n)},\nabla_v \rangle u^{(n)}= \lambda\, u^{(n)} - \begin{pmatrix}
\Bc^{(n)}\\
0
\end{pmatrix} \\
u^{(n)}_T=0,
\end{cases}
\end{equation}
which is meant component by component, where we recall that $\Bc^{(n)}$ is $d$-dimensional and the vector 0 here is $(N-d)$-dimensional. Similarly, let $u$ be the unique solution to 
 \begin{equation} 
\label{eq:Zvonkin PDE}
\begin{cases}
\Kc u+ \langle \Bc,\nabla_v \rangle u= \lambda\, u -\begin{pmatrix}
\Bc\\
0
\end{pmatrix} \\
u_T=0.
\end{cases}
\end{equation}
There exists $\bar \lambda$  such that  for all $\lambda> \bar \lambda$ we have
\begin{equation}
\label{eq:bound_grad}
\|  \nabla_v u^{(n)} \|_{\infty} \vee \|  \nabla_v  u\|_{\infty}
\le \frac{1}{2}, \qquad n\in \N.
\end{equation}
We set  
\begin{equation}%\label{eq:}
\phi^{(n)}_t(z):= z + u^{(n)}_t(z), \quad \phi_t(z):= z + u_t(z) ,\qquad t\in [0,T],\ z\in \R^N,
\end{equation}
and notice that they take values in $\mathbb R^N$. Then
we have the following.
\begin{itemize}
\item[(i)] For any fixed $t\in [0,T]$, the function $\phi^{(n)}_t$ %for $t$ fixed 
is invertible;
\item[(ii)] denoting by $\psi^{(n)}$ the inverse of $\phi^{(n)}$, the sequence $(\psi^{(n)})$ is
 equicontinuous; 
\item[(iii)] $\phi^{(n)} \to \phi$ uniformly on $[0,T] \times \R^N$; 
\item[(iv)]  $\psi^{(n)} \to \psi$ uniformly on $[0,T] \times \R^N$ where $\psi$  is the inverse of $\phi$; 
 \item[(v)]  $\sup_{t,\tilde x}  |\psi_t^{(n)}(\tilde v,\tilde x)  |\leq c (1+ |\tilde v|) $, $\forall \tilde v \in \R^d$ for a universal constant $c$ independent of $n$. 
\end{itemize}
\end{lemma}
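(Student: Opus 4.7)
The plan is to first establish the key gradient bound $\|\nabla_v u^{(n)}\|_\infty \vee \|\nabla_v u\|_\infty \leq 1/2$, which drives the entire argument. By assumption $(\Bc^{(n)})$ is uniformly bounded in $L^\infty_T\mathcal{C}_B^{-\beta}$ (it converges in a weaker norm), so applying Theorem \ref{thm:conv_un_back}(b) componentwise to \eqref{eq:Zvonkin PDE reg}--\eqref{eq:Zvonkin PDE} yields $\bar\lambda>0$ such that, for $\lambda>\bar\lambda$, we have $\|u^{(n)}\|_{T,1+\beta+\varepsilon}\leq 1/2$ uniformly in $n$, and likewise for $u$ by Remark \ref{rem:contraction_u}. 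Combining the Bernstein-type inequality \eqref{eq: Bern ineq Besov norm} with the embedding $\mathcal{C}_B^{\beta+\varepsilon}\hookrightarrow L^\infty$ (Remark \ref{rm:holder}) then gives \eqref{eq:bound_grad}. Moreover, since the last $N-d$ components of the source term in \eqref{eq:Zvonkin PDE reg}--\eqref{eq:Zvonkin PDE} vanish and the operator acts diagonally, uniqueness of mild solutions forces the last $N-d$ components of $u^{(n)}$ and $u$ to be zero, so $\phi^{(n)}_t(v,x)=(v+u^{(n),v}_t(v,x),\,x)$ where $u^{(n),v}_t$ denotes the first $d$ entries.

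With this in hand, assertions (i), (iii), (iv) and (v) are essentially direct. For (i), for each fixed $(t,\tilde x)$ the map $v\mapsto v+u^{(n),v}_t(v,\tilde x)$ is a Lipschitz perturbation of the identity on $\R^d$ with constant $\leq 1/2$, hence bi-Lipschitz onto $\R^d$ with constants $1/2$ and $3/2$; one then sets $\psi^{(n)}_t(\tilde v,\tilde x):=(\psi^{(n),v}_t(\tilde v,\tilde x),\tilde x)$. The same argument applies to $\phi_t$. For (v), the defining identity $\psi^{(n),v}_t(\tilde v,\tilde x)+u^{(n),v}_t(\psi^{(n),v}_t(\tilde v,\tilde x),\tilde x)=\tilde v$ and the bound $\|u^{(n)}\|_\infty\leq 1/2$ give $|\psi^{(n),v}_t(\tilde v,\tilde x)|\leq |\tilde v|+1/2$ uniformly in $(n,t,\tilde x)$. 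For (iii), since $\phi^{(n)}-\phi=u^{(n)}-u$ and $u^{(n)}\to u$ in $\mathit{C}_T\mathcal{C}_b$ by Corollary \ref{cor:uniform_conv_un}, uniform convergence follows. For (iv), given $(t,\tilde z)$, letting $z=\psi^{(n)}_t(\tilde z)$ we have $\phi_t(z)=\tilde z+(u_t(z)-u^{(n)}_t(z))$; applying $\psi_t$ and using that $\psi_t$ is Lipschitz in the $v$-variable with constant $2$ (by (i) applied to $\phi_t$), we obtain
\begin{equation}
|\psi^{(n)}_t(\tilde z)-\psi_t(\tilde z)|\leq 2\|u^{(n)}-u\|_{T,\infty}\xrightarrow[n\to\infty]{}0,
\end{equation}
uniformly in $(t,\tilde z)$.

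The assertion (ii) is the one requiring the most care, as a uniform-in-$n$ modulus of continuity must be produced separately in each of the variables $t$, $\tilde v$, $\tilde x$. In $\tilde v$, the uniform Lipschitz constant $2$ suffices. In $t$, subtracting the defining identities at times $t$ and $s$ yields
\begin{equation}
|\psi^{(n),v}_t(\tilde v,\tilde x)-\psi^{(n),v}_s(\tilde v,\tilde x)|\leq \|u^{(n)}_t-u^{(n)}_s\|_\infty + \tfrac{1}{2}|\psi^{(n),v}_t(\tilde v,\tilde x)-\psi^{(n),v}_s(\tilde v,\tilde x)|,
\end{equation}
so that equicontinuity in $t$ uniform in $n$ reduces to the equicontinuity of $(u^{(n)})$ in $\mathit{C}_T\mathcal{C}_b$, which is exactly Corollary \ref{cor:uniform_conv_un}. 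In $\tilde x$, the analogous trick gives $|\psi^{(n),v}_t(\tilde v,\tilde x_1)-\psi^{(n),v}_t(\tilde v,\tilde x_2)|\leq 2\sup_{v}|u^{(n),v}_t(v,\tilde x_1)-u^{(n),v}_t(v,\tilde x_2)|$, and the uniform bound on $\|u^{(n)}\|_{T,1+\beta+\varepsilon}$ together with Proposition \ref{prop:Besov-hold} supplies a uniform anisotropic Hölder modulus in $\tilde x$.

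The main technical obstacle is thus the $\tilde x$-regularity: it requires carefully exploiting the characterization of $\mathcal{C}_B^{1+\beta+\varepsilon}$ in terms of anisotropic Hölder seminorms (Proposition \ref{prop:Besov-hold}), so that the regularity loss along the degenerate blocks (which scale as powers $(2i+1)$ via the dilations \eqref{eq:dilation_lambda}) is absorbed correctly. Everything else is bookkeeping around the contraction constant $1/2$.
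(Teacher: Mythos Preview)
Your proof is correct and follows the paper's strategy for the gradient bound \eqref{eq:bound_grad} and items (i)--(iii), citing Theorem~\ref{thm:conv_un_back}(b), Remark~\ref{rem:contraction_u}, and Corollary~\ref{cor:uniform_conv_un} at the same points. One slip: your parenthetical ``it converges in a weaker norm'' does not justify uniform boundedness of $(\Bc^{(n)})$ in $L^\infty_T\mathcal{C}_B^{-\beta}$ --- convergence in $\mathcal{C}_B^{-\beta-\eta}$ gives boundedness only in that weaker space. The paper's proof of Theorem~\ref{thm:conv_un_back}(b) implicitly uses the same uniform bound $\sup_m\|\Bc^{(m)}\|_{T,-\beta}<\infty$, so this is more an unstated hypothesis of the lemma (satisfied in all applications, where $\Bc^{(n)}$ is built by mollification) than a gap specific to your argument.

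Where you genuinely diverge is in (iv) and (v), and in both cases your route is cleaner. For (iv), the paper argues pointwise convergence via a compactness/subsequence argument (boundedness of $v^{(n)}:=\psi^{(n)}_1(t,\tilde v,\tilde x)$, extraction of a limit $v^{(0)}$, identification via $\tilde v=\phi_1(t,v^{(0)},\tilde x)$), then upgrades to uniform convergence using the equicontinuity from (ii). Your direct Lipschitz estimate $|\psi^{(n)}_t(\tilde z)-\psi_t(\tilde z)|\leq 2\|u^{(n)}-u\|_{T,\infty}$ gives uniform convergence in one line and does not rely on (ii) at all. For (v), the paper chains several triangle-inequality pieces through the points $(\tilde v,0)$ and $(0,0)$, invoking the estimates established in (ii) and (iv); your observation that the defining identity $\psi^{(n),v}+u^{(n),v}(\psi^{(n),v},\tilde x)=\tilde v$ immediately yields $|\psi^{(n),v}|\leq|\tilde v|+\sup_n\|u^{(n)}\|_\infty$ is the natural argument. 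Both simplifications exploit the block structure $\psi^{(n)}_t(\tilde v,\tilde x)=(\psi^{(n),v}_t,\tilde x)$ more directly than the paper does.
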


 \begin{proof}

By Theorem \ref{thm:sol_back_CP} each component of  $u^{(n)}$ is well-defined as weak solution of \eqref{eq:Zvonkin PDE reg}. 
 Theorem \ref{thm:conv_un_back} and Remark \ref{rem:contraction_u} guarantee that the sequence of solutions $u^{(n)}$ converges in $L_T^\infty \C_B^{1+\beta+\eps}  $ for some $\eps\in(0, 1-2\beta)$
 to the unique weak solution $u$ of \eqref{eq:Zvonkin PDE}, and that $\bar \lambda$ can be chosen such that 
\eqref{eq:bound_grad} holds. 

 To prove Item  (i), we fix $t\in [0,T]$ and, for notational simplicity, we omit the variable $t$. We sometimes write $\phi^{(n)}(z) = (\phi_1^{(n)}(z), \phi_2^{(n)}(z))$, $u^{(n)}(z) = (u_1^{(n)}(z), u_2^{(n)}(z))$ and $z=(v,x)$, where the first component of these vectors is $d$-dimensional and the second is $(N-d)$-dimensional. Due to the definition of the vector $u^{(n)}(z)$ as solution of the PDE above, and by uniqueness of the PDE, we have  that $u^{(n)}_2(z) = 0$ and so 
\begin{equation}\label{eq:phi(z)}
\phi^{(n)}(z) = (\phi^{(n)}_1(z), \phi^{(n)}_2(z)) = ( v + u^{(n)}_1(z), x + u^{(n)}_2(z)) = ( v + u^{(n)}_1(z), x ).
\end{equation}
To show that $\phi^{(n)}(v,x)$ is invertible it is equivalent to find a unique solution $(v,x)$ of the system
 \begin{equation}
 \label{eq:inverse Zvonkin problem}
 \begin{cases}
 \tilde{v}=u^{(n)}_1(v,x)+v\\
 \tilde{x}=x,
 \end{cases}
 \end{equation}
 for any given $\tilde{z}=(\tilde{v},\tilde{x})\in\R^N$. If it exists,
%%%ELENA Introdurre forse dopo questa notazione.
 we denote the inverse of $\phi^{(n)}$ by $\psi^{(n)}=(\psi_1^{(n)},\psi_2^{(n)})\in \R^N$. Clearly $x:=\tilde x$ so the inverse of the second block is simply the identity, $\psi_2^{(n)} (\tilde v,\tilde x) = \tilde x$. As for the first block, by \eqref{eq:bound_grad} we have 
 \begin{equation}\label{eq:u1Lip}
\big{|} u^{(n)}_1(v,x)-u^{(n)}_1(v',x)\big{|}\leq \frac{1}{2}|v-v'|, \qquad v,v'\in\R^d, 
 \end{equation}
 uniformly in $(t,x)$, so the map %$v\mapsto \tilde{v}-u^{(n)}_1(v,x)$
  $\tilde{v}-u^{(n)}_1(\cdot,x)$ is a contraction, with fixed point in $\R^d$ denoted by $\psi_1^{(n)}(\tilde v, \tilde x)$. %$v\in \R^d$, that is $\psi_1^{(n)}(\tilde v, \tilde x) = v$. 
 It immediately follows that $(v,{x}):= (\psi_1(\tilde v, \tilde x),\psi_2(\tilde v, \tilde x))$ solves \eqref{eq:inverse Zvonkin problem} and so $\psi^{(n)}(\tilde z) = (\psi_1^{(n)}(\tilde z), \psi_2^{(n)}(\tilde z))$ is the inverse of $\phi^{(n)}(z)$.
% \[(t,\tilde{z})\longmapsto\big{(} \psi_1(t,\tilde{v},\tilde{x}),\psi_2(t,\tilde{v},\tilde{x})\big{)} =\big{(}\underbrace{\tilde{v}-u^{(n)}_t(v,x)}_{=v},\,\tilde{x}\big{)},\]
% is the inverse transformation.
 
%\[\psi(t,\tilde{v},\tilde{x})=\left( \psi_1(t,\tilde{v},\tilde{x}),\psi_2(t,\tilde{v},\tilde{x})\right)\]
We now verify Item (ii), that is  the equicontinuity of $(\psi^{(n)})$; to this aim, it is enough to consider only the first component $\psi_1^{(n)} $  because $\psi_2^{(n)}$ is simply the identity in the $\tilde x$ variable. For  $(\psi_1^{(n)})$ we have
\begin{equation}\label{eq:J}
|\psi_1^{(n)} (t,\tilde v, \tilde x) - \psi_1^{(n)} (t',\tilde v', \tilde x')| \leq  |\psi_1^{(n)} (t,\tilde v, \tilde x) - \psi_1^{(n)} (t',\tilde v, \tilde x')| + |\psi_1^{(n)} (t',\tilde v, \tilde x') - \psi_1^{(n)} (t',\tilde v', \tilde x')| =: J_1^{(n)} + J_2^{(n)}.
\end{equation}
For the term $J_1^{(n)}$ we can reduce the problem to the equicontinuity of $( u^{(n)}_1)$ in $(t,x)$ uniformly in $v$ as follows: setting
\[ v:=\psi_1^{(n)}(t,\tilde{v},\tilde{x}),\qquad v':=\psi_1^{(n)}(t',\tilde{v},\tilde{x}'),\]
 and using that $\tilde x = x$ and $\tilde x' = x'$ we get
\begin{equation}
\begin{split}
\tilde{v}&=\phi_1^{(n)}(t,v,{x})=v+u^{(n)}_1(t,v,{x})\\
\tilde{v}&=\phi_1(t',v',{x}')=v'+u^{(n)}_1(t',v',{x}').
\end{split}
\end{equation}
Then,
 \begin{align}
 |\psi_1^{(n)}(t,\tilde{v},\tilde{x})-\psi_1^{(n)}(t',\tilde{v},\tilde{x}')|
 &=|v-v'| \\
 &=\big{|}u_1^{(n)}(t,v,{x})-u_1^{(n)}(t',v', {x}')\big{|}\\
&\leq \big{|}u_1^{(n)}(t,v,{x})-u_1^{(n)}(t,v',{x})\big{|} + \big{|}u_1^{(n)}(t,v',{x})-u_1^{(n)}(t',v',{x}')\big{|}\\
&\leq \|\nabla_v u_1^{(n)}\|_\infty |v-v'|+\big{|} u_1^{(n)}(t,v',{x})-u_1^{(n)}(t',v',{x}')\big{|}\\
&\leq \frac12 | \psi_1^{(n)}(t,\tilde{v},\tilde{x})-\psi_1^{(n)}(t',\tilde{v},\tilde{x}')|+\big{|} u_1^{(n)}(t,v',{x})-u_1^{(n)}(t',v',{x}')\big{|},\label{eq:psi1}
 \end{align}
 having used Theorem \ref{thm:conv_un_back} Part (b). Moving on the LHS the first term we have
\begin{align}\label{eq:psi1i}
|\psi_1^{(n)}(t,\tilde{v},\tilde{x})-\psi_1^{(n)}(t',\tilde{v},\tilde{x}')| &\leq 2 \big{|} u_1^{(n)}(t,v',{x})-u_1^{(n)}(t',v',{x}')\big{|}\\ \nonumber
&\leq 2 \big{|} u_1^{(n)}(t,v',{x})-u_1^{(n)}(t',v',{x})\big{|} + 2 \big{|} u_1^{(n)}(t',v',{x})-u_1^{(n)}(t',v',{x}')\big{|}\\
& =: J_{11}^{(n)} + J_{12}^{(n)}. \nonumber
\end{align}
The term $J_{11}^{(n)}$ is controlled by Corollary \ref{cor:uniform_conv_un}. For the term $J_{12}^{(n)}$, since $u_1^{(n)}(t, \cdot) \in  \C^{1+ \beta+\eps}_B\subset \C^\alpha_B$ for every $\alpha \in (0,1)$, by
 Remarks \ref{rm:holder} and \ref{rm:holder2} there exists some $\nu \in (0,1)$ such that for all $t\in[0,T]$
$$  \|u_1^{(n)}(t, \cdot, \cdot) \|_{\C^\nu} \leq C \|u_1^{(n)}(t, \cdot, \cdot) \|_{\alpha}  \leq C \|u_1^{(n)}(t, \cdot, \cdot) \|_{1+\beta } \leq C  \|u_1^{(n)} \|_{T, 1+\beta } ,$$ where the latter bound is uniformly bounded in $n$ thanks to Theorem \ref{thm:conv_un_back}, Part (a). Thus,  recalling also that $\tilde x = x$ and $\tilde x'= x'$, we obtain 
$J_{12}^{(n)} \leq C \sup_n  \|u_1^{(n)} \|_{T, 1+\beta }  |\tilde x - \tilde x'|^\nu $.\\
For the term $J_2^{(n)}$ in \eqref{eq:J} we notice that 
\[
|\psi_1^{(n)} (t',\tilde v, \tilde x') - \psi_1^{(n)} (t',\tilde v', \tilde x')| \leq \|\nabla_{\tilde{v}}\psi_1 \|_\infty |\tilde v - \tilde v'|,
\]
so it is enough to show that $\nabla_{\tilde{v}} \psi_1^{(n)}$ is uniformly bounded.
As $\sup_n \|\nabla_v u_1^{(n)}\|_\infty \le \frac{1}{2}$ by \eqref{eq:bound_grad}, the Jacobian
\begin{equation}
\nabla_v \phi_1^{(n)} =I_d+\nabla_v u_1^{(n)}
\end{equation}
is nonsingular for every $(t,v,x)$.
 By the Inverse function Theorem, $\psi_1$ is of class $C^1$ in $\tilde{v}$; then
 \begin{equation}\label{eq:nabla-psi1}
 \|\nabla_{\tilde{v}}\psi_1^{(n)}\|_\infty=\|(\nabla_v \phi_1^{(n)})^{-1}\|_\infty \leq 2.
  \end{equation}
  
Item (iii) states uniform convergence of $\phi^{(n)}$ to $\phi$, and it  follows from the definition of $\phi^{(n)}$ and $\phi$ and the fact that $u_1^{(n)} \to u_1 $ uniformly by Corollary \ref{cor:uniform_conv_un}. 

Item (iv) states uniform convergence of $\psi^{(n)}$ to $\psi$. Taking into account Item (ii) and the fact that  $\psi^{(n)}_2(t, \tilde v, \tilde x) = \tilde x  = \psi_2(t, \tilde v, \tilde x)$,  it is enough to show pointwise convergence of $\psi_1^{(n)}(t, \tilde v, \tilde x)$ to $\psi_1(t, \tilde v, \tilde x)$. We define $v^{(n)}:= \psi^{(n)}_1(t, \tilde v, \tilde x)$ %and  $v:= \psi_1(t, \tilde v, \tilde x)$,
% and since $ x =\tilde x$
  we have by the inverse property   
  \begin{equation}\label{eq:tildev}
  \tilde v  = \phi_1^{(n)}(t, v^{(n)}, \tilde x) = v^{(n)}+ u_1^{(n)}(t, v^{(n)}, \tilde   x) .
  \end{equation} Since $(u_1^{(n)})$ is a uniformly bounded sequence, also $(v^{(n)}) $ is a bounded sequence and thus it converges to some $v^{(0)}$ up to a subsequence. 
  Since $(\phi_1^{(n)})$ converges uniformly  by Item (iii), taking the limit as $n\to \infty$ in \eqref{eq:tildev} we get $ \tilde v  = \phi_1(t, v^{(0)}, \tilde x) $, so by the inverse property again  $v^{(0)} = \psi_1(t, \tilde v, \tilde x)$ which concludes the proof. 

 Item (v) can be seen with similar ideas as in Item (ii). Again, we drop the variable $t$. Since $\psi^{(n)}(\tilde v, \tilde x) =(\psi^{(n)}_1( \tilde v, \tilde x) , \psi^{(n)}_2( \tilde v, \tilde x) )=(\psi^{(n)}_1(\tilde v, \tilde x) , \tilde x ) $, we focus on the linear growth of the first component. From bound \eqref{eq:psi1i} with $t'=t$, $\tilde x' =0$ we get
\[
|\psi^{(n)}_1( \tilde v, \tilde x)-\psi^{(n)}_1( \tilde v, 0)|  \leq  4 \sup_n\|u^{(n)}_1\|_{C_T \C_b},
\] 
and 
 % $v':= \psi_1^{(n)}(t, \tilde v, 0)$,  $v:= \psi_1^{(n)}(t, \tilde v, \tilde x)$ and $x:= \tilde x$, 
  together with \eqref{eq:nabla-psi1}   we have
\begin{align}
|\psi^{(n)}_1( \tilde v, \tilde x) | 
&\leq |\psi^{(n)}_1( \tilde v, \tilde x)-\psi^{(n)}_1( \tilde v, 0)| + |\psi^{(n)}_1( \tilde v, 0)-\psi^{(n)}_1( 0, 0)| + |\psi^{(n)}_1( 0,0)|\\
%& \leq 4 \sup_n\|u^{(n)}_1\|_{C_T \C_b} +  2 |\tilde v -0 |  + |\psi^{(n)}_1( 0,0)|\\
& \leq 4 \sup_n\|u^{(n)}_1\|_{C_T \C_b}  + 2 |\tilde v |  + \sup_n |\psi^{(n)}_1( 0,0)|\\
& \leq c(1+  |\tilde v |) , \label{eq:psi_1_lin_gro}
\end{align}
 having used the fact that $\sup_n\|u^{(n)}_1\|_{C_T \C_b}  $ is bounded  by Corollary \ref{cor:uniform_conv_un} and that $\sup_n|\psi^{(n)}_1( 0,0)|  $  is finite by Item (iv).
  \end{proof}

\section{The  singular McKean-Vlasov kinetic SDE}\label{sc:MKMP}

In this section we apply the results on the PDEs to define and show well-posedness for the singular McKean-Vlasov kinetic equation \eqref{eq:mkv_singular}.
Here the canonical space will be $C_T(\mathbb R^N)$  equipped, as usual, with the Borel sets with respect to the topology induced by the uniform convergence. In the sequel we will also make use of the standard \emph{canonical process} on the canonical space.

\subsection{The martingale problem formulation for the linear case}\label{sec:linear MP}
%Let $t \mapsto \Bc(t,\cdot)$ be a function with values in  $d$-dimensional Schwartz distributions, whose components belong to the anisotropic Besov space $\C_B^{-\beta}$. %\sout{Furthermore, we assume $\Bc \in\Linf{-\beta}$.} 
 %STEFANO: basta $\Bc \in\Linf{-\beta}$ oppure serve \Bc \in\Cb{-\beta}?
 Let $\mu_0$ be a probability law on $\R^N$. 
Throughout this subsection we assume that Assumption \ref{ass:kolmogorov_op} holds and $\Bc \in C_T \C^{-\beta}_B$ with $\beta \in (0,1/2)$. 
We consider the linear   case of equation \eqref{eq:mkv_singular}, which is formally
\begin{equation}\label{eq:SDE_singular}
\begin{cases}
\dd V_t = \Big (\Bc_t(Z_t) + \Bc_0 Z_t \Big) \dd t   +     \dd W_t       \\
\dd X_t = \Bc_1 Z_t \dd t,
\end{cases}
\end{equation}
with initial condition $Z_0 \sim \mu_0$, where the $\R^N$-valued  stochastic process $Z_t:= (V_t, X_t)$ is the unknown, and $B:=\begin{pmatrix}
  \Bc_0  \\ \Bc_1 
  \end{pmatrix}$  
  is given as in \eqref{B_tot}.
 The term linear refers to the fact that the corresponding Fokker-Planck equation is linear, and is reflected in the SDE by the fact that the coefficients do not depend on the law of the unknown.
   % and satisfies Assumption\ref{ass:kolmogorov_op}.
  Clearly \eqref{eq:SDE_singular} is only formal, and its meaning is given below in terms of martingale problem.

%\begin{definition}
The {backward Kolmogorov} operator $\mathcal L$  associated to the SDE \eqref{eq:SDE_singular} with drift $\Bc$ is given by 
\begin{equation}\label{eq:L}
\mathcal L u = \mathcal K u + \langle \Bc, \nabla_v  \rangle u,
\end{equation}
 where
$\mathcal K$ is given in \eqref{eq:kolm_const}
and $ u \in 
C^{AC}([0,T]; {\mathcal S}') \cap  L_T^\infty \C_B^{1+\beta}$.
%C^1 \mathcal S' \cap  L_T^\infty \C_B^{1+\beta}$.

Let $g \in \CT{\varepsilon}$ %(perche' non $\Linf{-\beta}$?)}
% $g\in C_T\C_B^{\eps}$
and  $f_T \in \C_B^{1+\beta+\varepsilon}$   for some $\varepsilon \in (0,1-2\beta)$. We denote by  $u$ the unique {weak (or, equivalently, mild)} solution of \begin{equation}\label{eq:cauchy_MP_def}
\begin{cases}
\mathcal{L} u = g\\
 u_T = f_T
 \end{cases}
 \end{equation}
 in the space $\CT{1+\beta+\varepsilon}$.
 Let  $Z$ be a stochastic process on some probability space $(\Omega, \mathcal F, \mathbb P)$, and set%. We denote  
\begin{equation} \label{eq:martingale}
M_t^u {= M_t^u(Z)} := u_t( Z_t) - u_0( Z_0) - \int_0^t g_s( Z_s) \dd s.
\end{equation}

\begin{definition}\label{def:solMP}
{Let $\mu_0$ be a probability law on  $\R^N$ and $t_0\in[0,T)$. Let also $\Bc \in \mathit{C}_T \C_B^{-\beta}$}
  % $\Bc \in \Linf{-\beta}$
  and let the operator $\mathcal L$ with drift $\Bc$ be given  in \eqref{eq:L}.
  We denote by MP$( \Bc, \mu_0 ; t_0)$ the martingale problem formally formulated as SDE \eqref{eq:SDE_singular}, with initial condition $Z_{t_0}\sim \mu_0$. 
  
We say that a couple $(Z, \mathbb P) $ defined on some measurable space $(\Omega, \mathcal F)$ is a {\em solution to MP}$( \Bc, \mu_0; t_0)$ if $Z$ is continuous, $Z_{t_0} \sim \mu_0$ and for all $g\in C_T \mathcal S$  and all $f_T \in \mathcal S$
%for  all $g\in \Linf{\eps}$  and all $f_T \in \C_B^{1+\beta+\eps}$ for some $\eps>0$,
 we have that $(M_t^u - M^u_{t_0})_{t\in[ t_0 , T ]}$ is a (local) martingale under $\mathbb P$ with respect to the canonical filtration of $Z$, where $M^u$ is given in \eqref{eq:martingale} and $u$ is the solution of \eqref{eq:cauchy_MP_def}.
We say that  MP$( \Bc, \mu_0 ; t_0)$ {\em admits uniqueness}  if, given any pair of solutions $(Z^1, \mathbb P^1) $ and $(Z^2, \mathbb P^2) $, then the law of $Z^1$ under $\mathbb P^1$ is the same as the law of $Z^2$ under $\mathbb P^2$.

If $t_0 = 0$ we  employ the abbreviation MP$( \Bc, \mu_0 )$ in place of MP$( \Bc, \mu_0 ; 0)$.
\end{definition}
\begin{comment}
{\color{red}Forse si pu\`o scrivere la definizione in generale, con il processo potenzialmente cadlag. Poi si dimostra l'esistenza di una soluzione continua, per costruzione, e l'unicit\`a alla Ethier-Kurz si verifica su cadlag (cambiare appendice) -- ci sarebbero altre cose che non funzionano in quest'ultimo caso?}
\end{comment}

\begin{remark}\label{rm:ZP}
Let  $(\Omega, \mathcal F, \mathbb P)$ be a probability  space and $Z$ be a stochastic process on  $(\Omega, \mathcal F)$.  Let us denote by  ${\mathbb P}_Z$ the law of $Z$ under $\mathbb P$ and by $\hat Z$ the canonical process on the canonical space $C_T \R^N$.
The couple $(Z, \mathbb P)$ is a  solution to  MP$( \Bc, \mu_0)$ if and only if the couple  $(\hat Z, {\mathbb P}_Z)$  is  a solution to  MP$( \Bc, \mu_0)$. This is a direct consequence of the change of variable theorem  and the martingale property.
\end{remark}
\begin{remark}
\label{rem:domain MP}
%\begin{itemize}
%\item[(a)]
The  solution to MP$(\Bc, \mu_0)$ can be equivalently defined as follows. Let us  define the domain
\begin{equation} 
\mathcal D_{\mathcal L}:=\{ u\in \CT{1+\beta+\varepsilon} {\text{ for some } \eps \in (0,1-2\beta)} : u \text{ is a  sol.\ to \eqref{eq:cauchy_MP_def} for some } g \in  C_T \mathcal S, f_T \in \mathcal S \}.
\end{equation}
Then $(Z, \mathbb P)$ is a solution to  MP$(\Bc, \mu_0)$ if and only if for all $u \in \mathcal D_{\mathcal L}$, $M^u$ is a local martingale under $\mathbb P$.
% \begin{comment}
% \item[(b)] We expect that the formulation of the martingale problem could be equivalently expressed with the smaller domain 
% \[
% \mathcal D'_{\mathcal L}:=\{ u\in \CT{1+\beta+\varepsilon} {\text{ for some } \eps \in (0,1-2\beta)} : u \text{ is a  sol.\ to \eqref{eq:cauchy_MP_def} for some } 
% g \in  C_T \mathcal S, f_T =0\}.
% \]
% Indeed, an element of $ \mathcal D_{\mathcal L}$ can be decomposed into the sum  $u^0+u^1$ where $u^1 \in \mathcal D'_{\mathcal L}$ and $u^0$ is a solution of $\mathcal L u^0 =0$, i.e. an $\mathcal L$-harmonic function. We expect that $u^0 (t,X_t)$ is a martingale under $\mathbb P$, where $(X, \mathbb P)$ is a solution of  MP$(\Bc, \mu_0)$.
% \end{itemize}
% \end{comment}
\end{remark}

\begin{lemma}\label{lm:Mu}
  If $(Z, \mathbb P)$ is a solution to MP$(\Bc, \mu_0)$ then the process $M^u$ defined in \eqref{eq:martingale} is a local martingale even if $u $ is solution to PDE \eqref{eq:cauchy_MP_def} with {$g \in \CT{\varepsilon}$}
  % $g\in C_T\C_B^{\eps}$
  for some $\eps >0$ and  $f_T \in \C_B^{1+\beta+\eps}$, when both have compact support. 
\end{lemma}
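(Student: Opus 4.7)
The plan is to extend the local martingale property by density, approximating the data $g$ and $f_T$ by Schwartz functions and leveraging the continuous dependence of the PDE solution on its data.

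First, I would mollify the data via Lemma \ref{lem:regul_g}. Since both $g$ and $f_T$ have compact support and live in $\mathit{C}_T\mathcal{C}_b$ (respectively $\mathcal{C}_b$) in particular, part (iii) of Lemma \ref{lem:regul_g} produces approximations $g^{(n)} \in \mathit{C}_T \mathcal{S}$ and $f_T^{(n)} \in \mathcal{S}$. By part (ii) of the same lemma, for any small $\eta \in (0,\varepsilon)$ we have $g^{(n)} \to g$ in $\mathit{C}_T \mathcal{C}_B^{\varepsilon-\eta}$ and $f_T^{(n)} \to f_T$ in $\mathcal{C}_B^{1+\beta+\varepsilon-\eta}$. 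Since the embedding $\mathcal{C}_B^{\varepsilon-\eta} \hookrightarrow \mathcal{C}_B^{-\beta-\eta}$ is continuous (for $\eta$ chosen so that $\varepsilon-\eta > -\beta-\eta$), these approximations satisfy the convergence hypotheses required by Theorem \ref{thm:conv_un_back}(a) with the trivial choice $\Bc^{(n)}\equiv \Bc$.

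Second, let $u^{(n)} \in \mathit{C}_T\mathcal{C}_B^{1+\beta+\varepsilon}$ denote the unique mild solution of \eqref{eq:cauchy_MP_def} with data $(g^{(n)}, f_T^{(n)})$, provided by Theorem \ref{thm:sol_back_CP}. Since $g^{(n)} \in \mathit{C}_T \mathcal{S}$ and $f_T^{(n)}\in \mathcal{S}$, the definition of solution to MP$(\Bc,\mu_0)$ ensures that each $M^{u^{(n)}}$ (as defined in \eqref{eq:martingale} with $g$ and $u$ replaced by $g^{(n)}$ and $u^{(n)}$) is a local martingale under $\mathbb{P}$. Theorem \ref{thm:conv_un_back}(a) then gives $\|u^{(n)}-u\|_{T,1+\beta+\varepsilon-2\eta}\to 0$, and since $\mathcal{C}_B^{1+\beta+\varepsilon-2\eta}$ embeds continuously into $\mathcal{C}_b$, we conclude that $u^{(n)}\to u$ uniformly on $[0,T]\times\R^N$. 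Similarly, from $g^{(n)}\to g$ in the Besov norm of positive regularity we obtain uniform convergence of $g^{(n)}$ to $g$ on $[0,T]\times\R^N$.

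Third, the uniform convergences imply that, pathwise and uniformly in $t \in [0,T]$,
\begin{equation}
u^{(n)}_t(Z_t)\to u_t(Z_t), \quad u^{(n)}_0(Z_0)\to u_0(Z_0), \quad \int_0^t g^{(n)}_s(Z_s)\,ds\to \int_0^t g_s(Z_s)\,ds,
\end{equation}
hence $M^{u^{(n)}}\to M^u$ uniformly on $[0,T]$, $\mathbb{P}$-a.s., and a fortiori in the ucp sense. Since the space of local martingales (with respect to the canonical filtration of $Z$) is closed under ucp convergence, $M^u$ is itself a local martingale, which is the desired conclusion.

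The main obstacle is making sure that one can simultaneously fit all the regularity indices into the range where the continuity estimate of Theorem \ref{thm:conv_un_back}(a) applies, so that both the approximation and the PDE convergence take place in spaces whose norms control the supremum norm needed to pass to the limit in $M^u$; this is a bookkeeping matter handled by choosing $\eta$ small enough in terms of $\varepsilon$ and $\beta$. A subtler point is ensuring that the mollification preserves Schwartz-class membership — this relies crucially on the compact support assumption, which is what activates part (iii) of Lemma \ref{lem:regul_g}.
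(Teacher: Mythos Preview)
Your proposal is correct and follows essentially the same approach as the paper's proof: mollify $g$ and $f_T$ via Lemma~\ref{lem:regul_g} (using the compact-support hypothesis to land in $\mathit{C}_T\mathcal{S}$ and $\mathcal{S}$), invoke Theorem~\ref{thm:conv_un_back}(a) for convergence of the PDE solutions, and conclude via ucp-closedness of local martingales. The only cosmetic difference is that the paper is terser about the index bookkeeping (it simply relabels the slightly reduced regularity parameter as a new $\eps>0$), whereas you track the $\eta$-loss explicitly.
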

\begin{proof}
By Lemma \ref{lem:regul_g} Parts (ii) and (iii) 
there exist two sequences $(g^{(n)} )_{n\in\N}\subset C_T \mathcal S$ and $(f_T^{(n)} )_{n\in\N} \subset \mathcal S$ such that $\|g^{(n)}-g\|_{T, \eps}\to 0$ and $\|f^{(n)}_T - f_T\|_{1+\beta+\eps} \to 0$ for some $\eps>0$. Thus by Theorem \ref{thm:conv_un_back}  Part (a)  there exists $\eps>0$ such that $\|u^{(n)}-u\|_{T, 1+\beta+\eps}\to 0$. Consequently $g^{(n)} \to g  $
and $u^{(n)} \to u$ uniformly on compacts and thus $M^{u^{(n)}} \to M^u$ in the sense of uniform convergence  in  probability (ucp). We conclude that $M^u$ is a local martingale since the space of local martingales is closed under the ucp topology.
\end{proof}

In the next result we  compare the notion of solution to the MP given in Definition \ref{def:solMP} with the classical notion of solution \`a l\`a Stroock-Varadhan.  Suppose that
% $\Bc \in C_T\C^{\eps}_B$,
{$\Bc \in \CT{\varepsilon}$,} for some $\eps>0$.
We recall that $(Z, \mathbb P)$ is a solution to  the Stroock-Varadhan Martingale Problem  with respect to $\mathcal L_{\Bc}:= \mathcal K  + \langle \Bc, \nabla_v \rangle$ meant in the classical sense, with initial condition $\mu_0$ if $Z$ is continuous,  $Z_0 \sim \mu_0$ and, for every $u\in  C^{1,2}([0,T]\times \R^N)$  (or equivalently $u\in C_c^\infty$), the process
\begin{equation}\label{eq:SV}
u_t(Z_t) - u_0(Z_0) - \int_0^t (\mathcal{L}_\Bc u)_s( Z_s) \dd s 
\end{equation}
is a (local) martingale, see e.g.\ \cite[Chapter 6]{stroock_varadhan}.
\begin{proposition}\label{prop:MP-Strook}
Let Assumption \ref{ass:kolmogorov_op} hold.
Let $(Z, \mathbb P)$ be a couple where $Z$ is a stochastic process with $Z_0 \sim \mu_0$ and $\mathbb P$ is  a probability measure on a given measurable space $(\Omega, \mathcal F)$. 
Let {$\Bc \in \CT{\varepsilon}$,}
% $\Bc \in C_T\C^{\eps}_B$,
for some $\eps>0$. 
Then  $(Z, \mathbb P)$ is a solution to MP$(\Bc, \mu_0)$ if and only if it is a solution to the classical Stroock-Varadhan martingale problem with respect to $\mathcal L_\Bc$  %drift $(t, z) \mapsto \begin{pmatrix}\Bc_t(z)\\0\end{pmatrix}+\begin{pmatrix}\Bc_0z\\\Bc_1z\end{pmatrix}$, diffusion coefficient $\begin{pmatrix}I_d & 0\\ 0 & 0\end{pmatrix}$ 
with initial condition $\mu_0$. 
\end{proposition}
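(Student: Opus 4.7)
The plan is to verify the equivalence of the two formulations directly on their defining classes of test functions, leveraging the regularity theory of the backward Cauchy problem \eqref{eq:back_cauchy probl} developed in Section \ref{sec:Kolmogorov PDE}.

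For the implication MP$(\Bc,\mu_0)\Rightarrow$ Stroock-Varadhan, I would take a time-independent $u\in C_c^\infty(\R^N)$ and set $g:=\mathcal L_\Bc u=\frac{1}{2}\Delta_v u+\langle Bz,\nabla_z u\rangle+\langle \Bc,\nabla_v u\rangle$. Since $\Bc\in\CT{\eps}$ is bounded and $\nabla_z u$ has compact support, the function $g$ is compactly supported and belongs to $\CT{\eps}$, while $u\in\C_B^{1+\beta+\eps}$ has compact support. Viewing $u$ as the (time-constant) classical solution of $\mathcal L v=g$ with terminal datum $u$, Proposition \ref{prop_lie_sol} together with the uniqueness of mild solutions from Theorem \ref{thm:sol_back_CP} identify it with the unique mild solution. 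Lemma \ref{lm:Mu} then guarantees that $u(Z_\cdot)-u(Z_0)-\int_0^\cdot\mathcal L_\Bc u(s,Z_s)\dd s$ is a $\mathbb P$-local martingale, which is precisely the Stroock-Varadhan property.

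For the converse, given $g\in C_T\mathcal S$, $f_T\in\mathcal S$, let $u\in\CT{1+\beta+\eps}$ be the mild solution of \eqref{eq:cauchy_MP_def}. Because $g,\Bc\in\CT{\eps}$ are H\"older continuous and $f_T$ is Schwartz, Proposition \ref{prop_lie_sol} upgrades $u$ to $\mathit{C}^{2+\alpha}_{B,T}$ for some $\alpha>0$, so that $\mathcal L u=g$ holds pointwise with $Yu$ continuous and understood as a Lie derivative. I would then invoke Lemma \ref{lem:density_intrinsic_spaces} to produce a sequence $u^{(n)}\in C^{1,2}([0,T]\times\R^N)$ with bounded first and second space derivatives, along which $u^{(n)}$, $\nabla_v u^{(n)}$, $\Delta_v u^{(n)}$ converge uniformly to their respective limits and $Yu^{(n)}\to Yu$ uniformly on every compact. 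The Stroock-Varadhan hypothesis applied to each $u^{(n)}$, after rewriting
\[
\mathcal L_\Bc u^{(n)}=\tfrac{1}{2}\Delta_v u^{(n)}+Yu^{(n)}+\langle\Bc,\nabla_v u^{(n)}\rangle,
\]
produces a sequence of local martingales. Stopping $Z$ at exit times of large balls to handle the local (rather than global) convergence of $Yu^{(n)}$, and exploiting the boundedness of $\Bc$, one passes to the limit in ucp. Since local martingales are closed under ucp limits, the limit equals $M^u$ and is itself a local martingale, yielding MP$(\Bc,\mu_0)$.

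The most delicate step will be the ucp limit in the reverse direction: in the degenerate setting, neither $\partial_t u$ nor the Euclidean gradient $\nabla_z u$ along the components of higher anisotropic order exists in general, and only the Lie combination $\partial_t u+\langle Bz,\nabla_z u\rangle=Yu$ is a well-defined continuous object. The intrinsic mollification \eqref{eq:eq:un_moll} employed in Lemma \ref{lem:density_intrinsic_spaces}, performed along the integral curves of $Y$, is precisely tailored to propagate this intrinsic derivative through the approximation, which is what makes the identification $\mathcal L_\Bc u^{(n)}\to\mathcal L_\Bc u$ on compact sets possible and closes the argument.
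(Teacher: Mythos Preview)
Your proposal is correct and follows essentially the same route as the paper's proof: the forward direction applies Lemma \ref{lm:Mu} after checking that $u\in C_c^\infty$ solves \eqref{eq:cauchy_MP_def} with compactly supported data in $\CT{\eps}$, and the converse uses Proposition \ref{prop_lie_sol} to place $u$ in $\mathit{C}^{2+\alpha}_{B,T}$, then Lemma \ref{lem:density_intrinsic_spaces} to approximate by $C^{1,2}$ functions and pass to the ucp limit. Your explicit mention of stopping at exit times and of the role of the intrinsic mollification is a helpful elaboration, but the underlying argument is identical.
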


\begin{proof}
``$\Rightarrow$'' We have to show that for any $u\in C_c^\infty$ then
$M^{u}$ is a local martingale. It is clear that $\Kc u \in C_c^\infty $ and
$\langle \Bc, \nabla_z u\rangle \in  \CT{\varepsilon}$,
% C_T\C^{\eps}_B$
hence 
$\mathcal L u \in \CT{\varepsilon}$ for some $\eps>0$.
% C_T\C^{\eps}_B $.
Moreover since $u$ has compact support and is time-homogeneous,  also $\mathcal L u$ and $u(T, \cdot )$ have compact support. The claim now follows by Lemma \ref{lm:Mu}.

``$\Leftarrow$'' Let $g\in C_T \mathcal S \subset \CT{\varepsilon}$
% C_T\C^{\eps}_B$
and {$f_T \in \mathcal S \subset  \C_B^{ 2+\eps} $}. By assumption, {$\Bc \in \CT{\varepsilon},$}
  % C_T\C^{\eps}_B$}
and thus, by  Proposition \ref{prop_lie_sol}, the mild solution $u$ to \eqref{eq:cauchy_MP_def} belongs to $\Cc^{2+\alpha}_{B,T}$ for any $\alpha<\eps$ and solves pointwise 
\begin{equation}%\label{eq:}
\Lc u=g\qquad (t,z)\in [0,T]\times \R^N,
\end{equation}
where the term $Yu$ that appears in $\Lc u$, through $\Kc u$, makes sense as a Lie derivative (see \eqref{eq:def Lie der}). 
Furthermore, by Lemma \ref{lem:density_intrinsic_spaces}, there exists a sequence $(u^{(n)})$ such that each $u^{(n)} \in  C^{1,2}([0,T]\times \R^N)$, and
\begin{equation}\label{eq:conv_regul_u_intrinsicBis}
\| u - u^{(n)} \|_{T},  \ \| ( g-\mathcal{L}u^{(n)}) \mathbf{1}_K \|_{T} \longrightarrow 0, \qquad \text{as } n\to +\infty,
\end{equation}
for any compact $K\subset \R^N$. Thus $ u^{(n)} \to u$ and $\mathcal{L}u^{(n)} \to g$ uniformly on compacts and so $M^{u^{(n)}} \to M^u$ ucp, which implies that  $M^{u}$ is a local martingale on $[0,T]$.
\end{proof}

\begin{corollary}\label{cor:MP-Strook} 
 Under the conditions of Proposition \ref{prop:MP-Strook}, a solution to MP$(\Bc, \mu_0)$ always exists.
\end{corollary}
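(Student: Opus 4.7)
The plan is to reduce the statement to a classical existence result and then invoke Proposition \ref{prop:MP-Strook}. Since $\Bc \in \CT{\varepsilon}$, the total drift appearing in \eqref{eq:SDE_singular}, namely $b(t,z) := \Bc_t(z) + Bz$, is jointly continuous in $(t,z)$ on $[0,T]\times\R^N$, with a bounded summand $\Bc$ and an additional term $Bz$ of at most linear growth. The diffusion coefficient $\sigma$ from \eqref{eq:sigma} is constant. We are therefore squarely in the classical regime in which weak existence for the associated SDE, equivalently existence for the Stroock--Varadhan martingale problem with generator $\mathcal{L}_\Bc = \Kc + \langle \Bc, \nabla_v\rangle$, is well known.

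Concretely, I would invoke \cite[Theorem 2.6]{figalli}, the same reference used elsewhere in this paper for precisely this smoothed-drift step: applied with the bounded-plus-linear drift $b(t,z)$ and the constant diffusion $\sigma$, it produces a probability measure $\mathbb{P}$ on the canonical space $C_T\R^N$ under which the canonical process $Z$ satisfies $Z_0\sim\mu_0$ and solves the classical Stroock--Varadhan martingale problem for $\mathcal{L}_\Bc$. Alternatively, one can construct $\mathbb{P}$ by hand: mollify $\Bc$ further to obtain Lipschitz drifts $\Bc^{(m)}$, solve the resulting SDEs strongly, establish tightness of the induced laws on $C_T\R^N$ via standard Kolmogorov-type moment bounds (using uniform boundedness of $\Bc^{(m)}$ and the linear growth of $Bz$), extract a subsequential weak limit, and identify it as a solution to the martingale problem for $\mathcal{L}_\Bc$ through uniform convergence on compacts of $\mathcal{L}_{\Bc^{(m)}}u \to \mathcal{L}_\Bc u$ for every $u\in C_c^\infty$.

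Once such $(Z,\mathbb{P})$ is in hand, the ``$\Leftarrow$'' direction of Proposition \ref{prop:MP-Strook} applies verbatim: for any $g\in C_T\mathcal{S}$ and any $f_T\in\mathcal{S}$, the Cauchy problem \eqref{eq:cauchy_MP_def} admits a solution $u\in \mathit{C}^{2+\alpha}_{B,T}$ by Proposition \ref{prop_lie_sol}, this $u$ can be approximated in the sense of Lemma \ref{lem:density_intrinsic_spaces} by functions in $C^{1,2}([0,T]\times\R^N)$, and passing to the limit in the Stroock--Varadhan martingale identity shows that $M^u$ from \eqref{eq:martingale} is a local martingale under $\mathbb{P}$. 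Hence $(Z,\mathbb{P})$ solves MP$(\Bc,\mu_0)$ in the sense of Definition \ref{def:solMP}. The only mildly delicate point I anticipate is the linear growth of the unbounded term $Bz$, which prevents the naive application of existence theorems stated under globally bounded coefficients; this is routinely absorbed either inside the statement of \cite[Theorem 2.6]{figalli} or by a standard stopping-time localization argument, and does not represent a genuine obstacle.
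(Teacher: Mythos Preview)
Your overall strategy---establish existence for the classical Stroock--Varadhan martingale problem and then apply the ``$\Leftarrow$'' direction of Proposition \ref{prop:MP-Strook}---is exactly the paper's. The execution differs in two ways worth noting. First, the paper appeals to \cite[Theorem 12.2.3 and Exercise 12.4.1]{stroock_varadhan} rather than \cite[Theorem 2.6]{figalli}; be aware that Figalli's result is a superposition principle that requires a Fokker--Planck solution as \emph{input} and returns a martingale problem solution with those marginals, so invoking it here would force you to first produce such a FP solution---an unnecessary detour. Your alternative mollification-and-tightness route is fine, but the Stroock--Varadhan reference already covers continuous coefficients with linear growth and is the cleaner citation. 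Second, the paper first obtains solutions $\mathbb{P}_{0,z}$ for Dirac initial conditions, observes that $(s,z)\mapsto \mathbb{P}_{s,z}$ is measurable, and then superposes via $\mathbb{P}(B):=\int_{\R^N}\mathbb{P}_{0,z}(B)\,\mu_0(dz)$ to handle a general $\mu_0$, whereas you aim at arbitrary $\mu_0$ in one shot. Both routes are valid; the paper's has the advantage of dealing with the linear-growth issue you flag by quoting a single classical result that already accommodates it.
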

\begin{proof}
Under the conditions of Proposition  \ref{prop:MP-Strook}, by \cite[Theorem 12.2.3  and Exercise 12.4.1]{stroock_varadhan} we know that there exists a  solution to the classical Stroock-Varadhan Martingale Problem with deterministic initial condition $z$ at time $s$. The solution is constructed on the canonical space and the probability is denoted by $ \mathbb P_{s,z}$.  Moreover $(s,z) \mapsto \mathbb P_{s,z}$ is measurable. Thus we can 
 superpose these laws to treat any initial condition  $\mu_0$ at time 0, by setting  $\mathbb P (B):= \int_{\R^N} \mathbb P_{0,z}(B)  \mu_0(dz) $ where $B$ is a Borel set of the canonical space. For more details  on the {\em superposition} argument, see   the proof of existence of  \cite[Theorem 4.5]{issoglio_russoMPb}.    
  Hence as a consequence of Proposition \ref{prop:MP-Strook} a solution to MP$(\Bc, \mu_0)$ always exists when
{$\Bc \in \CT{\varepsilon}$,}
for some $\eps>0$. 
\end{proof}

\begin{lemma}
\label{lm:tighness}
Let the hypotheses of Lemma \ref{lm:pre-tighness} hold and $\bar\lambda$, $\phi^{(n)}$ be as in Lemma \ref{lm:pre-tighness}. 
Let $Z$ be a stochastic process on some measurable space 
equipped with a sequence of probability measures $ \mathbb P^{(n)}$, $n\in\N$, such that the following holds.
 \begin{itemize}
\item[(a)] For any $\lambda>\bar\lambda$ we have
 \begin{align}\label{eq:phi-phi^4}
\phi^{(n)}_t(Z_t)-\phi^{(n)}_0(Z_0)=\int_0^t \lambda ( \phi^{(n)}_r(Z_r) - Z_r) dr  + \int_0^t BZ_r dr + M_t^{(n)},
 \end{align}
where $M^{(n)}$ is an $N$-dimensional martingale under $ \mathbb P^{(n)}$, with respect to the canonical filtration of $Z$, 
  with the last $N-d$ components identically zero and the first $d$ components, denoted by  $\tilde M^{(n)}$, with covariation 
  \begin{equation} \label{eq:covariation}
  [\tilde M^{(n)}, \tilde M^{(n)}]_t = \int_0^t [\nabla_v \phi_r^{(n)}(\nabla_v \phi_r^{(n)} )^\top ](Z_r) \dd r;
  \end{equation}
\item[(b)] $Z_0\sim \mu_0 $ under $ \mathbb P^{(n)}$ for any $n\in\N$.
\item[(c)] $Z_t$ under $ \mathbb P^{(n)}$  has finite 4th moment for any $t \in[0,T]$.
\end{itemize}
Then the sequence of the laws of $Z$ under $\mathbb P^{(n)}$ 
is tight.
\end{lemma}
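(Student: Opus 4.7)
The plan is to establish tightness of the auxiliary processes $\tilde Z^{(n)} := \phi^{(n)}(Z)$ via Kolmogorov's increment criterion, and then transfer this tightness to $Z = \psi^{(n)}(\tilde Z^{(n)})$ through a continuous-mapping argument, using the equicontinuity and uniform convergence of $\psi^{(n)}$ from Lemma \ref{lm:pre-tighness} (Items (ii) and (iv)). A key preliminary observation is that $\tilde Z^{(n)}$ and $Z$ are at uniformly bounded distance, since $|\phi^{(n)}_t(z) - z| = |u^{(n)}_t(z)|$ and $\sup_n \|u^{(n)}\|_{\mathit C_T \mathcal C_b} < \infty$ by Corollary \ref{cor:uniform_conv_un}.

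First I would derive a uniform fourth-moment bound $\sup_n \mathbb E^{(n)}\big[\sup_{t \in [0,T]} |Z_t|^4\big] < \infty$. Raising both sides of \eqref{eq:phi-phi^4} to the fourth power, applying Doob's inequality to the martingale part and BDG together with the covariation formula \eqref{eq:covariation} (whose density is bounded thanks to $\|\nabla_v \phi^{(n)}\|_\infty \leq 3/2$, itself a consequence of \eqref{eq:bound_grad}), and using the above uniform closeness of $Z$ and $\tilde Z^{(n)}$ to interchange the two, I arrive at an integral inequality of the form
\[
\mathbb E^{(n)}\Big[\sup_{s \leq t} |Z_s|^4\Big] \leq C \big(1 + \mathbb E[|Z_0|^4]\big) + C \int_0^t \mathbb E^{(n)}\Big[\sup_{r \leq s} |Z_r|^4\Big] ds ,
\]
where $\mathbb E[|Z_0|^4] < \infty$ by hypothesis (c) at $t=0$ (since $Z_0 \sim \mu_0$ independently of $n$). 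Gronwall's lemma then closes the estimate uniformly in $n$.

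Next I would verify Kolmogorov's criterion for $\tilde Z^{(n)}$. By \eqref{eq:phi-phi^4}, the increment $\tilde Z^{(n)}_t - \tilde Z^{(n)}_s$ decomposes as two Lebesgue-integral terms of deterministic size $O(t-s)$ (the first by uniform boundedness of $\phi^{(n)}-\Id$, the second by the previous moment bound and Jensen's inequality) plus a martingale increment whose fourth moment is $O((t-s)^2)$ by BDG and the covariation bound. Summing yields $\mathbb E^{(n)}\big[|\tilde Z^{(n)}_t - \tilde Z^{(n)}_s|^4\big] \leq C |t-s|^2$ uniformly in $n$. Combined with the tightness of the initial marginals $\tilde Z^{(n)}_0 = \phi^{(n)}_0(Z_0)$, which are uniformly bounded perturbations of the fixed law $\mu_0$, Kolmogorov's criterion gives tightness of the laws of $\tilde Z^{(n)}$ on $C_T \mathbb R^N$.

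Finally, I transfer tightness back to $Z$ via the maps $\Psi_n(\gamma)(t) := \psi^{(n)}_t(\gamma_t)$ on $C_T \mathbb R^N$. Items (ii) and (iv) of Lemma \ref{lm:pre-tighness} imply that $\Psi_n \to \Psi$ uniformly on compact subsets of $C_T \mathbb R^N$, where $\Psi(\gamma)(t) := \psi_t(\gamma_t)$ is continuous; so a standard continuous-mapping argument shows that any weak sub-subsequential limit $\tilde Z^*$ of $\tilde Z^{(n)}$ produces a weak limit $\Psi(\tilde Z^*)$ for the corresponding sub-subsequence of $Z = \Psi_n(\tilde Z^{(n)})$, proving tightness of $Z$ under $\mathbb P^{(n)}$. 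The main obstacle I anticipate is the uniform fourth-moment estimate: it crucially exploits that the singular drift $\Bc^{(n)}$ has been absorbed into the Zvonkin transformation, so that the decomposition \eqref{eq:phi-phi^4} involves only the regular terms $\lambda(\phi^{(n)}-\Id)$, $BZ$ and the martingale $M^{(n)}$, and no pointwise evaluation of $\Bc^{(n)}$ along $Z$ is needed.
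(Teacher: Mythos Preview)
Your proposal is correct and follows essentially the same strategy as the paper: establish a uniform fourth-moment bound via Gronwall on the decomposition \eqref{eq:phi-phi^4}, derive the Kolmogorov increment estimate $\mathbb E^{(n)}[|\tilde Z^{(n)}_t-\tilde Z^{(n)}_s|^4]\le C|t-s|^2$ for the transformed process, and transfer tightness back to $Z$ via the inverse maps $\psi^{(n)}$ using Lemma \ref{lm:pre-tighness}. The only cosmetic differences are that the paper works with $Y^{(n)}=\tilde Z^{(n)}$ and invokes Item (v) (linear growth of $\psi^{(n)}$) rather than your direct observation $|Z-\tilde Z^{(n)}|\le\sup_n\|u^{(n)}\|_\infty$, and it appeals to Garsia--Rodemich--Rumsey in place of Kolmogorov's criterion; both choices are equivalent here.
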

 
\begin{proof}
Let us define the process $Y^{(n)}$ as
\begin{equation}\label{eq:Y^n}
Y^{(n)}_t:=u^{(n)}_t(Z_t)+Z_t=\phi^{(n)}_t(Z_t).
\end{equation}
We first show that the sequence of the laws of $Y^{(n)}$ under $ \mathbb P^{(n)}$ is tight.  
According to \cite[Theorem 4.10 in Chapter 2]{karatzasShreve} we need to prove that 
\begin{equation}\label{eq:KS1}
\lim_{\eta\to\infty} \sup_{n\geq 1} \mathbb  P^{(n)}(| Y^{(n)}_0 |>\eta ) =0
\end{equation}
and that for every $\varepsilon>0$
\begin{equation}\label{eq:KS2} 
\lim_{\delta\to0} \sup_{n\geq 1} \mathbb  P^{(n)} \Big ( \sup_{\substack{
s,t \in [0,T] \\|s-t|\leq \delta }} |Y^{(n)}_t-Y^{(n)}_s|>\varepsilon   \Big ) =0 .
\end{equation}
By \eqref{eq:Y^n} and recalling that $Z_0 = (V_0, X_0)$ we have 
\[
Y_0^{(n)} =  \phi^{(n)}_0(Z_0) = \begin{pmatrix}
V_0 + u^{(n)}_1(0, Z_0)\\
X_0
\end{pmatrix},
\]
so the first condition \eqref{eq:KS1} for tightness gives
\begin{align*}
\mathbb P^{(n)}(| Y^{(n)}(0) |>\eta ) &\leq  \mathbb P^{(n)}(|X_0|+|V_0|+ |u_1^{(n)}(0, Z_0) |>\eta )  \\
&\leq  \mathbb P^{(n)}(|X_0|+|V_0|+ \sup_n \|u_1^{(n)}\|_{\infty}>\eta ) ,
%&\leq \mathbb P^{(n)}(|X_0|+\frac32 |V_0|>\eta ),
\end{align*} 
having used the fact that $\sup_n \| u_1^{(n)}\|_{\infty}$ is finite by Theorem \ref{thm:conv_un_back} Part (a). The right hand side of the latter inequality does not depend on $n$ because the initial condition  $(V_0, X_0 )\sim\mu_0$  is independent of $n$, so \eqref{eq:KS1} is satisfied. 

Concerning the second bound \eqref{eq:KS2} for tightness, if we can prove that 
\begin{equation}\label{eq:kolm}
  \mathbb E^{(n)}[|Y_t^{(n)} - Y_s^{(n)}|^4] \leq C |t-s|^2, \qquad { t,s\in [0,T],}
\end{equation}
holds for some positive constant $C$ independent of $n$, then we conclude exactly as in the proof of \cite[Lemma 3.12]{issoglio_russoMPb}, using
Garsia-Rodemich-Rumsey lemma (see \cite[Section 3]{BarlowYor}).
It remains to prove \eqref{eq:kolm}. To this aim, using assumption (c) we know that $Y^{(n)}_t = \phi^{(n)}_t (Z_t)$ has finite 4th moment since $ \phi^{(n)}_t  $ has linear growth. We next prove that there exists a constant $C$ depending on $T$ such that  %for all $t\in[0,T]$ we have 
\begin{equation}\label{eq:Yn^2}
\mathbb E^{(n)}[|Y^{(n)}_t|^4] \leq C, \qquad t\in[0,T].
\end{equation}
 Indeed, using  \eqref{eq:phi-phi^4} and \eqref{eq:Y^n} we get 
\begin{align}\label{eq:expr_Yn} 
Y^{(n)}_t & =  Y^{(n)}_0 +  \int_0^t \lambda u^{(n)}_r(Z_r) dr  + \int_0^t BZ_r dr + M_t^{(n)}.
\end{align}
Taking the square and then the expectation, and using the results of Lemma \ref{lm:pre-tighness}, in particular that $Z_t = \psi^{(n)}_t(\phi_t^{(n)}(Z_t)) = \psi^{(n)}_t(Y^{(n)}_t)$ and  the uniform in $n$ linear growth property (v) 
of $\psi^{(n)}$,  Jensen's inequality,  BDG inequality and \eqref{eq:covariation},  one obtains, for all $t\in[0,T]$,
\begin{align*} 
\mathbb E^{(n)}[ |Y^{(n)}_t|^4 ] & \leq c\,   \mathbb E^{(n)}[ |Y^{(n)}_0|^4 ] + c\,\mathbb E^{(n)} \bigg[ \left|\int_0^t \lambda u^{(n)}_r(Z_r) dr\right|^4 \bigg] +c\, \mathbb E^{(n)}\bigg[ \left|\int_0^t BZ_r dr\right|^4 \bigg]+  c\, \mathbb E^{(n)}\bigg[ \left|M_t^{(n)}\right|^4 \bigg] \\
  & \leq c\,   \mathbb E^{(n)} [( %|X_0|+|V_0|
 {|Z_0|} + |u_0^{(n)}( Z_0) |)^4] + c \,\lambda^4 T^4 \|u^{(n)}\|^4_{\infty} +c\, |B|^4 { t^{3} \,} \mathbb E^{(n)} \bigg[  \int_0^t  (1+ |Y_r^{(n)}|)^4 dr \bigg]\\
  &\quad + c\, { T}  (1+ \|{ \nabla_v} u^{(n)}\|_{\infty})^2 \\
\intertext{(by the assumption on the 4th moment of $\mu_0$)}
 & \leq c\, (1+ \|u^{(n)}\|_{T, 1+ \beta+\eps}^4)+ c \,  \mathbb E^{(n)} \bigg[ \int_0^t   |Y_r^{(n)}|^4 dr \bigg] \\	
 & \leq c (1+  \int_0^t \mathbb E^{(n)}  [ |Y_r^{(n)}|^4 ] dr) ,
 \end{align*}
 for some $\eps>0$, having used Theorem \ref{thm:conv_un_back} Part (b) in the last bound.  Notice that the constant $c$  is independent of $n$ (but depends on $T, B, \mu_0, \lambda$).
 Applying Gronwall's lemma we show that \eqref{eq:Yn^2} holds.
We now  notice that 
{\eqref{eq:expr_Yn} yields}
 \begin{align}\label{eq:Y-Y^4}
 Y_t^{(n)}-Y_s^{(n)}=\int_s^t  %(\phi^{(n)}_r(Z_r)-Z_r) 
 { \lambda u^{(n)}_r(Z_r)} dr+ \int_s^t BZ_r dr + M^{(n)}_t -M^{(n)}_s.
 \end{align}
 With similar computations as above, together with multidimensional BDG inequality (see e.g.\ \cite[Problem 3.29]{karatzasShreve}), we have
\begin{align*}
\mathbb E^{(n)} [ |Y_t^{(n)}-Y_s^{(n)}|^4 ]& \leq c\, \mathbb E^{(n)} \bigg[ \left |\int_s^t |\lambda u^{(n)}_r(Z_r)| +  |B\psi^{(n)} (Y^{(n)}_r)| dr\right|^4 \bigg]+ c\,  \mathbb E^{(n)} \bigg[ \left |M_t^{(n)}- M_s^{(n)} \right|^4\bigg] 
\intertext{(by Lemma \ref{lm:pre-tighness}-(v))}
 &\leq c\, \|u^{(n)}\|_{T, 1+\beta+\varepsilon} |t-s|^4 + c\, |t-s|^3 \mathbb E^{(n)} \bigg[ %\left | 
 \int_s^t  
 (1+ |Y^{(n)}_r|)^{4} dr %\right| 
 \bigg] \\ 
 &\quad + c\, \mathbb E^{(n)} \left[ ( \int_s^t  | \nabla_v u^{(n)}_r(Z_r) + \begin{pmatrix}
 I_d  \\
 0
 \end{pmatrix} |^2 dr )^2 \right]\\
  &\leq c \|u^{(n)}\|_{T, 1+\beta+\varepsilon} |t-s|^4 + c {|t-s|^3} \int_s^t  
 \mathbb E^{(n)} [ (1+ |Y^{(n)}_r|)^{4} ] dr +  c(1+ \|u^{(n)}\|_{T, 1+\beta+\varepsilon})^2 |t-s|^2\\
  &\leq c \|u^{(n)}\|_{T, 1+\beta+\varepsilon} |t-s|^4 + c |t-s|^{ 3}+ {c} (1+ \|u^{(n)}\|_{T, 1+\beta+\varepsilon})^2 |t-s|^2, 
\end{align*}
having used \eqref{eq:Yn^2} in the last bound. This clearly implies \eqref{eq:kolm}, as wanted. 
To conclude that the laws of $Z$ under  $\mathbb P^{(n)}$ are tight we observe that $Z = \psi^{(n)}(Y^{(n)})$ and $(\psi^{(n)})$ are  equicontinuous and have linear growth by Lemma \ref{lm:pre-tighness} Parts (ii), (v). Consequently, when restricted to any compact set, the family $(\psi^{(n)})$ is  equicontinuous and bounded, so from tightness of the laws of  $Y^{(n)} $ under $ \mathbb P^{(n)}$   the claim follows.
\end{proof}

Notice that the assumption on the finiteness of the 4th moments of $ Z_t$ in Lemma \ref{lm:tighness} is not optimal, and it could be relaxed up to requiring finite $(2+\epsilon)$-moment. However, this assumption is sufficiently general for us as we will apply this result to $Z_0 \sim\mu_0$ being equal to a Dirac delta distribution, and the finiteness of the moments for $t>0$ will follow.

\begin{lemma}\label{lm:tightnessPn}
  Let $(\Bc^{(n)})$ be a sequence in $\CT{\gamma}$
  for some $\gamma>0$ and such that $\|\Bc^{(n)}-\Bc\|_{T, -\beta-\eta} \to0$ for some $\eta >0$. 
Let $Z$ be the canonical process and  we denote by $(Z,\Pb^{(n)})$ the solution to the Stroock-Varadhan  martingale problem with respect to $\mathcal L_{\Bc^{(n)}}$ and initial condition $\mu_0$. Assume that $\mu_0$ has finite 4th moment.
 Then there exists a subsequence of $\Pb^{(n)}$ that weakly converges to some probability measure $\Pb$ and $(Z, \Pb)$ is a solution to MP$(\Bc, \mu_0)$.
\end{lemma}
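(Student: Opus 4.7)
The strategy is threefold: (i) verify that the sequence $(\Pb^{(n)})$ satisfies the hypotheses of Lemma \ref{lm:tighness}, thus obtaining tightness; (ii) extract a weakly convergent subsequence $\Pb^{(n_k)}\to\Pb$; (iii) show that $(Z,\Pb)$ solves MP$(\Bc,\mu_0)$ by passing to the limit in the martingale identity for approximating test functions.

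\textbf{Step (i): tightness.} Let $u^{(n)}$ be the solution of the Zvonkin PDE \eqref{eq:Zvonkin PDE reg} (with $\lambda>\bar\lambda$ large enough) driven by $\Bc^{(n)}$, and set $\phi^{(n)}_t(z):=z+u^{(n)}_t(z)$. Since $\Bc^{(n)}\in\CT{\gamma}$ with $\gamma>0$, Proposition \ref{prop_lie_sol} grants $u^{(n)}\in C^{2+\alpha}_{B,T}$ with suitable intrinsic regularity, so that $u^{(n)}$ admits a pointwise interpretation of the Lie/anisotropic equation. Because $(Z,\Pb^{(n)})$ solves the classical Stroock--Varadhan MP for $\mathcal L_{\Bc^{(n)}}$, Ito's formula applied to $u^{(n)}(t,Z_t)$ (rigorously, after regularizing $u^{(n)}$ to $C^{1,2}$ via Lemma \ref{lem:density_intrinsic_spaces} and passing to the limit) combined with the trivial Ito expansion of $Z_t$ yields exactly \eqref{eq:phi-phi^4}, with $M^{(n)}$ being the $\R^N$-valued local martingale whose last $N-d$ components vanish (since $u^{(n)}_2\equiv 0$) and whose first $d$-components have quadratic covariation \eqref{eq:covariation} (because $\nabla_v\phi^{(n)}_1=I_d+\nabla_v u^{(n)}_1$). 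The finiteness of the fourth moments of $Z_t$ follows from the boundedness of $\Bc^{(n)}$ (i.e.\ $\Bc^{(n)}\in\CT{\gamma}$) together with the assumed finite fourth moment of $\mu_0$, by a Burkholder--Davis--Gundy plus Gronwall argument on the SDE satisfied by $Z$ under $\Pb^{(n)}$. Lemma \ref{lm:tighness} then delivers tightness of $(\Pb^{(n)})$.

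\textbf{Step (ii) and (iii): limit and martingale identification.} Tightness gives a subsequence $\Pb^{(n_k)}\to\Pb$ weakly on the canonical space. Clearly $Z_0\sim\mu_0$ under $\Pb$, and $Z$ remains continuous. To check that $(Z,\Pb)$ solves MP$(\Bc,\mu_0)$, fix arbitrary $g\in C_T \mathcal S$ and $f_T\in\mathcal S$, and let $u$ (resp.\ $u^{(n)}$) denote the unique mild solution of \eqref{eq:cauchy_MP_def} driven by $\Bc$ (resp.\ $\Bc^{(n)}$). By Theorem \ref{thm:conv_un_back} (applied with the same forcing $g$ and terminal datum $f_T$) together with Corollary \ref{cor:uniform_conv_un}, one has $\|u^{(n)}-u\|_{T}\to 0$, hence $\|M^{u^{(n)}}-M^u\|_\infty\to 0$ uniformly on the canonical space. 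Moreover, $(Z,\Pb^{(n)})$ solves MP$(\Bc^{(n)},\mu_0)$ by the equivalence with Stroock--Varadhan (Proposition \ref{prop:MP-Strook}), so that $M^{u^{(n)}}$ is a (bounded, hence true) martingale under $\Pb^{(n)}$. For any $0\le s<t\le T$ and any bounded continuous $\Fc_s$-measurable cylindrical functional $\psi$, the identity
\begin{equation}
 \Eb^{\Pb^{(n_k)}}\!\big[\psi\,(M^{u^{(n_k)}}_t-M^{u^{(n_k)}}_s)\big]=0
\end{equation}
passes to the limit: the uniform convergence $u^{(n_k)}\to u$ absorbs the dependence of $M^{u^{(n_k)}}$ on $k$, while the weak convergence $\Pb^{(n_k)}\to\Pb$ handles the integration of the bounded continuous functional $\psi\,(M^u_t-M^u_s)$. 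One concludes $\Eb^{\Pb}[\psi(M^u_t-M^u_s)]=0$, so $M^u$ is a $\Pb$-martingale with respect to the canonical filtration, yielding $(Z,\Pb)\in\text{MP}(\Bc,\mu_0)$.

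\textbf{Main obstacle.} The delicate point is Step (i): justifying Ito's formula for $u^{(n)}(t,Z_t)$ when $u^{(n)}$ is only known a priori to live in an intrinsic H\"older space $C^{2+\alpha}_{B,T}$ (so that the time-variable regularity is encoded through the Lie derivative $Yu^{(n)}$ rather than a standard $\partial_t u^{(n)}$). This gap is bridged exactly by Lemma \ref{lem:density_intrinsic_spaces}, which provides an approximation by $C^{1,2}$ functions with the correct convergence of the relevant derivatives, so that the Stroock--Varadhan MP for $\mathcal L_{\Bc^{(n)}}$ can be applied to the approximants and then passed to the limit. The rest of the tightness argument and the martingale passage-to-limit are then of a standard Zvonkin-transform flavour, the key input being the continuity estimate of Theorem \ref{thm:conv_un_back} in anisotropic Besov scales, which ensures the uniform convergence of the PDE test functions.
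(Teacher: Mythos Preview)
Your proof is correct and follows essentially the same route as the paper's own argument: both verify the hypotheses of Lemma~\ref{lm:tighness} via the Zvonkin transform and an It\^o-formula argument justified through the $C^{1,2}$-approximation of Lemma~\ref{lem:density_intrinsic_spaces} and Proposition~\ref{prop_lie_sol}, then pass to the limit in the martingale identity using Theorem~\ref{thm:conv_un_back} and the equivalence of Proposition~\ref{prop:MP-Strook}. The paper additionally makes explicit the SDE representation of $Z$ under $\Pb^{(n)}$ (from the classical Stroock--Varadhan theory) to carry out the It\^o computation and to obtain the finite fourth moment of $Z_t$, but this is exactly what your ``BDG plus Gronwall'' remark encodes.
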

\begin{remark}
Notice  that the solution $(Z, \Pb^{(n)} )$ exists by  Corollary \ref{cor:MP-Strook}.
\end{remark}
\begin{proof}[Proof of Lemma \ref{lm:tightnessPn}]
The solution $(Z, \Pb^{(n)} )$
 is also a   solution to  MP$(\Bc^{(n)}, \mu_0)$ by Proposition \ref{prop:MP-Strook}. 
   For the moment we assume the hypothesis of Lemma  \ref{lm:tighness} hold (this will be proven later), which  implies that  the sequence of $(\Pb^{(n)})$ is tight. Thus  there exists a subsequence, which we denote again with $(\Pb^{(n)})$, that weakly converges to some Borel probability measure $\Pb$ on $ C_T\R^N$. We write
\begin{equation}
\label{eq:weak converg}
%\Pb^{(n)}\xrightarrow[n\to+\infty]{w}\Pb.
\Pb^{(n)}\Rightarrow \Pb.
\end{equation} 
We now show that the couple $(Z,\Pb)$ solves the MP$(\Bc,\mu_0)$. 
Since for any $n\in \N$,  $\Pb^{(n)}\circ Z_0^{-1}= \mu_0$ the same holds for $\Pb\circ Z^{-1}_0$. To prove the martingale property, we consider $u\in \mathcal{D}_{\mathcal{L}} $ (see Remark \ref{rem:domain MP}) along with $g \in C_T \mathcal S \subset \CT{\varepsilon}$
and $f_T \in \mathcal S\subset \C_B^{1+\beta+\eps}$, for some $\varepsilon >0$, such that   $u$ is a mild solution to \eqref{eq:cauchy_MP_def}. 
We then consider the process
\begin{equation}
M^u_t:=u_t(Z_t)-u_0(Z_0)-\int_0^t g_s(Z_s)ds,
\end{equation}
and show that it is a $\Pb$-martingale.
To this aim, fixed an $n\in \N$, we consider the  solution  $u^{(n)}\in \CT{1+\beta+\eps}$ of
\begin{equation}\label{eq:un}
\begin{cases}
\Kc u^{(n)}+\langle \Bc^{(n)},\nabla_v \rangle u^{(n)} =g\\
u^{(n)}_T=f_T.
\end{cases}
\end{equation}
Such $u^{(n)}$ exists  by Theorem \ref{thm:sol_back_CP}.
By definition of solution of MP$(\Bc^{(n)},\mu_0)$ we have that 
\begin{equation}
M_t^{u^{(n)}}=u^{(n)}_t(Z_t)-u^{(n)}_0(Z_0)-\int_0^t g_s(Z_s)ds
\end{equation}
is a $\Pb^{(n)}$-martingale.
Moreover, by assumption 
%\begin{equation}
$\|\Bc^{(n)}-\Bc\|_{T,-\beta-\eta}\xrightarrow{n\to\infty} 0$.
%\end{equation}
By \eqref{eq:cont_propr} in Theorem \ref{thm:conv_un_back} Part (a), it now follows that
the sequence $(u^{(n)})$ converges uniformly to $u$ and is uniformly bounded. The latter implies, the $M^{u^{(n)}}$ and $M^u$ are uniformly bounded.  By these facts and \eqref{eq:weak converg} it can be shown easily that $M^u$ is a $\Pb$-martingale  by using the definition of martingale, in particular that $\mathbb E^{\mathbb P}[ \Phi (Z_r, r\leq s) (M_t^{u} - M_s^{u})]=0$ for all bounded continuous functionals $\Phi$ on $C_s(\mathbb R^N)$ knowing that $\mathbb E^{\mathbb P^{(n)}}[ \Phi (Z_r, r\leq s) (M_t^{u^{(n)}} - M_s^{u^{(n)}})]=0$ for all bounded continuous functionals $\Phi$.

It is left to prove that we can apply Lemma \ref{lm:tighness}. Let $u^{(n)}$  and $\phi^{(n)}$ be defined as in Lemma \ref{lm:pre-tighness},  for some $\bar \lambda$ independent of $n$ and defined for the sequence $\Bc^{(n)}$.  The reader should be careful about the notation: $u^{(n)}$ in the rest of the proof should not be confused with the $u^{(n)}$ used above in the current proof, which was solution to \eqref{eq:un}.
We would like to apply It\^o's formula to $\phi_t^{(n)}(Z_t)$ under $\mathbb P^{(n)}$ for fixed $n$, but $\phi^{(n)}$ does not necessarily belong to $C^{1,2}([0,T]\times \R^N)$, hence  we perform a further smoothing procedure and to this aim we momentarily drop the superscript $(n)$, everywhere except in $\mathbb P^{(n)}$ and $\Bc^{(n)}$.
Recall that $(Z, \mathbb P^{(n)})$ solves the Stroock-Varadhan martingale problem with respect to $\mathcal L_{\Bc^{(n)}}$, therefore there exists a probability space $(\Omega, \mathbb P^{(n)})$ and a Brownian motion $W$ and a process $Z=(V, X)$ such that 
\begin{equation}\label{eq:sv}
\begin{cases}
\dd V_t = \Big (\Bc^{(n)}_t(Z_t) + \Bc_0 Z_t \Big) \dd t   +     \dd W_t       \\
\dd X_t = \Bc_1 Z_t \dd t\\
Z_0 \sim \mu_0. 
\end{cases}
\end{equation}
Indeed, equation \eqref{eq:sv} admits existence in law and furthermore $Z$ under $\mathbb P^{(n)}$ has finite 4th moment, hence assumptions (b) and (c) of Lemma \ref{lm:tighness} are satisfied. It remains to prove assumption (a).
By  Proposition \ref{prop_lie_sol},  $u = u^{(n)}$ is a strong Lie  solution of \eqref{eq:Zvonkin PDE reg} in $\Cc^{2+\alpha}_{B,T}$ for any $\alpha\in(0,1)$. 
 Let us denote by  $u^{(m)}$ a suitable  approximating sequence in $C^{1,2}([0,T]\times \R^N)$, which exists by Lemma \ref{lem:density_intrinsic_spaces} Part (a). Setting $\phi_t^{(m)}(z):= z + u^{(m)}(z) $ we compute $d\phi^{(m)}_t(Z_t)$ by applying It\^o's formula component-wise:
  we obtain
\begin{equation}\label{eq:nabla_phi_m}
\phi^{(m)}_t(Z_t) - \phi^{(m)}_0(Z_0) = \int_0^t \partial_t \phi^{(m)}_s (Z_s) ds + \int_0^t \nabla \phi^{(m)}_s (Z_s) dZ_s + \frac12 \sum_{i,j=1}^N \partial_{z_i, z_j} \phi^{(m)}_s(Z_s) d [Z^i, Z^j]_s.
\end{equation}
The second term on the RHS of \eqref{eq:nabla_phi_m} gives 
\begin{align}
\int_0^t  & \nabla \phi^{(m)}_s (Z_s) dZ_s \\
&= 
\int_0^t 
\langle 
\begin{pmatrix}
  \Bc_s^{(n)} (Z_s) \\ 
 0
\end{pmatrix}, 
\nabla \phi^{(m)}_s (Z_s) 
\rangle  d s  
+ \int_0^t \langle B Z_s, \nabla \phi^{(m)}_s (Z_s) \rangle ds + 
\begin{pmatrix}
\int_0^t \nabla_v \phi^{(m)}_s (Z_s)  d W_s \\ 
0
\end{pmatrix} \\
& = 
\int_0^t 
\langle  \Bc_s^{(n)} (Z_s), \nabla_v \phi^{(m)}_s (Z_s)  \rangle  d s  
+ \int_0^t \langle B Z_s, \nabla \phi^{(m)}_s (Z_s) \rangle ds
+ 
\begin{pmatrix}
 \int_0^t \nabla_v \phi^{(m)}_s (Z_s)  d W_s \\ 
 0
\end{pmatrix}, 
\end{align} 
where the previous identity are to be intended for each component of the vector $\phi^{(m)}$. The last term on the RHS of \eqref{eq:nabla_phi_m} can be written, again component-wise,  as  
\[
\frac12 \sum_{i,j=1}^N \partial_{z_i, z_j} \phi^{(m)}_s(Z_s) d [Z^i, Z^j]_s = \frac12 \Delta_v  \phi^{(m)}_s (Z_s) ds,
\]
so \eqref{eq:nabla_phi_m} becomes 
\begin{align}\label{eq:nabla_phi_mii}
\phi^{(m)}_t(Z_t) - \phi^{(m)}_0(Z_0) 
&= \int_0^t [ \partial_t \phi^{(m)}_s (Z_s) 
+  \langle B Z_s, \nabla \phi^{(m)}_s (Z_s) \rangle 
+\frac12 \Delta_v  \phi^{(m)}_s (Z_s) ] ds \\
 &\quad + \int_0^t \langle  \Bc_s^{(n)} (Z_s), \nabla_v \phi^{(m)}_s (Z_s)  \rangle  d s  
+\begin{pmatrix}
\int_0^t \nabla_v \phi^{(m)}_s (Z_s)  d W_s \\ 
 0
\end{pmatrix}.
\end{align}
We can rewrite this equality in terms of $u^{(m)}$ as
\begin{align}\label{eq:nabla_u_mii}
u^{(m)}_t(Z_t) &+ Z_t - u^{(m)}_0(Z_0) - Z_0  \\
&= \int_0^t [ \partial_t u^{(m)}_s (Z_s) 
+  \langle B Z_s, \nabla u^{(m)}_s (Z_s) \rangle 
+\frac12 \Delta_v  u^{(m)}_s (Z_s) ] ds  + \int_0^t   B Z_s ds \\
 & \quad + \int_0^t \langle  \Bc_s^{(n)} (Z_s), \nabla_v u^{(m)}_s (Z_s)  \rangle  d s + \int_0^t 
\begin{pmatrix} 
  \Bc_s^{(n)} (Z_s)\\
  0
  \end{pmatrix}   ds
+\begin{pmatrix}
\int_0^t \nabla_v u^{(m)}_s (Z_s)  d W_s \\ 
 0
\end{pmatrix}
+ 
\begin{pmatrix}
W_t \\ 
 0
\end{pmatrix}\\
&=\int_0^t [ Y u^{(m)}_s (Z_s) 
+\frac12 \Delta_v  u^{(m)}_s (Z_s) ] ds   + \int_0^t \langle  \Bc_s^{(n)} (Z_s), \nabla_v u^{(m)}_s (Z_s)  \rangle  d s \\
&  \quad 
 + \int_0^t   B Z_s ds 
+ \int_0^t 
\begin{pmatrix} 
  \Bc_s^{(n)} (Z_s)\\
  0
  \end{pmatrix}   ds
+\begin{pmatrix}
\int_0^t \nabla_v u^{(m)}_s (Z_s)  d W_s \\ 
 0
\end{pmatrix}
+ 
\begin{pmatrix}
W_t \\ 
 0
\end{pmatrix}
.
\end{align}
By   Lemma \ref{lem:density_intrinsic_spaces} Part (b)
% Applicazione in questo punto
we get that $u^{(m)}, Y u^{(m)}, \Delta_v  u^{(m)}$ and $\nabla_v u^{(m)} $ converge uniformly on compact sets to $u, Y u, \Delta_v  u$ and $\nabla_v u$, respectively, 
hence the ucp limit  of  \eqref{eq:nabla_u_mii} gives 
\begin{align}
u_t(Z_t) + Z_t - u_0(Z_0) - Z_0  %\\
&=\int_0^t [ Y u_s (Z_s) 
+\frac12 \Delta_v  u_s (Z_s) ] ds   + \int_0^t \langle  \Bc_s^{(n)} (Z_s), \nabla_v u_s (Z_s)  \rangle  d s \\
&\quad
 + \int_0^t   B Z_s ds 
+ \int_0^t 
\begin{pmatrix} 
  \Bc_s^{(n)} (Z_s)\\
  0
  \end{pmatrix}   ds
+\begin{pmatrix}
\int_0^t \nabla_v u_s (Z_s)  d W_s \\ 
 0
\end{pmatrix}
+ 
\begin{pmatrix}
W_t \\ 
 0
\end{pmatrix}.
\end{align}
As $u$ is a strong Lie solution of \eqref{eq:Zvonkin PDE reg},  the PDE in the latter is satisfied everywhere on $[0,T]\times \R^N$, and thus   
\begin{align}
u_t(Z_t) + Z_t - u_0(Z_0) - Z_0  & = 
\lambda \int_0^t u_s (Z_s) ds 
 + \int_0^t   B Z_s ds 
+\begin{pmatrix}
\int_0^t \nabla_v u_s (Z_s)  d W_s \\ 
 0
\end{pmatrix}
+ 
\begin{pmatrix}
W_t \\ 
 0
\end{pmatrix},
\end{align}
 which can be seen to be equality \eqref{eq:phi-phi^4}, where the martingale $M$ is given by 
\[M_t =  \begin{pmatrix}
\int_0^t \nabla_v \phi_s (Z_s)  d W_s \\ 
 0
\end{pmatrix},
\]
having  used the fact that $\phi_t(x)= x + u_t(x)$. This concludes the proof that Lemma \ref{lm:tighness} can be applied. 
\end{proof}
\begin{proposition}\label{prop:ex_MP}
Let Assumption \ref{ass:kolmogorov_op}  hold and $\Bc \in C_T \C^{-\beta}_B$ with $\beta \in (0,1/2)$. Let $\mu_0$ be a probability law on $\R^N$ with finite 4th moment. Then  MP$(\Bc, \mu_0)$ admits existence of a solution according to Definition \ref{def:solMP}.
\end{proposition}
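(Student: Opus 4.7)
The plan is to construct the solution by regularization and compactness, leveraging the machinery already developed. First I would mollify the distributional drift: by Lemma \ref{lem:regul_g} (applied with $g = \Bc$ and any regularity parameter $\gamma > -\beta$), set $\Bc^{(n)} := \Phi_n \ast \Bc$, which by Part (i) satisfies $\Bc^{(n)} \in \mathit{C}_T \mathcal{C}_B^\alpha$ for every $\alpha > 0$, and by Part (ii) satisfies
\begin{equation}
\|\Bc^{(n)} - \Bc\|_{T, -\beta - \eta} \longrightarrow 0 \quad \text{as } n\to\infty,
\end{equation}
for any fixed $\eta \in (0,1)$. In particular $\Bc^{(n)} \in \mathit{C}_T \mathcal{C}_B^\gamma$ for some $\gamma > 0$, so the hypothesis of Lemma \ref{lm:tightnessPn} on the approximating sequence is satisfied.

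Next, for each $n \in \N$, since $\Bc^{(n)}$ is smooth with H\"older-type regularity, Corollary \ref{cor:MP-Strook} produces a solution $(Z, \Pb^{(n)})$ on the canonical space $C_T(\R^N)$ to the classical Stroock-Varadhan martingale problem for $\Lc_{\Bc^{(n)}}$ with initial law $\mu_0$ (constructed via the superposition argument from a family $\Pb^{(n)}_{0,z}$, so that $\Pb^{(n)}(\cdot) = \int_{\R^N} \Pb^{(n)}_{0,z}(\cdot)\, \mu_0(\dd z)$). By Proposition \ref{prop:MP-Strook}, this is simultaneously a solution to MP$(\Bc^{(n)}, \mu_0)$ in the sense of Definition \ref{def:solMP}.

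Finally, I would apply Lemma \ref{lm:tightnessPn} directly: its three hypotheses are verified — the H\"older regularity and convergence of $(\Bc^{(n)})$ just established, and the finite 4th moment of $\mu_0$ given by assumption. Lemma \ref{lm:tightnessPn} then yields a subsequence $(\Pb^{(n_k)})$ converging weakly to some Borel probability measure $\Pb$ on $C_T(\R^N)$, and asserts that $(Z, \Pb)$ is a solution to MP$(\Bc, \mu_0)$. This completes the proof.

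I expect no serious obstacle, since the heavy lifting is already packaged in Lemma \ref{lm:tightnessPn} (which in turn relies on the tightness estimate of Lemma \ref{lm:tighness} via the Zvonkin transform of Lemma \ref{lm:pre-tighness} and the continuity Theorem \ref{thm:conv_un_back}). The only small care point is to ensure the mollification scheme truly falls within the regularity required by Lemma \ref{lm:tightnessPn}, namely $\Bc^{(n)} \in \mathit{C}_T \mathcal{C}_B^{\gamma}$ for some $\gamma > 0$ together with convergence in $\mathit{C}_T\mathcal{C}_B^{-\beta-\eta}$ for some $\eta > 0$; both are provided by Lemma \ref{lem:regul_g} by choosing, e.g., $\gamma = 1$ and $\eta$ any value in $(0,1)$ with $\beta + \eta < 1$.
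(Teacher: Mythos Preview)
Your proof is correct and follows essentially the same route as the paper: mollify $\Bc$ via Lemma \ref{lem:regul_g}, obtain the approximate Stroock-Varadhan solutions (the paper leaves the invocation of Corollary \ref{cor:MP-Strook} implicit, you make it explicit), and conclude via Lemma \ref{lm:tightnessPn}. The only cosmetic remark is that in your first sentence you should simply take $\gamma = -\beta$ when applying Lemma \ref{lem:regul_g}, since $\Bc \in C_T\mathcal{C}_B^{-\beta}$.
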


\begin{proof}
We define
\begin{equation}\label{eq:b1n}
\Bc_t^{(n)}: =  \Phi_n \ast \Bc_t
\end{equation}
as in \eqref{eq:b_n bis}. By Lemma \ref{lem:regul_g}, $\Bc^{(n)} \in \CT{\gamma}$
% \in C_T C_B^\gamma$
for all $\gamma>0$, 
and for any  
$\eta >0$
 %\[\Bc^{(n)} \xrightarrow[n\to+\infty]{C_T\C^{-\beta-\eta}} \Bc.\] 
$$ \Vert \Bc^{(n)} - \Bc \Vert_{T,-\beta-\eta}  
  \xrightarrow[n\to+\infty] \  0 .$$
Let $Z$ be the canonical process and  we denote by $(Z,\Pb^{(n)})$ the solution to the Stroock-Varadhan  martingale problem with respect to $\mathcal L_{\Bc^{(n)}}$.   Lemma \ref{lm:tightnessPn} allows to conclude existence of a solution  $(Z,\Pb) $.
\end{proof}

We now prove existence and uniqueness of solutions to MP$(\Bc, \mu_0)$,  for a general distribution law $\mu_0$. Concerning existence, the key tool is the superposition of solutions to MP$(\Bc, \delta_z)$ for any $z\in\R^N$, with $\delta_z$ clearly satisfying the finite $4$th moment assumption required to apply Proposition \ref{prop:ex_MP}.

\begin{theorem}\label{thm:ex_un_MP}
Let Assumptions \ref{ass:kolmogorov_op} hold, $\Bc \in C_T\C_B^{-\beta}$ with $\beta\in (0,1/2)$, and $\mu_0$ be a generic distribution law on $\R^N$. Then MP$(\Bc, \mu_0)$ admits existence and uniqueness of a solution according to Definition \ref{def:solMP}.
\end{theorem}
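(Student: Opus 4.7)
The plan is to decouple the statement into existence and uniqueness, and to pass from Dirac initial conditions (where Proposition \ref{prop:ex_MP} applies, since $\delta_z$ trivially has all moments) to a generic $\mu_0$ by a superposition argument. Both the measurability needed for superposition and the uniqueness for a generic $\mu_0$ will rest on a preliminary \emph{marginal uniqueness} result, which is the heart of the proof.

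\textbf{Marginal uniqueness.} Fix any $t_0\in[0,T)$, any probability law $\mu$ on $\R^N$, and let $(Z,\Pb)$ be an arbitrary solution of MP$(\Bc,\mu;t_0)$. For any $s\in(t_0,T]$ and any $f\in\mathcal S$, Theorem \ref{thm:sol_back_CP} (applied with terminal time $s$) provides the unique mild solution $u\in\mathit{C}_{[t_0,s]}\mathcal C_B^{1+\beta+\eps}$ of
\begin{equation}
\begin{cases}
\Lc u=0\quad\text{on } (t_0,s),\\
u_s=f.
\end{cases}
\end{equation}
Since $u$ has compact-type regularity (in particular $u$ is bounded), Lemma \ref{lm:Mu} yields that $M^u_{\cdot}-M^u_{t_0}$ is a bounded local, hence true, martingale on $[t_0,s]$ under $\Pb$. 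Taking $\Pb$-expectations gives
\begin{equation}
\Eb^{\Pb}[f(Z_s)]=\int_{\R^N}u_{t_0}(z)\,\mu(dz),
\end{equation}
which depends only on $\Bc$, $\mu$, $t_0$, $s$ and $f$. As $\mathcal S$ is measure-determining, the one-dimensional marginal $\Pb\circ Z_s^{-1}$ is uniquely determined for every $s\in[t_0,T]$.

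\textbf{From marginal to full uniqueness.} Having marginal uniqueness for every time-shifted problem MP$(\Bc,\mu;t_0)$, the uniqueness of the full law of $Z$ follows by applying Property P, the time-inhomogeneous extension of the Stroock--Varadhan/Ethier--Kurtz uniqueness principle developed in Appendix \ref{sec:MP Ethier-Kurtz}. This is the only non-mechanical step and the main technical obstacle: one has to verify that the class of test functions $\Dc_{\Lc}$ used in Definition \ref{def:solMP} is rich enough to produce the conditional marginals needed by the appendix (which is standard: regularising $f_T\in\mathcal S$ and $g\in C_T\mathcal S$ through Lemma \ref{lem:regul_g} gives density of test solutions $u$ in the relevant sense), and to check that the solution concept is stable under the conditioning procedure of that appendix.

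\textbf{Existence for generic $\mu_0$ by superposition.} For each $z\in\R^N$, Proposition \ref{prop:ex_MP} applied with $\mu_0=\delta_z$ produces a solution $(Z,\Pb_z)$ of MP$(\Bc,\delta_z)$, working on the canonical space $C_T(\R^N)$ as per Remark \ref{rm:ZP}. By the uniqueness established above, the map $z\mapsto\Pb_z$ is well defined; its measurability (with respect to the Borel $\sigma$-algebra on the canonical space endowed with weak convergence) follows from a standard argument: one shows that $z\mapsto \Eb^{\Pb_z}[\Phi]$ is measurable for a rich enough class of bounded continuous cylindrical functionals $\Phi$, using again the PDE representation $\Eb^{\Pb_z}[f(Z_s)]=u_0(z)$ for smooth $f$, which is continuous in $z$. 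Then define
\begin{equation}
\Pb(B):=\int_{\R^N}\Pb_z(B)\,\mu_0(dz),\qquad B\in\mathcal B(C_T(\R^N)).
\end{equation}
By construction $Z_0\sim\mu_0$ under $\Pb$, and for every $u\in\Dc_{\Lc}$ the martingale property of $M^u$ under each $\Pb_z$ integrates to the martingale property under $\Pb$, showing that $(Z,\Pb)$ solves MP$(\Bc,\mu_0)$.

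\textbf{Uniqueness for generic $\mu_0$.} This is immediate from the marginal uniqueness step combined with Property P, applied now with initial datum $\mu_0$ at time $t_0=0$. The main obstacle, as anticipated, is the rigorous application of the appendix result to our distributional test-function class $\Dc_{\Lc}$; everything else is a routine combination of Theorems \ref{thm:sol_back_CP}, \ref{thm:conv_un_back} and the continuity/stability statements already established for the Kolmogorov PDE.
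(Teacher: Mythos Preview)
Your overall architecture (verify Property~P, then apply Theorem \ref{thm:EK-extended} for uniqueness; superpose the $\delta_z$-solutions from Proposition \ref{prop:ex_MP} for existence) matches the paper exactly. The existence part is fine.

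The gap is in your marginal-uniqueness argument. You solve the backward PDE on $[t_0,s]$ with terminal condition $u_s=f$ and then invoke Lemma \ref{lm:Mu} to conclude that $M^u$ is a martingale on $[t_0,s]$. But Definition \ref{def:solMP} (and Lemma \ref{lm:Mu}) only provide the martingale property for test functions $u$ that solve \eqref{eq:cauchy_MP_def} with terminal time $T$: the class $\mathcal D_{\mathcal L}$ is built from $\mathcal L u=g$, $u_T=f_T$. Nothing in the paper says that a solution of MP$(\Bc,\mu;t_0)$ restricted to $[t_0,s]$ is a solution of the martingale problem with horizon $s$; to make your step work you would need to show that your $u$ on $[t_0,s]$ can be realised as (or approximated by) a solution of the PDE on $[t_0,T]$ with data $g\in C_T\mathcal S$, $f_T\in\mathcal S$. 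This is not obvious: if you extend by solving backward from some $u_T$, you have no control that $u_s$ hits a measure-determining class as $u_T$ varies over $\mathcal S$.

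The paper avoids this by keeping the terminal time at $T$ throughout and exploiting the freedom in $g$ instead. Take $g\in C_T\mathcal S$ arbitrary and $f_T=0$; the martingale property at $t=T$ gives
\[
\Eb^{\Pb^i}\Big[\int_0^T g_s(Z_s)\,ds\Big]=-\Eb^{\Pb^i}[u_0(Z_0)]=-\int u_0\,d\mu_0,\qquad i=1,2,
\]
so the two integrals coincide. Since $g$ ranges over all of $C_T\mathcal S$, this pins down $\Pb^i\circ Z_s^{-1}$ for a.e.\ $s$, and continuity of $Z$ upgrades this to all $s$. This is both simpler and stays strictly inside $\mathcal D_{\mathcal L}$. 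Your proof becomes correct if you replace the ``terminal time $s$, $g=0$'' device by this one.
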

\begin{proof}

{\em Uniqueness.} 
We want to  apply  Theorem \ref{thm:EK-extended} from the Appendix, so we need to verify that
Property M is satisfied for MP$(\Bc,\mu_0; t_0)$, for any initial 
condition $\mu_0$ and any $t_0\in[0,T)$ (see Definition \ref{def:Prop_P} in Appendix \ref{sec:MP Ethier-Kurtz} for the definition of Property M). Without restriction of generality we set $t_0=0$.
Assume that $(Z^1, \mathbb P^1)$ and  $(Z^2, \mathbb P^2)$ are two solutions with initial condition $\mu_0$. Without loss of generality, we can choose $Z^1=Z^2=Z$ where $Z$ is the canonical process. Let now %$g \in\CT{\eps}$ 
 $g \in C_T \mathcal S$
 and $u$ be the unique solution of $\mathcal Lu = g $ with terminal condition $u_T =0$. By the definition of solution of MP$(\Bc,\mu_0)$  
\begin{equation}
\mathbb E^{\mathbb P^i} \left[ \underbrace{ u_T( Z_T)}_{=0} -  u_0(Z_0) - \int_0^T g_s(Z_s) \dd s\right] =0,\qquad \text{for } i=1,2,
\end{equation}
and $\Pb^1\circ Z^{-1}_0=\Pb^2\circ Z^{-1}_0=\mu_0$. As a result,
\begin{equation}
\Eb^{\Pb^1} \left[\int_0^T g_s(Z_s)ds\right]=\Eb^{\Pb^2} \left[\int_0^T g_s(Z_s)ds\right].
\end{equation}
By arbitrariness of %$g\in \CT{\epsilon}$,
 $g \in C_T \mathcal S$
 then $\Pb^1$ and $\Pb^2$ have the same marginals for almost all $s\in[0,T]$, and since  $Z$ is continuous they have the same marginals for all $s \in[0,T]$,  that is Property M is satisfied for MP$(\Bc,\mu_0)$, for any initial condition $\mu_0$.

{\em Existence.} 
Given any $z\in \R^N$, by Proposition \ref{prop:ex_MP} there exists a solution $(Z, \mathbb P_z)$ to MP$(\Bc, \delta_z)$ since $\delta_z$ has finite 4th moment. As the solution is unique, the map $z\mapsto \mathbb P_z$ is measurable from $\R^N$ to the space of probability measures. This can be seen  with the same argument as in \cite[Exercise 6.7.4]{stroock_varadhan}. 
Given an arbitrary probability law $\mu_0$, we define $\mathbb P$ for a Borel set $B$ of the canonical space, by $\mathbb P (B):= \int_{\R^N} \mathbb P_z (B) d\mu_0(z) $.
The couple $(Z, \mathbb P)$ is a solution to MP$(\Bc, \mu_0)$, and we recall that the details of this arguments can be seen in the proof of existence of \cite[Theorem 4.5]{issoglio_russoMPb}, using the  current definition of martingale problem.
\end{proof}

The next proposition is the extension of the convergence result stated in Lemma \ref{lm:tightnessPn}, when removing the assumption on the finiteness of the 4th moment of the initial condition. 

\begin{proposition}\label{pr:convergence}
 Let $\mu_0$ be any probability law on $\R^N$. Let $\Bc^{(n)}, \Bc$ be as in Lemma \ref{lm:tightnessPn}.
Let $(Z, \mathbb P^{(n)})$ be a solution of the Stroock-Varadhan martingale problem with respect to $\mathcal L_{\Bc^{(n)}}$ with initial condition $\mu_0$. Then $\mathbb P^{(n)}$ weakly converges to $\mathbb P$, where $(Z, \mathbb P)$ is solution to MP$(\Bc, \mu_0)$.
\end{proposition}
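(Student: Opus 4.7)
The plan is to reduce the case of a general $\mu_0$ to the case of Dirac initial data, by disintegrating both $\Pb^{(n)}$ and the candidate limit $\Pb$ against $\mu_0$, and then pass to the limit using bounded convergence.

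First, for every $z\in\R^N$, the Dirac mass $\delta_z$ has finite moments of all orders. Hence Lemma \ref{lm:tightnessPn} applies with initial law $\delta_z$ and produces, along some subsequence, a weak limit of $\Pb^{(n)}_z$ (the solution of the Stroock--Varadhan MP for $\mathcal{L}_{\Bc^{(n)}}$ started at $z$) which solves MP$(\Bc,\delta_z)$. By the uniqueness statement in Theorem \ref{thm:ex_un_MP}, that limiting law is the unique $\Pb_z$ with $(Z,\Pb_z)$ solving MP$(\Bc,\delta_z)$, and hence the \emph{whole} sequence converges: $\Pb^{(n)}_z\Rightarrow\Pb_z$ on $C_T\R^N$ for every $z\in\R^N$. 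Measurability of $z\mapsto\Pb^{(n)}_z$ is classical (cf.\ \cite[Exercise 6.7.4]{stroock_varadhan}), and the measurability of $z\mapsto\Pb_z$ then follows since it is the pointwise weak limit of measurable maps.

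Second, by the superposition argument used in the proof of Theorem \ref{thm:ex_un_MP} (existence part), we have the representations
\begin{equation}
\Pb^{(n)}(A)=\int_{\R^N}\Pb^{(n)}_z(A)\,\mu_0(dz),\qquad \Pb(A)=\int_{\R^N}\Pb_z(A)\,\mu_0(dz),
\end{equation}
for every Borel set $A$ of the canonical space, where $\Pb$ is constructed by the same superposition and $(Z,\Pb)$ solves MP$(\Bc,\mu_0)$ (which is unique thanks to Theorem \ref{thm:ex_un_MP}). Note that $\Pb^{(n)}$ obtained by this superposition actually coincides with the given solution of the Stroock--Varadhan martingale problem for $\mathcal{L}_{\Bc^{(n)}}$ with initial law $\mu_0$, by uniqueness in law for the regularized equation (since $\Bc^{(n)}$ is smooth).

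Third, fix any bounded continuous $\Phi:C_T\R^N\to\R$. Then, by the disintegration above and Fubini,
\begin{equation}
\int\Phi\,d\Pb^{(n)}=\int_{\R^N}\Big(\int\Phi\,d\Pb^{(n)}_z\Big)\mu_0(dz),\qquad \int\Phi\,d\Pb=\int_{\R^N}\Big(\int\Phi\,d\Pb_z\Big)\mu_0(dz).
\end{equation}
By the first step, $\int\Phi\,d\Pb^{(n)}_z\to\int\Phi\,d\Pb_z$ for every $z\in\R^N$, and these integrals are uniformly bounded by $\|\Phi\|_{\infty}$. Hence by the dominated convergence theorem
\begin{equation}
\int\Phi\,d\Pb^{(n)}\longrightarrow\int\Phi\,d\Pb,\qquad n\to\infty,
\end{equation}
which gives $\Pb^{(n)}\Rightarrow\Pb$ on $C_T\R^N$.

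The main technical point to be careful about is the upgrade from subsequential to full convergence in step one, which relies crucially on the uniqueness provided by Theorem \ref{thm:ex_un_MP}; and the measurability of $z\mapsto\Pb_z$, needed to use Fubini in step three, which one obtains as the pointwise weak limit of the measurable maps $z\mapsto\Pb^{(n)}_z$.
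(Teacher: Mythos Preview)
Your proof is correct and follows essentially the same approach as the paper: disintegrate against $\mu_0$, apply Lemma \ref{lm:tightnessPn} with Dirac initial data, use the uniqueness of Theorem \ref{thm:ex_un_MP} to upgrade subsequential to full convergence of $\Pb^{(n)}_z\Rightarrow\Pb_z$, and conclude via Fubini and dominated convergence. The only cosmetic differences are that the paper disintegrates the \emph{given} $\Pb^{(n)}$ and then identifies the pieces via uniqueness (while you superpose the $\Pb^{(n)}_z$ and identify the result with the given $\Pb^{(n)}$ via uniqueness), and the paper obtains measurability of $z\mapsto\Pb_z$ from uniqueness \`a la \cite[Exercise 6.7.4]{stroock_varadhan} rather than as a pointwise weak limit.
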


\begin{proof}
  
For every $n$ we proceed to the {\em disintegration} of $\mathbb P^{(n)}$ as in \cite[Fact 2 in the proof of Theorem 4.5]{issoglio_russoMPb}. We write 
\begin{equation}\label{eq:E1}
\mathbb P^{(n)} = \int \mathbb P^{(n)}_z d\mu_0 (z).
\end{equation}
We show that $\mathbb P^{(n)}_z$ solves the Stroock-Varadhan martingale problem with respect to $\mathcal L_{\Bc^{(n)}}$ and initial condition $\delta_z$; this can be done as in Fact 2 of previously mentioned
\cite[Theorem 4.5]{issoglio_russoMPb}.
By the uniqueness of the Stroock-Varadhan martingale problem, $\mathbb P^{(n)}_z$ in \eqref{eq:E1} is the unique solution of that problem. Since $\mu_0=\delta_z$ has finite 4-th moment, we apply Lemma \ref{lm:tightnessPn} to show that for all $z\in\R^N$ we have 
\[
\mathbb P^{(n)}_z \Rightarrow \mathbb Q_z,
\]
where $\mathbb Q_z$ solves the MP$(\Bc, \delta_z)$. 
The map $z \mapsto \mathbb Q_z$ is measurable thanks to the uniqueness by Theorem \ref{thm:ex_un_MP} and again an argument as \cite[Exercise 6.7.4]{stroock_varadhan}.
By superposition, $\mathbb Q: = \int_{\R^N} \mathbb Q_z d\mu_0(z)$ solves the MP$(\Bc, \mu_0)$ which equals $\mathbb P$ by uniqueness, see again Theorem  \ref{thm:ex_un_MP}. It remains to show that 
$
\mathbb P^{(n)} \Rightarrow \mathbb P.
$
Let $F: C_T\R^N \to \R$ be bounded and continuous. We get by Fubini 
\begin{equation}\label{eq:E2}
\mathbb E^{\mathbb P^{(n)}} (F(Z)) = \int_{\R^N} \mathbb E^{\mathbb P^{(n)}_z} (F(Z)) d\mu_0(z).
\end{equation}
Using the fact that $\mathbb P^{(n)}_z \Rightarrow \mathbb Q_z$
for all $z\in\R^N$, taking the limit in \eqref{eq:E2} and using dominated convergence theorem we get 
\[
\lim_{n\to\infty} \mathbb E^{\mathbb P^{(n)}} (F(Z))  = \int_{\R^N} \mathbb E^{\mathbb Q_z} (F(Z)) d\mu_0(z)= \mathbb E^{\mathbb P} (F(Z)),
\]
and the proof is concluded. 
\end{proof}

In the next result we see that if the solution $Z_t$ of the singular martingale problem  has a density $v_t$ in a suitable space, then the function $t \mapsto v_t$ satisfies the Fokker-Planck equation, as expected.  
\begin{proposition}
\label{prop:MP sol FP}
Let Assumption \ref{ass:kolmogorov_op} holds and $\Bc \in C_T \C^{-\beta}_B$ with $\beta \in (0,1/2)$. {Let $\mu_0$ be a probability measure on $\R^N$ with density $u^0\in \C_B^{\beta+\varepsilon}$, for some $\varepsilon>0$.} Let $(Z,\mathbb{P})$ be the solution to the MP$(\Bc, \mu_0)$. Then $Z_t$ admits a law density $u_t$ with $u\in \CT{\beta+\varepsilon}$. Furthermore, $u$ is a weak solution of the linear Fokker-Planck equation \eqref{eq:FP cauchy probl} where $F(u)b = \Bc$, that is for every $\varphi\in \mathcal{S}$ we have
\begin{equation}
  \< u_t|\, \varphi\>= \<u^0 |\, \varphi\>+ \int_0^t \<u_s|\, \Ac \, \varphi\> \, ds +\sum_{i=1}^d \int_0^t \big\< u_s \Bc_i|\, \partial_{z_i} \varphi\big\> ds , \qquad t\in [0,T],
\end{equation}
where $\Ac$ was defined in \eqref{eq:kolm_const}. 
\end{proposition}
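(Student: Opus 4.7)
The strategy is to approximate the singular drift by smooth drifts, use classical results (and the density) for the regularised problem, and then pass to the limit by combining the weak convergence of the martingale solutions with the stability of the linear Fokker--Planck equation in anisotropic Besov topology.

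First, I would regularise by setting $\Bc^{(n)}:=\Phi_n\ast \Bc$ as in \eqref{eq:b1n}, so that by Lemma \ref{lem:regul_g} we have $\Bc^{(n)}\in \mathit{C}_T\mathcal{C}_B^{\gamma}$ for every $\gamma>0$ and $\|\Bc^{(n)}-\Bc\|_{T,-\beta-\eta}\to 0$ for every $\eta>0$. Let $(Z,\mathbb{P}^{(n)})$ be the corresponding Stroock--Varadhan solution, which by Proposition \ref{prop:MP-Strook} is the (unique) solution of MP$(\Bc^{(n)},\mu_0)$. By Proposition \ref{pr:convergence} we have $\mathbb{P}^{(n)}\Rightarrow\mathbb{P}$. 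Because the regularised drift is H\"older, classical kinetic results (e.g.\ the Gaussian bounds in \cite{menozzi}) ensure that $Z_t$ under $\mathbb{P}^{(n)}$ admits a density $u^{(n)}_t$ which is a weak (hence, by the linear analogue of Proposition \ref{prop:mild_weak_equiv}, mild) solution of the linear Fokker--Planck equation with drift $\Bc^{(n)}$ and initial datum $u^0$. The linear version of Theorem \ref{th:well_posedness_FP} (whose fixed-point proof simplifies considerably in the linear case) places this mild solution in $\mathit{C}_T\mathcal{C}_B^{\beta+\varepsilon}$.

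Next, the linear analogue of Lemma \ref{lem:comparison_mild_sol_FP} and Theorem \ref{thm:conv_un}(b), obtained by the same Schauder-estimate argument with $\tilde F$ replaced by the identity, delivers $\|u^{(n)}-u\|_{T,\beta+\varepsilon}\to 0$, where $u\in \mathit{C}_T\mathcal{C}_B^{\beta+\varepsilon}$ is the unique mild solution of the singular linear Fokker--Planck equation with drift $\Bc$. Since $\mathcal{C}_B^{\beta+\varepsilon}\hookrightarrow L^\infty$ (cf.\ Example \ref{ex:Linfinity} and Proposition \ref{prop:Besov-hold}), this convergence is uniform on $[0,T]\times\R^N$; in particular $u_t\geq 0$ and, by Fatou, $\|u_t\|_{L^1}\leq 1$.

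Finally, to identify $u_t$ with the density of $Z_t$ under $\mathbb{P}$, I would fix $\varphi\in\mathcal{S}$ and $t\in[0,T]$ and write
\begin{equation}
\int_{\R^N} u^{(n)}_t(z)\varphi(z)\,dz \;=\; \mathbb{E}^{\mathbb{P}^{(n)}}[\varphi(Z_t)].
\end{equation}
The right-hand side converges to $\mathbb{E}^{\mathbb{P}}[\varphi(Z_t)]$ because $\omega\mapsto\varphi(\omega_t)$ is bounded and continuous on the canonical space; the left-hand side converges to $\int u_t\varphi\,dz$ by the uniform convergence of $u^{(n)}$ together with the uniform $L^1$-bound and the rapid decay of $\varphi$ (split the integral over a large ball and its complement). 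This identifies $u_t$ as the density of the law of $Z_t$, and the distributional Fokker--Planck identity follows directly from the fact that $u$ is a weak solution of \eqref{eq:FP cauchy probl} with $F(u)b$ replaced by $\Bc$. The main technical point is the linear PDE stability in Besov topology (which only requires a routine adaptation of the nonlinear arguments of Section \ref{sc:FP}) and the existence of a density for the regularised SDE in the degenerate kinetic framework, for which we rely on the results of \cite{menozzi}.
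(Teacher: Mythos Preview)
Your overall strategy coincides with the paper's: regularise the drift, identify the density of the regularised process with the mild FP solution, and pass to the limit using Proposition~\ref{pr:convergence} on the probabilistic side and the Besov stability of the linear FP equation on the analytic side.

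The one substantive difference is in how you link the regularised SDE to the FP solution. You start from the Stroock--Varadhan solution, invoke \cite{menozzi} to know that a density exists, and then argue it must coincide with the mild FP solution. The paper goes the other way: it uses \cite[Theorem~2.6]{figalli} to construct, \emph{from} the FP solution $u^{(n)}$ (already known to be nonnegative, in $L^1$, and in $\CT{\beta+\eps}$ by Theorem~\ref{thm:conv_un}(a)), a Stroock--Varadhan solution whose time-marginals are $u^{(n)}_t\,dz$ by design; this is then identified with the unique MP$(\Bc^{(n)},\mu_0)$ solution via Proposition~\ref{prop:MP-Strook} and Theorem~\ref{thm:ex_un_MP}. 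The advantage of the Figalli route is that the identification ``SDE density $=$ FP solution'' comes for free from uniqueness of the \emph{martingale problem}, with no appeal to regularity of the SDE density. In your route, the step ``Menozzi density is a weak FP solution, hence by Proposition~\ref{prop:mild_weak_equiv} a mild solution, hence the one in $\CT{\beta+\eps}$'' is not quite closed: Proposition~\ref{prop:mild_weak_equiv} is \emph{stated} for $u\in\CT{\beta+\eps}$, so you would either have to first establish that the Menozzi density lies there, or (more to the point) invoke the uniqueness of weak solutions in $L^\infty_T\mathcal{S}'$ that is actually proved inside that proposition. This is a fixable imprecision rather than a genuine obstruction, but the paper's use of Figalli avoids it entirely and makes the reliance on external density estimates unnecessary.
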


\begin{proof}
Denote by $u$ and $u^{(n)}$ the unique mild, and thus weak, solutions in $\CT{\beta+\varepsilon}$ to \eqref{eq:FP cauchy probl} with $F(u)b=\Bc$ and $F(u)b=\Bc^{(n)}$, respectively, which exist by Theorem \ref{th:well_posedness_FP}. Here $(\Bc^{(n)})$ is defined as in \eqref{eq:b_n bis}. Recall that, by Theorem \ref{thm:conv_un}, $u^{(n)}$ converges uniformly to $u$ on $[0,T]\times \R^N$. 

Furthermore, once more by Theorem \ref{thm:conv_un}, $u^{(n)}$ is non-negative and $u^{(n)}\in L^1(\R^N)$. By \cite[Theorem 2.6]{figalli}, there exists a solution $(Z^{(n)}, \mathbb P^{(n)})$ to the Stroock-Varadhan martingale problem with $\mathcal L_{ \Bc^{(n)}}$ and initial condition $\mu_0$, whose law  density at time $t$ is exactly $u_t^{(n)}$. Owing to Proposition \ref{prop:MP-Strook}, the pair $(Z^{(n)},\Pb^{(n)})$ coincides with the unique solution to MP$(\Bc^{(n)}, \mu_0)$. Therefore, by Theorem \ref{thm:ex_un_MP} (uniqueness) and Proposition \ref{pr:convergence}, $u_t^{(n)}(z)d z$ converges to the law of  $Z_t$ under $\Pb$, for any $t\in [0,T]$. This implies that $u_t$ is the density of $Z_t$ under $\Pb$.
\end{proof}

\subsection{The McKean-Vlasov case}\label{sec:MKMP}
Throughout this section we assume that Assumption \ref{ass:kolmogorov_op} and \ref{ass:beta and b}  hold, and that Assumption \ref{ass:phi} holds for $\Phi$ being $F$ and $\tilde F$. 
The definition of solution to MP extends easily to the McKean-Vlasov case \eqref{eq:mkv_singular}  and \eqref{eq:initial_cond_Z}, recalled below for ease of reading:
\begin{equation}\label{eq:mkv_singularII}
\begin{cases}
\dd V_t = \Big( F\big( u_t (Z_t)  \big) b_t(Z_t) + \Bc_0 Z_t \Big) \dd t   +     \dd W_t       \\
\dd X_t = \Bc_1 Z_t \dd t \\
\mu_{Z_t}(\dd z) = u_t(z) \dd z,
\end{cases}
\end{equation}
for some initial condition $Z_0 \sim \mu_0 = u^{0}(z) \dd z$.

To formulate \eqref{eq:mkv_singularII} as a martingale problem we resort to the linear singular martingale problem MP$( \Bc, \mu_0)$ introduced in Section \ref{sec:linear MP}, with $\Bc=F(u) b $ and $u \in C_T \C_b \cap L^\infty_T\C_B^{\beta+\eps}$.  
%Assumptions \ref{ass:F lipsch bound} and \ref{ass:F_tild lipsch bound} are satisfied.
Precisely, %In this setting 
we give the following definition.

\begin{definition}\label{def:MKMP}
Let  the McKean-Vlasov Martingale Problem  formally given by \eqref{eq:mkv_singular} %with initial condition$Z_0 \sim \mu_0 = u^{0}(z) \dd z$  
 and \eqref{eq:initial_cond_Z} be denoted by  MKMP$( F, b, u^0)$.
 We say that   MKMP$( F, b, u^0)$ admits {\em existence} if there exists a triple $(Z, \mathbb P, u) $  where $Z$ is a continuous stochastic process, $\mathbb P$ is a probability measure, and $u \in C_T \C_b \cap L^\infty_T\C_B^{\beta+\eps}$ for some $\eps>0$, such that   the law of $Z_t$ under $\mathbb P$ is $u_t$ and 
 $(Z, \mathbb P) $ is a solution to  MP$( \Bc, \mu_0)$ where  $\Bc = 
 F(u) b $.
 
 We say that MKMP$( F, b, u^0)$ admits {\em uniqueness} if given any two solutions $(Z, \mathbb P, u) $ and $(\hat Z, \hat {\mathbb P}, \hat u) $ such that $u,\hat u \in C_T \C_b \cap L^\infty_T\C_B^{\beta+\eps}$ for some $\eps>0$,  we have that $u= \hat u$ and the law of $Z$ under $\mathbb P$ is the same as the  law of $\hat Z$ under $\hat {\mathbb P}$.
\end{definition}

We now prove the main result, which was stated in Theorem \ref{thm:ex_uniq_McKeanSDE_intro} and is repeated here for completeness.

\begin{theorem}\label{thm:ex_uniq_McKeanSDE}
 Let Assumptions \ref{ass:kolmogorov_op}  and 
 \ref{ass:beta and b} hold. 
 Let also Assumption \ref{ass:phi} hold with $\Phi=F,\tilde F$. 
Then  MKMP$( F, b,u^0)$ admits uniqueness.
\end{theorem}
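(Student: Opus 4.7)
The plan is to view MKMP$(F,b,u^0)$ as a linear singular martingale problem MP$(\mathcal{B}, u^0 \dd z)$ in which the distributional drift $\mathcal{B} = F(u) b$ is built from the density of the unknown law, and to reconcile the two viewpoints via the non-linear Fokker--Planck PDE \eqref{eq:FP cauchy probl}. Both existence and uniqueness will follow by combining three ingredients already established: well-posedness of the non-linear FP equation (Theorem \ref{th:well_posedness_FP}), well-posedness of MP$(\mathcal{B}, u^0 \dd z)$ for any admissible drift $\mathcal{B}\in \mathit{C}_T\mathcal{C}_B^{-\beta}$ (Theorem \ref{thm:ex_un_MP}), and the identification of the time-marginal densities of such MP-solutions as weak solutions of the corresponding linear FP equation (Proposition \ref{prop:MP sol FP}).

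For existence, I first invoke Theorem \ref{th:well_posedness_FP} to produce the unique weak solution $u\in \CT{\beta+\varepsilon}$ of the non-linear FP equation \eqref{eq:FP cauchy probl} with initial datum $u^0$. Setting $\mathcal{B} := F(u)\, b$, I must check that $\mathcal{B}\in \mathit{C}_T\mathcal{C}_B^{-\beta}$: Lemma \ref{lem:F lipsch est} applied to $\Phi=F$ yields $F(u)\in L^\infty_T\mathcal{C}_B^{\beta+\varepsilon}$, and since $u\in \mathit{C}_T\mathcal{C}_b$ with $F$ globally Lipschitz we also have $F(u)\in \mathit{C}_T\mathcal{C}_b$; a standard Paley--Littlewood interpolation between these two bounds then gives $F(u)\in \mathit{C}_T\mathcal{C}_B^{\beta+\varepsilon-\eta}$ for arbitrarily small $\eta>0$, and Bony's estimate (Proposition \ref{prop:bony_prod}), with $\eta<\varepsilon$, delivers $\mathcal{B}\in \mathit{C}_T\mathcal{C}_B^{-\beta}$. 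Theorem \ref{thm:ex_un_MP} then furnishes a unique solution $(Z,\mathbb{P})$ of MP$(\mathcal{B}, u^0\dd z)$, and Proposition \ref{prop:MP sol FP} ensures that the law of $Z_t$ under $\mathbb{P}$ has a density $\tilde u\in \CT{\beta+\varepsilon}$ solving the linear FP equation with drift $\mathcal{B}=F(u)b$. Since $u$ itself is a solution of this same linear FP equation (by virtue of solving the non-linear one), uniqueness of the linear FP problem in $\CT{\beta+\varepsilon}$ forces $\tilde u=u$, so that $(Z,\mathbb{P},u)$ is the desired MKMP solution.

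For uniqueness, let $(Z,\mathbb{P},u)$ and $(\hat Z,\hat{\mathbb{P}},\hat u)$ be two MKMP solutions. By the very definition of MKMP, $(Z,\mathbb{P})$ solves the linear MP$(F(u)b, u^0\dd z)$, so Proposition \ref{prop:MP sol FP} identifies $u$ as a weak solution in $\CT{\beta+\varepsilon}$ of the linear FP equation with drift $F(u)b$, which is precisely the non-linear FP equation \eqref{eq:FP cauchy probl}. The same reasoning shows that $\hat u$ solves \eqref{eq:FP cauchy probl} as well, and Theorem \ref{th:well_posedness_FP} (together with Remark \ref{rem:existence_uniq} to reconcile possibly different regularity indices) yields $u=\hat u$. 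Both $(Z,\mathbb{P})$ and $(\hat Z,\hat{\mathbb{P}})$ therefore solve the same linear MP$(F(u)b, u^0\dd z)$, and the uniqueness part of Theorem \ref{thm:ex_un_MP} concludes that $Z$ under $\mathbb{P}$ and $\hat Z$ under $\hat{\mathbb{P}}$ have the same law.

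The main obstacle is the consistent bookkeeping of regularity indices between the abstract MKMP framework (where Definition \ref{def:MKMP} only requires $u\in \CT{\beta+\tilde\varepsilon}$ for some $\tilde\varepsilon>0$) and the PDE/MP machinery (which is formulated with the specific $\varepsilon$ attached to $u^0$). The key observation is that Proposition \ref{prop:MP sol FP} automatically upgrades the density of any MKMP solution to the space $\CT{\beta+\varepsilon}$, matching the regularity of the initial datum; this is exactly the space in which Theorem \ref{th:well_posedness_FP} provides uniqueness for the non-linear FP equation. The remaining technicality, routine but necessary, is the Paley--Littlewood interpolation used to pass from $L^\infty_T$-Besov bounds plus $\mathit{C}_T\mathcal{C}_b$-continuity of $F(u)$ to $\mathit{C}_T\mathcal{C}_B^{\alpha}$-continuity (for some $\alpha>\beta$) needed to place the Bony product $F(u)b$ in $\mathit{C}_T\mathcal{C}_B^{-\beta}$, and hence to legitimately invoke Theorems \ref{thm:ex_un_MP} and \ref{th:well_posedness_FP} for the drift $F(u)b$.
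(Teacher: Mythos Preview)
Your uniqueness argument is essentially identical to the paper's: both invoke Proposition~\ref{prop:MP sol FP} to show that the density of any MKMP solution solves the non-linear Fokker--Planck equation \eqref{eq:FP cauchy probl}, then use Theorem~\ref{th:well_posedness_FP} to conclude $u=\hat u$, and finally Theorem~\ref{thm:ex_un_MP} to conclude equality in law. Your more careful handling of the regularity indices (upgrading via Proposition~\ref{prop:MP sol FP} to the $\varepsilon$ attached to $u^0$) is actually cleaner than the paper's treatment, which glosses over this point.

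For existence, however, you take a genuinely different route. The paper proceeds by approximation: it smooths $b$ to $b^{(n)}$, solves the regularized non-linear FP equation for $u^{(n)}$ (Theorem~\ref{thm:conv_un}~(a)), invokes Figalli's superposition result to produce Stroock--Varadhan solutions $(Z,\mathbb{P}^{(n)})$ whose marginals are exactly $u^{(n)}$, and then passes to the limit via Proposition~\ref{pr:convergence} and Theorem~\ref{thm:conv_un}~(b). Your argument is more direct: solve the exact non-linear FP for $u$ once and for all, build $\mathcal{B}=F(u)b$, solve the exact singular MP$(\mathcal{B},u^0\,\dd z)$, and then close the loop by invoking Proposition~\ref{prop:MP sol FP} plus uniqueness of the \emph{linear} FP equation (a special case of Theorem~\ref{th:well_posedness_FP} with $F\equiv I$) to identify the MP density with $u$.

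Both approaches are valid. Yours is more economical and better exploits the modularity of the results already proved; the approximation is still present, but it is hidden inside the proofs of Theorem~\ref{thm:ex_un_MP} and Proposition~\ref{prop:MP sol FP} rather than being repeated. The paper's approach, by contrast, is more constructive and makes explicit the convergence of classical solutions to the singular one. The only technical point in your route that is not stated verbatim in the paper is the verification that $F(u)b\in\mathit{C}_T\mathcal{C}_B^{-\beta}$; your interpolation argument (from $L^\infty_T\mathcal{C}_B^{\beta+\varepsilon}$ and $\mathit{C}_T\mathcal{C}_b$ to $\mathit{C}_T\mathcal{C}_B^{\beta+\varepsilon-\eta}$, then Bony) is correct and standard.
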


\begin{remark}\label{rem:regularity_ut}
Let us   denote by $u_t$ the probability density of the solution of MKMP$( F,b,u^0)$ at time $t\in [0,T]$. We now illustrate the fact that the space-regularity of the solution  decreases as $\beta$ increases, as expected. To this aim, let us suppose that $u^0$ is a bounded and smooth function.   The optimal regularity of the solution $u$ is obtained when regarding $ u^0$ as an element of  $ \C_B^{1-\beta-\eta}$ for every $\eta >0$ small enough.   
Indeed we apply Theorem \ref{th:well_posedness_FP} and Remark \ref{rem:existence_uniq} with $ \beta +\eps = 1-\beta - \eta$, and we see that $u_t \in \C_B^{1-\beta-\eta}$, hence the regularity index $1-\beta - \eta$ decreases as $\beta $ increases.
\end{remark}

\begin{proof}
{\em Existence.}
We construct a solution  $(Z, \mathbb P, u) $ with limiting arguments as follows. 
Let $(b^{(n)}) \subset L^\infty_T \C_B^\alpha$,
for all $\alpha>0$, be the sequence  defined as in \eqref{eq:b_n bis} so that $b^{(n)} \to b$ in  $L^\infty_T\C_B^{-\beta-\eta}$ for some $\eta>0$. Let $ u^{(n)}\in \CT{\beta+\eps}$ for some $\varepsilon>0$  be the corresponding mild solution to the non-linear FP Cauchy problem \eqref{eq:FP cauchy probl_regul}, which exists by Theorem \ref{thm:conv_un}, Part (a), and moreover, $u^{(n)}$ is a non-negative and bounded function which defines a non-negative finite measure $\mu_t^{(n)}$ with density $u_t^{(n)}$.
By Proposition \ref{prop:mild_weak_equiv}, $u^{(n)}$ is also a weak solution and thus a solution in the sense of distributions.
Hence we can apply \cite[Theorem 2.6]{figalli} to infer that there exists a solution $(Z^{(n)}, \mathbb P^{(n)})$ to the Stroock-Varadhan martingale problem with $\mathcal L_{ \Bc^{(n)}}$, where $\Bc^{(n)}:=F(u^{(n)})b^{(n)}$, and initial condition $\mu_0$,  
whose law  density at time $t$ is exactly $u_t^{(n)}$.  Without loss of generality, let us consider the solution $(Z, \mathbb P^{(n)})$, where $Z$ is the canonical process  on the canonical space $ C_T \R^N$.  
By the continuity of $F$ in H\"older spaces (Lemma \ref{lem:F lipsch est} with $\Phi=F$) and by continuity of the pointwise product (Proposition \ref{prop:bony_prod}) we obtain that $\Bc^{(n)}   \to \Bc:=F(u)b $  in $L^\infty_T\C_B^{-\beta-\eta}$ for some $\eta>0$, where $u$ is the mild solution to the FP Cauchy problem \eqref{eq:FP cauchy probl}.
By the continuity results on the PDE \eqref{eq:FP cauchy probl_regul}, see Theorem \ref{thm:conv_un}, Part (b), we have  $u^{(n)} \to u$ in $\CT{\beta+\eps}$
for some $\eps>0$.
 By Proposition \ref{pr:convergence},
 we know that $ \mathbb  P^{(n)} \to  \mathbb  P$, with   
 $(Z, \mathbb  P) $ being a solution to MP$(\Bc, \mu_0)$. 
Recall that
 $u_t^{(n)}$ is the law of $Z_t$ under $\mathbb P^{(n)}$.
Since, as mentioned earlier $u^{(n)} \to u$ in $\CT{\beta+\eps}$
  so $u_t$ is the law of $Z_t$ under $\mathbb P$ and
 by Definition \ref{def:MKMP}, we also have that $(Z, \mathbb  P, u) $ is
 a solution to MKMP$( F, b, u^0)$.

\noindent{\em Uniqueness.}
Let $(Z, \mathbb P, u) $ and $(\hat Z, \hat {\mathbb P}, \hat u) $ be two solutions to  MKMP$(F,b,  u^0)$. Then  $u, \hat u \in C_T \C_b \cap L^\infty_T\C_B^{\beta+\eps},$  for some $\eps > 0$, by definition. So, by Proposition \ref{prop:MP sol FP} with $\Bc = F(u)b$ and  $\Bc = F(\hat u)b$ respectively, we have that $u$ and $\hat u$ are both weak solutions to the nonlinear FP singular Cauchy problem \eqref{eq:FP cauchy probl}. By Proposition \ref{prop:mild_weak_equiv}
and  Theorem \ref{th:well_posedness_FP},
that Cauchy problem admits uniqueness in  $\C _T \C_b \cap L_T^\infty \C_B^{\beta+\eps}$;
consequently  $u = \hat u$.
By definition of solution to  MKMP$(F,b, u^0)$, this means that both $(Z, \mathbb P)$ and $(\hat Z, \hat{\mathbb P})$ are solutions to the same linear singular martingale problem, namely MP$(F(u) b, \mu_0),$ where $\mu_0$ is a law with density $u^0$. By Theorem \ref{thm:ex_un_MP}, we have uniqueness for the MP$(F(u)b, \mu_0)$ and hence the law of $Z$ under $\mathbb P $ is the same as the law of $\hat Z$ under $\hat{\mathbb P}$, which concludes the proof.
\end{proof}

%%% FRANCESCO. FINE VERIFICHE

\appendix

\section{Martingale Problem: Uniqueness}\label{sec:MP Ethier-Kurtz}
The purpose of this Appendix is to prove uniqueness of the Martingale Problem using  Ethier-Kurtz techniques, which allow to show uniqueness of the law knowing uniqueness of the time-marginals.

Throughout this section we assume that Assumption \ref{ass:kolmogorov_op} holds and $\Bc \in C_T \C^{-\beta}_B$ with $\beta \in (0,1/2)$.

\begin{definition}\label{def:Prop_P}
Let ${\mu_0}$ be a Borel probability measure on $\R^N$, and $t_0 \in [0,T)$.
We say that {\em Property M} holds for MP$(\Bc, \mu_0; t_0)$ if given $(Z, \mathbb P)$ solution to the MP$(\Bc, \mu_0; t_0)$ then the marginal laws are uniquely determined, that is if $(Z^1, \mathbb P^1)$ and $(Z^2, \mathbb P^2)$ are two solutions then 
\begin{equation}%\label{eq:}
 \mathbb P^1 \circ (Z^1)^{-1}_t= \mathbb P^2 \circ (Z^2)^{-1}_t,\qquad t\in [t_0,T].
\end{equation}
\end{definition}

\begin{theorem}\label{thm:EK-extended}
If Property M holds for MP$(\Bc,\mu_0; t_0)$  for every initial condition $\mu_0$ and every $t_0 \in [0,T)$, then we have uniqueness of  the MP$(\Bc, \mu_0)$ for every $\mu_0$.
\end{theorem}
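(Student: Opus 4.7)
The strategy is the classical Ethier--Kurtz argument (see e.g.\ \cite[Chapter 4]{ethier1986markov}): uniqueness of one-dimensional time marginals, propagated by regular conditional probabilities, implies uniqueness of the law on the canonical path space. First I would reduce to the canonical setting: by Remark \ref{rm:ZP} any solution is represented by a probability measure on the canonical space $\Omega := C_T \R^N$ under which the coordinate process $Z$ is continuous and satisfies the martingale property. So let $\Pb^1,\Pb^2$ be two probabilities on $\Omega$ making $(Z,\Pb^i)$ solutions of MP$(\Bc,\mu_0)$; the goal is $\Pb^1 = \Pb^2$. Since the Borel $\s$-algebra on $C_T \R^N$ is generated by finite-dimensional cylinder sets, it suffices to prove that, for every $n\geq 1$ and every $0=: t_0 < t_1 < \cdots < t_n \leq T$, the joint law of $(Z_{t_1},\ldots,Z_{t_n})$ is the same under $\Pb^1$ and $\Pb^2$.

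The proof then proceeds by induction on $n$. The base case $n=1$ is exactly Property~P applied at time $t_0=0$ with initial law $\mu_0$. For the inductive step, assume that $(Z_{t_1},\ldots,Z_{t_n})$ has the same law under $\Pb^1$ and $\Pb^2$, and pick an additional time $t_{n+1}\in(t_n,T]$. By a monotone class argument, to show that $(Z_{t_1},\ldots,Z_{t_{n+1}})$ has the same law it is enough to prove
\begin{equation}
\Eb^{\Pb^1}\!\big[ \varphi(Z_{t_1},\ldots,Z_{t_n}) \psi(Z_{t_{n+1}}) \big]  =  \Eb^{\Pb^2}\!\big[ \varphi(Z_{t_1},\ldots,Z_{t_n}) \psi(Z_{t_{n+1}}) \big]
\end{equation}
for every bounded continuous $\varphi$ and $\psi$. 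By the tower property, this reduces to showing that for $\Pb^i$-a.e.\ trajectory one has
\begin{equation}
\Eb^{\Pb^i}\!\big[ \psi(Z_{t_{n+1}}) \,\big|\, \Fc_{t_n}^Z \big]  =  h_{t_n,t_{n+1}}(Z_{t_n}),
\end{equation}
for some Borel function $h_{t_n,t_{n+1}}$ depending only on $\Bc$, $\psi$ and the times, but not on $i$.

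To obtain this, I would introduce, for each $i=1,2$, a regular conditional probability $(\Pb^i_{t_n,\omega})_{\omega\in\Omega}$ of $\Pb^i$ given $\Fc_{t_n}^Z$. The main lemma to establish is that, for $\Pb^i$-a.e.\ $\omega$, the pair $(Z, \Pb^i_{t_n,\omega})$ is a solution to MP$(\Bc,\delta_{Z_{t_n}(\omega)} ; t_n)$ in the sense of Definition \ref{def:solMP}. For any $g \in C_T \mathcal S$ and $f_T \in \mathcal S$, the process $(M^u_t - M^u_{t_n})_{t\in[t_n,T]}$ (with $u$ the solution of \eqref{eq:cauchy_MP_def}) is a $\Pb^i$-martingale with respect to $\Fc^Z$ starting from $t_n$; by the standard disintegration argument (see e.g.\ \cite[Theorem 1.2.10]{stroock_varadhan}), the restriction to $\Fc_{t_n,T}^Z$ of $\Pb^i_{t_n,\omega}$ therefore makes this same process a martingale $\Pb^i_{t_n,\omega}$-a.s. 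Together with $Z_{t_n} = Z_{t_n}(\omega)$, $\Pb^i_{t_n,\omega}$-a.s., this verifies the requirements of Definition \ref{def:solMP} for MP$(\Bc,\delta_{Z_{t_n}(\omega)} ; t_n)$.

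Once this lemma is in place, Property~P applied to MP$(\Bc,\delta_z ; t_n)$ (which holds by hypothesis for every $z\in\R^N$ and every $t_n\in[0,T)$) implies that the law of $Z_{t_{n+1}}$ under $\Pb^i_{t_n,\omega}$ depends only on $(t_n,t_{n+1},Z_{t_n}(\omega))$ and not on $i$. Setting
\begin{equation}
h_{t_n,t_{n+1}}(z) := \int_{\R^N}\!\psi(y)\,Q_{t_n,t_{n+1}}(z,\dd y),
\end{equation}
where $Q_{t_n,t_{n+1}}(z,\cdot)$ denotes this common marginal, we conclude the induction step. The main technical obstacle is the disintegration/regular-conditional-probability argument giving the restarted martingale problem; everything else is a routine measure-theoretic bookkeeping. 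Iterating $n$ times yields equality of all finite-dimensional distributions and hence $\Pb^1=\Pb^2$.
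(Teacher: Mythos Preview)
Your proposal is correct and follows a standard route, but it differs from the paper's argument in the key technical device. You use \emph{regular conditional probabilities}: disintegrate $\Pb^i$ given $\Fc^Z_{t_n}$, show that for almost every $\omega$ the conditional law solves the restarted problem MP$(\Bc,\delta_{Z_{t_n}(\omega)};t_n)$, and then invoke Property~P with Dirac initial data. The paper instead uses the \emph{change-of-measure trick}: for strictly positive bounded $h_0,\ldots,h_{n-1}$ it defines the equivalent probability $\mathbb Q \propto \prod_{i=0}^{n-1} h_i(Z_{t_i})\,\dd\Pb$, observes that $(Z|_{[t_{n-1},T]},\mathbb Q)$ solves MP$(\Bc,\mu_{n-1};t_{n-1})$ (immediate, since the Radon--Nikodym density is $\Fc^Z_{t_{n-1}}$-measurable and therefore preserves the martingale property on $[t_{n-1},T]$), and applies Property~P with the non-degenerate initial law $\mu_{n-1}$. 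Incidentally, this reweighting argument is exactly what appears in \cite[Theorem 4.4.2]{ethier1986markov}, so your citation points to the paper's method rather than your own.

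The paper's route is slightly more elementary: it sidesteps the r.c.p.\ machinery and, in particular, the separability argument you implicitly need in order to obtain a \emph{single} $\Pb^i$-null set outside of which $\Pb^i_{t_n,\omega}$ solves the martingale problem for \emph{all} pairs $(g,f_T)\in C_T\mathcal S\times\mathcal S$ simultaneously (each pair a priori produces its own exceptional set). This is routine once one uses that $\mathcal S$ and $C_T\mathcal S$ are separable, but it is a step you should make explicit. Your route, on the other hand, has the conceptual benefit of yielding a genuine transition kernel $Q_{t_n,t_{n+1}}(z,\cdot)$ and only ever invokes Property~P for deterministic initial conditions.
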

\begin{proof}
By Remark \ref{rm:ZP}, it is enough to prove uniqueness on the canonical space $C_T \R^N$, hence we let $Z$ be the canonical process and consider two solutions $(Z, \mathbb P)$
and $(Z,\bar { \mathbb P})$ to MP$(\Bc,\mu_0)$. 
It is  sufficient to prove that for every $n \in \N$,  $0\leq t_0 <t_1 <\ldots <t_n < T$ and any strictly positive {bounded} Borel functions $h_0,\ldots,h_n$ we have
\begin{equation}
\label{eq:ext uniqueness}
\Eb^\Pb \left[ \prod_{i=0}^n h_i(Z_{t_i})\right]= \Eb^{\bar{\Pb}} \left[ \prod_{i=0}^n h_i(\bar{Z}_{t_i})\right] .
\end{equation}
Indeed, the same equality still holds true when $h_i =\mathbf 1_{F_i}$ for closed sets $F_i$ (see \cite[proof of Theorem 1.1.1]{stroock_varadhan} for the approximation of an indicator function of a closed set with positive functions), hence the law of $(Z_{t_0}, \ldots, Z_{t_n})$ is identified.

We proceed by induction on $n$. The base step $n=0$ holds by Property M. Let $n\geq 1$ and assume
\eqref{eq:ext uniqueness} holds  
with $n$ replaced by $n-1$. 
We can define two new probability measures on $ C_T \R^N $
\begin{equation}
\mathbb Q(H):= \frac{1}{\Eb^\Pb \left[ \prod_{i=0}^{n-1} h_i(Z_{t_i})\right]}E^\Pb \left[ \mathbf{1}_H \prod_{i=0}^{n-1} h_i(Z_{t_i})\right], %\\ 
%&
\qquad \bar{\mathbb Q}(H):= \frac{1}{\Eb^{\bar{\Pb}} \left[ \prod_{i=0}^{n-1} h_i({Z}_{t_i})\right]}E^{\bar{\Pb}} \left[ \mathbf{1}_H \prod_{i=0}^{n-1} h_i({Z}_{t_i})\right],
\end{equation}
for every $H\in \sigma\left(Z_t,\, 0\leq t\leq T\right)$. 
We show that the law of $Z_{t_{n-1}}$ under $\mathbb Q$ is the same as the law  of $Z_{t_{n-1}}$ under $\bar {\mathbb Q}$. Indeed for any strictly positive  bounded Borel function $l$ we have, using induction step \eqref{eq:ext uniqueness}  with $n$ replaced by $n-1$,
\begin{align*}
\mathbb E^{\mathbb Q}[l (Z_{t_{n-1}})]
&= \frac{1}{\Eb^\Pb \left[ \prod_{i=0}^{n-1} h_i(Z_{t_i})\right]}E^\Pb \left[ l (Z_{t_{n-1}}) \prod_{i=0}^{n-1} h_i(Z_{t_i})\right]\\
&= \frac{1}{\Eb^{\bar\Pb} \left[ \prod_{i=0}^{n-1} h_i(Z_{t_i})\right]}E^{\bar \Pb} \left[ l (Z_{t_{n-1}}) \prod_{i=0}^{n-1} h_i(Z_{t_i})\right] =\mathbb E^{\bar{\mathbb Q}}[l (Z_{t_{n-1}})].
\end{align*}
We can thus denote by  $\mu_{n-1}:= \mathbb Q \circ Z^{-1}_{t_{n-1}} = \bar{ \mathbb Q }\circ Z^{-1}_{t_{n-1}}$. 
Next we want to show that $(Z_{\vert_{[t_{n-1}, T]}}, \mathbb Q)$ and $(Z_{\vert_{[t_{n-1}, T]}}, \bar{ \mathbb Q})$ solve  MP$(\Bc, \mu_{n-1}; t_{n-1})$.
We know that, for any $u $ such that  $\mathcal L u =g \in C_T\mathcal S$  and $u(T, \cdot) \in \mathcal S$,
\begin{equation}
 M^{u}_t - M^{u}_{t_n} = u_{t}( Z_t)-u_{t_{n-1}}( Z_{t_{n-1}})-\int_{t_{n-1}}^t   g_s ( Z_s)ds, \qquad t_{n-1}< t \leq T,
\end{equation}
is a martingale under $\mathbb P$ and under $\bar {\mathbb P}$.
Let $t_{n-1}< r< t \leq T$ and $H \in \sigma (Z_s, 0\leq s\leq r)$ be arbitrary, then 
\begin{align*}
\Eb^{\mathbb Q} &\left[ \left(M^{u}_t-M^{u}_r \right) \mathbf{1}_H\right]
= \frac{1}{\Eb^\Pb \left[ \prod_{i=0}^{n-1} h_i(Z_{t_i})\right]}E^\Pb \left[ \left(M^{u}_t -M^{u}_r \right) \mathbf{1}_H \cdot \prod_{i=0}^{n-1} h_i(Z_{t_i})\right]=0,
\end{align*}
the latter being 0 since $(Z,\mathbb P)$ is a solution to MP$(\Bc, \mu_0)$ and  $\mathbf{1}_H \cdot \prod_{i=0}^{n-1} h_i(Z_{t_i})$ is bounded measurable with respect to $\sigma (Z_s, 0\leq s\leq r)$.
Thus $( M^{u}_t - M^{u}_{t_n} )_{t\in[t_{n-1},T]}$ is a $\mathbb Q$-martingale. Analogously, %$(M^{u}_{t\in [t_{n-1}, T]})$
it is also a $\bar{\mathbb Q}$-martingale. Hence both $(Z_{\vert_{[t_{n-1}, T]}}, \mathbb Q)$ and $(Z_{\vert_{[t_{n-1}, T]}}, \bar {\mathbb Q})$ are solutions to MP$(\Bc, \mu_{n-1}; t_{n-1})$. Thus by Property M we know that the marginals at time $t_n$ of the solutions are the same, and thus  
\[
\Eb^{\mathbb Q} [h_n(Z_{t_n}) ] = \Eb^{\bar {\mathbb Q}} [h_n(Z_{t_n}) ] .
\]
Employing the definitions of $\mathbb Q$ and of $\bar{\mathbb Q}$, this implies
\[
 \frac{1}{\Eb^\Pb \left[ \prod_{i=0}^{n-1} h_i(Z_{t_i})\right]} \Eb^{\mathbb P} \left [h_n(Z_{t_n}) \prod_{i=0}^{n-1} h_i(Z_{t_i})\right] =  \frac{1}{\Eb^{\bar \Pb} \left[ \prod_{i=0}^{n-1} h_i(Z_{t_i})\right]} \Eb^{\bar {\mathbb P}} \left [h_n(Z_{t_n}) \prod_{i=0}^{n-1} h_i(Z_{t_i}) \right] .
\]
Finally, by the inductive assumption $\Eb^\Pb \left[ \prod_{i=0}^{n-1} h_i(Z_{t_i})\right]=  \Eb^{\bar \Pb} \left[ \prod_{i=0}^{n-1} h_i(Z_{t_i})\right]$, we obtain \eqref{eq:ext uniqueness} as wanted.
\end{proof}

\section{Schauder Estimates (proof of Theorem \ref{th:schauder_first})}\label{app:proof_Schauder}

Throughout this section, we will denote by $C$, indistinctly, any positive constant. In each statement below, $C$ depends on a set of parameters, which will be specified in each case. 

We start by recalling some known Gaussian bounds on the the fundamental solution $\Gamma$ of $\Kc$ defined in \eqref{eq:Gamma general Langevin}, and its derivatives. 
\begin{notation}
Let $\n=(\n_1,\dots,\n_N)\in\mathbb{N}_0^N $ be a multi-index. We define the $B$-length of $\n$ as
\begin{equation}
[\n]_B:=%\sum_{j=1}^\rr(2i+1)|\n^{[i]}|, \qquad
 \sum_{j=0}^{\rr}(2j+1)\sum_{i=\bar{d}_{j-1}+1}^{\bar{d}_j}\n_{i},
\end{equation}
where $ \bar{d}_j$ are defined  in terms of ${d}_j$, see  \eqref{eq:elements_d}.
Moreover, as usual
 $\p^\n_x=\p_{x_1}^{\n_1}\cdots \p_{x_N}^{\n_N}.$
\end{notation} 
\begin{lemma}
\label{lem:derivative estimations fund sol}
For any $\nu\in\mathbb{N}_0^N%, \alpha\geq 0
$, there exists $C>0$, only dependent on $B$ and $\nu$% and $\alpha$
, such that
\begin{equation}%\label{eq:}
 \int_{\R^N} \big( | \partial^\nu_y \Gamma_t(z - y) | %\times |  z - y |^{\alpha}_B\, \dd z +  \int_{\R^N}
 + | \partial^\nu_y \Gamma_t(z - e^{Bt} y) |% \times |  z - e^{Bt} y |^{\alpha}_B\, 
 \big)\dd z \leq C %t^{\frac{\alpha-[\nu]_B }{2}} 
 t^{-\frac{[\nu]_B }{2}}, \qquad t>0, \ y\in\R^N.
\end{equation}
\end{lemma}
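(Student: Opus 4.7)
The plan is to reduce both integrals to an estimate of the form $\|\partial^\mu \Gamma_t\|_{\Leb^1(\R^N)} \leq C\, t^{-[\mu]_B/2}$ with $|\mu|=|\nu|$. For the first integral the change of variables $w = z-y$ combined with the identity $\partial^\nu_y \Gamma_t(z-y) = (-1)^{|\nu|}(\partial^\nu \Gamma_t)(z-y)$ directly gives
\begin{equation*}
\int_{\R^N} |\partial^\nu_y \Gamma_t(z-y)|\, dz = \|\partial^\nu \Gamma_t\|_{\Leb^1(\R^N)}.
\end{equation*}
For the second integral, the chain rule produces
\begin{equation*}
\partial^\nu_y \Gamma_t(z - e^{Bt}y) = (-1)^{|\nu|}\!\sum_{|\mu|=|\nu|} c^\mu_\nu(e^{Bt})\,(\partial^\mu \Gamma_t)(z - e^{Bt} y),
\end{equation*}
where each $c^\mu_\nu(e^{Bt})$ is a sum of products of $|\nu|$ entries of $e^{Bt}$; after the translation $w = z - e^{Bt}y$ this integral is bounded by $\sum_{|\mu|=|\nu|} |c^\mu_\nu(e^{Bt})|\,\|\partial^\mu \Gamma_t\|_{\Leb^1(\R^N)}$.

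Next I would establish the anisotropic $\Leb^1$-bound by exploiting the classical factorization
\begin{equation*}
\Cc(t) = D_{t^{1/2}}\,\widetilde{\Cc}(t)\,D_{t^{1/2}},
\end{equation*}
with $\widetilde{\Cc}(t)$ uniformly bounded and uniformly positive definite on $(0,T]$: under Assumption \ref{ass:kolmogorov_op} and the block form \eqref{B} of $B$, this is a well-known consequence of the Lanconelli--Polidoro theory of hypoelliptic Kolmogorov operators. Changing variables $z = D_{t^{1/2}} w$ in $\|\partial^\mu \Gamma_t\|_{\Leb^1(\R^N)}$ and using the blockwise scaling $\partial_{z_i} = t^{-(2j_i + 1)/2}\,\partial_{w_i}$ (where $j_i$ is the block containing coordinate $i$) extracts a factor $t^{-[\mu]_B/2}$; the remaining $w$-integral is that of a polynomial of degree $|\mu|$ against a uniformly non-degenerate Gaussian density in $w$, hence uniformly bounded in $t$.

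To finish the second integral, I would invoke the companion scaling statement that $D_{t^{-1/2}}\,e^{Bt}\,D_{t^{1/2}}$ is uniformly bounded on $(0,T]$ — again a standard consequence of the block structure \eqref{B}, verified by a case analysis over the super-diagonal, diagonal and sub-diagonal blocks of $B$ using the leading-order expansion of $e^{Bt}$. This yields $|(e^{Bt})_{ki}| \leq C\, t^{j_k - j_i}$ in the relevant regime. Taking the product of $|\nu|$ such factors inside $c^\mu_\nu(e^{Bt})$ and using the elementary identity $\sum_{\alpha} j_{k_\alpha} = ([\mu]_B - |\mu|)/2$ (and similarly for $\nu$), together with $|\mu|=|\nu|$, gives the clean bound $|c^\mu_\nu(e^{Bt})| \leq C\, t^{([\mu]_B - [\nu]_B)/2}$. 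Combining with the previous step produces exactly $C\, t^{-[\nu]_B/2}$, as required.

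The main obstacle I anticipate is the bookkeeping in the chain rule expansion for the second integral: one must match each factor of $e^{Bt}$ with the blockwise index of the derivative it carries and verify that the $t$-powers combine so as to cancel precisely the mismatch between $[\mu]_B$ and $[\nu]_B$. Once the two scaling statements for $\Cc(t)$ and $e^{Bt}$ are in place, the remainder reduces to routine Gaussian computations.
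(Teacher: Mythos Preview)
Your argument is correct and is precisely the content of the reference the paper defers to (\cite{lucertini2022optimal}): the anisotropic factorization $\Cc(t)=D_{\sqrt{t}}\,\widetilde{\Cc}(t)\,D_{\sqrt{t}}$ and the companion boundedness of $D_{t^{-1/2}}e^{Bt}D_{t^{1/2}}$ are exactly the scaling facts underpinning those Gaussian estimates, and your chain-rule bookkeeping (in particular the identity $\sum_\alpha j_{k_\alpha}=([\mu]_B-|\mu|)/2$ and the resulting bound $|c^\mu_\nu(e^{Bt})|\leq C\,t^{([\mu]_B-[\nu]_B)/2}$) is accurate. One small remark: both scaling statements hold uniformly only on bounded intervals $(0,T]$, so your argument delivers $C=C(B,\nu,T)$ rather than a $T$-independent constant; since the lemma is only invoked for $t\in(0,T]$ (cf.\ Lemma~\ref{lem:l1 norm density}), this is harmless.
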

\begin{proof}
For $[\nu]_B \leq 4$, it is a particular case of \cite[Proposition 6 and Lemma 8]{lucertini2022optimal}. The case $[\nu]_B >4$ is a straightforward extension: we omit the details for brevity.
\end{proof}
We now continue by proving a fundamental estimate on the Paley-Littlewood blocks of $\Gamma_t$. This extends a very similar result, \cite[Lemma 3.1]{zhang2021cauchy}, which is given for the kinetic case of Example \ref{ex:kinetic}.
\begin{lemma}
\label{lem:l1 norm density}
For any $l \geq 0$ and $T>0$, there is a positive constant $C=C(B,l,T)$ %{\color{green}independent of $j$ and $t$} 
such that 
\begin{equation}
\label{eq:l1 norm density}
\| \Delta_j \Gamma_t\|_{\Leb^1}+\| \Delta_j \left(\Gamma_t \circ e^{tB} \right)\|_{\Leb^1}  \leq C  (t 4^j)^{-l} , \qquad t\in (0,T] ,\ j\in \Nzero\cup \{ -1 \}. 
\end{equation}
Note that $C$ above is independent of $j$ and $t$.
\end{lemma}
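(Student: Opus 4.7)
I split the argument according to the size of $\kappa := t\cdot 4^j$. In the \emph{small} regime where $\kappa$ stays bounded above, Young's convolution inequality gives the uniform bound
$$\|\Delta_j\Gamma_t\|_{L^1} = \|\check\rho_j * \Gamma_t\|_{L^1} \leq \|\check\rho_j\|_{L^1}\,\|\Gamma_t\|_{L^1} = \|\check\rho_0\|_{L^1},$$
having used the dilation identity \eqref{eq: scaling propr part} and the fact that $\Gamma_t$ is a probability density; the twisted kernel $\Gamma_t\circ e^{tB}$ is handled identically up to the factor $|\det e^{-tB}|$, bounded on $[0,T]$. Since $\kappa^{-l}$ is bounded below in this regime, the claim follows there after adjusting the constant.

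In the complementary regime $\kappa\geq 1$ (which forces $j\geq 0$ as $t\leq T$), I work in Fourier space. The change of variables $\xi=D_{2^j}\eta$ in
$$\Delta_j\Gamma_t(z) = (2\pi)^{-N/2}\int \rho_j(\xi)\,e^{-\frac{1}{2}\langle \Cc(t)\xi,\xi\rangle}e^{i\langle z,\xi\rangle}\,d\xi,$$
combined with $\rho_j\circ D_{2^j}=\rho_0$ and the Jacobian $2^{jQ}$, produces $\Delta_j\Gamma_t(z)=2^{jQ}G_\kappa(D_{2^j}z)$ for a Schwartz function $G_\kappa$ with
$$\widehat{G}_\kappa(\eta) = \rho_0(\eta)\,\exp\!\left(-\tfrac{1}{2}\langle \Cc(t)D_{2^j}\eta,\,D_{2^j}\eta\rangle\right),$$
and the dilation-invariance of Lebesgue measure yields $\|\Delta_j\Gamma_t\|_{L^1}=\|G_\kappa\|_{L^1}$. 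The key analytic input is the spectral lower bound
$$\langle \Cc(t)\xi,\xi\rangle \geq c\sum_{i=0}^{r}t^{2i+1}|\xi_i|^2,\qquad t\in(0,T],\,\xi\in\R^N,$$
a classical consequence of Assumption \ref{ass:kolmogorov_op} via the H\"ormander non-degeneracy of $\Cc(t)$ combined with the block-scaling behaviour dictated by \eqref{B}. Applied with $\xi=D_{2^j}\eta$ this gives $\langle \Cc(t)D_{2^j}\eta,D_{2^j}\eta\rangle\geq c\sum_i\kappa^{2i+1}|\eta_i|^2$; since on $\mathrm{supp}\,\rho_0\subset \A_{1/2,2}$ the constraint $|\eta|_B\geq 1/2$ forces some component $|\eta_{i^*}|$ to be bounded below by a positive constant, and since $\kappa^{2i^*+1}\geq \kappa$ when $\kappa\geq 1$, we conclude $|\widehat{G}_\kappa(\eta)|\leq \rho_0(\eta)\,e^{-c\kappa/2}$ (with $c>0$ possibly redefined).

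To turn this pointwise Fourier decay into an $L^1$-bound on $G_\kappa$, I bound $\|G_\kappa\|_{L^1}$ by a finite sum of anisotropic weighted $L^\infty$ norms $\|(1+|\cdot|_B)^M G_\kappa\|_{L^\infty}$ with $M>Q$; each such weighted norm is in turn controlled via integration by parts by $\|\partial^\nu\widehat{G}_\kappa\|_{L^1}$ for multi-indices $\nu$ with $[\nu]_B\leq M$. By Leibniz every such derivative distributes between the smooth compactly supported factor $\rho_0$ and the Gaussian exponential; derivatives of the latter bring down only polynomial-in-$\kappa$ factors (uniformly bounded on $\mathrm{supp}\,\rho_0$), which are entirely absorbed by $e^{-c\kappa/2}$, yielding $\|\partial^\nu\widehat{G}_\kappa\|_{L^1}\lesssim e^{-c\kappa/4}\lesssim \kappa^{-l}$ for every $l\geq 0$. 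Hence $\|\Delta_j\Gamma_t\|_{L^1}=\|G_\kappa\|_{L^1}\lesssim \kappa^{-l}$, as required. The companion term $\|\Delta_j(\Gamma_t\circ e^{tB})\|_{L^1}$ is treated identically, because $\widehat{\Gamma_t\circ e^{tB}}(\xi)=|\det e^{-tB}|\widehat{\Gamma}_t(e^{-tB^\top}\xi)$ and $e^{-tB^\top}$ is a smooth bounded perturbation of the identity on $[0,T]$, which leaves the spectral lower bound intact up to a new constant. The hardest ingredient in this plan will be establishing the spectral lower bound with constants uniform in $t\in(0,T]$: unlike in the homogeneous framework of \cite{zhang2021cauchy}, where $\Cc(t)=D_{\sqrt{t}}\Cc(1)D_{\sqrt{t}}$ holds exactly, under \eqref{B_tot}--\eqref{B} this scaling is only valid up to a perturbation that has to be controlled via the compactness of $[0,T]$.
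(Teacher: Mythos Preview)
Your approach is correct and takes a genuinely different route from the paper's. Both arguments coincide in the easy regime via Young's inequality. For $\kappa=t4^j\geq 1$ the paper stays in physical space: starting from $\Delta_j\Gamma_t(z)=\int\check\rho_0(y)\,\Gamma_t(z-2^{-j}.y)\,dy$, it integrates by parts against an anisotropic operator $\Lambda^m=\sum_{i}(-1)^{p_im}\sum_k\partial_{y_k}^{2p_im}$ (with $p_i=p/(2i+1)$, $p=\prod_i(2i+1)$), whose exact homogeneity under $D_\lambda$ produces a factor $4^{-jpm}$, and then invokes the $L^1$-bounds on $\partial^\nu\Gamma_t$ from Lemma~\ref{lem:derivative estimations fund sol} to extract $t^{-pm}$. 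The output is polynomial decay $(t4^j)^{-pm}$ for arbitrary $m$. Your Fourier-space rescaling is more direct and in fact yields exponential decay in $\kappa$; your treatment of the twisted kernel is also correct, since $e^{-tB}\Cc(t)e^{-tB^\top}=\int_0^t e^{-sB}Ae^{-sB^\top}ds$ is the covariance associated with $-B$, which satisfies the same block hypothesis \eqref{B}.

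Two points deserve to be made explicit, however. First, the claim that $\eta$-derivatives of the exponent $Q(\eta)=\langle\Cc(t)D_{2^j}\eta,D_{2^j}\eta\rangle$ are bounded \emph{polynomially in $\kappa$} on $\operatorname{supp}\rho_0$ requires not only the lower spectral bound you invoke but also the companion upper bound $\langle\Cc(t)\xi,\xi\rangle\leq C\,|D_{\sqrt t}\xi|^2$ on $(0,T]$: without it you only know $M_{kk}:=(D_{2^j}\Cc(t)D_{2^j})_{kk}\leq C\,4^{j(2i_k+1)}$, which is \emph{not} controlled by $\kappa$ alone. With the upper bound one gets $M_{kk}\leq C\kappa^{2i_k+1}$ and then $|M_{kl}|\leq\sqrt{M_{kk}M_{ll}}\leq C\kappa^{i_k+i_l+1}$ by positive-semidefiniteness, after which your absorption argument goes through. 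This two-sided estimate is as classical as the lower half (both stem from the small-time expansion of $\Cc(t)$ under \eqref{B}), but it should be stated. Second, outside the homogeneous case the function you call $G_\kappa$ actually depends on $(t,j)$ separately; only the \emph{bounds} depend on $\kappa$, so the notation is slightly misleading. Neither point is fatal; with the upper bound added your proof is complete and arguably cleaner than the paper's, at the cost of importing a sharper Gaussian estimate than Lemma~\ref{lem:derivative estimations fund sol}.
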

\begin{proof}
For brevity, we give a complete proof of the estimate on $\| \Delta_j \Gamma_t\|_{\Leb^1}$; analogous arguments apply to $\| \Delta_j \left(\Gamma_t \circ e^{tB} \right)\|_{\Leb^1}$. 
It is straightforward to prove that $\| \Delta_{-1} \Gamma_t \|_{\Leb^1}$ is bounded, uniformly with respect to $t\in[0,T]$. Therefore, it is enough to prove 
\eqref{eq:l1 norm density} for $j\in \Nzero$.

Fix $t\in(0,T]$ and $j\in \Nzero$.  
We first prove the uniform boundedness of the norms of the blocks. For any $z\in\R^N$ we have
\begin{align}
\Delta_j  \Gamma_t  (z) & =\big( \check\rho_j *  \Gamma_t  \big) (z) &&
\text{(by \eqref{eq: scaling propr part})} \\
& = 2^{jQ}\big(  (  \check{ \rho}_0 \circ D_{2^j}) * \Gamma_t    \big) (z) = \int_{\R^N} \check{ \rho}_0(y) \Gamma_t\big( z-2^{-j}.y \big)\dd y. \label{eq:Deltat_Gammat_rep}
\end{align}
As a result,
\begin{align}\label{eq:repr_L1_norm}
\| \Delta_j \Gamma_t\|_{\Leb^1} %& = \int_{\R^N} \Big{|}\int_{\R^N} \check{ \rho}_0(y) \Gamma_t(z-2^{-j}.y)dy \Big{|} dz \\
& = \int_{\R^N} \Big{|}\int_{\R^N} \check{ \rho}_0(y) \Gamma_t\left(z- %\underbrace{
2^{-j}.y %}_{=\hbar.y}
\right)dy \Big{|} dz\\
 & \leq \int_{\R^N} \Big{|}\check{ \rho}_0(y)\Big{|}\underbrace{\Big(\int_{\R^N} \Gamma_t\big(z-2^{-j}.y\big)dz\Big)}_{=1}dy %\lesssim_C 
 =  \| \check{ \rho}_0 \|_{\Leb^1} .%\| \check{ \rho}_0 \|_{\Leb^1} .
\end{align}
%%%FRANCESCO. FINE VERIFICHE
Therefore, to prove \eqref{eq:l1 norm density}, it is not restrictive to assume that $t 4^{j}\geq 1$.
For %$i = 0, \cdots, r$ and 
any $n\in \N$ we set %denote by $\Delta^n_i$ 
the operators 
\begin{equation}
\Delta_{i}^n :=(-1)^n \sum_{k=\bar{d}_{i-1}+1}^{\bar{d_{i}}} \partial^{2n}_{y_k}, \qquad i = 0, \cdots, r.
\end{equation}
Set also $p=\prod_{i=0}^r (2i+1)$ and define% $m\in\N$ and %, we choose $m\in\N$ such that $m\leq \frac{l}{p}$ and define the operator $\Lambda^m$ acting on $\mathcal{S}$ as
\begin{equation}
\Lambda^m:= \Delta_{0}^{pm}+\Delta_{1}^{\frac{p}{3}m}\cdots + \Delta_{r}^{\frac{p}{2r+1}m}, \qquad m\in\N.
\end{equation}
By the property of the Fourier transform, the inverse of $\Lambda^m$ is defined for those $f\in \mathcal{S}$ such that supp$\fh$ does not contain the origin; in this case such operator acts as
\begin{equation}
\Lambda^{-m}f =\F^{-1}\left(\frac{ \fh(\xi)}{%|\xi_1|^{2 pm}+\cdots+|\xi_N|^{\frac{2p}{2r+1} m}
 \sum_{i=0}^r \sum_{k=\bar d_{i-1} + 1}^{\bar d_{i}} |\xi_k|^{2 m\frac{ p}{2i + 1}}  }\right)=: \F^{-1}\left(\frac{ \fh(\xi)}{S_m(\xi)}\right).
\end{equation}
Now note that, by the chain rule, we have
\begin{equation}%\label{eq:}
\nabla_y  \Gamma_t\big(z- 2^{-j}.y \big) = 2^{-j}. \nabla_\zeta \Gamma_t\big(z-\zeta\big)|_{\zeta = 2^{-j}.y}, \qquad y,z\in\R^N,
\end{equation}
It is important to see that the $i$-th block of the gradient $\nabla_\zeta$ is multiplied by $2^{-j(2i+1)}$ every time that $\Gamma_t(z-2^{-j}.y)$ is differentiated with respect to a variable of the $i$-th block. By this fact it is not hard to see that
\begin{equation}\label{eq:repr_Lambda_m}
%\Delta_i^{\frac{p}{2i+1}m}
 \Lambda^m_y \big( \Gamma_t(z-2^{-j}.y) \big)= 4^{-jpm} %\Delta_i^{\frac{p}{2i+1}m}
  \Lambda^m_\zeta \big( \Gamma_t(z-\zeta) \big) \big{|}_{\zeta=2^{-j}.y}, \qquad y,z\in\R^N,
\end{equation}
where the indeces $y$ and {$\zeta$} in $\Lambda^m_y$ and $\Lambda^m_\zeta$, respectively, denote the variables the operator $\Lambda^m$ acts on. 
%$%\Delta_i^{\frac{p}{2i+1}m}
% \Lambda^m$ on the left and right-hand side acts, respectively, on the variables $y$ and $x$.

We can now conclude the proof. By \eqref{eq:Deltat_Gammat_rep} and Plancherel's Theorem, we obtain
%\[\int_{\R^N} \check{ \rho}_0(y) \Gamma_t\left(z-\hbar.y\right)dy=\int_{\R^N} \left[\Lambda^{-m}\check{ \rho}_0\right](y) \left[\Lambda^m\Gamma_t\left(z-e^{tB}2^{-j}.y\right)\right]dy.\]
%%We now focus on $\Lambda^m\Gamma_t\left(z-e^{tB}2^{-j}.y \right)$: by the chain rule,
%%\[\nabla_y [\Gamma_t(z-2^{-j}.e^{tB}y )]= -2^{-j}. \nabla_x [\Gamma_t(z-e^{tB}x)](2^{-j}.y);\]
%So 
\begin{align}
\Delta_j \Gamma_t (z)&%=\int_{\R^N} \check{ \rho}_0(y) \Gamma_t(z-2^{-j}.y)dy 
=\int_{\R^N} \rho_0(\xi) \overline{\F_y(\Gamma_t(z-2^{-j}.y))(\xi)}d\xi =\int_{\R^N} \frac{\rho_0(\xi)}{S_m(\xi)} \, S_m(\xi)\overline{\F_y(\Gamma_t(z-2^{-j}.y))}(\xi)d\xi
\intertext{(by applying once more Plancherel's Theorem together with the properties of the Fourier transform)}
 &=\int_{\R^N} \big( \Lambda^{-m}\check{ \rho}_0 \big)(y)\, \Lambda^m_y  \big( \Gamma_t  (z-2^{-j}.y ) \big)\, dy, \label{eq:Deltat_Gammat_rep_bit}
\end{align}
and thus, by %\eqref{eq:repr_L1_norm}, we have
\eqref{eq:repr_Lambda_m}, we have
\begin{align}
\| \Delta_j \Gamma_t\|_{\Leb^1} %& = \int_{\R^N}\Big{|} \int_{\R^N} \big( \Lambda^{-m}\check{ \rho}_0 \big)(y)\, \Lambda^m_y  \big( \Gamma_t  (z-2^{-j}.y ) \big) \, dy \Big{|} dz
%\intertext{(by triangular inequality and \eqref{eq:repr_Lambda_m})}
& \leq 
4^{-jpm}  \int_{\R^N} \int_{\R^N} \big| \big( \Lambda^{-m}\check{ \rho}_0 \big)(y) \big| \times \big| \Lambda^m_\zeta  \big( \Gamma_t  (z-\zeta ) \big)|_{\zeta=2^{-j}.y} \big| \, dy  dz
\intertext{(by Tonelli's Theorem and the estimate of Lemma \ref{lem:derivative estimations fund sol} {$N$ times with $\nu = \n_i = (0, \dots, 0 , \frac{2mp}{2j+1}, 0 , \dots, 0) \in \mathbb{N}_0^{i-1} \times \mathbb{N}_0 \times  \mathbb{N}_0^{N-i} = \mathbb{N}_0^N $ for $i = \bar d_{j-1}+1 , \dots, \bar d_j $ and $j=0, \dots , r$}) %{\color{green} aggiungere dettagli sull'applicazione del Lemma, ad esempio chi \`e $\nu$?}
}
& \leq C\,  4^{-jpm} t^{-pm} \int_{\R^N} \big| \big( \Lambda^{-m}\check{ \rho}_0 \big)(y) \big|  dy . 
%\int_{\R^N} \big{|}\left[\Lambda^{-m}\check{ \rho}_0\right]\big{|}(y) \left(\int_{\R^N} \; \Big{|} \Lambda^m \Gamma_t(z-x)\Big{|}\big{|}_{x=2^{-j}.y}  dz \right)  dy\\
%&\leq C 4^{-jpm} t^{-pm} \int_{\R^N} \big{|}\left[\Lambda^{-m}\check{ \rho}_0\right]\big{|}(y)dy\lesssim_C 4^{-jpm} t^{-pm}. %\leq 4^{-jl} t^{-l}
\end{align}
As $m\in\N$ is arbitrary and $t 4^j\geq 1$, this concludes the proof.
%implies that 
%\begin{equation}
%\label{eq:l1 norm trasl density_bis}
%\| \Delta_j \left(\Gamma_t \circ e^{tB} \right)\|_{\Leb^1} \leq C\,  (t 4^j)^{-l} ,% \qquad t\in (0,T] ,\ j\in \Nzero. 
%\end{equation}
%for any $t\in (0,T]$ and  $j\in \Nzero$ such that $t 4^j \geq 1$
%where the last inequality is implied by $t 4^{j}\geq 1$ and $m\geq \frac{l}{p}$.
\end{proof}

The next two lemmas are also needed to prove Theorem \ref{th:schauder_first}.

\begin{lemma}\label{lem:exp}
For any $T>0$, there exists a positive $C=C(B,T)$ such that
\begin{equation}%\label{eq:}
\big| e^{t B^\top} \xi\big|_B  \leq C \bigg( \,\sum_{\substack{k,h=0,\cdots, r \\ k \geq h}} |t|^{\frac{k-h}{2h +1}} |\xi|_B^{({2k+1})/({2h+1})}     +    \sum_{\substack{k,h=0,\cdots, r \\ k < h}}  |\xi|_B^{({2k+1})/({2h+1})}     \bigg)
, \qquad \xi\in \R^N, \ t\in[-T,T].
\end{equation}
\end{lemma}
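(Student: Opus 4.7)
The plan is to exploit a block-Hessenberg structure of $B^\top$ inherited from the weak H\"ormander representation \eqref{B}, and to propagate it through the power series of the matrix exponential. Since $\Bc_0$ is arbitrary while $\Bc_1$ has the shape in \eqref{B}, the $(j,k)$-block of $B$ vanishes precisely when $j \geq 2$ and $k \leq j-2$; transposing, the $(k,j)$-block of $B^\top$ vanishes whenever $j > k+1$. So with respect to the decomposition $\R^N = \R^{d_0} \oplus \cdots \oplus \R^{d_r}$, $B^\top$ is block lower-Hessenberg.

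First I would iterate this observation: the $(k,h)$-block of $(B^\top)^n$ is a sum over sequences $k = i_0, i_1, \ldots, i_n = h$ of block-indices satisfying $i_{s+1} \leq i_s + 1$, which forces $h \leq k + n$. Hence $[(B^\top)^n]_{k,h} = 0$ whenever $n < h - k$. Combining $|[(B^\top)^n]_{k,h}| \leq \|B^\top\|^n$ with the elementary tail bound $\sum_{n \geq N} x^n/n! \leq (x^N/N!)\,e^x$, the expansion $e^{tB^\top} = \sum_{n \geq 0} t^n (B^\top)^n / n!$ yields, for $t \in [-T,T]$,
\begin{equation}\label{eq:plan_block_bound}
|[e^{tB^\top}]_{k,h}| \leq C\,|t|^{\max(h-k,\,0)},
\end{equation}
with $C = C(B,T)$.

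Next I would combine \eqref{eq:plan_block_bound} with the structure of the anisotropic norm. Writing $(e^{tB^\top}\xi)_k = \sum_{h} [e^{tB^\top}]_{k,h}\,\xi_h$, using subadditivity of $x \mapsto x^{1/(2k+1)}$ (valid since $1/(2k+1) \leq 1$), and then applying $|\xi_h|^{1/(2k+1)} = \bigl(|\xi_h|^{1/(2h+1)}\bigr)^{(2h+1)/(2k+1)} \leq |\xi|_B^{(2h+1)/(2k+1)}$, I arrive at
\begin{equation}
|(e^{tB^\top}\xi)_k|^{1/(2k+1)} \leq C \sum_{h > k} |t|^{(h-k)/(2k+1)} |\xi|_B^{(2h+1)/(2k+1)} + C \sum_{h \leq k} |\xi|_B^{(2h+1)/(2k+1)}.
\end{equation}
Summing over $k$ and relabelling $(k,h) \leftrightarrow (h,k)$ produces exactly the two sums in the statement of the lemma; the diagonal case $k = h$ contributes $|\xi|_B$ and may be absorbed in either sum since $|t|^0 = 1$.

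There is no genuinely hard estimate in this proof; the real content lies in noting that the weak H\"ormander shape of $B$ forces $B^\top$ to be block lower-Hessenberg, which is exactly what is needed to obtain the nilpotent-in-excess-degree bound \eqref{eq:plan_block_bound}. The remaining work is routine bookkeeping with the anisotropic exponents $1/(2k+1)$ and the correct matching of summation ranges.
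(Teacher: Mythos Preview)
Your proof is correct and follows essentially the same approach as the paper: both exploit the block lower-Hessenberg structure of $B^\top$ (equivalently, the vanishing of $[(B^\top)^n]_{k,h}$ for $n < h-k$) together with the elementary estimate $|\xi_h| \leq |\xi|_B^{2h+1}$. The only cosmetic difference is that you first sum the exponential series to get the clean block bound $|[e^{tB^\top}]_{k,h}| \leq C|t|^{\max(h-k,0)}$ and then take the anisotropic norm, whereas the paper estimates $|t^n(B^\top)^n\xi|_B$ term-by-term and sums over $n$ at the end; your ordering is arguably a bit cleaner.
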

\begin{proof}
%Since $e^{t B^\top}=\sum_{n=0}^{+\infty} \frac{(tB^\top)^n}{n!}$ we are interested in studying the block structure of the power of $B^\top$. 
Throughout the proof, we will denote by $C$ any positive constant that depends at most on $B$ and $T$. 
We first study the block structure of the powers of $B^\top$. %Because of
Recalling the structural hypotheses \eqref{B_tot}-\eqref{B}, we have
\begin{equation}
\label{eq:powers_B_transpose}
\big( (B^\top)^n\big)_{ij} = 0 , \qquad {\bf b}(j)-{\bf b}(i) > n,
\end{equation}
where ${\bf b}(i)$ denotes the block of variables corresponding to the index $i$, i.e.
\begin{equation}%\label{eq:}
{\bf b}(i) := \min \{  k : i \leq \bar d_k   \}, \qquad i = 1,\cdots, N.
\end{equation}
A graphical illustration of this structure is
 %the following $(r+1)\times (r+1)$ block-structure
\begin{equation}%\label{eq:}
B^\top =  \begin{pmatrix}
 \ast & \ast & 0 & \cdots & 0 & 0 \\
 \ast & \ast & \ast & 0 & \cdots&  0 \\
 \vdots & \vdots &\ddots & \ddots& \ddots&\vdots \vspace{-2pt} \\ 
 \ast & \ast & \ast & \ddots & \ast & 0 \\
 \ast & \ast & \ast & \cdots & \ast & \ast \\
 \ast & \ast & \ast & \cdots & \ast & \ast
  \end{pmatrix}, \qquad (B^\top)^2 = \begin{pmatrix}
 \ast & \ast & \ast & 0 & \cdots & 0 & 0 \\
 \ast & \ast & \ast & \ast & 0 & \cdots&  0 \\
 \vdots & \vdots &\ddots & \ddots &\ddots& \ddots&\vdots \vspace{-2pt} \\
 \ast & \ast & \ast &  \ddots & \ddots & \ast& 0 \vspace{-2pt} \\
 \ast & \ast & \ast &  \ast & \ddots & \ast & \ast\\
 \ast & \ast & \ast &  \ast & \cdots & \ast & \ast  \\
  \ast & \ast & \ast &  \ast & \cdots & \ast & \ast
 \end{pmatrix},
\end{equation}
and so on, until
\begin{equation}%\label{eq:}
\qquad (B^\top)^n =\begin{pmatrix}
 \ast & \ast & \cdots & \ast & \ast \\
 \ast & \ast & \cdots & \ast & \ast\\
 \vdots & \vdots &\ddots & \vdots & \vdots \\
 \ast & \ast & \cdots & \ast & \ast \\
 \ast & \ast & \cdots & \ast & \ast \\
\end{pmatrix}, \qquad n\geq r-1,
\end{equation}
with the blocks in the position $k,h$ representing $(d_k \times d_h)$ matrices.
As a result, for any $n\in\mathbb{N}$ and $\xi=(\xi_0,\cdots, \xi_r)\in\Rb^N$, we obtain  
\begin{align}
\label{eq:power est block}
|t^n (B^\top)^n \xi|_B & \leq  \sum_{h=0}^{r} \bigg(   %{\|(B^\top)^n\|_\infty}
C^n\,   |t|^n \sum_{k=0}^{(h+n)\wedge r}  |\xi_k| \bigg)^{\frac{1}{2h+1}} %\\
\intertext{(since $|\xi_k| \leq |\xi|_B^{2k + 1} $)}
&\leq %\big( N \|B^\top \|_\infty\big)^n
C^n   \sum_{h=0}^{r}  |t|^{n/(2h+1)}   \sum_{k=0}^{(h+n)\wedge r}   |\xi|_B^{(2k+1)/(2h+1)}   . 
\end{align}
For all the pairs of indices $(h,k)$ in the sums above we have
\begin{equation}%\label{eq:}
n \geq k-h ,
\end{equation}
and thus
\begin{equation}%\label{eq:}
|t|^{n/(2h+1)}
 \leq C^n |t|^{(k-h)/(2h+1)}, \qquad t\in [-T,T].
\end{equation}
This  and $|t|^{n/(2h+1)}\leq C^n$ yield
\begin{align}
|t^n (B^\top)^n \xi|_B\leq  C^n 
\bigg( \,\sum_{\substack{k,h=0,\cdots, r \\ k \geq h}} |t|^{\frac{k-h}{2h +1}} |\xi|_B^{({2k+1})/({2h+1})}     +    \sum_{\substack{k,h=0,\cdots, r \\ k < h}}  |\xi|_B^{({2k+1})/({2h+1})}     \bigg)
\end{align}
and concludes the proof.
\end{proof}
\begin{lemma}\label{lem:theta}
For any $T>0$ we have
\begin{equation}
\label{eq:inter probl}
 \big\{ i \in \Nzero\cup\{ -1 \} \, | \; \text{\emph{supp}} \rho_j \cap \emph{supp}\big( \rho_i \circ e^{tB^\top} \big) \neq \emptyset\big\} \subseteq \Theta^t_j , \qquad t\in [-T,T],\quad j\in\mathbb{N}_0 \cup \{-1 \}, 
\end{equation}
where
\begin{align}
\label{def:theta} 
\Theta^t_j &:=\bigg\{ i \in \Nzero\cup\{ -1 \}\, \big| \, 2^i   \leq   C  \sum_{\substack{k,h=0,\cdots, r \\ k \geq h}} |t|^{\frac{k-h}{2h +1}} 2^{\frac{j({2k+1})}{{2h+1}}} , \, 2^j   \leq   C  \sum_{\substack{k,h=0,\cdots, r \\ k \geq h}} |t|^{\frac{k-h}{2h +1}} 2^{\frac{i({2k+1})}{{2h+1}}}  \bigg\}, \quad j\in \N_0, \hspace{20pt} \\
\Theta^t_{-1} & := \bigg\{ i \in \Nzero\cup\{ -1 \}\, \big| \, 2^i   \leq   C  \sum_{\substack{k,h=0,\cdots, r \\ k \geq h}} |t|^{\frac{k-h}{2h +1}}   \bigg\} , 
\end{align}
with $C=C(B,T)$ positive constant. 

Furthermore, for any $\gamma\in \R\setminus\{ 0 \}$, there exists a constant $C=C(B,T,\gamma)>0$ such that
\begin{align}
\label{eq:Theta est}
 \sum_{i\in \Theta^t_j} 2^{-\gamma i}& \leq  
C\, 2^{-j \gamma} %\bigg(
 \sum_{\substack{k,h=0,\cdots, r \\ k \geq h}} (|t| 4^j)^{|\gamma|({k-h})/({2h +1})}  %4^{j(k-h)/(2h+1)} \bigg)^{|\gamma|} 
 , && t\in [-T,T], \quad j\in \Nzero,\\
 \sum_{i\in \Theta^t_{-1}} 2^{-\gamma i}& \leq  
C, && t\in[-T,T]. \label{eq:Theta est_bis}
\end{align}
\end{lemma}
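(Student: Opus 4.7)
My plan is to prove the two parts of Lemma \ref{lem:theta} as follows. For the inclusion \eqref{eq:inter probl}, I would fix $\xi \in \text{supp}\,\rho_j \cap \text{supp}(\rho_i \circ e^{tB^{\top}})$. By Remark \ref{rem:paley-littlewood}(b) and \eqref{eq:def_rho_minus_one}, we have $|\xi|_B \leq 2^{j+1}$ when $j \geq 0$ and $|\xi|_B \leq 2/3$ when $j=-1$; symmetrically, $|e^{tB^{\top}}\xi|_B \geq 2^{i-1}$ when $i \geq 0$. Applying Lemma \ref{lem:exp} to bound $|e^{tB^{\top}}\xi|_B$ from above in terms of $|\xi|_B$ and substituting the upper bound on $|\xi|_B$, I plan to absorb the $k<h$ terms (each $\leq |\xi|_B^{(2k+1)/(2h+1)}$ with exponent less than one) into the diagonal $k=h$ terms of the first sum, leaving exactly the quantity appearing in the first constraint of $\Theta^t_j$. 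The second constraint follows by applying the same reasoning to $\eta := e^{tB^{\top}}\xi$ with $-t$ in place of $t$, exchanging the roles of $i$ and $j$. When $j=-1$ both constraints collapse to the single one defining $\Theta^t_{-1}$.

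For the summation bounds \eqref{eq:Theta est}, I would split on the sign of $\gamma$, with the algebraic identity $|t|^{(k-h)/(2h+1)} 2^{j(2k+1)/(2h+1)} = 2^j (|t| 4^j)^{(k-h)/(2h+1)}$ doing the crucial bookkeeping. For $\gamma < 0$, the geometric sum is dominated by $2^{|\gamma| i_{\max}}$, and the \emph{first} defining inequality of $\Theta^t_j$ gives directly $2^{i_{\max}} \leq C\, 2^j \sum_{k\geq h}(|t|4^j)^{(k-h)/(2h+1)}$. Raising to the power $|\gamma|$ and using $(\sum a_k)^{|\gamma|} \leq C_{r,\gamma} \sum a_k^{|\gamma|}$ (valid by concavity/power-mean since the number of summands is bounded in $r$) yields the claimed bound.

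The case $\gamma > 0$ is where the main obstacle lies. Here the sum is dominated by $2^{-\gamma i_{\min}}$, and $i_{\min}$ must be extracted from the \emph{second} defining inequality $2^j \leq C \sum_{k\geq h}|t|^{(k-h)/(2h+1)} 2^{i(2k+1)/(2h+1)}$. Since at least one summand must realize this, I obtain $2^{i_{\min}} \geq c \min_{k\geq h}\bigl(2^{j(2h+1)/(2k+1)} |t|^{-(k-h)/(2k+1)}\bigr)$, whence
\begin{equation*}
2^{-\gamma i_{\min}} \leq C\, 2^{-j\gamma}\, \max_{k\geq h}(|t|\,4^j)^{\gamma(k-h)/(2k+1)}.
\end{equation*}
The hard part is that the natural exponent $(k-h)/(2k+1)$ arising from the inversion differs from the target exponent $(k-h)/(2h+1)$. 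My plan for reconciling this mismatch is to set $\tau := |t|\,4^j$ and argue by cases. When $\tau \geq 1$, the inequality $(k-h)/(2k+1) \leq (k-h)/(2h+1)$ (a consequence of $k \geq h$) together with monotonicity of $\tau \mapsto \tau^a$ for $\tau \geq 1$ gives $\tau^{(k-h)/(2k+1)} \leq \tau^{(k-h)/(2h+1)}$, so the max over $(k,h)$ is dominated by the target sum. When $\tau < 1$, the maximum of $\tau^{(k-h)/(2k+1)}$ over $k \geq h$ equals $1$ (attained at $k=h$), and this is already controlled by the diagonal $k=h$ term of $\sum_{k\geq h}\tau^{(k-h)/(2h+1)}$.

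Finally, \eqref{eq:Theta est_bis} is immediate: since $C\sum_{k\geq h}|t|^{(k-h)/(2h+1)}$ is bounded uniformly in $t \in [-T,T]$, the set $\Theta^t_{-1}$ has cardinality bounded by a constant depending only on $B$ and $T$, and the sum $\sum_{i\in\Theta^t_{-1}}2^{-\gamma i}$ is therefore controlled by a constant depending on $B, T, \gamma$.
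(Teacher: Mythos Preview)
Your proposal is correct. For the inclusion \eqref{eq:inter probl} and the case $\gamma < 0$ of \eqref{eq:Theta est}, your argument matches the paper's essentially step for step. For $\gamma > 0$ you take a genuinely different route.

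The paper rewrites the second constraint in \eqref{def:theta} as
\[
2^{-i} \leq C\, 2^{-j}\sum_{k\geq h}|t|^{(k-h)/(2h+1)}\, 4^{\,i(k-h)/(2h+1)}
\]
(divide both sides by $2^i 2^j$ and regroup), and then splits on $i \leq j$ versus $i > j$: when $i \leq j$ one replaces $4^{i(k-h)/(2h+1)}$ by $4^{j(k-h)/(2h+1)}$ (the exponent is nonnegative), obtaining directly $2^{-i} \leq D_j := C\,2^{-j}\sum_{k\geq h}(|t|4^j)^{(k-h)/(2h+1)}$; when $i > j$ one has $2^{-i} < 2^{-j} \leq D_j$ from the diagonal $k=h$ terms. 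Summing the geometric series from $i_{\min} = \lceil -\log_2 D_j\rceil$ then gives the claim. This sidesteps the exponent mismatch entirely: the target exponent $(k-h)/(2h+1)$ appears from the outset.

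Your approach instead inverts the constraint by pigeonhole, landing on the exponent $(k-h)/(2k+1)$, and then repairs the discrepancy via the dichotomy $\tau = |t|4^j \gtrless 1$. Both arguments are sound; the paper's manipulation is a bit slicker because no exponent correction is needed, while your route is arguably the more natural first attempt (straightforward inversion of a sum-constraint) at the cost of the extra $\tau$-case analysis.
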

\begin{proof}
Fix $j\in\mathbb{N}_0$. For any $i\in\mathbb{N}_0 \cup \{ -1 \}$, the second property in \eqref{eq:def_rho_minus_one} and the property \eqref{rmk:point (b) support partition} of the partition yield%, we have
\begin{equation}
\xi \in \text{supp} \rho_j \cap \text{supp} \big(\rho_i \circ e^{tB^\top}\big) \Longrightarrow
\begin{cases}
%2^{j-1}\leq
 |\xi|_B\leq 2^{j+1} \\
  | e^{tB^\top} \xi |_B \geq 2^{i-1} - \frac{1}{4}% \leq 2^{i+1}
\end{cases} ,
\end{equation}
and also
\begin{equation}
 \zeta \in \text{supp} \big(\rho_j \circ e^{-tB^\top} \big)\cap \text{supp} \rho_i \Longrightarrow
\begin{cases}
 |e^{-tB^\top} \zeta|_B \geq 2^{j-1}% \leq 2^{j+1} 
\\
%2^{i-1} - \frac{1}{4}\leq 
| \zeta |_B \leq 2^{i+1}
\end{cases} .
\end{equation}
By applying Lemma \ref{lem:exp} to the first system of inequalities, we have 
\begin{align}
2^{i-1} - \frac{1}{4} \leq |e^{t B^\top}\xi|_B & \leq C \bigg( \,\sum_{\substack{k,h=0,\cdots, r \\ k \geq h}} |t|^{\frac{k-h}{2h +1}} |\xi|_B^{({2k+1})/({2h+1})}     +    \sum_{\substack{k,h=0,\cdots, r \\ k < h}}  |\xi|_B^{({2k+1})/({2h+1})} \bigg)\\
& \leq C \bigg( \,\sum_{\substack{k,h=0,\cdots, r \\ k \geq h}} |t|^{\frac{k-h}{2h +1}} 2^{(j+1)({2k+1})/({2h+1})}     +    \sum_{\substack{k,h=0,\cdots, r \\ k < h}}  2^{(j+1)({2k+1})/({2h+1})}     \bigg) \\
\intertext{(since $2^{j+1}\geq 1$ {then $k<h\Rightarrow 2^{(j+1)({2k+1})/({2h+1})}\leq 2^{j+1}$})}
& \leq C \,\sum_{\substack{k,h=0,\cdots, r \\ k \geq h}} |t|^{\frac{k-h}{2h +1}} 2^{(j+1)({2k+1})/({2h+1})}     .
\end{align}
Analogously, the second system leads to
\begin{align}
2^{j-1}\leq C \,\sum_{\substack{k,h=0,\cdots, r \\ k \geq h}} |t|^{\frac{k-h}{2h +1}} 2^{(i+1)({2k+1})/({2h+1})}  , 
\end{align}
which, combined with the previous inequality, yields \eqref{eq:inter probl} for $j\in \mathbb{N}_{0}$. Furthermore, %We consider now $j=-1$. F
for any $i\in\mathbb{N}_0 \cup \{ -1 \}$, by \eqref{eq:def_rho_minus_one} we have
\begin{equation}
\xi \in \text{supp} \rho_{-1} \cap \text{supp} \big(\rho_i \circ e^{tB^\top}\big) \Longrightarrow
\begin{cases}
%2^{j-1}\leq
 |\xi|_B\leq 1 \\
  | e^{tB^\top} \xi |_B \geq 2^{i-1} - \frac{1}{4}% \leq 2^{i+1}
\end{cases} .
\end{equation}
Proceeding like above yields  \eqref{eq:inter probl} for $j=-1$.

Prove now \eqref{eq:Theta est}. We only show it when $\gamma>0$, the case $\gamma < 0$ being easier.
%\[\sum_{i\in \Theta^t_j} 2^{-\beta i}= \sum_{i\in \Theta^t_j, i>j} 2^{-\beta i}+\sum_{i\in \Theta^t_j, i \leq j} 2^{-\beta i}.\]
Let $i\in\Theta_t^j$. By the second inequality in \eqref{def:theta} we have
\begin{equation}\label{eq:ineq_theta}
2^{-i}\leq C 2^{-j} \sum_{\substack{k,h=0,\cdots, r \\ k \geq h}} |t|^{\frac{k-h}{2h +1}} \underbrace{2^{\frac{i({2k+1})}{{2h+1}}-i}}_{=4^{i(k-h)/(2h+1)}}.
\end{equation} 
Now, if $i\leq j$, then \eqref{eq:ineq_theta} yields
\begin{equation}%\label{eq:}
2^{-i} \leq C 2^{-j} \sum_{\substack{k,h=0,\cdots, r \\ k \geq h}} |t|^{\frac{k-h}{2h +1}} 4^{j(k-h)/(2h+1)} :=D_j.
\end{equation}
For $i>j$ we have $2^{-i}<2^{-j}$, and by definition of $\Theta^t_j$ we get
\begin{equation}
2^{-i}<2^{-j}\leq D_j,
\end{equation}
that is $i\geq -\log_2 D_j$ for every $i\in \Theta^t_j$.
%We can now deduce \eqref{eq:Theta est}: namely,
This latter implies
\begin{equation}
\sum_{i\in \Theta^t_j} 2^{-\gamma i} \leq \sum_{i=\lfloor -\log_2 D_j \rfloor}^{\infty} 2^{-\gamma i} = \frac{2^{\lfloor \log_2 D_j \rfloor \gamma} }{1-2^{-\gamma}}  \leq \frac{2^{\beta} D_j^\gamma}{1-2^{-\gamma}}, 
\end{equation}
which yields \eqref{eq:Theta est} for $\gamma>0$. Finally, \eqref{eq:Theta est_bis} is trivial.
\end{proof}
We are now in the position to prove Theorem \ref{th:schauder_first}.
\begin{proof}[Proof of Theorem \ref{th:schauder_first}]
We fix $t\in ( 0, T]$ and write $P_t$ and $P'_t$ in a convenient way, suitable to exploit the properties of the Fourier transform. 
We have 
\begin{equation}%\label{eq:}
\Gamma_t * ( f \circ e^{-tB} ) (z)= \int_{\R^N}  \Gamma_t(z-y) f(e^{-tB}y) dy\\
=|\det e^{tB}| \int_{\R^N}\Gamma_t\big(z - e^{tB}y\big)  f(y)  d y , \qquad f\in \mathcal{S},
\end{equation}
\begin{equation}%\label{eq:}
\big((\Gamma_t\circ e^{tB}) * ( f \circ e^{tB} ) \big) (y)= \int_{\R^N}  \Gamma_t\big(e^{tB}(z-y)\big) f(e^{tB}z) dz\\
=\frac{1}{|\det e^{tB}|} \int_{\R^N}\Gamma_t\big(z - e^{tB}y\big)  f(z)  d z , \qquad f\in \mathcal{S},
\end{equation}
and thus
\begin{equation}%\label{eq:}
{P'_t f} = \frac{1}{|\det e^{tB}|} \, \Gamma_t * ( f \circ e^{-tB} ), \qquad {P_t f} = |\det e^{tB}| \, (\Gamma_t\circ e^{tB}) * ( f \circ e^{tB} ),\qquad f\in \mathcal{S},
\end{equation}
which extends to $f\in \mathcal{S}'$ by duality. 
As the following steps apply to $P_t$ and similarly to $P'_t$, for the sake of brevity, we prove only the part of \eqref{eq:schauder 1}  related to {$P'_t$}.
For any $j\geq -1$ we have obtained that
\begin{equation}%\label{eq:}
\Delta_j ({P'_t f}) =\frac{1}{|\det e^{tB}|}\,\big(\Delta_j \Gamma_t \big) * ( f \circ e^{-tB} )  .%= \big(\Delta_j  \,\Gamma_t  \big)* (f \circ e^{-tB}).
\end{equation}
Furthermore, by the second part of Remark \ref{rem:paley_little}, we have
\begin{equation}%\label{eq:}
f \circ e^{-tB} = \sum_{i=-1}^{+\infty} ( \Delta_i f ) \circ e^{-tB} ,%\neq \sum_{i=-1}^{+\infty} \Delta_i\left[ e^{-tB} f\right].
\end{equation}
and thus
%As a result,
\begin{equation}%\label{eq:}
\Delta_j ( {P'_t f} ) = \frac{1}{|\det e^{tB}|} \sum_{i=-1}^{\infty}   \big(\Delta_j  \Gamma_t \big)* \big( (\Delta_i f)  \circ e^{-tB} \big) .
\end{equation}
Each term of this summation is nonzero if and only if
\begin{equation}%\label{eq:}
 \F\Big(  \big(\Delta_j  \Gamma_t  \big)* \big(  (\Delta_i f)  \circ e^{-tB} \big)  \Big)%(\xi)
 = |\det e^{tB}| \, \rho_j%(\xi)
 \widehat{\Gamma}\,
  \big( \rho_i \circ e^{tB^\top}\big) %(\xi)
   \big( \fh\circ e^{tB^\top}\big) %\neq 0
\end{equation}
is non-null. Therefore, Lemma \ref{lem:theta} yields
\begin{equation}%\label{eq:}
\Delta_j (  P'_tf )=\frac{1}{|\det e^{tB}|}\sum_{i\in \Theta^t_j}  \big(\Delta_j  \Gamma_t \big)* \big( ( \Delta_i f ) \circ e^{-tB} \big)  , %\qquad j \in \mathbb{N}_0,
\end{equation}
and by Young's inequality we obtain
\begin{equation}%\label{eq:}
\|\Delta_j (  P'_tf ) \|_{\Leb^\infty} \leq \frac{1}{|\det e^{tB}|}\| \Delta_j  \Gamma_t \|_{\Leb^1} \sum_{i\in \Theta^t_j}  \underbrace{\|  ( \Delta_i f ) \circ e^{-tB}  \|_{\Leb^\infty}}_{=\|\Delta_i f\|_{\Leb^\infty}} \leq  \frac{1}{|\det e^{tB}|}\| \Delta_j  \Gamma_t \|_{\Leb^1} \|f\|_\gamma \sum_{i\in \Theta^t_j} 2^{-i\gamma}. %, \qquad j \in \mathbb{N}_0.
\end{equation}
Assume now $\gamma\neq 0$ and $j\in\N_0$: \eqref{eq:Theta est} yields 
\begin{align}
\|\Delta_j ( P'_t f ) \|_{\Leb^\infty} 
& \leq  C \| \Delta_j \Gamma_t \|_{\Leb^1} \|f\|_\gamma \, 2^{-j \gamma} 
\sum_{\substack{k,h=0,\cdots, r \\ k \geq h}} (t 4^j)^{|\gamma|({k-h})/({2h +1})}.
\end{align}
Now, for each $0\leq h \leq k \leq r$, we apply Lemma \ref{lem:l1 norm density} with 
\begin{equation}%\label{eq:}
l=\frac{\alpha}{2}+ \frac{k-h}{2h+1}|\gamma|
\end{equation}
and obtain
\begin{equation}%\label{eq:}
(t 4^j)^{|\gamma|({k-h})/({2h +1})} \| \Delta_j  \Gamma_t \|_{\Leb^1}
\leq C (t 4^{j})^{-l}  (t 4^{j})^{|\gamma|\frac{k-h}{2h +1}}=C t^{-\frac{\alpha}{2}} 2^{-j \alpha} .%, \qquad t\in[0,T].
\end{equation}
This proves
\begin{equation}\label{eq:schauder_est_proof}
 \|\Delta_j (P'_t f)\|_{\Leb^{\infty}}\leq  C\, t^{-\frac{\alpha}{2}} 2^{-j(\alpha+\gamma)} \|f\|_\gamma, \qquad j\in\mathbb{N}_0.
\end{equation}
Following analogous arguments, the same estimate can be proved for $j=-1$. We omit the details for brevity. Finally, the case $\gamma=0$ follows by the interpolation Lemma 2.10 in \cite{hao2024singular}. Precisely, employing the notation in the latter reference, estimate \eqref{eq:schauder_est_proof} for $\gamma = 0$ can be obtained by setting: $\mathbb{X}={L}^\infty(\R^N)$, $\beta_0=-1$, $\beta_1=1$, $T_j=\Delta_j P'_t$, $C_{0,j}=C_{-1}2^{-j\alpha}t^{-\frac{\alpha}{2}}$ and $C_{1,j}=C_{1}2^{-j\alpha}t^{-\frac{\alpha}{2}}$, with $C_{-1}, C_{1}$ denoting the constant $ C$ appearing in \eqref{eq:schauder_est_proof} for $\gamma=-1,1$ respectively.
\end{proof}

{\bf ACKNOWLEDGEMENTS.}
 The research of Elena Issoglio was partially supported by PRIN-PNRR2022 (P20224TM7Z) CUP: D53D23018780001 and from EU - Next Generation
EU - PRIN2022 (202277N5H9) CUP: D53D23005670006. 
  The research of Stefano Pagliarani was partially supported by the INdAM - GNAMPA project CUP\_E53C22001930001 and by the
PRIN22 project CUP E53C23001670001. 
The research of Francesco Russo was supported by the
ANR-22-CE40-0015-01
project (SDAIM).

\bibliographystyle{siam}
%\bibliography{Bibtex-Final.bib}
\bibliography{../../BIBLIO_FILE/Bibtex-Final.bib}
\end{document}